\numberwithin{equation}{section}
\newcommand{\email}[1]{\href{mailto:#1}{\texttt{#1}}}
\setlist[enumerate,1]{label=(\arabic*)}
\Crefname{pr}{Proposition}{Propositions}
\Crefname{co}{Corollary}{Corollaries}
\Crefname{subsection}{Subsection}{Subsections}
\tikzset{snake it/.style={decorate, decoration={snake, segment length=5.3pt, amplitude=0.25mm}}}
\newcommand{\from}{\colon}
\renewcommand{\phi}{\varphi}
\renewcommand{\epsilon}{\varepsilon}
\renewcommand{\models}{\vDash}
\newcommand{\monster}{\mathfrak U}
\newcommand{\bla}[4]{{#1}_{#2}#3\ldots#3{#1}_{#4}}
\newcommand{\inverse}{^{-1}}
\newcommand{\allora}{\Rightarrow}
\newcommand{\into}{\hookrightarrow}
\newcommand{\ar}{\mathrm{ar}}
\newcommand{\indu}{\mathrm{ind}}
\newcommand{\id}{\mathrm{id}}
\newcommand{\inp}{\mathrm{inp}}
\newcommand{\lex}{\mathrm{lex}}
\newcommand{\mor}{\mathrm{Mor}}
\newcommand{\alg}{\mathrm{alg}}
\DeclareMathOperator{\trop}{trop}
\DeclareMathOperator{\ini}{in}
\DeclareMathOperator{\tp}{tp}
\DeclareMathOperator{\EM}{EM}
\DeclareMathOperator{\qftp}{qftp}
\DeclareMathOperator{\ot}{ot}
\DeclareMathOperator{\dcl}{dcl}
\DeclareMathOperator{\aut}{Aut}
\DeclareMathOperator{\orb}{orb}
\DeclareMathOperator{\im}{im}
\DeclareMathOperator{\bdn}{bdn}
\DeclareMathOperator{\res}{res}
\DeclareMathOperator{\ac}{ac}
\DeclareMathOperator{\VF}{VF}
\DeclarePairedDelimiter{\set}{\{}{\}}
\DeclarePairedDelimiter{\abs}{\lvert}{\rvert}
\DeclarePairedDelimiter{\strgen}{\langle}{\rangle}
\theoremstyle{definition}
\newtheorem{defin}{Definition}[section]
\newtheorem{thm}[defin]{Theorem}
\newtheorem{pr}[defin]{Proposition}
\newtheorem{co}[defin]{Corollary}
\newtheorem{lemma}[defin]{Lemma}
\newtheorem{notation}[defin]{Notation}
\newtheorem{eg}[defin]{Example}
\newtheorem{rem}[defin]{Remark}
\newtheorem{fact}[defin]{Fact}
\newtheorem{ass}[defin]{Assumption}
\newtheorem{prob}[defin]{Problem}
\newtheorem{claim}{Claim}[defin]
\newtheorem{alphthm}{Theorem}
\let\oldqed\qedsymbol
\newcommand{\qedclaim}{\mbox{$\underset{\textsc{claim}}{\oldqed}$}}
\newenvironment{claimproof}[1][\it Proof of Claim]{
\let\qedsymbol\qedclaim
  \par
  \pushQED{\qed}%
  \normalfont \topsep6\p@\@plus6\p@\relax
  \trivlist
\item[\hskip\labelsep
  \upshape
  #1\@addpunct{.}]\ignorespaces
}{%
  \popQED\endtrivlist\@endpefalse
}
\let\qedsymbol\oldqed
\newcommand{\subjclass}[2][2020]{%
  \let\@oldtitle\@title%
  \gdef\@title{\@oldtitle\footnotetext{\hspace*{-2em}#1 \emph{Mathematics subject classification.} #2}}%
}
\newcommand{\keywords}[1]{%
  \let\@@oldtitle\@title%
  \gdef\@title{\@@oldtitle\footnotetext{\hspace*{-2em}\emph{Keywords:} #1.}}%
}
\title{Automorphisms of valued fields:\\ amalgamation and existential closedness}
\author[1]{Jan~Dobrowolski\orcidlink{0000-0003-3435-4782}%
  \thanks{email: \email{dobrowol@math.uni.wroc.pl}}}
\author[2]{Francesco~Gallinaro\orcidlink{0000-0002-9171-8522}%
  \thanks{email: \email{francesco.gallinaro@dm.unipi.it}}}
\author[ ]{Rosario~Mennuni\orcidlink{0000-0003-2282-680X}%
  \thanks{email: \email{R.Mennuni@posteo.net}}}
\affil[1]{\small Xiamen University Malaysia,
	Department of Mathematics,
	Jalan Sunsuria, Bandar Sunsuria, 43900 Sepang, Selangor Darul Ehsan, Malaysia \emph{and}\newline 
 Instytut Matematyczny, Uniwersystet Wrocławski, Wrocław, Poland}
\affil[2]{\small Dipartimento di Matematica, Universit\`a di Pisa, Largo Bruno Pontecorvo 5, 56127 Pisa, Italy}
\date{}
\subjclass{Primary: 12J10, 12L12, 03C45. Secondary: 12H10, 03C60, 03C98.}
\keywords{valued difference field, model theory, positive logic, amalgamation, tree property}
\begin{document}

\maketitle

\vspace{-1cm}

\begin{abstract}We study valued fields equipped with an automorphism. We prove that all of them have an extension admitting an equivariant cross-section of the valuation. In residual characteristic zero, and in the presence of such a cross-section, we show that amalgamation problems are solvable precisely when the induced residual problem is, characterise the existentially closed objects of this category, and prove that its positive theory does not have the tree property of the second kind. We prove analogous results with cross-sections replaced by angular components. Along the way, we show that array modelling does not require thickness.
\end{abstract}

\tableofcontents

\section*{Introduction}
\label{sec:intro}
\addcontentsline{toc}{section}{\nameref{sec:intro}}
\markboth{Introduction}{}

The study of difference equations may be carried out inside a suitable ring of functions, say in the indeterminate $x$, equipped with the endomorphism $\sigma(f(x))=f(x+1)$. For this reason, a field $K$ with a distinguished endomorphism $\sigma$ is called a \emph{difference field}~\cite{cohnDifferenceAlgebra1965}; when $\sigma$ is an automorphism, $(K,\sigma)$ is said to be \emph{inversive}. The typical object of interest in this work will be an inversive \emph{valued difference field}: a valued field $(K,v)$ together with an automorphism $\sigma$ compatible with the valuation $v$, that is, fixing the valuation ring $\mathcal O$ setwise. If we denote by $\Gamma$ the value group and by $v\from K\to \Gamma\cup\set\infty$ the (surjective) valuation, we could equivalently specify a pair of automorphisms, one of the field $K$ and one of the ordered abelian group $\Gamma$, that we will both denote by $\sigma$, such that for all $a\in K$ one has $v(\sigma(a))=\sigma(v(a))$.

Automorphisms of valued fields appear for instance in \cite{senAutomorphismsLocalFields1969,vanderputDifferenceEquationsOverpadic1972,kellerValuedCompleteFields1986,kuhlmannAutomorphismGroupValued2022,kaplanDecomposingAutomorphismGroup2025}. Their model theory has been studied intensively since the beginning of the millennium, for example in~\cite{Sc,Sc2,belairModelTheoryFrobenius2007,Az,AvD,Pa,DO,Ri,blaszkiewiczNoteValuedFields2025}, and it has been used to obtain Lang--Weil-type estimates in finite difference fields~\cite{hilsLangWeilTypeEstimates2024} and a version of Kapranov's Theorem for tropical difference algebra~\cite{aliyariDifferenceKapranovTheorem2025}. A connection with algebraic dynamics was envisaged in the introduction to \cite{chatzidakisDifferenceFieldsDescent2008}.
Although the techniques have recently been extended to the non-inversive case \cite{dorSpecializationDifferenceEquations2022,dorContractingEndomorphismsValued2023,ramelloModelTheoryValued2024},  all difference fields in this paper will be inversive, hence we will from now on omit this qualifier and tacitly assume the surjectivity of all of our field endomorphisms, unless otherwise specified.

When studying fields, it is often convenient to place oneself in a \emph{universal domain}: a large enough algebraically closed field. It has long been known that this is not a peculiarity of field theory, as one can give a precise definition of ``universal domain'' via the notion of \emph{existentially closed} object of a category of structures with embeddings (\Cref{defin:ec}). If such a category is closed under inductive limits, every object embeds in an existentially closed one. This happens to be the case for the category of valued difference fields; here, a consequence of existential closedness is a form of the Nullstellensatz for difference varieties which takes the valuation into account: for example, whenever a difference variety defined over $K$ has a point in some valued difference field extension $(L, v_L, \sigma_L)\supseteq (K,v,\sigma)$ with all coordinates of valuation zero, then such a point already exists in $K$. A sufficiently \emph{saturated} (\Cref{defin:saturated}) existentially closed valued difference field can then be seen as a universal domain for \emph{tropical difference algebra}.

 For categories of structures with an automorphism it is customary to refer to the automorphism of an  existentially closed object as \emph{generic}~\cite{Ma,chatzidakisGenericStructuresSimple1998}. Note that not every structure carries a generic automorphism; for example,  every field that does must be algebraically closed; this follows from the fact that every automorphism of a field can be extended to its algebraic closure. In the valued difference case, some consequences of genericity can be easily deduced using standard results from the model theory of valued fields: for instance (\Cref{pr:ecbasics}) the valued field needs to be algebraically closed,  and its residue field and value group, equipped with the induced automorphisms, need to be existentially closed in their respective categories.

After observing this, one is immediately confronted with the fact that, in almost all of the existing literature on fields with a valuation and an automorphism, the behaviour of $\sigma$ on $\Gamma$ is severely constrained, and certainly not generic. Namely, most of the currently available results are only valid in the \emph{multiplicative} case~\cite{Pa}, two important special cases being the \emph{isometric} one---where $v(\sigma(x))=v(x)$ for every $x$, that is, the induced automorphism of $\Gamma$ is the identity---and the \emph{strongly contractive} one---where $v(\sigma(x))>n\cdot v(x)$ for every $x$ of positive valuation and every $n\in \mathbb N$. Exceptions are~\cite{beyarslanFieldsAutomorphismValuation2020}, which deals with existentially closed  $(K,v,\sigma)$ where $\sigma$ is only required to preserve the field structure but not $v$---hence there is no induced automorphism of $\Gamma$ at all---and~\cite{DO,Ri,ramelloModelTheoryValued2024}, that black-box the behaviour of $\sigma$ on $\Gamma$. Another non-multiplicative example is, on the field of transseries, precomposition by a positive infinite element, studied in~\cite[Chapter 5]{vanderhoevenTransseriesRealDifferential2006a}  and~\cite[Section~8]{DO}.

Existential closedness for ordered abelian groups with an automorphism---from now on, \emph{difference oags}---has recently been studied in~\cite{DM}. Having started to grasp an understanding of existentially closed difference oags, which occur as value groups of existentially closed valued difference fields, it is natural to wonder whether one may leverage knowledge of the former to obtain results on the latter. In this paper, we will see that the answer is affirmative.

The first property one may expect from a universal domain for tropical difference algebra is the existence of a  \emph{cross-section} of $v$---a group morphism $s\from\Gamma\to K^\times$ such that $v(s(\gamma))=\gamma$---that is $\sigma$-equivariant. In turn, such an object induces a $\sigma$-equivariant \emph{angular component}, that is, a multiplicative homomorphism from $K$ to the residue field $k$ extending the residue map. The existence of these maps implies that the inclusion  of $\mathbb{Z}[\sigma]$-modules $\mathcal O^\times\into K^\times$ is \emph{pure}: every system of multiplicative difference equations with parameters in $\mathcal O^\times$ that has a solution in $K^\times$ must have one in $\mathcal O^\times$. If said equations are not allowed to mention $\sigma$, then this is well known to follow from the fact that $\Gamma$ is torsion-free. When $\sigma$ enters the picture, showing this becomes more intricate, and constitutes our first main result.
 \begin{alphthm}[\Cref{thm_purity}]\label{intro_thm_purity}
Suppose $(K,v,\sigma)$ is an existentially closed valued difference field. Then $\mathcal O^\times$ is a pure $\mathbb{Z}[\sigma]$-submodule of $K^\times$.  It follows that every valued difference field embeds into one with a cross-section commuting with $\sigma$.
\end{alphthm}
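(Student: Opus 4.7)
The plan is to prove the purity statement first, then derive the existence of equivariant cross-sections from it by a saturation argument.

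For the purity part, assume $(K,v,\sigma)$ is existentially closed, fix a system
\[
\prod_{j=1}^m x_j^{f_{ij}(\sigma)} = a_i \qquad (i=1,\ldots,n)
\]
with $f_{ij}\in\mathbb Z[\sigma]$ and $a_i\in\mathcal O^\times$, and suppose it is solved by some $(x_j)\in(K^\times)^m$. By existential closedness it suffices to produce an extension $(L,v_L,\sigma_L)\supseteq(K,v,\sigma)$ in which the system admits a solution lying entirely in $\mathcal O_L^\times$. Set $\gamma_j:=v(x_j)$; applying $v$ yields $\sum_j f_{ij}(\sigma)\gamma_j=0$ in $\Gamma$. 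My strategy is to produce in some $L$ elements $s_j\in L^\times$ with $v_L(s_j)=\gamma_j$ and $\prod_j s_j^{f_{ij}(\sigma)}=1$; then $y_j:=x_j/s_j\in\mathcal O_L^\times$ will give the desired solution.

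Finding the $s_j$ amounts to constructing a partial $\sigma$-equivariant cross-section on the finitely generated $\mathbb Z[\sigma]$-submodule $\Gamma_0:=\strgen{\gamma_1,\ldots,\gamma_m}_{\mathbb Z[\sigma]}\subseteq\Gamma$. I would build $L$ by adjoining, stage by stage, the $\sigma$-orbit of a new multiplicative generator $s$ of prescribed valuation $\gamma$, imposing only the $\sigma$-polynomial relations on $s$ that are forced by the relations of $\gamma$ in $\Gamma_0$ modulo what has been already constructed. At each stage one extends $v$ via a Gauss-type formula (writing $s=t_0\cdot u$ with $t_0\in K$ of prescribed valuation $\gamma$ and $u$ a transcendental unit) and then checks compatibility with $\sigma$; when the $\sigma$-orbit of $\gamma$ is infinite one is forced to adjoin infinitely many generators and pass to an inversive closure. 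The delicate point is that $\mathbb Z[\sigma]=\mathbb Z[\sigma,\sigma^{-1}]$ is a non-principal Laurent polynomial ring, so the module-theoretic structure of $\Gamma_0$ can be intricate; here the recent structural results on difference oags of \cite{DM} are likely to enter.

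For the cross-section corollary, embed any valued difference field $(K_0,v_0,\sigma_0)$ into a sufficiently saturated existentially closed $(K,v,\sigma)$; this exists because the category is closed under inductive limits. By the purity part, $\mathcal O^\times$ is pure in $K^\times$ as a $\mathbb Z[\sigma]$-module. Since $\mathbb Z[\sigma]$ is countable and $\mathcal O^\times$ is a definable $\mathbb Z[\sigma]$-module inside a saturated structure, $\mathcal O^\times$ inherits enough saturation to be pure-injective (algebraically compact). Any pure embedding into a pure-injective module splits, so the short exact sequence
\[
1\longrightarrow\mathcal O^\times\longrightarrow K^\times\stackrel{v}{\longrightarrow}\Gamma\longrightarrow 0
\]
splits in the category of $\mathbb Z[\sigma]$-modules, and any splitting is exactly a $\sigma$-equivariant cross-section of $v$.

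The main obstacle is the extension construction in the first part: one must juggle simultaneously the $\mathbb Z[\sigma]$-module structure of the generators $\gamma_j$ in $\Gamma$, the Gauss-type extension of $v$ across new transcendental units, and the extension of $\sigma$ to a field automorphism of $L$ compatible with the prescribed multiplicative relations. Everything downstream (pure-injectivity, splitting, cross-section) is comparatively formal once purity has been secured.
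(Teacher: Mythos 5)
Your high-level reduction is sound: to prove purity it suffices, by existential closedness (the condition $v(z_j)=0$ being quantifier-free), to exhibit an extension $(L,v_L,\sigma_L)$ in which $A\cdot z=b$ has a solution in $\mathcal O_L^\times$, and dividing the given solution $(x_j)$ by elements $s_j\in L^\times$ with $v_L(s_j)=v(x_j)$ and $\prod_j s_j^{f_{ij}(\sigma)}=1$ is a valid way to produce such a solution. Your cross-section argument from purity (sufficient saturation of $\mathcal O^\times$ as a $\mathbb Z[\sigma]$-module implies pure-injectivity/algebraic compactness, whence the pure exact sequence splits) is essentially the paper's argument via \cite{prestModelTheoryModules1988}, modulo a backwards phrasing of pure-injectivity.

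The gap is precisely where you flag it, and it is not a detail but the entire content of the theorem. "Adjoin $s$ with the $\sigma$-polynomial relations forced by $\gamma\in\Gamma_0$, extend $v$ by a Gauss formula, check $\sigma$-compatibility, iterate, and pass to the inversive closure" is a wish, not a construction. When the $\gamma_j$ satisfy nontrivial $\mathbb Z[\sigma]$-relations, the $\sigma$-iterates of the new generators are confined to a proper coset of an algebraic subgroup of a power of $\mathbb G_{\mathrm m}$; the Gauss extension for a single transcendental does not apply, and one must choose simultaneously the valuation and the residue of each $\sigma^h(s_j)$ so that (a) the multiplicative relations hold, (b) the valuation on the resulting field is well-defined and coherent, and (c) the shift $\alpha_{jh}\mapsto\alpha_{j(h+1)}$ on the new generators really is an automorphism of the valued field, i.e.\ preserves the full quantifier-free $\mathcal L_{\mathrm{vf}}$-type over $K$. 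Nothing in your sketch addresses (c), which is where the danger lies: if the type of the adjoined tuple is not complete, the shift need not be elementary.

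The paper resolves this with machinery that is absent from your proposal. The matrix equation is repackaged as membership in a coset $t\cdot G$ of an algebraic subgroup (\Cref{lemma:matrix-to-coset}); the $\sigma$-shifts of this coset are assembled into a coherent family of cosets $H_{r,s}$ of \emph{connected} subgroups containing the orbit of a fixed solution (\Cref{prop:ideals-of-subgroups}); irreducibility of the initial variety $\ini_0$ of a coset of a connected subgroup (\Cref{irr}), which relies on the tropical-basis degree bound \Cref{fact:trop_basis_deg_bound} and the arbitrary-rank Fundamental Theorem of Tropical Geometry, is then combined with the Zariski-density statement \Cref{pr:zardense} to show, by compactness, that one can realise a point of valuation zero which is generic in every $H_{r,s}$ and has generic residue in every $\ini_0(H_{r,s})$; precisely this genericity makes the type $\Theta$ complete, so that the identity $\sigma(\ini_0(H_{r,s}))=\ini_0(H_{r+1,s+1})$ (\Cref{prop:sigma-commutes-ini0}) lets $\sigma$ be extended by shifting indices. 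Note also that, contrary to your prediction, \cite{DM} plays no role in this proof; the essential input is tropical geometry and the structure theory of algebraic subgroups of tori, not the model theory of difference ordered abelian groups.
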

 While adding a cross-section to the language enlarges the family of definable sets, we prove that this does not drastically change the notion of universal domain. Namely, every valued difference field with a cross-section that is existentially closed is also existentially closed as a valued difference field with no extra structure (\Cref{co:secacec}); we also obtain similar results for other expansions of valued difference fields.

We prove the above results in \Cref{sec:purity} by combining logical and tropical tools, such as the Compactness Theorem, saturation, and initial varieties. We also use a version of the Fundamental Theorem of Tropical Geometry for valuations with arbitrary rank (\Cref{thm:fundamental-arbitrary-rank}), which we prove in \Cref{sec:prelim} by a compactness argument using upper bounds from \cite{joswigDegreeTropicalBasis2018} on the minimum degree of a tropical basis. The existence of an equivariant cross-section allows to define equivariant initial forms (\Cref{trop:eqvini}), thereby providing further evidence that this framework is suitable to develop tropical difference algebra. We will also see that, by an argument due to Philip Dittmann, existentially closed valued difference fields are not existentially closed as difference fields (\Cref{co:acvfanotacfa}).

In \Cref{sec:ap} we work in the case of residual characteristic zero, and study \emph{amalgamation problems}: given a base object $A$ and morphisms $f\from A\to B$ and $g\from A\to C$, we seek an object $D$ and morphisms making the diagram in \Cref{fig:amaldiag} commute.
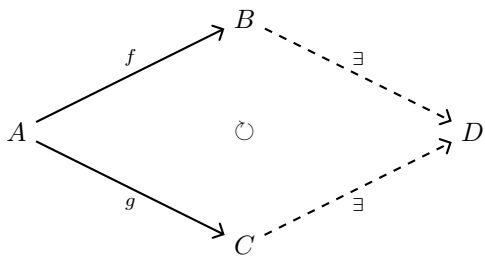
\begin{figure}
  \begin{center}
    \begin{tikzpicture}[scale=3]
  \node(a) at (0,0){$A$};
  \node(b) at (1,0.5){$B$};
  \node(c) at (1,-0.5){$C$};
  \node(d) at (2,0){$D$};
  \node(comm) at (1,0){$\circlearrowright$};
\path[->, thick,  font=\scriptsize,>= angle 90]
(a) edge node [above]  {$f$} (b)
(a) edge node [below]  {$g$} (c)
(b) edge [dashed] node [above]  {$\exists$} (d)
(c) edge [dashed] node [below]  {$\exists$} (d)
;
\end{tikzpicture}
\end{center}
\caption{An amalgamation diagram.}\label{fig:amaldiag}
\end{figure}

It is well known that, in the category of valued fields, as well as in that of difference fields, where in both cases we allow as arrows all possible embeddings, not every amalgamation problem can be solved. For difference fields, one may solve amalgamation problems where the arrows are required to preserve certain predicates expressing the possibility to add solutions to difference equations of a specific form~\cite{Ma}.  In our setting, even adding the aforementioned predicates to the residue field does not allow to solve all amalgamation problems (by \Cref{eg:apfailure}).

 As for valued fields, one remedy is to use  \emph{$\mathrm{RV}$-structures}, or equivalently~\cite{linziHyperfieldsAssociatedValued2025}  \emph{stringent valued hyperfields}; to the best of our knowledge, a theory of generic automorphisms of such structures is still lacking. For this reason, we adopt another common approach, and work in the category of valued difference fields endowed with a $\sigma$-equivariant angular component---briefly, $\mathrm{ac}$-valued difference fields---of residual characteristic zero, with embeddings required to commute with angular components. We characterise which amalgamation problems have solutions by proving an embedding result (\Cref{pr:embeddingvdf}) that also allows us to characterise the existentially closed objects of this category in terms of genericity of the automorphisms induced on $k$ and $\Gamma$ and a generalisation of henselianity commonly used in the context of valued difference fields.
\begin{alphthm}[\Cref{AP_VDF}]\label{intro_thm_amalg}
An amalgamation problem of $\mathrm{ac}$-valued difference fields of residual characteristic zero has a solution if and only if the induced amalgamation problem of difference residue fields does.
\end{alphthm}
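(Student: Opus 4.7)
The forward direction is immediate: if $D$ solves the amalgamation problem, then applying the residue functor ---which carries morphisms of $\mathrm{ac}$-valued difference fields to difference field morphisms on residues, and preserves commuting diagrams--- yields an amalgam of $\bar B$ and $\bar C$ over $\bar A$ in the category of difference fields.

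For the converse, assume that the residue problem is solved by some $\bar D$. The strategy is to realise both $B$ and $C$ inside a common, sufficiently rich, $\mathrm{ac}$-valued difference field $M$ of residual characteristic $0$, by means of the embedding property \Cref{pr:embeddingvdf}. First amalgamate the value groups $\Gamma_B$ and $\Gamma_C$ over $\Gamma_A$ as difference ordered abelian groups ---always possible, e.g.\ by passing to divisible hulls and amalgamating as ordered $\mathbb Q[\sigma,\sigma\inverse]$-modules--- to obtain $\Gamma_D$. Construct $M$ as a Hahn-series-style field whose residue extends $\bar D$, whose value group extends $\Gamma_D$, and whose automorphism acts diagonally on coefficients and by $t^\gamma\mapsto t^{\sigma\gamma}$ on monomials (the order-preservation of $\sigma$ on $\Gamma_D$, forced by $\sigma(\mathcal O)=\mathcal O$, makes this well-defined on well-ordered supports), with $\mathrm{ac}$ reading off leading coefficients; pass to an elementary extension if needed to secure enough saturation for \Cref{pr:embeddingvdf} to apply.

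Now fix an embedding $A\into M$ compatible with the prescribed residue and value-group data, and invoke \Cref{pr:embeddingvdf} twice, relative to this fixed embedding, to extend it to embeddings $B\into M$ and $C\into M$ inducing the chosen maps into $\bar D\subseteq k_M$ and $\Gamma_D\subseteq \Gamma_M$. The substructure of $M$ generated by the two images is the required amalgam $D$. The main obstacle is the alignment step: two independent applications of \Cref{pr:embeddingvdf} would agree on $A$ only up to an automorphism of the image, so one has to fix the embedding of $A$ first and extend from it, rather than produce two embeddings and then try to paste them. The saturation of $M$, together with the coherent resolutions of the residue and value-group amalgamation problems inside $k_M$ and $\Gamma_M$, provides precisely the room needed for such relative extensions to exist.
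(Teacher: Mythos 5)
Your outline is sound and stays close to the paper's, but there are a few points worth flagging.

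\emph{Structural comparison.} The paper builds the ambient structure $N$ so that it already contains one leg of the diagram (it starts from a valued difference field $N_0$ whose residue field is a solution of the residue amalgamation problem and which is taken to extend $L$, then passes to an existentially closed, quantifier-free $\abs M^+$-saturated extension), and applies \Cref{pr:embeddingvdf} exactly once, to embed $M$ over $K$. You instead build a ``fresh'' ambient $M$ and apply \Cref{pr:embeddingvdf} twice, once for each leg, over a fixed embedding of $A$. This is a legitimate variant, but it introduces an alignment step that you acknowledge but do not actually carry out: you must first produce an embedding $A\hookrightarrow M$ whose induced maps on $k$ and $\Gamma$ agree with the inclusions $k_A\hookrightarrow \bar D\hookrightarrow k_M$ and $\Gamma_A\hookrightarrow\Gamma_D\hookrightarrow\Gamma_M$. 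This is not automatic: \Cref{pr:embeddingvdf} produces \emph{some} embedding over a common substructure, with no control over the induced residue and value-group maps. One way to fix it is to first amalgamate $A$ with the Hahn field over the common (appropriately enriched) substructure generated by a lift of $k_A$ and a cross-section of $\Gamma_A$; in $\mathcal{L}_{s,\iota,\sigma}$ this is genuinely a common substructure and the induced maps are then the identities, but in the plain $\mathrm{ac}$-language some extra care is needed. The paper avoids all of this by baking $L$ (hence $A$) into $N$ from the outset. Note also that ``inducing the chosen maps into $\bar D\subseteq k_M$ and $\Gamma_D\subseteq\Gamma_M$'' for $B$ and $C$ is, again, not something \Cref{pr:embeddingvdf} promises --- and it is not needed: any two embeddings of $B$ and $C$ into $M$ over $A$ give the amalgam.

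\emph{Two concrete gaps.} First, for \Cref{pr:embeddingvdf} to apply, the ambient structure must be quantifier-free $\abs{B}^+$-saturated and $\sigma$-henselian, with residue field a \emph{saturated model of} $\mathsf{ACFA}$. Taking the Hahn field with ``residue extending $\bar D$'' and then an elementary extension does not achieve this: elementary extensions preserve the theory of the residue sort, so a non-$\mathsf{ACFA}$ residue will not become a model of $\mathsf{ACFA}$ in this way. You should instead pick the residue to be a suitably saturated model of $\mathsf{ACFA}$ containing $\bar D$ before forming the Hahn field (and similarly choose $\Gamma_M$), or pass to a sufficiently saturated \emph{existentially closed} extension, which is what the paper does. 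Second, your parenthetical justification for amalgamating the value groups --- ``passing to divisible hulls and amalgamating as ordered $\mathbb Q[\sigma,\sigma^{-1}]$-modules'' --- is not a proof: there is no off-the-shelf amalgamation lemma for ordered modules over $\mathbb{Q}[\sigma,\sigma^{-1}]$, and the compatibility of an ordering on the pushout with the automorphism is exactly the nontrivial point. The fact you need is correct, but it is a genuine theorem, namely~\cite[Theorem~5.10]{DM}, which the paper cites at this step.
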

\begin{alphthm}[\Cref{ec_characterisation}]\label{intro_thm_char_ec}
  An $\mathrm{ac}$-valued difference field of residual characteristic zero is existentially closed if and only if it is $\sigma$-henselian and the automorphisms induced on its value group and residue field are both generic.
\end{alphthm}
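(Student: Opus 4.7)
The plan is to prove the two implications separately, using the amalgamation theorem \Cref{AP_VDF} and the embedding property \Cref{pr:embeddingvdf} as the main technical inputs on the valued-field side, together with the known descriptions of existentially closed difference fields and difference oags on the residue and value-group side.

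For the forward direction, assume $(K,v,\sigma,\mathrm{ac})$ is existentially closed in residual characteristic zero. To obtain $\sigma$-henselianity, I would exhibit a $\sigma$-henselian $\mathrm{ac}$-valued difference field extension of $K$ ---for instance, a Hahn-series field with coefficients in $k$ and exponents in $\Gamma$, equipped with a natural $\sigma$-equivariant $\mathrm{ac}$, which is $\sigma$-henselian as it is maximally complete in residual characteristic zero--- and transfer the existential content of the $\sigma$-Hensel scheme back to $K$ by existential closedness. For genericity of the induced residue automorphism, given any difference-field extension $k\hookrightarrow k'$, I would construct an $\mathrm{ac}$-valued difference field extension of $K$ with residue field containing $k'$ (e.g.\ via a Hahn-series construction over $k'$ with appropriately extended $\sigma$ and $\mathrm{ac}$); existential closedness then transfers existential difference-field formulas satisfied in $k'$ back to $k$, showing $k$ is existentially closed as a difference field. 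The argument for the value group is analogous, starting from an extension $\Gamma\hookrightarrow\Gamma'$ in difference oags and building $k((\Gamma'))$ with an $\mathrm{ac}$ and $\sigma$ induced from those on $K$ and $\Gamma'$.

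For the backward direction, assume $K$ is $\sigma$-henselian and that both induced automorphisms are generic. To show existential closedness it suffices to prove that for every extension $(K,v,\sigma,\mathrm{ac})\hookrightarrow(L,v_L,\sigma_L,\mathrm{ac}_L)$ and every finitely generated substructure $L_0\subseteq L$ over $K$, there is an embedding $L_0\hookrightarrow K^*$ over $K$ into a sufficiently saturated elementary extension $(K^*,v^*,\sigma^*,\mathrm{ac}^*)$ of $K$. Since $k$ and $\Gamma$ are existentially closed as a difference field and a difference oag respectively, their sufficiently saturated elementary extensions $k^*$ and $\Gamma^*$ remain so, so the induced residue extension $k\hookrightarrow\mathrm{res}(L_0)$ and value-group extension $\Gamma\hookrightarrow v(L_0)$ embed over $k$ and $\Gamma$ into $k^*$ and $\Gamma^*$. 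The embedding property \Cref{pr:embeddingvdf}, applied to $K^*$ and made possible by the $\sigma$-henselianity of $K^*$, then lifts these residue and value-group embeddings to the required embedding $L_0\hookrightarrow K^*$ over $K$.

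The main obstacle I anticipate is the construction, in the forward direction, of $\mathrm{ac}$-valued difference field extensions realising prescribed residue-field or value-group extensions while retaining a $\sigma$-equivariant angular component. The Hahn-series construction over $k'$ (or with exponents in $\Gamma'$) is the natural candidate, but defining $\sigma$ on the resulting Hahn field so that it restricts correctly to the given $K$, commutes with the angular component, and acts on monomials $t^\gamma$ compatibly with the prescribed $\sigma$ on $\Gamma'$ (resp.\ $k'$) requires genuine care. This is where residual characteristic zero and the availability of a $\sigma$-equivariant cross-section and angular component from \Cref{intro_thm_purity} become essential.
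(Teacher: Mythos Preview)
Your backward direction is essentially the paper's argument: pass to a sufficiently saturated elementary extension $K^*$, embed the residue and value-group extensions into $k(K^*)$ and $\Gamma(K^*)$, and invoke \Cref{pr:embeddingvdf}. One imprecision: you write that $\Gamma^*$ ``remains'' existentially closed, but existentially closed difference oags are not an elementary class, so this is not automatic. Fortunately you do not need it: since $\Gamma(K)$ is existentially closed, the quantifier-free type of any tuple from $\Gamma(L_0)$ over $\Gamma(K)$ is finitely satisfiable in $\Gamma(K)$, hence realised in $\Gamma^*$ by saturation; this is exactly how the paper phrases it. For $k^*$ the issue does not arise, since $\mathsf{ACFA}$ is first-order.

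Your forward direction, however, diverges from the paper and runs into the obstacle you yourself flag. The problem with the Hahn-series route is that $K$ does not come with an embedding into $k((\Gamma))$ (let alone $k'((\Gamma))$ or $k((\Gamma'))$) as an $\mathrm{ac}$-valued difference field; manufacturing one amounts to extending $\sigma$ to a maximal immediate extension and identifying it with a Hahn field, which is delicate and, for the $\sigma$-henselianity step, already presupposes that $k(K)$ is linearly difference closed. The paper avoids all of this. For genericity of $(k,\sigma)$ and $(\Gamma,\sigma)$ it uses the argument of \Cref{pr:ecbasics} adapted to $\mathsf{ACVF}_{\mathrm{ac}}$ (see \Cref{co:acec_basics}): embed $(K,v,\mathrm{ac})$ in a strongly homogeneous monster $\mathfrak U\models\mathsf{ACVF}_{\mathrm{ac}}$, embed the prescribed extension $k'$ (resp.\ $\Gamma'$) into $k(\mathfrak U)$ (resp.\ $\Gamma(\mathfrak U)$), observe that $\sigma_K\cup\sigma'$ is partial elementary by quantifier elimination, and extend it to an automorphism of $\mathfrak U$. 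No explicit construction is needed. For $\sigma$-henselianity the paper then argues (\Cref{henselianpr}) by extending $K$ to a $\sigma$-algebraically maximal immediate extension, which is $\sigma$-henselian by \cite[Corollary~5.6(2)]{DO} once one knows $k(K)$ is linearly difference closed, and the latter now follows from the genericity just established.

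In short: your backward direction is right; for the forward direction, replace the Hahn-series constructions by the quantifier-elimination/monster-model trick, and prove genericity \emph{before} $\sigma$-henselianity.
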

Both theorems also hold if we work with cross-sections instead of angular components, and in both cases we may also add a lift of the residue field to the language.  Combining \Cref{intro_thm_char_ec} with results from~\cite{DO} we show that, if $\operatorname{char}(k)=0$ and both  $(k,\sigma)$ and $(\Gamma,\sigma)$ are existentially closed, then so is their Hahn product $k((\Gamma))$ (\Cref{eg:ecHahn}).

As observed in~\cite{pillay_forking_2000,BY,BYP}, even if a class of existentially closed structures is not \emph{elementary}, which happens to be the case in our setting by \cite{kikyoStrictOrderProperty2002}, one can still analyse it through the lenses of model theory by using \emph{positive logic}, an approach already followed in e.g.~\cite{HK,kamsmaBilinearSpacesFixed2023,DKN,DM} that constitutes one of the main motivations for the general study of this framework, carried out for instance in \cite{Ha,DK,DGK,Ka}.  Positive logic has also been used to introduce \emph{definability patterns}~\cite{hrushovskiDefinabilityPatternsTheir2020,segelPositiveDefinabilityPatterns2025} and obtain applications to approximate subgroups~\cite{hrushovskiLascarGroup2022}.

 The relevant neostability-theoretic \emph{dividing line} for this paper is the \emph{tree property of the second kind}: the existence of an unbounded definable family of independent partitions. The absence of this property is a tameness condition, known as $\mathsf{NTP}_2$ and studied intensively in recent years, see e.g.~\cite{Chernikov,CH,montenegroSTABILIZERSGROUPSGENERICS2020,Ka}. In our final \Cref{sec:ntp2} we take the opportunity to advance the study of $\mathsf{NTP}_2$ in the positive setting by generalising to arbitrary positive theories the \emph{Array Modelling Theorem} (\Cref{ar_mod}), a Ramsey-theoretic statement established in~\cite{Ka} for \emph{thick} positive theories.  Array modelling implies a number of desirable general properties such as \emph{submultiplicativity of burden} (\Cref{co:submulbur}) that, combined with \Cref{intro_thm_amalg}, allow us to adapt the arguments of~\cite{CH} to prove that our class enjoys the above tameness property, which for example fails in existentially closed exponential fields~\cite{HK}.
\begin{alphthm}[\Cref{ntp2}]
Existentially closed valued difference fields of residual characteristic zero are $\mathsf{NTP}_2$ in the sense of positive logic.
\end{alphthm}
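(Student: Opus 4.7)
The plan is to adapt the strategy of~\cite{CH}, which establishes $\mathsf{NTP}_2$ for algebraically closed valued fields of residual characteristic zero by transferring the property from the residue field and the value group via an Ax--Kochen--Ershov-type principle. The two key inputs in our situation are \Cref{intro_thm_char_ec}, which tells us that an existentially closed $(K,v,\sigma,\ac)$ of residual characteristic zero is $\sigma$-henselian with generic induced automorphisms on the residue field $k$ and on the value group $\Gamma$, and \Cref{intro_thm_amalg}, which asserts that amalgamation in our category is controlled by amalgamation of the residual difference fields. Together these reduce the behaviour of $(K,v,\sigma,\ac)$ to that of its two ``coordinate'' structures $(k,\sigma)$ and $(\Gamma,\sigma)$, and the goal is to turn this reduction into a bound on the burden of the positive theory, using the submultiplicativity statement \Cref{co:submulbur}.

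Concretely, I would argue that given any candidate inp-pattern in $(K,v,\sigma,\ac)$, one can project its parameters and witnesses through $v$ and through the residue map, and show that each row is controlled by one of the two sides: rows witnessing a nontrivial inconsistent partition residually remain inp-patterns in $(k,\sigma)$, while those witnessing it on the value-group side remain inp-patterns in $(\Gamma,\sigma)$. The amalgamation of \Cref{intro_thm_amalg}, combined with the embedding property \Cref{pr:embeddingvdf}, is invoked in reverse to lift independent configurations on either side back to consistent rows in the valued field, so that a wide pattern in $K$ would force a wide pattern in one of the two projections. Since $(k,\sigma)\models \mathrm{ACFA}_0$ is simple and hence $\mathsf{NTP}_2$, and since existentially closed difference ordered abelian groups are $\mathsf{NTP}_2$ by~\cite{DM}, both summands on the right of the expected bound
\[
\bdn(K,v,\sigma,\ac)\le \bdn(k,\sigma)+\bdn(\Gamma,\sigma)
\]
are finite, giving the theorem; the cross-section variant then follows either from \Cref{co:secacec} or by running the same argument with a cross-section in place of $\ac$.

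The main obstacle is the positive-logic setting itself. Classically, arguments of this shape rely freely on sufficiently saturated and strongly homogeneous monster models, on heir/coheir extensions of types, and on Ramsey-type modelling theorems for mutually indiscernible arrays; in positive logic these tools are not available in the same form, and the argument of~\cite{CH} cannot be copied verbatim. The technical heart of the adaptation is \Cref{ar_mod}, which removes the thickness hypothesis present in~\cite{Ka} and thereby licenses the extraction step that turns an abstract inp-pattern into a mutually indiscernible array from which \Cref{co:submulbur} can be applied. Once this engine is in place, the remaining bookkeeping ---tracking partitioned positive formulas through the amalgamation, and separating residual from value-group contributions in each column--- is comparatively routine, in the spirit of the classical reduction, and yields the stated NTP$_2$ bound.
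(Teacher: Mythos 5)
Your sketch correctly identifies the broad ingredients (Array Modelling without thickness, submultiplicativity of burden, $\mathsf{NTP}_2$ of $(\Gamma,\sigma)$ via \cite{DM} and of $(k,\sigma)$ via simplicity of $\mathsf{ACFA}$), and the choice to work with an angular component or cross-section to get amalgamation is right. But the mechanism you propose --- ``project the inp-pattern through $v$ and $\res$ and lift back via amalgamation'', yielding a bound $\bdn(K)\le\bdn(k)+\bdn(\Gamma)$ --- is not how the proof of \Cref{ntp2} goes, and there is a real gap here. A $\mathsf{TP}_2$-witnessing array lives in the $\mathrm{VF}$ sort, and there is no sensible way to ``project'' the formulas $\phi(x,a_{ij})$ through $v$ or $\res$: the whole difficulty is precisely what happens in the $\mathrm{VF}$ sort. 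The additive bound you write is neither proved in the paper nor needed; what \Cref{co:submulbur} buys you is the reduction to a single variable $x$, and after that the argument is a case analysis by sort, not a burden computation.

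The steps you are missing are the technical core. After reducing to one variable, the proof uses weak quantifier elimination (\Cref{qe}) and strict weak stable embeddedness of $\Gamma$ and $k$ (\Cref{se}), together with \Cref{seD} and \Cref{lem: burden by rotation}, to dispose of the $\Gamma$ and $k$ sorts and reduce to a quantifier-free formula in a $\mathrm{VF}$ variable. It then runs the iterative construction of \cite[Theorem~4.1, steps (I)--(V)]{CH}, with the array-extension lemmas replaced by the positive-logic versions \Cref{careful_ext} and \Cref{trivial}, to build an increasing chain whose union $K$ is an $\mathcal L_{s,\iota,\sigma}$-substructure over which $K\langle a\rangle$ is an immediate extension. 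The final step is not a transfer to $k$ or $\Gamma$ at all: by \Cref{immediate} one finds a quantifier-free formula $\psi$ without $\mathrm{Mor}_k$-symbols witnessing $\mathsf{TP}_2$, hence $\mathsf{IP}$, and this contradicts $\mathsf{NIP}$ of algebraically closed $\mathcal L_{s,\iota}$-valued fields of residual characteristic zero (forgetting $\sigma$). None of this is captured by ``project and lift'', and an attempt to make your sketch precise would stall exactly at the point where the $\mathrm{VF}$-sorted pattern must be confronted.
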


\paragraph{Acknowledgements}
 \Cref{pr:philip} and \Cref{eg:angus} are due respectively to Philip Dittmann and Angus Matthews; we are grateful to them for allowing us to include their results here. We thank Simone Ramello and Silvain Rideau-Kikuchi for extensive and very inspiring discussions on several problems addressed in this paper.  Thanks to Philip Dittmann, Martin Hils, Ehud Hrushovski, Mark Kamsma and Martin Ziegler for helpful comments.

 \paragraph{Funding information}JD was supported by EPSRC grant EP/V03619X/1 and by Xiamen University Malaysia Research Fund (Grant No: XMUMRF/2025-C15/IMAT/0037).
 FG was partially supported by GeoMod ANR-19-CE40-0022-01 (ANR-DFG).
 FG and RM were supported by the project PRIN 2022: ``Models, sets and classifications'' Prot.~2022TECZJA, Italian Ministry of University and Research (MUR). RM was supported by the project PRIN 2022 ``Logical methods in combinatorics'', 2022BXH4R5 and is a member of the INdAM research group GNSAGA.  We acknowledge the MUR Excellence Department Project awarded to the Department of Mathematics, University of Pisa, CUP I57G22000700001.

\section{Preliminaries}\label{sec:prelim}
\subsection{Model theory}
Lowercase letters such as $a,b,x,y,\ldots$ will usually denote finite (except in \Cref{sec:ntp2}) tuples of elements or of variables, e.g.\ $a=(\bla a0,{n-1})$, and their length (in the previous case, $n$) will be denoted by $\abs a$. We abuse the notation and write e.g.\ $a\in M$ instead of $a\in M^{\abs a}$. Similarly, for a function $f$ defined on $M$, we write $f(a)$ for $(f(a_0),\ldots,f(a_{n-1}))$, etc.
We refer the reader to~\cite[Section~1.2]{DM} for a quick tour of some model-theoretic terminology and facts, to \cite{Ho} for a more extensive treatment of the fundamentals, and to \cite{Dri} for the basics of valued fields and of their model theory. We nevertheless recall the following standard definition.
\begin{defin}\label{defin:ec}
  Let $\mathcal L$ be a first-order language, $T$ an $\mathcal L$-theory axiomatised by $\forall \exists$-sentences, and $\mathcal K$ the category of models of $T$  with embeddings. An object $M\in \mathcal K$ is \emph{existentially closed} iff, for every embedding $f\from M \to N$, every tuple $a$ from $M$, and every quantifier-free formula $\phi(x,y)$ without parameters,
if $N\models \exists x\; \phi(x,f(a))$ then $M\models\exists x\; \phi(x,a)$.
\end{defin}
\begin{rem}\label{rem:AEbasics}The following is well known.
  \begin{enumerate}
  \item  Every such $\mathcal K$ is closed under inductive limits~\cite[Theorem~6.5.9 and Exercise 5 in Section 6.5]{Ho}, and  this in turn ensures that every object embeds in an existentially closed one~\cite[Theorem~8.2.3]{Ho}. 
  \item By enlarging $\mathcal L$, hence restricting the class of embeddings between models, every first-order theory can be $\forall \exists$-axiomatised~\cite[Theorems~2.6.5 and~2.6.6]{Ho}. However, this process (known as \emph{Morleyisation} or \emph{atomisation}) may change the notion of existentially closed model.
  \end{enumerate}
\end{rem}

In our context, the following assumption is natural; see \Cref{rem:stuffforfree}\ref{point:inversiveforfree}.
\begin{ass}
 All difference fields and oags are assumed inversive.
\end{ass}

\begin{defin}\label{defin:languages}
  We will mostly deal with the following languages.
  \begin{enumerate}
  \item The \emph{three-sorted language of valued fields} $\mathcal L_{\mathrm{vf}}$ has
    \begin{enumerate}
    \item a sort $\mathrm{VF}$ for the valued field, carrying a copy of the language $\mathcal L_\mathrm{field}\coloneqq\set{+_{\mathrm{VF}},0_{\mathrm{VF}},-_{\mathrm{VF}},\cdot_{\mathrm{VF}},1_{\mathrm{VF}}, (-)\inverse_{\mathrm{VF}}}$ of fields, interpreted in the natural way with the convention that $0\inverse=0$,
    \item a sort $\Gamma$ for the value group, carrying a copy of the language $\mathcal L_\mathrm{oag}\coloneqq \set{+_{\Gamma},0_{\Gamma},-_{\Gamma},<_{\Gamma}}$ of oags and an additional constant symbol $\infty$, to be interpreted as an infinite absorbing element, with $-\infty=\infty$, equal to the valuation of $0_\mathrm{VF}$,
    \item a sort $k$ for the residue field, carrying a copy of the language of fields $\set{+_{k},0_{k},-_{k},\cdot_{k},1_{k}, (-)\inverse_{k}}$, interpreted in the natural way with the convention that $0\inverse=0$,
    \item a function symbol $v\from \mathrm{VF}\to \Gamma$,
    \item a function symbol $\operatorname{res}\from \mathrm{VF}\to k$.
    \end{enumerate}
    We view a valued field as an $\mathcal L_{\mathrm{vf}}$-structure by interpreting $v$ as the (surjective) valuation, $\operatorname{res}$ as the map sending $x$ to the residue of $x$ if $x$ lies in the valuation ring, that we denote by $\mathcal O$, and to $0$ otherwise, and the rest of the symbols in the obvious way suggested by the symbols themselves. 
  \item The \emph{language of $\mathrm{ac}$-valued fields} $\mathcal L_{\mathrm{ac}}$ is the expansion of $\mathcal L_{\mathrm{vf}}$ by a function symbol $\ac\from \mathrm{VF}\to k$, to be interpreted as an angular component map.
  \item The \emph{language of $s$-valued fields} $\mathcal{L}_{s}$ is the expansion of $\mathcal{L}_{\mathrm{vf}}$ by a function symbol $s\from \Gamma \to \mathrm{VF}$, to be interpreted as a cross-section of the valuation.
  \item\label{point:lsig} We write $\mathcal L_{\sigma}$ (resp.\ $\mathcal L_{\mathrm{ac},\sigma},\mathcal{L}_{s,\sigma}$) for the expansion of $\mathcal L_{\mathrm{vf}}$ (resp.\ $\mathcal L_{\mathrm{ac}},\mathcal{L}_{s}$) by symbols $\sigma_\mathrm{VF},\sigma_\mathrm{VF}\inverse, \sigma_\Gamma, \sigma_\Gamma\inverse, \sigma_k, \sigma_k\inverse$, interpreted as automorphisms of the sorts indicated in the subscripts and their inverses, and commuting with the other maps in the language (see below). 
  \item We also expand all of the above languages by a function symbol $\iota\from k\to \mathrm{VF}$, to be interpreted---in the equicharacteristic case---as a lift of the residue field commuting with $\sigma$. The resulting languages are denoted e.g.\ by $\mathcal L_{\ac,\iota,\sigma}$.
  \item If $\mathcal L$ is one of the languages introduced above, we will write \emph{$\mathcal{L}$-valued difference field} to mean a valued difference field expanded by the corresponding structure.
  \end{enumerate}
\end{defin}
As customary, we treat $\Gamma$ as an oag even if, strictly speaking, it also contains  $\infty$,  conflate $K$ with $\VF(K)$ whenever convenient, and drop subscripts if no confusion may arise. If $K$ is clear from context, we may also write $k$ in place of $k(K)$, etc. We also say e.g.\ ``$\sigma$ commutes with $v$'' instead of ``$\sigma_\Gamma\circ v=v\circ \sigma_\mathrm{VF}$'', as already done in \Cref{defin:languages}\ref{point:lsig}.
When dealing with extensions we may or may not use subscripts to differentiate the interpretation of a particular symbol, typically $\sigma$. For instance, we sometimes write $(K,\sigma)\subseteq (L,\sigma)$, and sometimes write $(K, \sigma_K)\subseteq (L, \sigma_L)$ for emphasis. For brevity, we write e.g.\ $(K,v,\sigma)$ to refer to a valued difference field $K$ viewed as an $\mathcal L_\sigma$-structure, even if $\mathcal L_\sigma$ contains other symbols, e.g.\ $\sigma\inverse$.

Recall that $\mathsf{DOAG}$ denotes the $\mathcal L_\mathrm{oag}$-theory of nontrivial divisible ordered abelian groups. We denote by $\mathsf{ACF}$ the $\mathcal L_\mathrm{field}$-theory of algebraically closed fields, by $\mathsf{ACVF}$  the $\mathcal{L}_{\mathrm{vf}}$-theory of nontrivially valued algebraically closed fields, by $\mathsf{ACVF}_\mathrm{ac}$  the $\mathcal{L}_{\mathrm{ac}}$-theory of nontrivially valued algebraically closed fields equipped with an angular component map, and similarly for the other languages mentioned above. For $p,q$ prime or zero, we write $\mathsf{ACVF}_
{p,q}$ for the completion of $\mathsf{ACVF}$ obtained by specifying that $\operatorname{char}(\mathrm{VF})=p$ and $\operatorname{char}(k)=q$. As customary, in the aforementioned theories of valued fields, the valuation map $v\from \mathrm{VF}\to \Gamma$ is assumed surjective.
 All of these theories are easily checked to be $\forall\exists$-axiomatisable in their language, hence by \Cref{rem:AEbasics} each of their models embeds in an existentially closed one.\footnote{In fact, all of  $\mathsf{DOAG}$, $\mathsf{ACF}$, $\mathsf{ACVF}$, $\mathsf{ACVF}_\mathrm{ac}$ are \emph{model complete}, that is, all of their models are existentially closed.} After adding $\sigma, \sigma^{-1}$ to the languages above, together with axioms saying that these are respectively an automorphism and its inverse, $\forall \exists$-axiomatisability is clearly preserved. 

\begin{rem}\label{rem:vflanguages}
Our choice of language is slightly different from the usual one, in that we have symbols for multiplicative inverses on the sorts $\mathrm{VF}$ and $k$ and a unary $\res$ map, while most of the literature uses the language of rings and a binary map $\mathrm{Res}(x,y)$ defined as $\res(x/y)$ if $y\ne 0$ and $x/y\in \mathcal O$, and $0$ otherwise. Once a symbol for the multiplicative inverse is available, $\mathrm{Res}(x,y)$ is piecewise definable by a term, and each piece is quantifier-free definable, hence many results from the literature, e.g.\ on quantifier elimination, are easily shown to transfer.
\end{rem}
\begin{eg}\label{eg:Hahnsigma}
  Given $(k,\sigma_k)$ and $(\Gamma,\sigma_\Gamma)$, the Hahn field $k((\Gamma))$ can be equipped with the structure of a valued difference field, with the natural valuation and $\sigma\from \VF(L)\to \VF(L)$ given by $(\sigma (f))(\gamma)\coloneqq \sigma_{k}(f(\sigma^{-1}_{\Gamma}(\gamma)))$ or, in power series notation,
  \[
    \sigma\left(\sum_{\gamma\in \Gamma} a_\gamma t^\gamma\right)=\sum_{\gamma\in \Gamma} \sigma_k(a_\gamma) t^{\sigma_\Gamma(\gamma)}.
  \]
As a special case, consider the field of complex Puiseux series with the automorphism sending $\sum_{n\in \mathbb Q} a_nt^n$ to $\sum_{n\in \mathbb Q} \overline{a_n}t^{2n}$, where $\bar z$ denotes the complex conjugate of $z$.
  
  Observe that such structures have a natural cross-section, namely $\gamma\mapsto t^\gamma$, which is easily checked to commute with $\sigma$. In particular, the associated angular component, which maps $f=\sum_{\gamma\in \Gamma} a_\gamma t^\gamma$ to $a_{v(f)}$, is $\sigma$-equivariant.
\end{eg}

\subsection{Tropical geometry}
In what follows, we will use results from tropical geometry, for which we refer the reader to~\cite{MS}. In the cited source, all value groups are Archimedean, which is typically not the case in this paper. Nevertheless, all the results we will use from~\cite{MS} still hold in our setting; since this sometimes requires justification, and because it may be of independent interest, we gather in this subsection the results that we will need. We especially want to highlight that many of these are, or can be reduced to, first-order statements; therefore, by the model-theoretic properties of $\mathsf{ACVF}$, they do not depend on the rank of the valuation.

For the rest of this section we use $K[x^{\pm 1}], \mathcal{O}[x^{\pm 1}], k[x^{\pm 1}]$ as shorthands for $K[x_1^{\pm 1}, \ldots, x_n^{\pm 1}], \mathcal{O}[x_1^{\pm 1}, \ldots, x_n^{\pm 1}], k[x_1^{\pm 1}, \ldots, x_n^{\pm 1}]$ respectively, for some fixed $n$. For $\mathbf{u}=(u_1,\ldots,u_n) \in \mathbb{Z}^n$ and $\gamma=(\gamma_1,\ldots,\gamma_n) \in \Gamma^n$, we will use $\langle \mathbf{u}, \gamma \rangle$ to denote the scalar product $u_1 \gamma_1 +\ldots + u_n \gamma_n$.

\begin{defin}
	Let $(K,v)$ be a valued field.
	\begin{enumerate}
        \item  A \emph{tropical polynomial} is an expression of the form $\min\set{\gamma_i+\langle\mathbf{u}_i,x\rangle\mid i\le \ell}$, where $x$ is a variable ranging over $\Gamma^n$,  $\mathbf{u}_i\in \mathbb Z^n$, and $\gamma_i\in \Gamma$.
        \item We call $\gamma\in \Gamma^n$ a \emph{tropical root} of a tropical polynomial $g$  iff the minimum in $g$ is achieved at least twice at $\gamma$.
        \end{enumerate}
 Let $f \in K[x^{\pm 1}]$ be a Laurent polynomial; write $f$ as $\sum_{\mathbf{u} \in S} c_{\mathbf{u}}x^{\mathbf{u}}$, for some finite subset $S \subseteq \mathbb{Z}^n$, with $c_{\mathbf{u}} \neq 0$ for all $\mathbf{u} \in S$, and let $\gamma \in \Gamma^n$.
        \begin{enumerate}[resume]
        \item The \emph{tropicalisation} $\trop(f)\from \Gamma^n\to \Gamma$ is defined as \[(\trop(f))(\gamma)\coloneqq\min\set{v(c_\mathbf{u})+\langle \mathbf{u},\gamma\rangle\mid \mathbf{u} \in S}.\]
        \end{enumerate}
         Fix now  a cross-section of the valuation $s\from \Gamma \rightarrow K$.
          \begin{enumerate}[resume]
      \item 	The \emph{initial form of $f$ with respect to $\gamma$} is the Laurent polynomial over the residue field $k$ defined as $\ini_\gamma(f)\coloneqq \sum_{\mathbf{u} \in S'} \res(c_{\mathbf{u}}s(-v(c_{\mathbf{u}}))) x^{\mathbf{u}}$, where $S'\coloneqq \{\mathbf{u} \in S \mid  \forall \mathbf{u}' \in S, v(c_{\mathbf{u}})+\langle \mathbf{u}, \gamma \rangle \leq v(c_{\mathbf{u}'}) + \langle \mathbf{u}', \gamma \rangle \}$.	
		\item 	Given an ideal $I$ of $K[x^{\pm 1}]$, the \emph{initial ideal of $I$ with respect to $\gamma$} is the ideal of $k[x^{\pm 1}]$ that is generated by the set $\{\ini_\gamma(f) \mid f \in I \}$.
		\item 	If $V \subseteq \mathbb{G}_{\mathrm{m}}^n(K)$ is an algebraic subvariety and $I$ is the ideal of Laurent polynomials over $K$ vanishing on $V$, then we denote by $\ini_\gamma(V)$ the algebraic subvariety of $\mathbb{G}_{\mathrm{m}}^n(k)$ associated to $\ini_\gamma(I)$. We call this the \emph{initial variety of $V$ with respect to $\gamma$}.
	\end{enumerate}
\end{defin}
Note that initial forms depend on the choice of a cross-section, even if this is suppressed from the notation. Below, if we mention initial forms without specifying a cross-section, we assume implicitly that a one exists and has been fixed. Recall that cross-sections always exist if the value group is divisible, e.g.\ in the algebraically closed case, see for example \cite[Lemma 3.3.32]{aschenbrennerAsymptoticDifferentialAlgebra2017}. 
\begin{rem}\*\label{rem:svsac}
	\begin{enumerate}
		\item In order to define initial forms one does not really need the cross-section $s$, rather the corresponding angular component $\res(xs(-v(x)))$. We mostly use the cross-section as that is customary in the literature, but observe that many results in tropical geometry (say from \cite[Sections 2 and 3]{MS}) generalise to initial forms when they are defined using an arbitrary angular component.
		\item Initial varieties are sometimes called \emph{tropical reductions} or \emph{tropical degenerations} in the literature.
	\end{enumerate}
      \end{rem}

It is almost immediate to see that the codimension 1 case of the Fundamental Theorem of Tropical Geometry, also known as Kapranov's Theorem \cite[Theorem 3.1.3]{MS}, is a first-order statement. In higher codimension we will need a bound of Joswig and Schr\"oter on the minimum degree of a \emph{tropical basis}.

\begin{defin}
	Let $K$ be a valued field.
        \begin{enumerate}
        \item Let $I$ be a homogeneous ideal of $K[\bla x0,n]$. A \emph{universal Gr\"obner basis} $\mathcal U$ of $I$ is a finite subset of $I$ such that, for every $\gamma\in \Gamma(K)^{n+1}$, the set  $\ini_\gamma(\mathcal U)$ generates $\ini_\gamma(I)$.
        \item  Let $I$ be an ideal in $K[x^{\pm 1}]$. A \emph{tropical basis} of $I$ is a finite generating subset $\mathcal{T} \subseteq I$ such that for every $\gamma \in \Gamma^n$, if there is $f \in I$ such that $\gamma$ is not a tropical root of $\trop(f)$, then there is $g \in \mathcal{T}$ such that $\gamma$ is not a tropical root of $\trop(g)$.
        \end{enumerate}
      \end{defin}

      Given the ideal $I$, denote by $I_{\mathrm{proj}}$ its homogenisation: the homogeneous ideal in $K[x_0,\ldots,x_n]$ obtained by homogenising the elements of $K[x^{\pm 1}] \cap I$. 

\begin{pr}\label{fact:trop_basis_deg_bound}
Let $K$ be an algebraically closed valued field with value group $\mathbb Q$. For all finite $S\subseteq\mathbb{Z}^n$ there are $d\in \mathbb N$ and a finite  $H\subseteq\mathbb{N}^n$ such that the following holds. Let  $I\subseteq K[x^{\pm 1}]$ be an ideal and $\mathcal G$ a finite set generating of $I$ such that every polynomial in $\mathcal{G}$ can be written as $\sum_{\mathbf{u} \in S} c_{\mathbf{u}}x^{\mathbf{u}}$ (with possibly some $c_{\mathbf{u}}=0$).
\begin{enumerate}
\item There is a universal Gr\"obner basis $\mathcal U$ of $I_\mathrm{proj}$ such that every polynomial in $\mathcal{U}$ can be written as $\sum_{\mathbf{u} \in \set{0,\ldots,d}\times H} b_{\mathbf{u}}x^{\mathbf{u}}$.
\item There is a tropical basis $\mathcal T\supseteq \mathcal G$ of $I$ such that every polynomial in $\mathcal{T}$ can be written as $\sum_{\mathbf{u} \in H} b_{\mathbf{u}}x^{\mathbf{u}}$. 
 \end{enumerate}
\end{pr}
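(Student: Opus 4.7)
The plan is to invoke the degree bound on tropical bases due to Joswig and Schr\"oter \cite{joswigDegreeTropicalBasis2018}, after a monomial shift reducing the Laurent-polynomial setting to that of ordinary polynomials.

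First, I would pick $\mathbf{v} \in \mathbb{N}^n$ large enough that $S' := S + \mathbf{v} \subseteq \mathbb{N}^n$; for every $f \in \mathcal{G}$ the product $x^{\mathbf{v}} f$ is an ordinary polynomial with support contained in $S'$, and $\mathcal{G}' := \set{x^{\mathbf{v}} f \mid f \in \mathcal{G}}$ generates the same Laurent ideal $I$, since $x^{\mathbf{v}}$ is a unit in $K[x^{\pm 1}]$. Because $\trop(x^{\mathbf{v}} f)(\gamma) = \langle \mathbf{v}, \gamma \rangle + \trop(f)(\gamma)$, the minima defining tropical roots are achieved on the same index subsets, so the map $\mathcal{T} \mapsto x^{\mathbf{v}} \mathcal{T}$ carries tropical bases of $I$ to tropical bases of the ideal $J' \subseteq K[x_1, \ldots, x_n]$ generated by $\mathcal{G}'$, and vice versa. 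Every element of $\mathcal{G}'$ has total degree at most $d := \max\set{|\mathbf{u}|_1 \mid \mathbf{u} \in S'}$, a quantity determined by $S$ alone.

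Next, I would apply the main theorem of \cite{joswigDegreeTropicalBasis2018}, which provides an explicit function $D(n,d)$ such that any ideal of $K[x_1, \ldots, x_n]$ generated in total degree at most $d$ admits a tropical basis whose elements all have total degree at most $D(n,d)$. Applied to $J'$, this yields a tropical basis $\mathcal{T}_0'$ supported on $H_0' := \set{\mathbf{u} \in \mathbb{N}^n \mid |\mathbf{u}|_1 \leq D(n,d)}$. Multiplying back by $x^{-\mathbf{v}}$ gives a tropical basis $\mathcal{T}_0$ of $I$ supported on $H_0' - \mathbf{v}$, and setting $\mathcal{T} := \mathcal{G} \cup \mathcal{T}_0$ and $H := S \cup (H_0' - \mathbf{v})$ yields the conclusion, since adjoining elements of the ideal to a tropical basis preserves the property, and $H$ depends only on $S$.

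The main obstacle is confirming that the Joswig--Schr\"oter bound applies uniformly in the required generality: that $D(n,d)$ depends only on $n$ and $d$, not on the field $K$ or the particular coefficients appearing in $\mathcal{G}$. Their argument is combinatorial, so this should go through; as a back-up, model completeness of $\mathsf{ACVF}$ transports any bound established over a fixed algebraically closed valued field to any other of the same residue and value characteristics. A small bookkeeping nuisance is the literal request that $H \subseteq \mathbb{N}^n$ rather than $\mathbb{Z}^n$; this is accommodated by absorbing a sufficiently large global shift into $\mathbf{v}$ from the outset, so that all supports arising in the argument live in $\mathbb{N}^n$.
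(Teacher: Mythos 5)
Your approach diverges from the paper's in a way that conceals three genuine gaps.

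First, the Joswig--Schr\"oter result you invoke is not a theorem about affine ideals generated in bounded degree. Theorem~10 of \cite{joswigDegreeTropicalBasis2018} is a statement about \emph{homogeneous} ideals in $K[x_0,\ldots,x_n]$, and the bound $d_0$ it provides is expressed in terms of $n$ and the \emph{degree of the ideal} (the degree of the projective variety), not in terms of the degree of a generating set. Your monomial shift $f\mapsto x^{\mathbf v}f$ turns Laurent polynomials into ordinary polynomials, but that only lands you in an affine polynomial ring, not in the projective setting the theorem requires. This is exactly why the paper's proof passes through the homogenisation $I_{\mathrm{proj}}\subseteq K[x_0,\ldots,x_n]$ of $I\cap K[x_1,\ldots,x_n]$, applies Theorem~10 there, and then dehomogenises by setting $x_0=1$. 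To make that dehomogenisation yield a tropical basis of the original Laurent ideal $I$, the paper must invoke \cite[Theorem~3.2.3]{MS} (the monomial-in-initial-ideal characterisation of tropical bases) together with \cite[Proposition~2.6.1]{MS} (the compatibility of initial ideals under homogenisation); your proposal has no analogue of this verification. Relatedly, your claim that $\mathcal T\mapsto x^{\mathbf v}\mathcal T$ ``carries tropical bases of $I$ to tropical bases of $J'$ and vice versa'' quietly assumes a notion of tropical basis for an arbitrary affine ideal and an identification between the tropical data of $J'$ and of $I=J'K[x^{\pm1}]$; the latter is nontrivial (it involves the saturation $I\cap K[x_1,\ldots,x_n]\supseteq J'$, which can be strictly larger) and is precisely what the homogenisation machinery in \cite{MS} is designed to handle.

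Second, you never use the hypothesis $\Gamma(K)=\mathbb Q$, which should itself be a warning sign. The paper needs it because Theorem~10 of \cite{joswigDegreeTropicalBasis2018} is established over discretely valued coefficients: since $I$ is generated by finitely many polynomials, it is defined over a finitely generated, hence discretely valued, subfield of $K$, and this is where $\Gamma(K)=\mathbb Q$ enters. Your proof would work equally well (or equally badly) for any value group, so something essential is missing. Finally, the remark that model completeness of $\mathsf{ACVF}$ could transport the bound is not a back-up here: the content of the later \Cref{thm:fundamental-arbitrary-rank} is exactly that one must first manufacture a genuine first-order statement (which requires the uniform bound $H$ from this very proposition) before any transfer argument becomes available, so invoking transfer inside this proof is circular.
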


\begin{proof}
Because $\Gamma(K)=\mathbb Q$, every ideal of $K[x_0,\ldots,x_n]$ is defined over a subfield of $K$ with discrete valuation. Hence we may apply  \cite[Theorem~10]{joswigDegreeTropicalBasis2018} to $I_{\mathrm{proj}}$ and obtain that there are $d$, depending only on $n$ and on the minimal degree of a set of generators of $I_{\mathrm{proj}}$, a tropical basis $\mathcal{T}_0$, and a universal Gr\"obner basis $\mathcal U$ of $I_{\mathrm{proj}}$, both consisting of polynomials of degree at most $d$. We can then take $\mathcal{T}$ to be the union of $\mathcal G$ with the set of polynomials from $\mathcal{T}_0$ evaluated in $x_0=1$; we will show that $\mathcal{T}$ is a tropical basis, from which it follows that we can take as $H$ the set of exponents $[0,d]^n\subseteq \mathbb N^n$.

Equip $K$ with a cross-section, so that we may define initial forms. By \cite[Theorem 3.2.3]{MS}, $\mathcal T$ is a tropical basis if and only if, for every $\gamma$, whenever there is a monomial in $\ini_\gamma(I)$, then there is some $g\in \mathcal T$ such that $\ini_\gamma(g)$ is a monomial.

Let $\gamma \in \Gamma^n$, and assume that $\ini_\gamma(I)$ contains a monomial. Then by \cite[Proposition 2.6.1]{MS}, $\ini_{(0,\gamma)}(I_{\mathrm{proj}})$ also contains a monomial, so there is $f \in \mathcal{T}_0$ such that $\ini_{(0,\gamma)}(f)$ is a monomial. Since $\ini_{(0,\gamma)}(f(x_0,\ldots,x_n))=\ini_\gamma(f(1,x_1,\ldots,x_n))$ as shown in the proof of \cite[Proposition 2.6.1]{MS}, $\mathcal{T}$ is then a tropical basis.
\end{proof}

\begin{fact}\label{fact:262}
Let $I$ be an ideal in $K[x^{\pm 1}]$ and $\gamma\in \Gamma^{n}$. For every $g\in \ini_\gamma(I)$ there is $h\in I$ such that $g=\ini_\gamma(h)$.
\end{fact}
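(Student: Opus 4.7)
The plan is to proceed by a direct construction that lifts $g$ to an element of $I$ whose initial form recovers $g$. The key ingredient is the multiplicativity of $\ini_\gamma$ on products of nonzero Laurent polynomials, i.e.\ $\ini_\gamma(pq) = \ini_\gamma(p)\,\ini_\gamma(q)$, which holds because $k[x^{\pm 1}]$ is a domain: no cancellation can occur among the minimum-weight terms of the product.

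First, by the definition of $\ini_\gamma(I)$ as the $k[x^{\pm 1}]$-ideal generated by the initial forms of elements of $I$, I would write $g = \sum_{i \in F} c_i x^{\mathbf{u}_i} \ini_\gamma(f_i)$ for a finite index set $F$, nonzero coefficients $c_i \in k^\times$, exponents $\mathbf{u}_i \in \mathbb{Z}^n$, and elements $f_i \in I$. The case $g = 0$ is handled by taking $h = 0$, so assume $g \ne 0$. For each $i$, lift $c_i$ to a unit $\tilde c_i \in \mathcal O^\times$ with $\res(\tilde c_i) = c_i$, so that by multiplicativity $\ini_\gamma(\tilde c_i x^{\mathbf{u}_i} f_i) = c_i x^{\mathbf{u}_i} \ini_\gamma(f_i)$. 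Denote by $w_i$ the minimum tropical weight of $\tilde c_i x^{\mathbf{u}_i} f_i$ at $\gamma$, and set $W := \max_i w_i$.

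Next, I would use the cross-section $s$ to equalise these minimum weights: the scalar $s_i := s(W - w_i)$ satisfies $v(s_i) = W - w_i \ge 0$ and its angular component is $1$, so multiplicativity yields $\ini_\gamma(s_i \tilde c_i x^{\mathbf{u}_i} f_i) = c_i x^{\mathbf{u}_i} \ini_\gamma(f_i)$, and every rescaled summand now has minimum weight $W$. Set $h := \sum_{i \in F} s_i \tilde c_i x^{\mathbf{u}_i} f_i \in I$.

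Finally, one verifies $\ini_\gamma(h) = g$ monomial by monomial. Each contribution to the coefficient of $x^{\mathbf{w}}$ in $h$ has valuation at least $W - \langle \mathbf{w}, \gamma\rangle$, with equality precisely for those $i$ such that $\mathbf{w} - \mathbf{u}_i$ is a minimum-weight exponent of $f_i$. The angular component of the sum of these boundary contributions equals, by construction, the coefficient of $x^{\mathbf{w}}$ in $\sum_i c_i x^{\mathbf{u}_i} \ini_\gamma(f_i) = g$. Hence: where $g$ has a nonzero coefficient at $x^{\mathbf{w}}$, the monomial $x^{\mathbf{w}}$ occurs in $\ini_\gamma(h)$ with that same coefficient; where $g$ vanishes at $x^{\mathbf{w}}$, the valuation of the coefficient in $h$ is strictly greater than $W - \langle \mathbf{w}, \gamma\rangle$, so $x^{\mathbf{w}}$ does not occur in $\ini_\gamma(h)$. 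Since $g \ne 0$, the minimum weight of $h$ is exactly $W$, and $\ini_\gamma(h) = g$.

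The step requiring most care is the last one, namely tracking cancellations between summands contributing to the same monomial: these happen exclusively at the residue-field level, and are captured precisely by the identity defining $g$ thanks to the rescaling by $s_i$ (whose angular component is $1$, so that the passage from $f_i$ to $s_i \tilde c_i x^{\mathbf{u}_i} f_i$ does not perturb the angular components that ultimately assemble into $g$). Without equalising weights, the different summands would compete at different tropical levels and one could not read off $\ini_\gamma(h)$ from the partial data above.
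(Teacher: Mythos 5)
Your proof is correct, but it takes a different route from the paper's, whose ``proof'' is just a citation of \cite[Lemma~2.6.2(1)]{MS} together with the observation that, by inspection of that argument and of \cite[Lemma~2.4.2]{MS} and \cite[Proposition~2.6.1]{MS} on which it relies, the rank-$1$ hypothesis is never used. What you do is reconstruct that lemma's proof from scratch: decompose $g=\sum_i c_i x^{\mathbf{u}_i}\ini_\gamma(f_i)$ with monomial multipliers, lift the $c_i$ to units of $\mathcal{O}$, rescale each summand by a cross-section value $s(W-w_i)$ so that all summands attain the same minimum tropical weight $W$, and then verify $\ini_\gamma(h)=g$ coefficient by coefficient via the angular-component arithmetic (with the residue-level cancellations exactly matching the cancellations in the identity defining $g$). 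The one auxiliary fact you invoke as a black box --- multiplicativity $\ini_\gamma(pq)=\ini_\gamma(p)\,\ini_\gamma(q)$ for nonzero Laurent polynomials --- is precisely \cite[Lemma~2.4.2]{MS}, so your ingredient list matches the paper's citation chain. What your version buys is that it substantiates, rather than merely asserts, the rank-independence: you use only the existence of a cross-section (equivalently an angular component) and the usual additivity of $\ac$ on terms of equal valuation; no Archimedean property of $\Gamma$ enters. The paper's citation is shorter; your proof is self-contained and makes the claim it depends on verifiable without consulting the source.
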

\begin{proof}
This is \cite[Lemma~2.6.2(1)]{MS}. One sees by inspection that in its proof (and in the proofs of Lemma~2.4.2 and Proposition~2.6.1, which are used within it) the rank $1$ assumption is never used.
\end{proof}
\begin{thm}\label{thm:fundamental-arbitrary-rank}[Fundamental Theorem of Tropical Geometry for valuations of arbitrary rank]
	Let $K$ be an algebraically closed valued field and $I \subseteq K[x^{\pm 1}]$ an ideal. Then the following properties are equivalent for all $\gamma \in \Gamma^n$.
	\begin{enumerate}
		\item\label{funtrop1} For all $f \in I$, the point $\gamma$ is a tropical root of $\trop(f)$.
		\item\label{funtrop2} The ideal $\ini_\gamma(I) \subseteq k[x^{\pm 1}]$ is proper.
		\item\label{funtrop3} There is $z \in (K^\times)^n$ such that $f(z)=0$ for every $f \in I$ and $v(z)=\gamma$.
	\end{enumerate}
\end{thm}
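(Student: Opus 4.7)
The easy implications are $(3)\Rightarrow(1)$ and $(1)\Leftrightarrow(2)$; the substance lies in $(1)\Rightarrow(3)$, which I would handle via first-order transfer from the rank-1 case. For $(3)\Rightarrow(1)$: given $z$ as in (3) and $f=\sum_{\mathbf{u}\in S}c_\mathbf{u} x^\mathbf{u}\in I$, the equation $0=f(z)=\sum c_\mathbf{u} z^\mathbf{u}$ combined with the ultrametric inequality forces the minimum of the values $v(c_\mathbf{u})+\langle\mathbf{u},\gamma\rangle$ to be attained at least twice.

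For $(1)\Leftrightarrow(2)$, the key observation is that the units of $k[x^{\pm 1}]$ are exactly the (nonzero) monomials, so an ideal there is proper if and only if it contains no monomial. If $(1)$ fails for some nonzero $f\in I$, then $\ini_\gamma(f)$ is a monomial lying in $\ini_\gamma(I)$, contradicting (2). Conversely, Fact~\ref{fact:262} states that every element of $\ini_\gamma(I)$ is of the form $\ini_\gamma(h)$ for some $h\in I$; under $(1)$, each such nonzero element has at least two monomials, so $\ini_\gamma(I)$ contains no monomial and is therefore proper.

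For $(1)\Rightarrow(3)$, I would reduce to the classical Fundamental Theorem in the Archimedean setting \cite[Theorem~3.2.5]{MS}. Fix $n$ and a finite $S\subseteq\mathbb{Z}^n$; by Noetherianity, any ideal $I$ admits finitely many generators supported in some such $S$. Proposition~\ref{fact:trop_basis_deg_bound} yields, in the rank-1 case, a uniform finite set $H\subseteq\mathbb{N}^n$ (depending only on $n,S$) such that every such $I$ has a tropical basis with support in $H$. Combined with a uniform bound $H'$ on the supports of multipliers in representations $f=\sum_j h_j f_j$ for $f\in I$ with support in $H$ (obtained from effective Noetherianity applied to the homogenisation of $I$), this allows one to write an $\mathcal{L}_{\mathrm{vf}}$-sentence $\phi_{n,S}$ expressing: ``if every Laurent polynomial in $I$ with support in $H$ has $\gamma$ as a tropical root, then there exists $z\in(\mathrm{VF}^{\times})^n$ with $v(z)=\gamma$ and $f_j(z)=0$ for all $j$''. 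The rank-1 Fundamental Theorem together with Proposition~\ref{fact:trop_basis_deg_bound} guarantees $\phi_{n,S}$ in every rank-1 algebraically closed valued field; by completeness of each completion $\mathsf{ACVF}_{p,q}$ it then holds in all models of $\mathsf{ACVF}$. Since (1) trivially implies the bounded-support hypothesis, (3) follows in $K$.

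The main obstacle I anticipate is the effective Noetherianity step that lets one bound the supports of the multipliers $h_j$, which is needed to make ``$f\in I$'' first-order expressible for $f$ of bounded support. This should follow from standard degree bounds in Gr\"obner basis theory applied to the homogenisation of $I$, along the same lines as the argument used in the proof of Proposition~\ref{fact:trop_basis_deg_bound}.
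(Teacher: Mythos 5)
Your overall strategy for $(1)\Rightarrow(3)$ — transfer from rank $1$ via the Joswig--Schr\"oter degree bound in \Cref{fact:trop_basis_deg_bound} and completeness of $\mathsf{ACVF}_{p,q}$ — is the same as the paper's, and your treatments of $(3)\Rightarrow(1)$ and $(1)\Leftrightarrow(2)$ match (the paper also briefly dispatches the trivially valued case, which you omit; this is harmless). The genuine difference is in how the first-order sentence quantifies over ``elements of the ideal with bounded support''. You propose to express ideal membership directly, via uniform degree bounds on multipliers in representations $f=\sum_j h_jf_j$, and correctly flag this ``effective Noetherianity'' step as the main obstacle you have not carried out. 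The paper avoids this entirely: its formula $\phi_H(t,\gamma)$ quantifies over all $f$ with support in $H$ that \emph{vanish on the common zero set} of the generators $\mathcal G_t$, which is a plain universally quantified $\mathcal L_{\mathrm{vf}}$-condition requiring no degree bounds for multipliers. Over algebraically closed fields this condition captures $f\in\sqrt{I_t}$ by the Nullstellensatz, and since the tropical basis of \Cref{fact:trop_basis_deg_bound} lives inside $I_t\subseteq\sqrt{I_t}$ this is enough to make $\phi_H$ equivalent in rank $1$ to both $(1)$ and (via the rank-$1$ Fundamental Theorem) $(3)$. So your route is plausible and would presumably work once the Hermann/Gr\"obner degree bounds are supplied with the required uniformity in the coefficients, but as written it leaves a gap that the paper's radical-membership trick sidesteps cleanly.
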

\begin{proof}
  If $K$ is trivially valued then the statement is obvious, so we may assume the valuation is nontrivial.
  
The implications $\ref{funtrop2}\allora \ref{funtrop1}$ and $\ref{funtrop3}\allora \ref{funtrop1}$ are obvious, and $\ref{funtrop1}\allora \ref{funtrop2}$ follows from \Cref{fact:262}.

For $\ref{funtrop1}\allora \ref{funtrop3}$, let $V \subseteq \mathbb{G}_\mathrm{m}^n$ be the algebraic subvariety defined by $I$. Fix a set $\mathcal{G}$ of generators of $I$, with support $S$. Let $H \subseteq \mathbb{N}^n$ be given by \Cref{fact:trop_basis_deg_bound} applied to $S$ and to an arbitrary algebraically closed valued field $L$ of the same characteristic $(p,q)$ with value group $\mathbb Q$. 
We identify a finite family of Laurent polynomials in $K[x_1^{\pm1},\ldots,x_n^{\pm1}]$ with exponents in $S$ with the tuple $t$ of their coefficients; write $\mathcal G_t$ for such a family and $I_t$ for the ideal it generates.

Let $\phi_H(t,\gamma)$ be the formula saying that for every tuple $(y_\mathbf{u})_{\mathbf{u} \in H}$, if the polynomial $f=\sum_{\mathbf{u} \in H}y_{\mathbf{u}}x^{\mathbf{u}}$ vanishes on all $z$ such that all elements of $\mathcal{G}_t$ vanish on $z$ (this is a finite conjunction expressing the fact that $f$ is in the radical of $I_t$), then $\gamma$ is a tropical root of $\trop(f)$. In an arbitrary valued field, for all $t$ and $\gamma$, the implication $\ref{funtrop1}\allora \phi_H(t,\gamma)$ holds trivially.
  
By \Cref{fact:trop_basis_deg_bound}, in $L$ we have that  $\phi_H(t,\gamma)$ holds if and only if $\gamma$ satisfies property~\ref{funtrop1} for $I_t$; by the Fundamental Theorem of Tropical Geometry~\cite[Theorem 3.2.3]{MS}, in rank $1$ properties~\ref{funtrop1} and~\ref{funtrop3} are equivalent, so in $L$ we have that  $\phi_H(t, \gamma)$ holds if and only if $\gamma$ satisfies property~\ref{funtrop3} for $I_t$. This equivalence is a first-order statement, so it holds in  algebraically closed nontrivially valued fields of arbitrary rank by completeness of $\mathsf{ACVF}_{p,q}$. 
\end{proof}

The following proposition, which generalises a step of the proof of the Fundamental Theorem of Tropical Geometry \cite[Proposition 3.2.11]{MS}, will be needed later on. We consider initial forms as computed with respect to a given angular component map, see \Cref{rem:svsac}.

\begin{pr}\label{pr:zardense}
  Let $V \subseteq \mathbb{G}_\mathrm{m}^n$ be an algebraic subvariety defined over an algebraically closed nontrivially $\mathrm{ac}$-valued field $K$, and $\gamma \in \Gamma^n$.
  \begin{enumerate}
  \item\label{point:iniac} The variety $\ini_\gamma(V)(k(K))$ coincides with the set of angular components of points of $V(K)$ of valuation $\gamma$.
  \item\label{point:aczardense}  For each $\alpha\in \ini_\gamma(V)(k(K))$, the set of $a \in V(K)$ such that $v(a)=\gamma$ and $\ac(a)=\alpha$ is Zariski dense in $V(K)$.
  \end{enumerate}
\end{pr}

\begin{proof}
Let $I \subseteq K[x^{\pm 1}]$ be the ideal associated to $V$.
Let $d$ and $H$ be given by \Cref{fact:trop_basis_deg_bound} applied to the set $S$ of exponents of a fixed basis of $I$. Similarly as in the proof of \Cref{thm:fundamental-arbitrary-rank}, write $I=I_t$ where $t$ is the tuple of coefficients of a set of generators with exponents in $S$.
Let $J_{t,\gamma}$ be the ideal generated by $\ini_\gamma(f)$, as $f$ varies amongst polynomials in $I_t$ with exponents in $H$. In every nontrivially $\ac$-valued field, for every $t$ and $\gamma$, it follows from the definition of initial form that
\[
\set{\ac(a)\mid a\in V(I_t), v(a)=\gamma}\subseteq
V(\ini_\gamma(I_t))\subseteq
V(J_{t,\gamma}).
\]
By \cite[Proposition 3.2.11]{MS}, in every algebraically closed $\mathrm{ac}$-valued field of rank 1, the first inclusion above is an equality. If furthermore $\Gamma(K)=\mathbb Q$, then by \Cref{fact:trop_basis_deg_bound} there is a universal Gr\"obner basis of $I_\mathrm{proj}$ with exponents in $\set{0,\ldots,d}\times H$, and it follows that the second inclusion is also an equality.

As the sets $\set{\ac(a)\mid a\in V(I_t), v(a)=\gamma}$ and $V(J_{t,\gamma})$ are first-order definable uniformly in $t$ and $\gamma$, completeness of $\mathsf{ACVF}_{p,q,\mathrm{ac}}$ yields that they coincide in every algebraically closed nontrivially $\ac$-valued field. As $V(\ini_\gamma(I_t))$ lies between them, this proves~\ref{point:iniac}.

As for~\ref{point:aczardense}, fix a polynomial $g \in K[x^{\pm 1}]$. The conclusion follows from the previous point and the first-order expressibility of ``if there are points $z_0, z_1\in V$ with $v(z_0)=\gamma$, $\ac(z_0)=\alpha$ and  $g(z_1) \neq 0$, then there is a point $z_2 \in V$ such that $v(z_2)=\gamma$, $\ac(z_2)=\alpha$, and $g(z_2) \neq 0$''.
\end{proof}

\begin{lemma}\label{lemma:initial-of-pol-over-O}
	Let $I$ be an ideal in $K[x^{\pm 1}]$. Then $\ini_0(I)=\res(I \cap \mathcal{O}[x^{\pm 1}])$. 
\end{lemma}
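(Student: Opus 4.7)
The plan is to unfold the definition of $\ini_0$ and observe that, at $\gamma=0$, taking initial forms is essentially the same as residuing a suitable rescaling of the polynomial. Specifically, for a nonzero $f=\sum_{\mathbf{u}\in S}c_\mathbf{u} x^\mathbf{u}\in K[x^{\pm 1}]$, let $\gamma_{\min}\coloneqq \min_{\mathbf{u}\in S}v(c_\mathbf{u})$. Unwinding the definition, $S'=\{\mathbf{u}\in S : v(c_\mathbf{u})=\gamma_{\min}\}$, and I would verify directly that
\[
\ini_0(f)=\res\bigl(s(-\gamma_{\min})\cdot f\bigr),
\]
since for $\mathbf{u}\in S'$ the coefficient $c_\mathbf{u} s(-\gamma_{\min})$ has valuation $0$, while for $\mathbf{u}\in S\setminus S'$ it has positive valuation and thus reduces to $0$. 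Moreover $s(-\gamma_{\min})\cdot f\in \mathcal{O}[x^{\pm 1}]$.

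For the inclusion $\ini_0(I)\subseteq \res(I\cap \mathcal{O}[x^{\pm 1}])$, I would first note that the right-hand side is in fact an ideal of $k[x^{\pm 1}]$, being the image of the ideal $I\cap \mathcal{O}[x^{\pm 1}]\subseteq \mathcal{O}[x^{\pm 1}]$ under the surjective ring homomorphism $\res\from \mathcal{O}[x^{\pm 1}]\to k[x^{\pm 1}]$. It therefore suffices to show that every generator $\ini_0(f)$ (with $f\in I$) lies there. But $s(-\gamma_{\min})\in K^\times$, so $s(-\gamma_{\min})\cdot f$ still lies in $I$, and by the observation above it lies in $I\cap\mathcal{O}[x^{\pm 1}]$ with $\res(s(-\gamma_{\min})f)=\ini_0(f)$.

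For the reverse inclusion $\res(I\cap \mathcal{O}[x^{\pm 1}])\subseteq \ini_0(I)$, I take $g\in I\cap \mathcal{O}[x^{\pm 1}]$, assume $g\ne 0$, and let $\gamma_{\min}\ge 0$ be the minimum valuation of its coefficients. If $\gamma_{\min}=0$, then by the key identity (applied with $s(0)=1$) we have $\res(g)=\ini_0(g)\in \ini_0(I)$. If $\gamma_{\min}>0$, then every coefficient of $g$ lies in the maximal ideal, so $\res(g)=0\in \ini_0(I)$. Either way, $\res(g)\in \ini_0(I)$.

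There is no real obstacle here; the statement is essentially the computation that initial forms at $0$ coincide with residues after clearing denominators, together with the bookkeeping that $\res\from\mathcal{O}[x^{\pm 1}]\to k[x^{\pm 1}]$ is a surjective ring homomorphism so that it sends ideals to ideals. The only subtlety worth flagging is the distinction between the ideal $I$ of $K[x^{\pm 1}]$ and the ideal $I\cap\mathcal{O}[x^{\pm 1}]$ of $\mathcal{O}[x^{\pm 1}]$, and the fact that the rescaling $s(-\gamma_{\min})\cdot f$ stays in $I$ precisely because $I$ is closed under multiplication by elements of the larger ring $K[x^{\pm 1}]$.
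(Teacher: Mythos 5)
Your proof is correct, and in fact streamlines the paper's argument in the $(\subseteq)$ direction. Both proofs hinge on the same key identity $\ini_0(f)=\res(s(-\gamma_{\min})f)$, and the $(\supseteq)$ directions are essentially identical (the paper simply assumes $\res(g)\neq 0$ where you split into the two cases $\gamma_{\min}=0$ and $\gamma_{\min}>0$). The genuine difference is in $(\subseteq)$: the paper invokes \Cref{fact:262} (from \cite{MS}, after verifying that its proof is rank-independent) to write an arbitrary element of $\ini_0(I)$ as $\ini_0(g)$ for a single $g\in I$, whereas you observe that $\res(I\cap\mathcal{O}[x^{\pm 1}])$ is already an ideal of $k[x^{\pm 1}]$ — being the image of an ideal of $\mathcal{O}[x^{\pm 1}]$ under the surjective ring homomorphism $\res$ — so it suffices to check that the generators $\ini_0(f)$, $f\in I$, of $\ini_0(I)$ lie in it. Your route is more elementary: it sidesteps \Cref{fact:262} entirely for this lemma, at the cost of nothing. (One trivial omission: for $f=0$ the quantity $\gamma_{\min}$ is undefined, but $\ini_0(0)=0$ is always in the target, so this is harmless.)
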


\begin{proof}
	$(\supseteq)$ Let $f \in \res(I \cap \mathcal{O}[x^{\pm 1}])$; if $f=0$ then there is nothing to prove, so assume $f\neq 0$. Then there is $g \in I \cap \mathcal{O} [x^{\pm 1}]$ such that $f=\res(g)$. Since $\res(g) \neq 0$, $g$ has at least one coefficient of valuation $0$, and it follows from the definitions that $f=\res(g)=\ini_0(g)$.
	
	$(\subseteq)$ Let $f \in \ini_0(I)$. By \Cref{fact:262} there is $g \in I$ such that $f=\ini_0(g)$, say  $g=\sum_{\mathbf{u} \in S}c_\mathbf{u}x^\mathbf{u}$. Let  $S' \subseteq S$ be the set of $\mathbf{u}$ for which $v(c_\mathbf{u})$ is minimal. If $\mathbf{u}_0 \in S'$, then \[\ini_0(g)=\sum_{\mathbf{u} \in S'}\res(c_\mathbf{u}s(-v(c_\mathbf{u})) x^\mathbf{u}=\res\left( \frac{g}{s(v(c_{\mathbf{u}_0}))} \right).\]
	
	Now $g/(s(v(c_{\mathbf{u}_0}))) \in I \cap \mathcal{O}[x^{\pm 1}]$, and the minimal valuation of its coefficients is $0$, so we have \[\ini_0 \left(\frac{g}{s(v(c_{\mathbf{u}_0}))} \right)=\res \left(\frac{g}{s(v(c_{\mathbf{u}_0}))}\right) =\ini_0(g)=f.\qedhere\]
\end{proof}

\begin{pr}\label{prop:sigma-commutes-ini0}
	Let $I$ be an ideal in $K[x^{\pm 1}]$ and $\sigma$ an automorphism. Then $\sigma(\ini_0(I))=\ini_0(\sigma(I))$.
\end{pr}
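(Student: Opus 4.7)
The plan is to reduce the statement to \Cref{lemma:initial-of-pol-over-O} and use the fact that $\sigma$, being an automorphism of the valued difference field, restricts to an automorphism of $\mathcal{O}$ whose induced action on $k=\mathcal{O}/\mathfrak{m}$ is exactly $\sigma_k$.

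More concretely, I would extend $\sigma$ coefficient-wise to an automorphism of $K[x^{\pm 1}]$ (still denoted $\sigma$) and of $k[x^{\pm 1}]$ (via $\sigma_k$). First I would note that since $\sigma$ commutes with $v$, it preserves both $\mathcal{O}$ and its maximal ideal, hence $\sigma\bigl(\mathcal{O}[x^{\pm 1}]\bigr)=\mathcal{O}[x^{\pm 1}]$, which gives
\[
  \sigma\bigl(I\cap \mathcal{O}[x^{\pm 1}]\bigr)=\sigma(I)\cap \mathcal{O}[x^{\pm 1}].
\]
Next, since $\sigma$ commutes with $\res$ on $\mathcal{O}$ (by definition of $\sigma_k$ on the residue field and the compatibility required in \Cref{defin:languages}\ref{point:lsig}), this compatibility propagates coefficient-wise to $\mathcal{O}[x^{\pm 1}]$, so for any subset $J\subseteq \mathcal{O}[x^{\pm 1}]$ one has $\sigma_k(\res(J))=\res(\sigma(J))$.

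Combining these two observations with \Cref{lemma:initial-of-pol-over-O} applied to both $I$ and $\sigma(I)$ yields
\[
  \sigma(\ini_0(I))=\sigma\bigl(\res(I\cap \mathcal{O}[x^{\pm 1}])\bigr)=\res\bigl(\sigma(I)\cap\mathcal{O}[x^{\pm 1}]\bigr)=\ini_0(\sigma(I)),
\]
which is the desired equality. There is no substantial obstacle: the only thing to verify carefully is that the compatibility $\sigma_k\circ \res=\res\circ\sigma_{\mathrm{VF}}$ on $\mathcal{O}$ extends without fuss to polynomials, which is immediate since both $\sigma$ and $\res$ act coefficient-wise and fix the monomials $x^{\mathbf{u}}$.
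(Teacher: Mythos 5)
Your proof is correct and follows essentially the same route as the paper: reduce via \Cref{lemma:initial-of-pol-over-O} to the identity $\sigma(\res(I\cap\mathcal O[x^{\pm 1}]))=\res(\sigma(I)\cap\mathcal O[x^{\pm 1}])$, and conclude from $\sigma\circ\res=\res\circ\sigma$. You merely spell out the two intermediate observations (preservation of $\mathcal O[x^{\pm1}]$ and coefficientwise compatibility) that the paper leaves to the reader.
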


\begin{proof}
	By \Cref{lemma:initial-of-pol-over-O}, it is enough to show $\sigma(\res(I \cap \mathcal{O}[x^{\pm 1}])) = \res (\sigma(I) \cap \mathcal{O}[x^{\pm 1}])$. This follows easily from $\sigma \circ \res=\res \circ \sigma$.
\end{proof}

\section{Purity and reducts}
\label{sec:purity}

As already recalled, every algebraically closed valued field admits a cross-section of the valuation. In this section, we show that universal domains for tropical difference algebra admit one that is $\sigma$-equivariant.

For this purpose, we endow the multiplicative group of a valued difference field $(K,v,\sigma)$ with a module structure over the polynomial ring $\mathbb{Z}[\sigma]$ by defining $\left( \sum_{i=0}^n m_i \sigma^i \right) \cdot z= \prod_{i=0}^{n} \sigma^i(z^{m_i})$. We show that, assuming existential closedness, $\mathcal{O}^\times$ is a \emph{pure $\mathbb{Z}[\sigma]$-submodule} of $K^\times$, and we use standard arguments to deduce the existence of a cross-section commuting with $\sigma$ if enough saturation is present. 

Recall that a formula is \emph{existential} iff it is of the form $\exists x\; \phi(x)$ with $\phi(x)$ quantifier-free, and \emph{positive} (resp.\ \emph{positive primitive}) iff it is obtained from atomic formulas by using $\land,\lor,\exists$ (resp.\ $\land, \exists$).

\begin{defin}\*\label{defin:saturated}
	\begin{enumerate}
		\item Let $M$ be an $\mathcal L$-structure. A \emph{partial type} $\pi(x)$ is a set of $\mathcal L$-formulas with parameters from $M$ such that, for every $n$ and every $\phi_0(x),\ldots, \phi_n(x)\in \pi(x)$, there is $a\in M$  such that, for every $i\le n$, we have $M\models \phi_i(a)$.
		\item The \emph{quantifier-free type} $\qftp(a/B)$ of a tuple $a$ in an $\mathcal L$-structure $M$ over  a set $B\subseteq M$ is the set of quantifier-free $\mathcal L$-formulas $\phi(x)$ with parameters from $B$ such that $M\models\phi(a)$.
		\item Let $\kappa$ be an infinite cardinal.  A structure $M$ is \emph{existentially} (resp.\ \emph{positively}, resp.\ \emph{quantifier-free}) \emph{$\kappa$-saturated}  iff, whenever $\pi(x)$ is a partial type consisting of existential (resp.\ positive, resp.\ quantifier-free) formulas  with parameters from a set $A\subseteq M$ of size $\abs A<\kappa$, there is $a\in M$ satisfying all formulas in $\pi(x)$.
	\end{enumerate}
\end{defin}
So, for example,  the $\mathbb Z[\sigma]$-module $\mathcal O^\times$ is positively $\aleph_1$-saturated if and only if, for every system of countably many equations in the language of $\mathbb Z[\sigma]$-modules with parameters from $\mathcal O^\times$, if every finite subsystem has a solution in $\mathcal O^\times$  then the whole system does.\footnote{Note that every partial type consisting of positive formulas is implied by a partial type consisting of positive primitive formulas.} This is in particular the case if $(K,v,\sigma)$ is existentially $\aleph_1$-saturated.

\begin{rem}\label{rem:ecsatexist}
  By standard model-theoretic arguments, for every cardinal $\kappa$, every valued difference field has an existentially $\kappa$-saturated existentially closed extension.
\end{rem}

\begin{pr}\label{pr:ecbasics}
	In every existentially closed $(K,v,\sigma)$, the following statements are true.
	\begin{enumerate}
		\item\label{point:ecbasicsACF}  The field $\operatorname{VF}(K)$ is algebraically closed.
		\item\label{point:ecbasicsOAGA} The difference oag $\Gamma(K)$ is existentially closed.
		\item\label{point:ecbasicsACFA} 
		The difference field $k(K)$ is existentially closed.
	\end{enumerate}
\end{pr}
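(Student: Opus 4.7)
All three parts follow the same pattern: extend $(K, v, \sigma)$ to a valued difference field realising a prescribed extension of the corresponding reduct, then pull existential witnesses back using existential closedness.

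For part (1), I would take an algebraic closure $\bar K$ of $\VF(K)$ equipped with some extension $\bar v$ of $v$. Since $\bar K/K$ is normal, $\sigma$ lifts to a field automorphism $\hat\sigma$ of $\bar K$; this $\hat\sigma$ need not preserve $\bar v$, but $\bar v \circ \hat\sigma$ is another extension of $v$ to $\bar K$, so by the transitive action of $\aut(\bar K/K)$ on such extensions there is $\tau \in \aut(\bar K/K)$ with $\bar v \circ \hat\sigma = \bar v \circ \tau$; then $\tilde\sigma \coloneqq \tau^{-1}\hat\sigma$ is a field automorphism of $\bar K$ extending $\sigma$ and preserving $\bar v$. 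Thus $(\bar K, \bar v, \tilde\sigma)$ is a valued difference field extension of $(K, v, \sigma)$, and applying existential closedness to the atomic formula $f(x) = 0$ for each nonconstant $f \in K[x]$ produces a root of $f$ in $K$, whence $\VF(K)$ is algebraically closed.

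For parts (2) and (3) the crucial reduction is a lifting lemma: every extension of difference oags $(\Gamma(K), \sigma) \hookrightarrow (\Delta, \sigma_\Delta)$ (respectively, every extension of difference fields $(k(K), \sigma) \hookrightarrow (k', \sigma')$) is induced by some valued difference field extension $(K, v, \sigma) \hookrightarrow (L, v_L, \sigma_L)$. Granting this, a witness in $\Delta$ (resp.\ $k'$) for an existential formula $\phi$ over parameters from $\Gamma(K)$ (resp.\ $k(K)$) transports to a witness in $\Gamma(L)$ (resp.\ $k(L)$); since $\phi$ may be regarded as an existential $\mathcal L_\sigma$-formula in the appropriate sort, existential closedness of $K$ in $L$ yields a witness in $K$, hence in $\Gamma(K)$ (resp.\ $k(K)$).

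To establish the lifting lemma in the value group case, one adjoins to $K$ transcendentals $t_\delta$ of prescribed valuation $\delta \in \Delta \setminus \Gamma(K)$ via Gauss-type extensions (legitimate since $\Delta$ is torsion-free) and extends $\sigma$ to send $t_\delta$ to a chosen element of valuation $\sigma_\Delta(\delta)$, iterating transfinitely through $\Delta$ and taking an inversive closure at the end; in the residue field case one similarly adjoins lifts of new residues, using algebraic or Gauss-type adjunctions according to whether the new residue is algebraic or transcendental over $k(K)$, again defining $\sigma$ on the new lifts to project onto the prescribed $\sigma'$-images. Alternatively, once $K$ is first enlarged to a maximally complete valued difference extension, the equivariant Hahn construction of \Cref{eg:Hahnsigma} can be used to realise the given extension of value group or residue field directly. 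The principal obstacle I foresee is ensuring that these adjunctions can be carried out $\sigma$-equivariantly without introducing spurious algebraic dependencies between $t_\delta$ and $t_{\sigma_\Delta(\delta)}$ (and their iterates); in residual characteristic zero this is routine from the standard theory of valued field extensions, and the inversivity of the resulting $\sigma$ is recovered by passing to an inversive closure in the limit.
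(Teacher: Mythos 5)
Your argument for (1) is correct and genuinely different from the paper's. You lift $\sigma$ to a field automorphism $\hat\sigma$ of $\bar K$ by normality, then correct by an element of $\aut(\bar K/K)$ using Chevalley's conjugacy theorem for the extensions of $v$ to $\bar K$. One small slip: since $\sigma_\Gamma$ need not be the identity, $\bar v\circ\hat\sigma$ restricted to $K$ equals $\sigma_\Gamma\circ v$ rather than $v$, so what you really have is equality of valuation rings $\hat\sigma^{-1}\mathcal O_{\bar v}=\tau^{-1}\mathcal O_{\bar v}$, and the composite preserving $\mathcal O_{\bar v}$ is $\hat\sigma\tau^{-1}$ rather than $\tau^{-1}\hat\sigma$; the idea is unchanged. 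The paper instead embeds $(K,v)$ into a strongly homogeneous $\monster\models\mathsf{ACVF}$, uses quantifier elimination to view $\sigma$ as a partial elementary map of $\monster$, and extends it to an automorphism. Your route is more algebraic and avoids monster models; the paper's has the virtue that the same device immediately handles (2) and (3).

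For (2) and (3), your reduction to a lifting lemma --- every extension of the difference value group (resp.\ difference residue field) is realised by some valued difference field extension --- is exactly the paper's decomposition, but your sketch of its proof has a genuine gap, one you flag yourself: making the Gauss-type adjunctions $\sigma$-equivariant without introducing spurious algebraic relations among $t_\delta$, $t_{\sigma_\Delta(\delta)}$ and their orbits, and this is not resolved by either of your two suggestions. The ``maximal completion then equivariant Hahn'' alternative does not obviously close it, because the extension of $\sigma$ to a maximal immediate extension of $K$ need not be of Hahn type, so \Cref{eg:Hahnsigma} does not apply to it directly; moreover, both of your suggestions tacitly restrict to residual characteristic $0$, whereas \Cref{pr:ecbasics} carries no characteristic assumption. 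The paper proves the lifting lemma in one stroke via quantifier elimination in $\mathsf{ACVF}$: embed the prescribed extension $\Gamma'$ (resp.\ $k'$) into $\Gamma(\monster)$ (resp.\ $k(\monster)$) over $\Gamma(K)$ (resp.\ $k(K)$), note that $\sigma_K\cup\sigma'$ is a partial elementary map of $\monster$ because it is a partial isomorphism and $\mathsf{ACVF}$ eliminates quantifiers, and extend it to an automorphism by strong homogeneity. That observation is the missing idea in your sketch.
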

\begin{proof}
As commonly done in model theory, we embed the valued field $(K,v)$ into a \emph{$\abs K^+$-strongly homogeneous} valued field $\monster$, that is, one where every \emph{elementary map} with domain of size at most $\abs K$ extends to an automorphism. Recall that, whenever a theory has \emph{quantifier elimination}, an elementary map is nothing more than a partial isomorphism, and that quantifier elimination holds for $\mathsf{ACVF}$ in $\mathcal L_\mathrm{vf}$, see~\cite[Theorem~2.1.1(iii)]{haskellDefinableSetsAlgebraically2006}\footnote{Our language is slightly different, see \Cref{rem:vflanguages}.}.
	\begin{enumerate}[wide=0pt]
		\item As $(K,v,\sigma)$ is existentially closed, it follows easily from the definitions that, in order to prove its algebraic closedness, it suffices to embed it into an algebraically closed valued difference field. By quantifier elimination, $\sigma$ may be regarded as an elementary map $\monster\to \monster$, and by $\abs{K}^+$-strong homogeneity we may extend it to some $\sigma_{\monster}\in \aut(\monster)$, thus obtaining the required extension. 
		\item\label{point:ecbasics2proof}
		View again $\sigma$ as an elementary map $\monster\to \monster$. Let $(\Gamma', \sigma')\supseteq (\Gamma(K), \sigma_{\Gamma(K)})$. 
		Since $\Gamma(\monster)\models \mathsf{DOAG}$,\footnote{In characteristic $(0,p)$ we need to add to the language on $\Gamma$ a constant symbol, interpreted as $v(p)$. We leave to the reader the easy task of modifying this part of the proof accordingly.} up to replacing $\monster$ by a larger $\monster'$, we may embed $\Gamma'$ in it over $\Gamma(K)$.
		Again by quantifier elimination, $\sigma_K\cup \sigma'$ is an elementary map, hence extends to an automorphism of $\monster$. 
		Every existential formula over $\Gamma(K)$ realised in $\Gamma'$ is as well realised in $\Gamma(\monster)$.  By existential closedness of $K$, it must be also realised in $\Gamma(K)$. This shows that $\Gamma(K)$ is existentially closed.
		
		\item This can be proved similarly to~\ref{point:ecbasics2proof}, using $\mathsf{ACF}$ in place of $\mathsf{DOAG}$.\qedhere
	\end{enumerate}
\end{proof}

\begin{rem}\label{rem:stuffforfree}\*
  \begin{enumerate}
  \item\label{point:surjforfree} In our framework, the residue and valuation maps are assumed surjective from the start. In fact, as far as existentially closed models are concerned, this is not necessary, as surjectivity of these maps can be proved with arguments similar to those of the proof of \Cref{pr:ecbasics}. We thank Ehud Hrushovski for pointing this out. 
  \item\label{point:inversiveforfree} Every possibly non-inversive difference field embeds canonically in its inversive closure. In particular, even if we work with non-inversive valued difference fields, the existentially closed ones are going to be automatically inversive. We thank Simone Ramello for pointing this out.
  \end{enumerate}
\end{rem}

\subsection{Purity}

Recall that, if $R$ is a ring, a submodule $N$ of an $R$-module $M$ is \emph{pure} iff, for every matrix $A \in \operatorname{Mat}_{d \times n}(R)$ and vector $b \in N^d$, if the equation $A \cdot x =b$ has a solution in $M$ then it has a solution in $N$. This passes to localisations, and is in fact a local property, see~\cite[§4, Exercise~34]{lamLecturesModulesRings1999}. The case of interest for us is that purity as a $\mathbb Z[\sigma]$-module implies purity as a $\mathbb Z[\sigma,\sigma\inverse]$-module, hence it suffices to work with $\mathbb Z[\sigma]$.

\begin{rem}\label{rem:OxpureinKx}
  It follows easily from the definitions that, since $\Gamma=K^\times/\mathcal O^\times$ is torsion-free, $\mathcal O^\times$ is a pure abelian subgroup of $K^\times$.
\end{rem}

The following example shows that, if we do not assume existential closedness, purity may indeed fail.

\begin{eg}
Consider the \emph{$\mathbb R$-exponentiation} (see \cite[Definition~3.3.1]{kuhlmannAutomorphismGroupValued2022}\footnote{The reader familiar with this work will see that, if we decompose the automorphism group of $\mathbb C ((\mathbb R))$ as in \cite[Theorem~3.7.1]{kuhlmannAutomorphismGroupValued2022} with respect to the natural cross-section, then any counterexample to purity must have a nontrivial factor in $\operatorname{Hom}(\mathbb R, \mathbb C^\times)$.})  $\sigma\from \mathbb C((\mathbb R))\to \mathbb C((\mathbb R))$ given by \[\sum c_rt^r\mapsto \sum e^rc_rt^r.\] 

In this example, $\mathcal O^\times$ is not a pure $\mathbb Z[\sigma]$-submodule of $K^\times$:  the equation $\sigma(x)x\inverse=et^0$ has parameters in $\mathcal O^\times$ and is solved by $t^1$, but there are no solutions in $\mathcal O^\times$ because $\sigma$ induces the identity on the residue field.

In particular, this automorphism admits no equivariant angular component, hence no equivariant cross-section. 
\end{eg}

\begin{notation}\*
  \begin{enumerate}
  \item For $r \leq s \in \mathbb{Z}$, we denote by $[r,s]$ the interval $\{h \in \mathbb{Z} \mid r \leq h \leq s\}$.  
  \item For a tuple $a$ with all coordinates in the domain of an automorphism $\sigma$ and $r\leq s \in \mathbb{Z}$ we will write $\orb_{r,s}(a)$ to denote the tuple $(\sigma^r(a),\ldots,\sigma^s(a))$.
  \end{enumerate}
\end{notation}

Note that, if $\mathbf{u}_1,\ldots,\mathbf{u}_{n_2} \in \mathbb{Z}^{n_1}$ and $\psi\from \mathbb{G}_{\mathrm{m}}^{n_1} \rightarrow \mathbb{G}_{\mathrm{m}}^{n_2}$ is the \emph{monomial map} $z \mapsto (z^{\mathbf{u}_1},\ldots,z^{\mathbf{u}_{n_2}})$, then $\trop(\psi)\from \Gamma^{n_1} \rightarrow \Gamma^{n_2}$ is the morphism \[\gamma \mapsto \left(\langle \mathbf{u}_1, \gamma \rangle, \ldots, \langle \mathbf{u}_{n_2}, \gamma \rangle \right).\]

\begin{lemma}\label{lemma:matrix-to-coset}
Let $K$	be a valued difference field, let $A \in \operatorname{Mat}_{d \times n}(\mathbb{Z}[\sigma])$, and let $b \in (\mathcal{O}^\times)^d$. Assume there is $\beta \in (K^\times)^n$ such that $A \cdot \beta=b$.
	
	There are $\ell \in \mathbb{N}$, an algebraic subgroup $G$ of $\mathbb{G}_{\mathrm{m}}^{n(\ell+1)}$, and $t \in (\mathcal{O}^\times)^{n(\ell+1)}$  such that, for all $z \in (K^\times)^n$, we have $A\cdot z=b$ if and only if $\orb_{0,\ell}(z) \in t \cdot G$.
\end{lemma}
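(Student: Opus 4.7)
The plan is to unfold $A \cdot z = b$ as a finite system of monomial (equivalently, $\mathbb{Z}$-linear) equations in the coordinates of the orbit $\orb_{0,\ell}(z)$, realise this as the fibre of a monomial map, and then use purity of $\mathcal O^\times$ as an abelian subgroup of $K^\times$ to downgrade a witness to one in $(\mathcal O^\times)^{n(\ell+1)}$.

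First I fix $\ell \in \mathbb{N}$ to be the maximum $\sigma$-degree among the entries $A_{k,j}(\sigma) = \sum_{i=0}^{\ell} a_{k,j,i}\sigma^i$ of $A$. Recalling the definition of the $\mathbb Z[\sigma]$-action, the $k$-th equation of $A\cdot z=b$ becomes
\[
  \prod_{j=1}^{n}\prod_{i=0}^{\ell} \sigma^i(z_j)^{a_{k,j,i}} = b_k.
\]
Index new variables $y_{j,i}$ for $1\le j\le n$, $0\le i\le \ell$, and consider the monomial map $\phi\from \mathbb{G}_\mathrm{m}^{n(\ell+1)}\to \mathbb{G}_\mathrm{m}^d$ defined by $\phi(y)_k = \prod_{j,i} y_{j,i}^{a_{k,j,i}}$. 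Setting $G \coloneqq \ker\phi$ gives an algebraic subgroup of $\mathbb{G}_\mathrm{m}^{n(\ell+1)}$, and the substitution $y_{j,i} = \sigma^i(z_j)$ shows that for every $z \in (K^\times)^n$,
\[
  A\cdot z=b \iff \phi(\orb_{0,\ell}(z))=b.
\]

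It remains to produce a witness $t \in (\mathcal O^\times)^{n(\ell+1)}$ with $\phi(t)=b$: once that is done, $\phi^{-1}(b) = t\cdot G$, and the desired equivalence $A\cdot z=b \iff \orb_{0,\ell}(z) \in t\cdot G$ is immediate. By hypothesis, $\orb_{0,\ell}(\beta)\in (K^\times)^{n(\ell+1)}$ already solves $\phi(y)=b$, so the fibre is nonempty in $K^\times$. Writing $K^\times$ additively, the defining equations of $\phi(y)=b$ form a $\mathbb{Z}$-linear system with parameters $b_1,\ldots,b_d\in \mathcal O^\times$. By \Cref{rem:OxpureinKx}, $\mathcal O^\times$ is a pure abelian subgroup of $K^\times$ (as $K^\times/\mathcal O^\times\cong\Gamma$ is torsion-free), so the system admits a solution $t \in (\mathcal O^\times)^{n(\ell+1)}$, as required.

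No real obstacle is expected: the combinatorics of turning $A\cdot z=b$ into a single monomial map on orbit coordinates is routine once $\ell$ is chosen, and the only non-trivial input ---the existence of a solution $t$ with coordinates in $\mathcal O^\times$--- is handled directly by purity of $\mathcal O^\times$ as a $\mathbb{Z}$-submodule of $K^\times$, which is already recorded in \Cref{rem:OxpureinKx}. Note that the $\mathbb Z[\sigma]$-purity of $\mathcal O^\times$ is \emph{not} used (and indeed is what the broader programme aims to prove); all we need is the much weaker $\mathbb Z$-purity.
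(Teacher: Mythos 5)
Your proof is correct and matches the paper's argument essentially step for step: both unfold $A\cdot z=b$ into a monomial system on the orbit coordinates $(\sigma^h(z_j))_{j,h}$, take $G$ to be the corresponding algebraic subgroup (your $\ker\phi$ equals the paper's $\set{y \mid p_i(y)=1 \text{ for all } i}$), and invoke \Cref{rem:OxpureinKx} to locate a coset representative $t$ in $(\mathcal O^\times)^{n(\ell+1)}$. Your explicit observation that only $\mathbb Z$-purity (not $\mathbb Z[\sigma]$-purity) is needed is exactly the point the paper is implicitly making by citing that remark.
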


\begin{proof}
	For each entry $a_{ij}$ of $A$, there are $\ell_{ij} \in \mathbb{Z}_{\geq 0}$  and $m_{ij0},\ldots, m_{ij\ell_{ij}} \in \mathbb{Z}$ such that 
	\[
	a_{ij}=m_{ij\ell_{ij}}\sigma^{\ell_{ij}}+m_{ij(\ell_{ij}-1)}\sigma^{\ell_{ij}-1}+\ldots+m_{ij1}\sigma+m_{ij0}.
	\]
	Let $\ell\coloneqq \max_{ij}\ell_{ij}$. For $i\in [1,d]$, let $C_i \in \operatorname{Mat}_{n \times (\ell+1)}(\mathbb{Z})$ be the matrix
	\[
	C_i=(c_{ijh})_{j\in[1,n], h\in[0, \ell]}
	\]
	in which the entry $c_{ijh}$ is the (integer, possibly $0$) coefficient of $\sigma^{h}$ in the entry $a_{ij}$ of $A$ (see \Cref{eg:C} below). For $i\in[1,d]$, let $p_i((y_{jh})_{j\in[1,n],h\in[0,\ell]})$ be the monomial 
	\[p_i=\prod_{j=1}^n \prod_{h=0}^\ell y_{jh}^{c_{ijh}}. \] 
	
	Let $G\coloneqq \{y \in \mathbb{G}_{\mathrm{m}}^{n(\ell+1)} \mid \forall i\in[1, d]\; p_i(y)=1   \}$ and $H\coloneqq \{y \in \mathbb{G}_{\mathrm{m}}^{n(\ell+1)} \mid \forall i\in[1, d]\; p_i(y)=b_i\}$. Then $G$ is an algebraic subgroup of $\mathbb{G}_{\mathrm{m}}^{n(\ell+1)}$, and $H$ is a coset of $G$. By construction, for all $z \in (K^\times)^n$, we have  $A \cdot z=b$ if and only if $p_i(\orb_{0,\ell}(z))=b_i$ for all $i\in[1, d]$, if and only if $(\sigma^i(z_j))_{i\in[0,\ell],j\in[1,n]} \in H$. 
	
	Since $A \cdot \beta =b$, we have that $(\orb_{0,\ell}(\beta)) \in H(K)$. 
Moreover, by \Cref{rem:OxpureinKx}, there is some $t \in (\mathcal{O}^\times)^{n(\ell+1)}$ such that $H=t \cdot G$.
\end{proof}

\begin{eg}\label{eg:C}
	Let $A=\begin{pmatrix} 1 -\sigma\\ 1-\sigma^2 \end{pmatrix}$ and $b=\begin{pmatrix} 1 \\ 1 \end{pmatrix}$. Then the matrices $C_1$ and $C_2$ are $C_1 = \begin{pmatrix} 1 & -1 & 0 \end{pmatrix}$ and $C_2=\begin{pmatrix} 1 & 0 &-1 \end{pmatrix}$, and for all $a \in K^\times$ we have that $A \cdot a=b$ if and only if $(a,\sigma(a),\sigma^2(a))$ lies in the algebraic subgroup of $(K^\times)^3$ defined by $z_1z_2^{-1}=1\land z_1z_3^{-1}=1$.
\end{eg}

\begin{lemma}\label{lem:monomial-map}
	Let $\psi\from \mathbb{G}_{\mathrm{m}}^{n_1} \rightarrow \mathbb{G}_{\mathrm{m}}^{n_2}$ be a monomial map, defined by $\psi(z)=(z^{\mathbf{u}_1},\ldots,z^{\mathbf{u}_{n_2}})$ for appropriate vectors $\mathbf{u}_1,\ldots,\mathbf{u}_{n_2} \in \mathbb{Z}^{n_1}$.
	
	There is a monomial map $\xi\from \mathbb{G}_{\mathrm{m}}^{n_1} \rightarrow \mathbb{G}_{\mathrm{m}}^{n_2}$, defined by $\xi(z)=(z^{\mathbf{v}_1},\ldots,z^{\mathbf{v}_{n_2}})$, such that $\ker\xi$ is the connected component of the identity of $\ker\psi$ and that each $\mathbf{v}_i$ is a rational multiple of an element of the group generated by $\mathbf{u}_1,\ldots,\mathbf{u}_{n_1}$.
\end{lemma}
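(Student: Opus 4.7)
The plan is to translate the problem to a question about subgroups of the character lattice $\mathbb{Z}^{n_1}$, using the standard anti-equivalence between algebraic subgroups of $\mathbb{G}_\mathrm{m}^{n_1}$ and subgroups of $\mathbb{Z}^{n_1}$. Writing $\Lambda\coloneqq \langle \mathbf{u}_1,\ldots,\mathbf{u}_{n_2}\rangle \leq \mathbb{Z}^{n_1}$, I first observe that $\ker\psi=\{z\in\mathbb{G}_\mathrm{m}^{n_1}\mid z^{\mathbf{u}_i}=1\text{ for all } i\leq n_2\}=\{z\mid z^\lambda=1\text{ for all } \lambda\in\Lambda\}$ is precisely the algebraic subgroup of $\mathbb{G}_\mathrm{m}^{n_1}$ corresponding to $\Lambda$ under that bijection.

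Recalling that an algebraic subgroup of $\mathbb{G}_\mathrm{m}^{n_1}$ is connected if and only if the corresponding quotient of $\mathbb{Z}^{n_1}$ is torsion-free --- equivalently, if and only if the corresponding subgroup is saturated in $\mathbb{Z}^{n_1}$ --- the identity component of $\ker\psi$ corresponds to the saturation
\[
\Lambda^{\mathrm{sat}}\coloneqq\set{\mathbf{v}\in\mathbb{Z}^{n_1}\mid \exists m\in\mathbb{Z}_{>0},\ m\mathbf{v}\in\Lambda}.
\]
The group $\Lambda^{\mathrm{sat}}$ is free abelian of the same rank $r$ as $\Lambda$, and since $\Lambda$ is generated by $n_2$ elements we have $r\leq n_2$.

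I then pick a $\mathbb{Z}$-basis $\mathbf{w}_1,\ldots,\mathbf{w}_r$ of $\Lambda^{\mathrm{sat}}$ and define $\mathbf{v}_i\coloneqq \mathbf{w}_i$ for $i\le r$ and $\mathbf{v}_i\coloneqq \mathbf 0$ for $r<i\le n_2$. The $\mathbf{v}_i$ still generate $\Lambda^{\mathrm{sat}}$, so the kernel of the associated monomial map $\xi(z)=(z^{\mathbf{v}_1},\ldots,z^{\mathbf{v}_{n_2}})$ is the algebraic subgroup corresponding to $\Lambda^{\mathrm{sat}}$, namely $(\ker\psi)^\circ$, as required. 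For the rational-multiple condition, each $\mathbf{w}_i\in\Lambda^{\mathrm{sat}}$ admits by definition a positive integer $n_i$ with $n_i\mathbf{w}_i\in\Lambda$, so $\mathbf{w}_i=\tfrac{1}{n_i}(n_i\mathbf{w}_i)$ is a rational multiple of an element of $\Lambda$; and $\mathbf 0 = 0\cdot \mathbf{u}_1$ is trivially such a multiple.

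There is no substantial obstacle here: the only nontrivial input is the classical dictionary between closed subgroups of $\mathbb{G}_\mathrm{m}^{n_1}$ and subgroups of its character lattice, together with the identification of connectedness with saturation of the corresponding sublattice. The rest is linear algebra over $\mathbb{Z}$ plus a padding argument to match the target dimension $n_2$.
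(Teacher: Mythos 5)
Your proof is correct and follows essentially the same route as the paper: both pass to the character lattice $\Lambda=\langle\mathbf{u}_1,\ldots,\mathbf{u}_{n_2}\rangle\leq\mathbb{Z}^{n_1}$, identify the identity component of $\ker\psi$ with the saturation (primitive lattice of the same rank) of $\Lambda$, and pick generators for it; the paper simply cites a reference for this, while you spell out the saturation, the rank bound $r\leq n_2$, and the padding by zero vectors explicitly.
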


\begin{proof}
It suffices to let $\bla {\mathbf{v}}1,{n_2}$ be generators of the unique primitive lattice containing the one generated by $\bla {\mathbf{u}}1,{n_1}$ and of the same rank. See~\cite[Lemma~A.7]{scheferCountingTorsionPoints2025}.
\end{proof}

\begin{lemma}\label{irr}
	Let $G \leq \mathbb{G}_{\mathrm{m}}^n$ be a connected algebraic subgroup. For all $t \in (\mathcal{O}^\times)^n$, the initial variety $\ini_0(t \cdot G)$ is irreducible.
\end{lemma}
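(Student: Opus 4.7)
The plan is to reduce via a monomial change of coordinates to a coordinate subtorus coset, for which the initial degeneration is transparent. First I would invoke \Cref{lem:monomial-map} (applied to any monomial map cutting out $G$) together with connectedness of $G$ to write $G = \ker\xi$ for a monomial map $\xi\from\mathbb{G}_{\mathrm{m}}^n \to \mathbb{G}_{\mathrm{m}}^r$, $z \mapsto (z^{\mathbf{v}_1}, \ldots, z^{\mathbf{v}_r})$, whose exponents span a saturated sublattice of $\mathbb{Z}^n$. Extending $\mathbf{v}_1, \ldots, \mathbf{v}_r$ to a $\mathbb{Z}$-basis $\mathbf{v}_1, \ldots, \mathbf{v}_n$ of $\mathbb{Z}^n$ yields a unimodular integer matrix, and hence a monomial automorphism $\phi\from \mathbb{G}_{\mathrm{m}}^n \to \mathbb{G}_{\mathrm{m}}^n$, $z \mapsto (z^{\mathbf{v}_1}, \ldots, z^{\mathbf{v}_n})$. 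Because the defining matrix is unimodular, $\phi^*$ and its inverse are both monomial with unit coefficients, so $\phi^*$ is an $\mathcal{O}$-algebra automorphism of $K[x^{\pm 1}]$ that restricts to an automorphism of $\mathcal{O}[x^{\pm 1}]$ and commutes with $\res$.

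The crux of the argument is that these properties, combined with \Cref{lemma:initial-of-pol-over-O}, yield the identity $\ini_0(\phi^*(J)) = \phi_k^*(\ini_0(J))$ for every ideal $J \subseteq K[y^{\pm 1}]$, where $\phi_k$ denotes the reduction of $\phi$ modulo the maximal ideal (itself a monomial automorphism of $\mathbb{G}_{\mathrm{m}}^n$ over $k$). Since $\phi_k$ is an isomorphism of varieties over $k$ and hence preserves irreducibility, it suffices to show that $\ini_0(V')$ is irreducible for $V' \coloneqq \phi(t \cdot G)$.

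In the $y$-coordinates, $V'$ is cut out by $y_i = t^{\mathbf{v}_i}$ for $i = 1, \ldots, r$, and its ideal in $K[y^{\pm 1}]$ is generated by the binomials $y_i - t^{\mathbf{v}_i}$, each having both coefficients in $\mathcal{O}^\times$. A routine division argument, available because the $t^{\mathbf{v}_i}$ are units in $\mathcal{O}$, shows that $I(V') \cap \mathcal{O}[y^{\pm 1}]$ is itself generated by these binomials; hence by \Cref{lemma:initial-of-pol-over-O}, $\ini_0(I(V'))$ is generated by the residues $y_i - \overline{t^{\mathbf{v}_i}}$, and the corresponding initial variety is the coset $\{\overline{t^{\mathbf{v}_1}}\} \times \cdots \times \{\overline{t^{\mathbf{v}_r}}\} \times \mathbb{G}_{\mathrm{m}}^{n-r}$, manifestly irreducible. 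The single step requiring genuine care is the compatibility $\ini_0 \circ \phi^* = \phi_k^* \circ \ini_0$, but this is essentially formal once the $\mathcal{O}$-algebra automorphism property of $\phi^*$ and its commutation with $\res$ are in hand, after which one simply applies \Cref{lemma:initial-of-pol-over-O} on both sides.
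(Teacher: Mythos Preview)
Your proof is correct and takes a genuinely different route from the paper's.

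The paper argues by containment and dimension: it writes $G$ as the subgroup attached to a primitive lattice $\Lambda$, observes that $\ini_0(t\cdot G)$ is contained in $\res(t)\cdot H$ where $H$ is the subgroup of $\mathbb{G}_{\mathrm m}^n$ over $k$ defined by the \emph{same} lattice, and then invokes \cite[Lemma~3.2.6]{MS} to match dimensions, forcing the full character lattice $\Lambda'$ of $H$ to equal $\Lambda$; primitivity then gives connectedness of $H$ and hence irreducibility of the coset.

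Your approach instead linearises the problem: you use the primitive lattice to build a unimodular monomial automorphism $\phi$ sending $t\cdot G$ to a coordinate coset $\{c_1\}\times\cdots\times\{c_r\}\times\mathbb{G}_{\mathrm m}^{n-r}$ with $c_i\in\mathcal O^\times$, check via \Cref{lemma:initial-of-pol-over-O} that $\ini_0$ commutes with the induced $\mathcal O$-algebra automorphism, and compute the initial ideal of the coordinate coset by hand. The division argument you sketch (reducing $y_i$ to $c_i$ one variable at a time, using $c_i\in\mathcal O^\times$ so that $y_i^{-1}\equiv c_i^{-1}$ modulo $(y_i-c_i)$ with cofactors in $\mathcal O[y^{\pm1}]$) does indeed show $I(V')\cap\mathcal O[y^{\pm1}]$ is generated by the binomials. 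Your argument is more explicit and avoids the external dimension lemma, at the cost of carrying the coordinate change through; the paper's argument is shorter but leans on \cite{MS} and leaves the final passage from $\Lambda=\Lambda'$ to irreducibility of $\ini_0(t\cdot G)$ somewhat implicit. One small remark: strictly speaking, \Cref{lem:monomial-map} as stated does not assert that the exponents of $\xi$ span a primitive lattice, only that they are rational multiples of the original ones; you are really using the description in its proof (or, equivalently, the standard fact that connected subtori of $\mathbb{G}_{\mathrm m}^n$ correspond to primitive sublattices), so it would be cleaner to cite that directly.
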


\begin{proof}
By~\cite[Theorem~A.13]{scheferCountingTorsionPoints2025} and connectedness, $G$ is defined by a set of equations of the form $z^{\mathbf{u}}-1=0$ and the associated lattice $\Lambda\subseteq \mathbb Z^n$ is primitive. The subvariety $\ini_0(t\cdot G)$ of $\mathbb G_\mathrm{m}^n(k)$ is contained in the coset by $\res(t)$ of the group $H$ defined by the equations $\ini_0(z^{\mathbf{u}} -1)=0$, and it is easily checked that $\ini_0(z^{\mathbf{u}} -1)=z^{\mathbf{u}} -1$. The lattice $\Lambda'$ associated to $H$ must satisfy $\Lambda'\supseteq \Lambda$. Moreover,  $\ini_0(t\cdot G)\subseteq \res(t)\cdot H$, and  by~\cite[Lemma~3.2.6]{MS}\footnote{This is for rank $1$ algebraically closed valued fields, but note that every algebraic group in this lemma is defined over the prime field.}, $\dim(\ini_0(G))=\dim(G)$. It follows that $\dim(H)=\dim(G)$, hence $\Lambda'$ and $\Lambda$ have the same rank, and because $\Lambda$ is primitive $\Lambda=\Lambda'$.
\end{proof}

\begin{defin}\*
  \begin{enumerate}
  \item We define $\sigma_\to$ as the map sending a Laurent polynomial  $f= \sum c_{ijh}y_{jh}^i\in K[y_{jh}^{\pm1}\mid j\in[1,n], h \in \mathbb{Z}]$ to the Laurent polynomial
    \[
      \sigma_\to(f)\coloneqq \sum \sigma(c_{ijh}) y_{j(h+1)}^i.
    \]
  \item   We write $\sigma_\to^p$ for $(\sigma_\to)^p$.
  \end{enumerate}
\end{defin}
Note that $\sigma_\to$ is invertible and that $f(a)=0$ if and only if $(\sigma_\to(f))(\sigma(a))=0$.

\begin{lemma}\label{prop:ideals-of-subgroups}
	Let $(K,\sigma)$ be an algebraically closed difference field, and let $I \subseteq K[y_{jh}^{\pm1}\mid j\in[1,n], h \in[0, \ell]]$ be a proper ideal such that the variety $V(I)$ is a coset of an algebraic subgroup of $\mathbb{G}_{\mathrm{m}}^{n(\ell+1)}$. Assume moreover that there is $\alpha\in (K^\times)^n$ such that, for all $r\le s\in \mathbb Z$, we have that $\orb_{r,s}(\alpha) \in V(\sum_{p \in [r,s]} \sigma_\to^p(I))$.
	
	For each $r \leq s \in \mathbb{Z}$ there is an ideal $J_{r,s}$ of $K[y_{jh}^{\pm1}\mid j\in[1,n], h \in[r,s+ \ell]]$ such that $J_{r,s} \supseteq \sum_{p \in [r,s]} \sigma_\to^p(I)$ and the set $\set{J_{r,s}}_{[r,s] \in \mathbb{Z}}$ satisfies the following properties.
	\begin{enumerate}
		\item\label{point:irreducible} For all $r\leq s \in \mathbb{Z}$, the variety $H_{r,s}\coloneqq V(J_{r,s})$ is a coset of a connected algebraic subgroup of $\mathbb{G}_{\mathrm{m}}^{n(s-r+\ell+1)}$.
		\item\label{point:sigmainvariant} For all $r \leq s \in \mathbb{Z}$, we have $H_{r+1,s+1}=\sigma(H_{r,s})$.
		\item\label{point:overO} If $v$ is a valuation on $K$ commuting with $\sigma$, and $V(I)$ intersects $(\mathcal{O}^\times)^{n(\ell+1)}$, then each $H_{r,s}$ intersects $(\mathcal{O}^\times)^{n(s-r+\ell+1)}$.
	\end{enumerate}
\end{lemma}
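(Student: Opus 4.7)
The plan is to take $H_{r,s}$ as the connected component of $C_{r,s}\coloneqq V(\sum_{p \in [r,s]} \sigma_\to^p(I))$ containing the $\sigma$-orbit of $\alpha$, and to verify the three properties afterwards. First I would observe that, since $V(I)$ is a coset $\beta G_0$ of an algebraic subgroup $G_0 \leq \mathbb{G}_{\mathrm{m}}^{n(\ell+1)}$, each ideal $\sigma_\to^p(I)$ extended to the larger polynomial ring defines a cylinder coset of $\sigma^p(G_0)$ in $\mathbb{G}_{\mathrm{m}}^{n(s-r+\ell+1)}$. Intersecting these cylinders shows that $C_{r,s}$ is itself a coset of an algebraic subgroup $G_{r,s}$, nonempty by the orbit hypothesis; I then take $J_{r,s}$ to be the defining ideal of $H_{r,s}\coloneqq \orb_{r,s+\ell}(\alpha) \cdot G_{r,s}^0$, where $G_{r,s}^0$ is the identity component.

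Property~\ref{point:irreducible} is immediate from this construction. Property~\ref{point:sigmainvariant} holds because $\sigma$, acting componentwise on coordinates while shifting the index by $1$, is a bijection carrying $C_{r,s}$ to $C_{r+1,s+1}$, mapping $\orb_{r,s+\ell}(\alpha)$ to $\orb_{r+1,s+\ell+1}(\alpha)$, and preserving the decomposition into connected components; in particular $\sigma(G_{r,s}^0)=G_{r+1,s+1}^0$, whence $\sigma(H_{r,s})=H_{r+1,s+1}$.

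The main obstacle is property~\ref{point:overO}. Under its hypothesis I may assume $\beta \in (\mathcal{O}^\times)^{n(\ell+1)}$. If $G_0$ is defined by a character lattice $\Lambda_0\subseteq \mathbb{Z}^{n(\ell+1)}$, then $G_{r,s}$ is defined by the lattice $\Lambda_{r,s}$ that is the sum of the images of $\Lambda_0$ under the index-shift embeddings at each $p \in [r,s]$, while its identity component corresponds to the primitive hull $\widetilde\Lambda_{r,s}$. Two applications of \Cref{thm:fundamental-arbitrary-rank} give $v(G_{r,s})=\Lambda_{r,s}^\perp$ and $v(G_{r,s}^0)=\widetilde\Lambda_{r,s}^\perp$ in $\Gamma^{n(s-r+\ell+1)}$, and these coincide because $\Gamma$ is torsion-free. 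From $\orb_{0,\ell}(\alpha) \in V(I) = \beta G_0$ together with $v(\beta) = 0$ I get $\orb_{0,\ell}(v(\alpha)) \in \Lambda_0^\perp$, and since $\sigma$ preserves $\Lambda_0^\perp$ it follows that $v(\orb_{r,s+\ell}(\alpha)) = \orb_{r,s+\ell}(v(\alpha)) \in \Lambda_{r,s}^\perp = v(G_{r,s}^0)$. Applying \Cref{thm:fundamental-arbitrary-rank} once more, to $G_{r,s}^0$ and the tuple $-v(\orb_{r,s+\ell}(\alpha))$, yields $g\in G_{r,s}^0$ with $v(g)=-v(\orb_{r,s+\ell}(\alpha))$; then $\orb_{r,s+\ell}(\alpha)\cdot g$ lies in $H_{r,s}$ and has all coordinates in $\mathcal{O}^\times$.
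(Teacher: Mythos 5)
Your construction proves the three listed conclusions, but it departs from the paper's proof in a way that matters for how the lemma is used. You set $H_{r,s}$ to be the coset of the identity component $G^0_{r,s}$ through $\orb_{r,s+\ell}(\alpha)$ inside $V(I_{r,s})$ with $I_{r,s}\coloneqq\sum_{p\in[r,s]}\sigma_\to^p(I)$. The paper instead first replaces $I_{r,s}$ by the stabilised elimination ideal
\[
I'_{r,s}\coloneqq\sum_{[r_0,s_0]\supseteq[r,s]}\bigl(I_{r_0,s_0}\cap K[y_{jh}^{\pm 1}\mid j\in[1,n],\ h\in[r,s+\ell]]\bigr),
\]
which by noetherianity equals $I_{r_1,s_1}\cap K[\ldots]$ for any sufficiently large interval $[r_1,s_1]$, and only then takes the component through the orbit. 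Since $I'_{r,s}\supseteq I_{r,s}$, the paper's $H_{r,s}$ is in general strictly smaller than yours; this extra shrinking is what guarantees that for $[r,s]\subseteq[r',s']$ the coordinate projection of $H_{r',s'}$ is Zariski dense in $H_{r,s}$. That coherence is precisely what makes the type $\Theta$ built in the proof of \Cref{thm_purity} finitely satisfiable: it demands that each finite block be a $K$-generic point of the corresponding $H_{r,s}$, and a generic point of a longer block must therefore restrict to a generic point of every sub-block.

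With your $H_{r,s}$ this coherence can fail. Take $n=1$, $\ell=2$, and $G_0$ cut out by the characters $y_0y_2^{-1}$ and $y_1$, so $V(I)=\{(a,c,a)\mid a\in K^\times\}$ for some $c\in\mathcal O^\times$; choosing $c$ with $\sigma^2(c)=c$ and $\alpha=\sigma(c)$ satisfies the orbit hypothesis. Then $I_{0,1}$ already cuts out a single point of $\mathbb{G}_{\mathrm{m}}^4$, so the projection of $V(I_{0,1})$ to the coordinates $h\in[0,2]$ is one point, whereas your $H_{0,0}=V(I)$ is one-dimensional; a generic point of your $H_{0,1}$ cannot restrict to a generic point of your $H_{0,0}$. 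So your family verifies the lemma's statement but is not the family the paper later invokes, and the elimination step you skipped is what supplies the needed projective coherence. (Your tropical-geometry argument for property~\ref{point:overO}, passing through $v(G^0_{r,s})=\Lambda_{r,s}^\perp$ and \Cref{thm:fundamental-arbitrary-rank}, is on the other hand a perfectly good alternative to the paper's route via binomial generators over $\mathcal O^\times$ and purity of $\mathcal O^\times$ in $K^\times$.)
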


\begin{proof}
 For each interval $[r,s] \subseteq \mathbb{Z}$, define ideals $I_{r,s}$ and $I'_{r,s}$ of $K[y_{jh}^{\pm 1} \mid j\in[1,n], h\in[r,s+\ell]]$  by \[I_{r,s}\coloneqq \sum_{p \in [r,s]} \sigma_\to^p(I)\] and \[I'_{r,s}\coloneqq  \sum_{[r_0,s_0] \supseteq [r,s]} (I_{r_0,s_0} \cap K[y_{jh}^{\pm1}\mid j\in[1,n], h\in[r,s+\ell]]).\] 
By noetherianity of $K[y_{jh}^{\pm1}\mid j\in[1,n], h\in[r,s+\ell]]$, this sum stabilises, so given $[r,s]$, there is $[r_0,s_0]$ such that for all $[r_1,s_1] \supseteq [r_0,s_0]$ we have
 \begin{multline*}
   I_{r_1,s_1} \cap K[y_{jh}^{\pm1}\mid j\in[1,n], h\in[r,s+\ell]]\\=I_{r_0,s_0} \cap K[y_{jh}^{\pm1}\mid j\in[1,n], h\in[r,s+\ell]]=I'_{r,s},
 \end{multline*}
hence $V (I_{r,s}')$ is still a coset of an algebraic subgroup of $\mathbb{G}_{\mathrm{m}}^{n(s-r+\ell+1)}$.	Let $J_{r,s}$ be the prime ideal containing $I'_{r,s}$ of polynomials vanishing on the irreducible component $H_{r,s}$ of $V(I'_{r,s})$ which contains the point $\orb_{r,s}(\alpha)$. Then, for all $r \leq s$, the irreducible variety $H_{r,s}$ is again a coset of an algebraic subgroup of $\mathbb{G}_{\mathrm{m}}^{n(s-r+\ell+1)}$, proving~\ref{point:irreducible}, and by construction $H_{r+1,s+1}=\sigma(H_{r,s})$, proving~\ref{point:sigmainvariant}. 
	
	Finally, for~\ref{point:overO} note that, under these assumptions, $I$ is generated by polynomials of the form $x^\mathbf{u}-b$, with $b\in \mathcal O^\times$. It follows that so is $I_{r,s}'$, so $V(I_{r,s}')$ has a point with all coordinates in $\mathcal O^\times$ by \Cref{rem:OxpureinKx}. Different irreducible components of a subgroup of $\mathbb G_\mathrm{m}^N$ are translates of each other by a root of unity. As roots of unity have valuation $0$, if one of them intersects $(\mathcal O^\times)^N$, then so do all of them, and the conclusion follows.
\end{proof}

\begin{thm}\label{thm_purity}
	Suppose $K$ is an existentially closed valued difference field. Then $\mathcal O^\times$ is a pure $\mathbb{Z}[\sigma]$-submodule of $K^\times$.
\end{thm}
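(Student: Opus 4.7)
The plan is to use existential closedness to reduce the purity statement to constructing a suitable valued difference field extension of $K$, then build this extension by exploiting the $\sigma$-equivariant refinement of the equations given by \Cref{prop:ideals-of-subgroups}. Given $A \in \operatorname{Mat}_{d \times n}(\mathbb Z[\sigma])$ and $b \in (\mathcal O^\times)^d$ with a solution $\beta \in (K^\times)^n$, the condition $A \cdot x = b \wedge \bigwedge_j v(x_j) = 0$ is a quantifier-free $\mathcal L_\sigma$-formula $\phi(x,c)$ with parameters $c \in K$; by existential closedness of $K$, it suffices to produce a valued difference field extension $(L, v_L, \sigma_L) \supseteq (K, v, \sigma)$ containing a witness $\eta \in (\mathcal O_L^\times)^n$.

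First I would apply \Cref{lemma:matrix-to-coset} to rewrite $A \cdot z = b$ as $\orb_{0,\ell}(z) \in t \cdot G$ for suitable $\ell$, algebraic subgroup $G \le \mathbb G_{\mathrm m}^{n(\ell+1)}$, and $t \in (\mathcal O^\times)^{n(\ell+1)}$, with $\orb_{0,\ell}(\beta) \in t \cdot G$. Since the components of $G$ are translates of $G^\circ$ by roots of unity (which have valuation zero), after replacing $G$ by $G^\circ$ and $t$ by an appropriate valuation-zero translate we may assume $G$ is connected and $\orb_{0,\ell}(\beta) \in t \cdot G$. Then \Cref{prop:ideals-of-subgroups} applied with $I$ the defining ideal of $t \cdot G$ and $\alpha = \beta$ yields a $\sigma$-equivariant family of prime ideals $(J_{r,s})$ whose varieties $H_{r,s}$ are cosets of connected algebraic subgroups of $\mathbb G_{\mathrm m}^{n(s-r+\ell+1)}$, satisfy $H_{r+1,s+1} = \sigma(H_{r,s})$, and each intersect $(\mathcal O^\times)^{n(s-r+\ell+1)}$.

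To construct the extension I would view $\bigcup_{r,s} J_{r,s}$ as a system of Laurent difference polynomial equations in $y_1, \ldots, y_n$ over $K$, generating a $\sigma$-stable ideal $\bar J$ in the ring $R = K[y_{j,h}^{\pm 1} : j \in [1,n], h \in \mathbb Z]$ (with the $\sigma$-action shifting $h$). This $\bar J$ is proper: the evaluation $y_{j,h} \mapsto \sigma^h(\beta_j)$ annihilates every $J_{r,s}$, since $\orb_{r,s+\ell}(\beta) \in H_{r,s}$. A Zorn-type argument then extends $\bar J$ to a $\sigma$-prime ideal of $R$, so that its quotient embeds into a difference field $L_0 \supseteq K$ furnished with a realisation $\eta \in L_0^n$ of the whole system. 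By \Cref{irr} the initial ideal $\ini_0(J_{0,0})$ is proper, and \Cref{thm:fundamental-arbitrary-rank} then produces a point $h^* \in H_{0,0}(K) \cap (\mathcal O^\times)^{n(\ell+1)}$. I would use $h^*$ to specialise the coordinate ring of the orbit of $\eta$ to the residue field, extending the resulting place across $L_0$ to obtain a valuation $v_L$ with $v_L(\eta_j) = 0$; its $\sigma$-equivariance follows from \Cref{prop:sigma-commutes-ini0} combined with $H_{r+1, s+1} = \sigma(H_{r, s})$. Existential closedness of $K$ then transfers the witness back to $K$.

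The main obstacle is the simultaneous control of algebraic and valuation-theoretic data: producing a valuation on $L_0$ that is both $\sigma_L$-equivariant and forces $v_L(\eta_j) = 0$. Both requirements rely on the full strength of \Cref{prop:ideals-of-subgroups} --- its $\sigma$-equivariance together with the existence of valuation-zero points in each $H_{r,s}$ --- combined with \Cref{thm:fundamental-arbitrary-rank} and \Cref{prop:sigma-commutes-ini0}. A subsidiary technicality is the extension of $\bar J$ to a $\sigma$-prime ideal, which is not automatic in general difference algebra but should be tractable given the very concrete coset-of-connected-subgroup structure of the $H_{r,s}$. A compactness/partial-type approach using only a saturated existentially closed extension of $K$ runs into the same difficulty: realising the partial type ``$\eta \in (\mathcal O^\times)^n$ and $A\cdot\eta=b$'' eventually reduces to exhibiting some valued difference field extension in which the system is satisfied, which is precisely the construction sketched above.
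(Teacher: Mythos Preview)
Your proposal identifies the right ingredients (the coset reformulation, the family $H_{r,s}$, the initial-variety machinery), but there is a genuine gap in the construction of the $\sigma$-equivariant valuation on $L_0$. The passage ``specialise the coordinate ring of the orbit of $\eta$ to the residue field, extending the resulting place across $L_0$'' does not explain why the resulting valuation should be preserved by $\sigma_L$. Even granting \Cref{prop:sigma-commutes-ini0}, that result only tells you that $\sigma$ permutes the initial ideals compatibly; it does not tell you that a \emph{particular} valuation extension, built by extending some specialisation to a possibly non-generic $\eta$, is $\sigma$-stable. Likewise, extending $\bar J$ to a $\sigma$-prime ideal is a genuine obstacle, not a side technicality: Zorn only gives $\sigma$-invariant primes under special hypotheses, and the coset structure here does not make this automatic.

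Your dismissal of the compactness route is misplaced; the paper's proof does proceed via a partial type $\Theta$, and the idea you are missing is to strengthen the conditions. Rather than asking merely for $v(y_{jh})=0$, one also demands that $(y_{jh})_{j,h\in[r,s+\ell]}$ be a $K$-generic point of $H_{r,s}$ \emph{and} that $(\res(y_{jh}))$ be a $k$-generic point of the irreducible variety $\ini_0(H_{r,s})$ (irreducibility by \Cref{irr}). Finite satisfiability comes from \Cref{pr:zardense}. The payoff is that $\Theta$ becomes a \emph{complete} quantifier-free $\mathcal L_{\mathrm{vf}}$-type over $K$: in a coset of a connected subtorus there is a unique type of valuation-zero tuple with $k$-generic residue. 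Combined with \Cref{prop:sigma-commutes-ini0} and $\sigma(H_{r,s})=H_{r+1,s+1}$, completeness forces $\sigma_\to$ of the quantifier-free type of $(\alpha_{jh})_{h\in[r,s+\ell]}$ to equal that of the shifted tuple, so $\sigma$ extends to the realising field by $\alpha_{jh}\mapsto\alpha_{j(h+1)}$. This dissolves both of your obstacles at once: no $\sigma$-prime extension is needed, and the valued-field and difference-field structures are built simultaneously rather than sequentially.
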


\begin{proof}
	Let $A \in \operatorname{Mat}_{d \times n}(\mathbb{Z}[\sigma])$, let $b \in (\mathcal{O}^\times)^d$, let $\beta \in (K^\times)^n$ be such that $A \cdot \beta=b$, and denote by $\ell$ the highest order of an iterate of $\sigma$ appearing in the matrix $A$.
	
	Let $t \cdot G$ be the coset of an algebraic subgroup $G \leq \mathbb{G}_{\mathrm{m}}^{n(\ell+1)}$ obtained from  \Cref{lemma:matrix-to-coset}, and let $I$ be the ideal of polynomials in the ring $K[y_{jh}^{\pm 1} \mid j\in[1,n], h\in[0,\ell]]$ which vanish on $t \cdot G$. Let $\set{H_{r,s}}_{[r,s] \subseteq \mathbb{Z}}$ be the set of varieties obtained by \Cref{prop:ideals-of-subgroups}. For each $[r,s]$, as $H_{r,s}$ is irreducible, by \Cref{irr} so is $\ini_0(H_{r,s})$.
	
	Let $\Theta$ be the set of formulas in the variables $(y_{jh})_{j\in[1,n], h \in \mathbb{Z}}$, which contains, for each finite subtuple $(y_{jh})_{j\in[1,n], h\in[r,s+\ell]}$, the following.
	\begin{enumerate}
		\item The formulas saying that $(y_{jh})_{j\in[1,n], h\in[r,s+\ell]}$ is  a $K$-generic point\footnote{Recall that a point of an irreducible algebraic variety is $K$-generic iff it does not lie in any proper subvariety definable over $K$.} of the irreducible  variety $H_{r,s}$, all of whose coordinates have valuation $0$.
		\item The formulas saying that $(\res(y_{jh}))_{j\in[1,n], h\in[r,s+\ell]}$ is a $k$-generic point of the irreducible variety $\ini_0(H_{r,s})$.
	\end{enumerate}
	Let $\Delta$ be a finite fragment of $\Theta$. Without loss of generality, $\Delta$ says that a finite subtuple of variables $(y_{jh})_{j\in[1,n],h\in[r,s+\ell]}$ lies in a certain Zariski open dense subset $H'$ of $H_{r,s}$, that all of its coordinates have valuation $0$, and that its residue lies in a certain Zariski open dense subset $H''$ of $\ini_0(H_{r,s})$. Since the residue field $k$ is algebraically closed, there is a point $(a_{jh})_{j\in[1,n],h\in[r,s+\ell]} \in H''(k)$, and by \Cref{pr:zardense} the set of points of valuation $0$ in $H_{r,s}(K)$ with residue $(a_{jh})_{j\in[1,n],h\in[r,s+\ell]}$ is Zariski dense in $H_{r,s}$, so in particular there is such a point in $H'(K)$. Therefore $\Theta$ is finitely satisfiable, so by the Compactness Theorem it is consistent.

	Hence $\Theta$ is realised in some valued field extension $(K',v')$ of $(K,v)$ by some infinite tuple $(\alpha_{jh})_{j\in[1,n], h \in \mathbb{Z}}$. Without loss of generality we assume $K'=K((\alpha_{jh})_{j\in[1,n], h \in \mathbb{Z}})$.  By using  \Cref{prop:sigma-commutes-ini0}, we see that $\sigma(\ini_0(H_{r,s}))=\ini_0(\sigma(H_{r,s}))=\ini_0(H_{r+1,s+1})$. 
 Because $H_{r,s}$ is a coset of an algebraic subgroup isomorphic (as an algebraic group) over the prime field to a power of $\mathbb G_\mathrm{m}$, and because in every power of $\mathbb G_\mathrm{m}$ there is a unique type of a tuple of valuation $0$ with $k$-generic residue, it follows that $\Theta$ is a complete quantifier-free $\mathcal L_{\mathrm{vf}}$-type over $K$.
        
        Therefore, extending the action of $\sigma_\to$ from polynomials to formulas in the obvious way, as $(\alpha_{jh})_{j\in[1,n], h \in \mathbb{Z}}$ realises $\Theta$, for all $r\leq s \in \mathbb{Z}$ we have, in $\mathcal L_{\mathrm{vf}}$, \[\sigma_\to \left( \qftp((\alpha_{jh})_{j\in[1,n], h\in[r,s+\ell]}/K)\right)=\qftp((\alpha_{jh})_{j\in[1,n], h\in[r+1,s+\ell+1]}/K).\]

	Therefore we may extend $\sigma$ to an automorphism $\sigma'$ of $(K',v')$ by putting $\sigma'(\alpha_{jh})\coloneqq \alpha_{j(h+1)}$ for each $j\in[1,n]$ and $h \in \mathbb{Z}$. Then the tuple $(\alpha_{10},\ldots,\alpha_{n0}) \in (K'^\times)^n$ is a solution of the equation $A \cdot z=b$, whose coordinates have valuation $0$. Hence,  by existential closedness there is a solution with coordinates of valuation $0$ in $K$.
\end{proof}

\begin{co}\label{co:section} Let $K$ be an existentially closed valued difference field. If $\mathcal O^\times$ is positively $\aleph_1$-saturated as a $\mathbb Z[\sigma]$-module (e.g.\ if $K$ is existentially $\aleph_1$-saturated), then $K$ admits a cross-section $s\from \Gamma \to K$ satisfying $s\circ\sigma=\sigma\circ s$, hence an angular component $\ac\from K\to k$ satisfying $\mathrm{ac}\circ\sigma=\sigma\circ\mathrm{ac}$. In particular, every valued difference field embeds into one admitting an equivariant cross-section.
\end{co}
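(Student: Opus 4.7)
The plan is to combine \Cref{thm_purity} with the positive $\aleph_1$-saturation hypothesis to solve, inside $\mathcal O^\times$, a positive primitive type $\Sigma$ encoding the existence of an equivariant cross-section. I would fix preimages $\alpha_\gamma\in v\inverse(\gamma)$ for each $\gamma\in\Gamma$ and introduce variables $y_\gamma$ intended to represent $s(\gamma)/\alpha_\gamma\in\mathcal O^\times$. The cross-section conditions $v(s(\gamma))=\gamma$, $s(\gamma_1+\gamma_2)=s(\gamma_1)s(\gamma_2)$, and $s(\sigma\gamma)=\sigma(s(\gamma))$ then translate into a partial positive primitive type $\Sigma$ over $\mathcal O^\times$ in the language of $\mathbb Z[\sigma]$-modules, with parameters being the cocycles $\alpha_{\gamma_1}\alpha_{\gamma_2}/\alpha_{\gamma_1+\gamma_2}$, $\sigma(\alpha_\gamma)/\alpha_{\sigma\gamma}$, all of which lie in $\mathcal O^\times$.

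For finite consistency of $\Sigma$, a finite fragment only involves finitely many $\gamma$'s, which generate a finitely generated $\mathbb Z[\sigma,\sigma\inverse]$-submodule $\Gamma'\subseteq\Gamma$. Since $\mathbb Z[\sigma,\sigma\inverse]$ is Noetherian (Hilbert basis plus localisation), $\Gamma'$ is finitely presented. By \Cref{thm_purity} (and the upgrade from $\mathbb{Z}[\sigma]$ to $\mathbb{Z}[\sigma,\sigma\inverse]$ noted at the beginning of \Cref{sec:purity}), the restricted sequence $1\to\mathcal O^\times\to v\inverse(\Gamma')\to \Gamma'\to 0$ is pure. The classical fact that a pure short exact sequence with finitely presented quotient splits then yields a local equivariant section $\Gamma'\to K^\times$, which in particular provides a realisation of the fragment in $\mathcal O^\times$. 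I would then construct a global realisation of $\Sigma$ by transfinite induction along a well-ordering of a $\mathbb Z[\sigma,\sigma\inverse]$-generating set of $\Gamma$: at successor stages, positive $\aleph_1$-saturation realises the extension condition for the new generator (a finite pp-system over the already committed values), and limit stages just take unions.

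Given such $s$, the map $\ac(x)\coloneqq \res(x\cdot s(-v(x)))$ for $x\ne 0$ (with $\ac(0)\coloneqq 0$) is an angular component and commutes with $\sigma$ by construction. For the ``in particular'' statement, any valued difference field embeds into an existentially closed one by $\forall\exists$-axiomatisability, and further into an existentially $\aleph_1$-saturated existentially closed extension by \Cref{rem:ecsatexist}; the first part applies there, delivering the sought embedding.

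The main obstacle is the inductive extension step: one must check that, at each successor stage, the finite pp-system constraining the next $y_\gamma$ is consistent over $\mathcal O^\times$. The idea is to pick any local equivariant section on the $\mathbb Z[\sigma,\sigma\inverse]$-submodule generated by the new element, compare it with the already committed values on the overlap, and observe that their ratio is a homomorphism into $\mathcal O^\times$ on the overlap; extending this homomorphism (so that the two sections can be glued) reduces to a finite system over $\mathcal O^\times$ whose solvability in $K^\times$ follows from the existence of the auxiliary local section, and which therefore has a solution in $\mathcal O^\times$ by purity.
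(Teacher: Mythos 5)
Your overall strategy—combining the purity from \Cref{thm_purity} with saturation to split the sequence $1\to\mathcal O^\times\to K^\times\to\Gamma\to 0$—is the right one, and your finite-satisfiability observation (restrict to a finitely generated, hence finitely presented, $\Gamma'\subseteq\Gamma$; restriction of a pure sequence is pure; pure with finitely presented quotient splits) is correct. The paper, however, takes a much shorter path from here: once one knows that $\mathcal O^\times$ realises all pp-types over countable parameter sets, one just cites the general theorem (Prest, \emph{Model Theory of Modules}, Theorem 2.8) that a pure-injective module is a direct summand of any module in which it sits purely, and one is done.

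Where your argument has a genuine gap is the successor step of the transfinite induction. You want to extend a partial section $s_\alpha:\Gamma_\alpha\to K^\times$ to $\Gamma_\alpha+R\gamma_\alpha$ (with $R=\mathbb Z[\sigma,\sigma^{-1}]$). Writing $\mathfrak a=\{r\in R : r\gamma_\alpha\in\Gamma_\alpha\}=(r_1,\dots,r_m)$ and fixing a preimage $\tilde c$ of $\gamma_\alpha$, this amounts to solving the finite system $r_i y=\psi(r_i)$ in $\mathcal O^\times$, where $\psi:\mathfrak a\to\mathcal O^\times$, $\psi(r)=r\tilde c\,/\,s_\alpha(r\gamma_\alpha)$, is a well-defined $R$-module map. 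You assert that solvability of this system in $K^\times$ ``follows from the existence of the auxiliary local section,'' but it does not: the auxiliary section on $R\gamma_\alpha$ given by the pure+finitely presented argument has no reason to agree with $s_\alpha$ on the overlap $\mathfrak a\gamma_\alpha=\Gamma_\alpha\cap R\gamma_\alpha$, and adjusting it so that it does is exactly the same kind of extension problem you started with. Equivalently, you are implicitly claiming that $\psi$ extends to a map $R\to\mathcal O^\times$, which is an $\operatorname{Ext}^1_R(R/\mathfrak a,\mathcal O^\times)$-type obstruction and need not vanish for an arbitrary pure submodule. Note also that positive $\aleph_1$-saturation is of no help at this step: for a single finite pp-system with parameters in $\mathcal O^\times$, being ``consistent over $\mathcal O^\times$'' in the sense of \Cref{defin:saturated} already means having a solution in $\mathcal O^\times$, so the saturation hypothesis gives nothing new.

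The correct way around this is precisely what Prest's theorem encodes: rather than building the section greedily one generator at a time, one poses the \emph{global} system, in $|K^\times|$ many variables $(x_b)_{b\in K^\times}$, saying that $b\mapsto x_b$ is an $R$-linear retraction of $\mathcal O^\times\hookrightarrow K^\times$. Any \emph{finite} fragment of this system has the tautological solution $x_b=b$ in $K^\times$, and purity pushes that solution into $\mathcal O^\times$; this gives finite satisfiability. One then invokes algebraic compactness of $\mathcal O^\times$ (which for $R$ countable is equivalent to positive $\aleph_1$-saturation, and this equivalence is exactly the non-trivial content behind the ``filter of pp-definable cosets'' remark and Prest's Theorem 2.8) to solve the whole system at once. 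Your greedy transfinite induction can commit to choices at an early stage that cannot be continued later, while the global-system argument has no such issue. So either cite Prest's Theorem 2.8 as the paper does, or replace the stage-by-stage extension with the global finitely-satisfiable system argument just sketched.
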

\begin{proof}
 Recall that, in an $R$-module, a \emph{partial positive primitive type} (or \emph{partial pp-type}) is specified by a filter of pp-definable cosets, hence can be written by using at most $\abs R+\aleph_0$-many formulas (see e.g.\ \cite{prestModelTheoryModules1988} for model theory of modules). In particular, under our assumptions the $\mathbb Z[\sigma]$-module $\mathcal O^\times$ realises all consistent partial pp-types. By \Cref{thm_purity} the short exact sequence of $\mathbb Z[\sigma]$-modules $1\to \mathcal O^\times\to K^\times\to \Gamma\to 0$ is pure, hence the existence of a cross-section follows from~\cite[Theorem 2.8]{prestModelTheoryModules1988}. As standard, a cross-section $s$ induces an angular component $\ac_s(x)\coloneqq \res(x/s(v(x)))$, and clearly if $s$ commutes with $\sigma$ then so does $\ac_s$, since both $\res$ and $v$ do.  The ``in particular'' part follows from \Cref{rem:ecsatexist}.
\end{proof}

\begin{rem}\label{trop:eqvini}
	 If $(K,v,\sigma)$ is a valued difference field and $s\from \Gamma \rightarrow K$ is a $\sigma$-equivariant cross-section, then it is easy to see that, for every Laurent polynomial $f \in K[x^{\pm 1}]$ and $\gamma \in \Gamma^n$, we have $\sigma(\ini_\gamma(f))=\ini_{\sigma(\gamma)}(\sigma(f))$; similarly, for initial ideals we get $\sigma(\ini_\gamma(I))=\ini_{\sigma(\gamma)}(\sigma(I))$.
\end{rem}

\subsection{Good reducts}\label{subsec:ec_and_reducts}
By \Cref{co:section}, every  existentially closed valued difference field that is sufficiently existentially saturated can be expanded by an equivariant cross-section. In order to use the cross-section as an auxiliary object, and later remove it from the language, it is important to understand whether, given an existentially closed $(K,v,\sigma,s)$, its reduct $(K,v,\sigma)$ is still existentially closed. Similar questions can be formulated when passing from $(K,v,\sigma,s)$ to $(K,v,\sigma,\ac)$, etc.

Below, we will prove that these questions have positive answers.
As this will be proved separately for each case of interest, and as there will be a nontrivial amount of algebra involved, the reader may wonder whether some abstract argument shows that existential closedness passes to reducts. This is in fact false.
We thank Angus Matthews for the following example, and for his permission to include it here. We will encounter another instance of this phenomenon in \Cref{co:acvfanotacfa}.
\begin{eg}[A.~Matthews]\label{eg:angus}
  Let $T_0$ be an inductive theory that is not model complete (e.g.\ the theory of oags with an automorphism, or the theory of fields), and let $T_1$ be its Morleyisation (see the references given in \Cref{rem:AEbasics}). Observe that every model of $T_0$ expands to a model of $T_1$, and that every model of $T_1$ is existentially closed by quantifier elimination. In particular, if $M_0\models T_0$ is not existentially closed, then its natural expansion to $M_1\models T_1$ is existentially closed but has a non-existentially closed reduct.
\end{eg}

		\begin{pr}\label{claim:extendac}
		Let $(K,v,\sigma,\ac)$ be an $\mathrm{ac}$-valued difference field closed under roots, and let $(L,v,\sigma)$ be an existentially closed, existentially $\aleph_1$-saturated valued difference field extension. There is an equivariant angular component map on $L$ extending the angular component map of $K$.
		\end{pr}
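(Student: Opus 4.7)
The plan is to construct $\ac_L$ in two moves: first produce \emph{some} equivariant angular component on $L$ via the cross-section machinery, then correct it by a character of the value group so that the corrected map extends $\ac_K$. Since $L$ is existentially closed and existentially $\aleph_1$-saturated, \Cref{co:section} supplies an equivariant angular component $\ac_L^0 \from L \to k(L)$.

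I would then measure the discrepancy between $\ac_L^0$ and $\ac_K$ on $K^\times$. Both restrict to $\res$ on $\mathcal O_K^\times$, so the assignment $a \mapsto \ac_L^0(a)/\ac_K(a)$ is an equivariant group homomorphism $K^\times \to k(L)^\times$ which is trivial on $\mathcal O_K^\times$, hence factors through $\Gamma_K$ to yield an equivariant homomorphism $\chi \from \Gamma_K \to k(L)^\times$. The point is that, if $\chi$ extends to some equivariant $\tilde\chi \from \Gamma_L \to k(L)^\times$, then setting $\ac_L(a) \coloneqq \ac_L^0(a)/\tilde\chi(v(a))$ for $a \in L^\times$ and $\ac_L(0) \coloneqq 0$ produces the desired map: multiplicativity and equivariance are immediate, $\tilde\chi(0)=1$ forces $\ac_L$ to agree with $\res$ on $\mathcal O_L^\times$, and the definition of $\chi$ forces $\ac_L$ to agree with $\ac_K$ on $K^\times$.

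The crux is therefore extending $\chi$ to all of $\Gamma_L$, which I would carry out by Zorn's lemma in the poset of equivariant partial homomorphisms $\psi \from A \to k(L)^\times$ extending $\chi$, with $A$ ranging over $\mathbb Z[\sigma,\sigma^{-1}]$-submodules of $\Gamma_L$ containing $\Gamma_K$. If a maximal $(A_0,\psi_0)$ satisfied $A_0 \subsetneq \Gamma_L$, I would pick $\gamma \in \Gamma_L \setminus A_0$; by noetherianity of $\mathbb Z[\sigma,\sigma^{-1}]$, the ideal $I \coloneqq \{p : p \cdot \gamma \in A_0\}$ is finitely generated, say by $p_1,\ldots,p_r$, and extending $\psi_0$ to $A_0 + \mathbb Z[\sigma,\sigma^{-1}] \cdot \gamma$ is equivalent to finding $c \in k(L)^\times$ such that $p_i(\sigma)(c)=\psi_0(p_i \cdot \gamma)$ for each $i$. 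By \Cref{pr:ecbasics}, $k(L)$ is an existentially closed difference field, and it inherits existential $\aleph_1$-saturation from $L$; hence it realises every consistent finite system of multiplicative difference equations over itself.

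The main obstacle is precisely verifying consistency of such a system. The natural candidate target is the pushout in $\mathbb Z[\sigma,\sigma^{-1}]$-modules of $\psi_0 \from A_0 \to k(L)^\times$ along the inclusion $A_0 \hookrightarrow A_0+\mathbb Z[\sigma,\sigma^{-1}]\cdot\gamma$: this abstract module sits above $k(L)^\times$ (pushouts of monomorphisms are monomorphisms) and contains, by construction, an element solving the system. What remains is to realise this module as a subgroup of the multiplicative group of a difference field extension of $k(L)$, which should be possible using divisibility of $k(L)^\times$ (afforded by algebraic closedness, from \Cref{pr:ecbasics}) together with a group-ring construction to rule out unintended collapse. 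Once consistency is established, existential closedness and $\aleph_1$-saturation of $k(L)$ produce $c$ inside $k(L)$, and iterating through Zorn yields $\tilde\chi$ and therefore $\ac_L$.
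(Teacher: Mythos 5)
Your reduction is genuinely different from the paper's: instead of building the angular component on $L^\times$ directly via the model theory of $\mathbb Z[\sigma]$-modules, you first obtain an auxiliary equivariant $\ac_L^0$ from \Cref{co:section}, package the mismatch with $\ac_K$ into a single equivariant character $\chi\from \Gamma_K\to k(L)^\times$, and reduce everything to extending $\chi$ across $\Gamma_L$. That reduction is sound (and pleasant), and the identity $\ac_L(a)=\ac_L^0(a)/\tilde\chi(v(a))$ does produce a correct angular component once $\tilde\chi$ is in hand. The paper instead sets $\theta$ on $T=\langle K^\times,\mathcal O(L)^\times\rangle$ and invokes Prest's theorem on pure-injective modules, so the two proofs target different extension problems, though morally equivalent after quotienting by $\mathcal O(L)^\times$.

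The gap is exactly at the point you flag as ``the main obstacle,'' and your proposed fix does not close it. After the first Zorn step, $A_0$ need no longer be divisible, and then $B_0/A_0$ can have torsion; in that case the pushout $M$ need not embed into any field's multiplicative group. Concretely, if $2\gamma\in A_0$, $\sigma\gamma-\gamma\in A_0$, but $\gamma\notin A_0$, put $d\coloneqq\psi_0(2\gamma)$ and let $u$ be a square root of $d^{-1}$ in $k(L)$; then $[(u,\gamma)]$ is $2$-torsion in $M$, distinct from $\pm 1$, so $M[2]\cong(\mathbb Z/2)^2$, which cannot sit inside $k'^\times$ for any field $k'$ since $\mu_2(k')$ is cyclic. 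So the ``group-ring construction'' cannot rule out collapse in general. One can try to salvage this by forcing each $A_0$ to be divisible (extending by $\mathbb Q[\sigma,\sigma^{-1}]$-spans, which keeps quotients torsion-free), but then the single Zorn step is no longer a finite system over $\mathbb Z[\sigma,\sigma^{-1}]$: it requires a simultaneous $\sigma$-coherent choice of $n$-th roots of the putative value of $\gamma$ for all $n$, and one also has to twist the $\sigma$-action on $k(L)[Q]$ by the $1$-cocycle measuring the failure of the abelian-group splitting to be equivariant. Those are precisely the points where the hypothesis that $K$ (not merely $\Gamma_K$) is closed under roots enters, and where the paper's argument does real work, via \Cref{lem:monomial-map} (connected components of kernels of monomial maps), \Cref{irr} (irreducibility of $\ini_0$ of cosets of connected tori), and the $\sigma_\to$-coherence of the resulting generic types. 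As written, your proposal asserts rather than proves that the system is realisable in a difference-field extension of $k(L)$, and the torsion example shows that the naïve pushout route would need substantial reinforcement to become a proof.
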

		\begin{proof}
			Let $T$ be the $\mathbb Z[\sigma]$-submodule of $L^\times$ generated by $K^\times$ and $\mathcal{O}(L)^\times$, and let $\theta\from T \rightarrow k(L)^\times$ be the unique morphism which coincides with $\ac$ on $K^\times$ and with $\res$ on $\mathcal{O}(L)^\times$. Enumerate $T$ in a tuple $\bar{b}$.  If we prove that the pp-type of $\bar{b}$ in the $\mathbb{Z}[\sigma]$-module $L^\times$ is contained in the pp-type of $\theta(\bar{b})$ in $k(L)^\times$, then we may conclude by using existential saturation of $k(L)^\times$ as a $\mathbb Z[\sigma]$-module to apply Point (v) in \cite[Theorem 2.8]{prestModelTheoryModules1988}, obtaining a morphism $\ac_L\from L^\times \rightarrow k(L)^\times$ which extends $\theta$.

                        Let $a$ be some finite subtuple of $\bar{b}$ and $A \in \mathrm{Mat}_{d \times n}(\mathbb{Z}[\sigma])$ a matrix such that $A \cdot z = a$ has a solution in $(L^\times)^n$. Fix such a solution $\alpha$. We need to show that $A\cdot z = \theta(a)$ has a solution in $(k(L)^\times)^n$. Since $k(L)$ is existentially closed as a difference field by \Cref{pr:ecbasics}, it suffices to find an extension of $k(L)$ where a solution exists.

                        As in the proof of \Cref{lemma:matrix-to-coset}, we associate to $A$ some $d$ matrices $C_1,\ldots,C_d \in \mathrm{Mat}_{n \times (\ell+1)}(\mathbb{Z})$, where $\ell$ is maximum such that  $\sigma^\ell$ appears in $A$, such that the entry $c_{ijh}$ of the matrix $C_i$ is the integer coefficient of $\sigma^{h}$ in the entry $a_{ij} \in \mathbb{Z}[\sigma]$ of $A$.

For all $r \leq s \in \mathbb{Z}$, there is a group homomorphism $\psi_{r,s}\from \mathbb{G}_{\mathrm{m}}^{n(s+\ell-r+1)} \rightarrow \mathbb{G}_{\mathrm{m}}^{d(s-r+1)}$ satisfying $\psi_{r,s}(\orb_{r,s+\ell}(\alpha))=\orb_{r,s}(a)$, namely we can take the homomorphism which maps $(y_{jh})_{j\in[1,n],h\in[r,s+\ell]}$ to
\[\begin{pmatrix}
	\prod_{j=1}^n \prod_{h=r}^{r+\ell} y_{jh}^{c_{1jh}} \\
	\vdots \\
	\prod_{j=1}^n \prod_{h=r}^{r+\ell} y_{jh}^{c_{djh}}\\
	\prod_{j=1}^n \prod_{h=r+1}^{r+1+\ell} y_{jh}^{c_{1jh}}\\
	\vdots\\
	\prod_{j=1}^n \prod_{h=r+1}^{r+1+\ell} y_{jh}^{c_{djh}}\\
	\vdots \\
	\prod_{j=1}^n \prod_{h=s}^{s+\ell} y_{jh}^{c_{1jh}} \\
	\vdots\\
	\prod_{j=1}^n \prod_{h=s}^{s+\ell} y_{jh}^{c_{djh}}
\end{pmatrix}.\]

By \Cref{lem:monomial-map} there is a monomial map $\xi_{r,s}$ such that $\ker\xi_{r,s}$ is the connected component of $\ker\psi_{r,s}$. Moreover, each of the exponents of $\xi_{r,s}$ is a rational multiple of an element of the group spanned by the exponents of $\psi_{r,s}$, and therefore every entry of $\xi_{r,s}(\orb_{r,s}(\alpha))$ is a root of a product of the entries of $\orb_{r,s}(a)$.  By assumption, $T$ is a multiplicative subgroup that is closed under taking roots hence, since each entry of $\orb_{r,s}(a)$ lies in $T$,  we have $\xi_{r,s}(\orb_{r,s}(\alpha)) \in T^{d(s-r+1)}$.

Let $e=(e_1,\ldots,e_d)$ denote the tuple $\xi_{0,0}(\orb_{0,\ell}(\alpha))$.  Consider the set $\Omega$ of $\mathcal{L}_{\mathrm{field}}$-formulas over $k(L)$ in variables $(y_{jh})_{j\in[1,n],h \in \mathbb{Z}}$ which says that each finite subtuple $(y_{jh})_{j\in[1,n],h\in[r,s+\ell]}$ is a $k(L)$-generic point of the algebraic variety $V_{r,s}$ defined by the equations $\xi_{r,s}((z_{jh})_{j\in[1,n],h\in[r,s+\ell]})=\theta(\orb_{r,s}(e))$. Note that the genericity requirement makes sense because, as $V_{r,s}(k(L))$ is a coset of $\ini_0(\ker \xi_{r,s})$ (which does not depend on the choice of an angular component by \Cref{lemma:initial-of-pol-over-O}), it is irreducible by \Cref{irr}.

To see that $\Omega$ is consistent, consider a finite fragment mentioning formulas in the finite subtuple of variables $(y_{jh})_{j\in[1,n],h\in[r,s+\ell]}$. As $k(L)^\times$ is a divisible abelian group, there is a (possibly non-equivariant) angular component $\ac'\from  L\to k(L)$ extending $\theta$. Since, for all $r\le s$, we have $\orb_{r,s}(e)\in \im(\xi_{r,s})$, and $\ac'$ is a multiplicative homomorphism, $V_{r,s}(k(L))$ is not empty.

Hence the set $\Omega$ is realised in some field extension of $k(L)$; let $(\beta_{jh})_{j\in[1,n],h \in \mathbb{Z}}$ be a realisation, and consider the field $k(L)((\beta_{jh})_{j\in[1,n],h \in \mathbb{Z}})$. We now have that, for all $r \leq s$, the finite tuple $(\beta_{j(h+1)})_{j\in[1,n],h\in[r,s+\ell]}$ is generic in the algebraic variety defined by the equations \[\xi_{r+1,s+1}((z_{j(h+1)})_{j\in[1,n],h\in[r,s+\ell]})=\theta(\orb_{r+1,s+1}(e)),\] which since $\sigma$ and $\theta$ commute is the same as \[\xi_{r+1,s+1}((z_{j(h+1)})_{j\in[1,n],h\in[r,s+\ell]})=\sigma(\theta(\orb_{r,s}(e))).\] Hence, applying  $\sigma_{\to}$ to  the  $\mathcal L_\mathrm{field}$-quantifier-free type  over $k(L)$ of the tuple  $(\beta_{jh})_{j\in[1,n],h\in[r,s+\ell]}$ yields the one of $(\beta_{jh})_{j\in[1,n],h\in[r+1,s+1+\ell]}$. So the automorphism $\sigma$ of $k(L)$ extends to an automorphism of $k(L)((\beta_{jh})_{j\in[1,n],h \in \mathbb{Z}})$, in which the equation $A \cdot z=\theta(a)$ is solved by the tuple $\beta\coloneqq (\beta_{10},\ldots,\beta_{1n})$. 
\end{proof}

\begin{co}\label{co:acec}
 If $(K,v,\sigma,\ac)$ is existentially closed, then $(K,v,\sigma)$ is existentially closed.
\end{co}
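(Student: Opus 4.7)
The strategy is to argue by contrapositive. Suppose $(K,v,\sigma)$ is not existentially closed as a valued difference field; we produce an $\mathrm{ac}$-valued difference field extension witnessing that $(K,v,\sigma,\ac)$ is not existentially closed either.

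The first step is to observe that, because $(K,v,\sigma,\ac)$ is existentially closed in the category of $\mathrm{ac}$-valued difference fields, $K$ must be algebraically closed as a field. This follows by an argument analogous to \Cref{pr:ecbasics}\ref{point:ecbasicsACF}: one embeds the underlying valued field into a sufficiently strongly homogeneous model of $\mathsf{ACVF}_{\ac}$, uses quantifier elimination for $\mathsf{ACVF}_{\ac}$ together with strong homogeneity to extend $\sigma$ to an automorphism of this monster, and recalls that angular components extend along algebraic extensions of valued fields. In particular $K$ is closed under roots, so the hypotheses of \Cref{claim:extendac} are in force.

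Next, suppose there exist a valued difference field extension $(K,v,\sigma) \subseteq (L,v,\sigma)$, a quantifier-free $\mathcal L_\sigma$-formula $\phi(x,y)$, and $a \in K$ such that $L \models \exists x\, \phi(x,a)$ while $K \not\models \exists x\, \phi(x,a)$. By \Cref{rem:ecsatexist} we embed $(L,v,\sigma)$ into an existentially closed, existentially $\aleph_1$-saturated valued difference field $(L',v',\sigma')$; persistence of existential formulas under embeddings yields $L' \models \exists x\, \phi(x,a)$.

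Now \Cref{claim:extendac} applied to the inclusion $(K,v,\sigma,\ac) \subseteq (L',v',\sigma')$ furnishes an equivariant angular component $\ac'$ on $L'$ extending $\ac$. Hence $(K,v,\sigma,\ac) \hookrightarrow (L',v',\sigma',\ac')$ is an $\mathcal L_{\ac,\sigma}$-embedding, and the existential $\mathcal L_{\ac,\sigma}$-formula $\exists x\, \phi(x,a)$ holds in the codomain. Existential closedness of $(K,v,\sigma,\ac)$ as an $\mathrm{ac}$-valued difference field then forces $K \models \exists x\, \phi(x,a)$, a contradiction. The only genuinely delicate point is the initial verification of algebraic closedness of $K$, which is what licenses the use of \Cref{claim:extendac}; everything else is a formal consequence of that proposition together with the ability to saturate extensions.
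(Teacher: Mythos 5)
Your proof is correct and follows essentially the same route as the paper: establish algebraic closedness of $K$ via quantifier elimination for $\mathsf{ACVF}_{\ac}$, pass to an existentially $\aleph_1$-saturated existentially closed extension, and invoke \Cref{claim:extendac} to lift the angular component. One small point of hygiene: you frame the argument as a contrapositive, but you use the hypothesis that $(K,v,\sigma,\ac)$ is existentially closed already in the first step (to get algebraic closedness, hence closure under roots), so what you actually write is a direct proof by contradiction rather than a contrapositive; the logic is nonetheless sound.
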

\begin{proof}
As $\mathsf{ACVF}_{\mathrm{ac}}$ eliminates quantifiers\footnote{This is folklore, and follows from e.g.\ the much more general \cite[Theorem~1]{haleviEliminatingFieldQuantifiers2019}. For a direct proof, use~\cite[Fact~2.4]{hilsImaginariesSeparablyClosed2018} in combination with~\cite[Observation~1.51]{touchardBurdenHenselianValued2023}.}, arguing as in the proof of \Cref{pr:ecbasics}\ref{point:ecbasicsACF} we see that $K$ is algebraically closed, and in particular closed under taking roots.  Suppose that $(L_0,v_{L_0},\sigma_{L_0})\supseteq (K,v,\sigma)$ is such that $L_0\models \exists x\;\phi(x)$, with $\phi(x)$ a quantifier-free $\mathcal L_{\sigma}(K)$-formula. Let $(L,v_L,\sigma_L)$ be an existentially $\aleph_1$-saturated existentially closed extension of $L_0$. After applying \Cref{claim:extendac}, $L$ extends $K$ as $\mathrm{ac}$-valued difference fields, and $L\models \exists x\; \phi(x)$. Since $\mathcal L_{\mathrm{ac},\sigma}\supseteq \mathcal L_{\sigma}$ and $(K,v,\sigma,\ac)$ is existentially closed, we have $K\models \exists x\;\phi(x)$.
\end{proof}
\begin{lemma}\label{lemma:secbasics}
  If $(K,v,\sigma,s)$ is existentially closed, then $(\Gamma(K),\sigma)$ is existentially closed.
\end{lemma}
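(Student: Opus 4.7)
\textit{Plan.} The argument follows the blueprint of \Cref{pr:ecbasics}\ref{point:ecbasicsOAGA}, adapted to incorporate the cross-section.

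The first observation is that, using the cross-section, any quantifier-free $\mathcal{L}_\mathrm{oag}$-formula $\phi(x, a)$ with $\sigma$ applied to variables and parameters $a \in \Gamma(K)^n$ translates into a quantifier-free $\mathcal{L}_{s,\sigma}$-formula $\phi^*(z, s(a))$ over $K$: each atomic subformula of the shape $\sum_i c_i\sigma^{n_i}(x_{j_i}) = \sum_k d_k\sigma^{m_k}(a_{l_k})$ becomes
\[
v\Bigl(\prod_i \sigma^{n_i}(z_{j_i})^{c_i} \prod_k \sigma^{m_k}(s(a_{l_k}))^{-d_k}\Bigr) = 0,
\]
and similarly for inequalities. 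Consequently, for any $\mathcal{L}_{s,\sigma}$-extension $(L, v, \sigma_L, s_L)$ of $(K, v, \sigma, s)$ and any $y \in (L^\times)^{\abs x}$, we have $\Gamma(L) \models \phi(v(y), a)$ iff $L \models \phi^*(y, s(a))$. Existential closedness of $(\Gamma(K), \sigma)$ thus reduces to showing that every difference oag extension of $(\Gamma(K), \sigma)$ embeds into the value group of some $\mathcal{L}_{s, \sigma}$-extension of $(K, v, \sigma, s)$.

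Let $(\Gamma', \sigma') \supseteq (\Gamma(K), \sigma)$ be any such extension. Take $\monster$ a sufficiently saturated, $\abs K^+$-strongly homogeneous $\mathcal{L}_s$-model of the $\mathcal{L}_s$-theory of nontrivially valued algebraically closed fields with cross-section, into which $(K, v, s)$ embeds as an $\mathcal{L}_s$-substructure. By saturation of $\Gamma(\monster) \models \mathsf{DOAG}$, embed $\Gamma'$ into $\Gamma(\monster)$ over $\Gamma(K)$. On the set $K \cup \Gamma' \cup s_\monster(\Gamma')$, define a partial map $\tau$ equal to $\sigma$ on $K$, to $\sigma'$ on $\Gamma'$, and sending $s_\monster(\gamma) \mapsto s_\monster(\sigma'(\gamma))$; consistency on $s(\Gamma(K)) = s_\monster(\Gamma(K)) \subseteq K$ follows from the equivariance of $s$. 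The map $\tau$ is a partial $\mathcal{L}_s$-isomorphism: by quantifier elimination of $\mathsf{ACVF}$ in $\mathcal{L}_\mathrm{vf}$, the $\mathcal{L}_\mathrm{vf}$-type of $(s_\monster(\gamma_1), \ldots, s_\monster(\gamma_n))$ over $K$ in $\monster$ is determined by the $\mathcal{L}_\mathrm{oag}$-type of $(\gamma_1, \ldots, \gamma_n)$ over $\Gamma(K)$, and $\tau$ preserves the values of $s_\monster$ by construction. By strong homogeneity, $\tau$ extends to an $\mathcal{L}_s$-automorphism $\tilde{\sigma}$ of $\monster$; the $\mathcal{L}_{s,\sigma}$-substructure $L$ of $(\monster, \tilde{\sigma})$ generated by $K \cup s_\monster(\Gamma')$ satisfies $\Gamma(L) \supseteq \Gamma'$ and extends $(K, v, \sigma, s)$, as required.

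Combining the two steps, an existential difference oag formula over $\Gamma(K)$ satisfied in $(\Gamma', \sigma')$ translates to an existential $\mathcal{L}_{s,\sigma}$-formula over $K$ satisfied in the constructed $L$, hence in $K$ by existential closedness of $(K, v, \sigma, s)$; translating back yields a witness in $\Gamma(K)$. The main subtlety lies in verifying that $\tau$ is a partial $\mathcal{L}_s$-isomorphism, which essentially packages a quantifier elimination statement for algebraically closed valued fields with a cross-section; if one prefers to avoid appealing to such a statement, an alternative is to build $L$ directly by transfinite induction, extending $(K, v, \sigma, s)$ one $\sigma'$-orbit of $\Gamma'/\Gamma(K)$ at a time by adjoining suitable transcendentals with prescribed valuations and $\sigma$-action.
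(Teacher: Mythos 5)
Your high-level strategy coincides with the paper's: embed $(K,v,s)$ into a monster $\monster\models\mathsf{ACVF}_s$, exhibit an elementary partial map encoding $\sigma$ together with the new automorphism $\sigma'$ of the value group, extend it to an automorphism of $\monster$ by strong homogeneity, and transfer a witness for an existential $\mathcal L_\mathrm{oag}$-formula down to $\Gamma(K)$ via the cross-section translation and existential closedness of $(K,v,\sigma,s)$. The genuine difference lies in how elementariness of the partial map is verified. The paper works with the small domain $K\cup\Delta$ and argues by induction on the depth of nested occurrences of $s$ in $\mathcal L_s$-terms; you instead enlarge the domain to $K\cup\Gamma'\cup s_\monster(\Gamma')$, so that the generated $\mathcal L_s$-substructure is the valued field $K(s_\monster(\Gamma'))$, whose value group equals $\Gamma'$ and which is therefore closed under $s_\monster\circ v$, and nested applications of $s$ are absorbed into the domain. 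This is a valid and arguably tidier reorganisation of the same computation.

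Two points need more care, the second of which you flag yourself. First, the claim that the $\mathcal L_\mathrm{vf}$-type of $(s_\monster(\gamma_1),\ldots,s_\monster(\gamma_n))$ over $K$ is determined by the $\mathcal L_\mathrm{oag}$-type of $(\gamma_1,\ldots,\gamma_n)$ over $\Gamma(K)$ does not follow from QE of $\mathsf{ACVF}$ in $\mathcal L_\mathrm{vf}$ alone: one must still pin down the quantifier-free $\mathcal L_\mathrm{vf}$-type of the tuple, which requires the algebraic facts that distinct monomials in the $s_\monster(\gamma_i)$ have distinct valuations and that the only multiplicative relations are $\prod_i s_\monster(\gamma_i)^{n_i}=s\bigl(\sum_i n_i\gamma_i\bigr)$ when $\sum_i n_i\gamma_i\in\Gamma(K)$, in the spirit of \Cref{fact:generating+nontorsion} and the $\mathcal L_{s,\sigma}$-case of \Cref{lem:Gamma} (and, since $\tau$ moves $K$ by $\sigma$ rather than fixing it, what you actually need is the corresponding statement about the map, not just about two tuples over a fixed $K$). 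Second, and more fundamentally, strong homogeneity requires $\tau$ to be $\mathcal L_s$-elementary, not merely $\mathcal L_\mathrm{vf}$-elementary and commuting with $s_\monster$; the passage from the latter to the former is precisely a quantifier-elimination statement for $\mathsf{ACVF}_s$ in $\mathcal L_s$, which is what the paper invokes via the reference to Kesting. This cannot be sidestepped by $\mathcal L_\mathrm{vf}$-QE and should be stated up front rather than left as a closing caveat.
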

\begin{proof}
 It is well known that the theory $\mathsf{ACVF}_s$ of algebraically closed valued fields with a cross-section eliminates quantifiers; see e.g.~\cite[Theorem~2.2]{Kesting} for a proof for $\mathsf{ACVF}_{s,\iota}$ that can be easily adapted.

Embed $(K,v,s)$ in a monster model $\monster\models \mathsf{ACVF}_s$.  Let $(\Delta,\sigma_\Delta)$ be an extension of $(\Gamma(K),\sigma_{\Gamma(K)})$; we may assume that $\Delta$ is embedded in $\Gamma(\monster)$ over $\Gamma(K)$. If the map $f\coloneqq\sigma\cup \sigma_\Delta$, viewed as a partial map $\monster\to \monster$, is elementary, then it extends to an automorphism of $\monster$ and we conclude as in \Cref{pr:ecbasics}. 

  By quantifier elimination it suffices to show that $f$ preserves and reflects atomic formulas. Up to applying the same arguments to $f\inverse$, it is enough to show that atomic formulas are preserved. It suffices to check formulas of the form $\phi(a,\gamma)\coloneqq v(t_0(a,\gamma))\ge v(t_1(a,\gamma))$, where $t_0, t_1$ are terms in the language with a cross-section $s$, and the tuples $a$,$\gamma$ are from the sorts $\mathrm{VF}$, $\Gamma$ respectively. We do this by induction on the maximum depth of nested occurrences of $s$ in $t_0, t_1$; in the same induction, we simultaneously prove that the valuation of such terms lies in $\Delta$.

  If $s$ does not appear at all, then $\phi$ is an $\mathcal L_\mathrm{vf}$-formula, hence the conclusion follows from the fact that $\sigma$ is an automorphism. If $s$ never appears nested more than once, then $\phi$ is, up to equivalence, of the form $v(p(a,s(\gamma)))\ge v(q(a,s(\gamma)))$, where $\gamma$ is a tuple from $\Delta\setminus \Gamma(K)$ that is $\mathbb Q$-linearly independent over $\Gamma(K)$ and $p,q$ are polynomials over $\mathbb Z$, say. Then $p(a,x)$ is a polynomial over $K$; write it as the sum of its monomials, say $p(a,x)=\sum_i m_i(x)$. By linear independence of $\gamma$ over $\Gamma(K)$, the $v(m_i(s(\gamma)))$ are pairwise distinct, hence there is $i_0$ such that  $v(p(a,s(\gamma)))=v(m_{i_0}(s(\gamma)))$. Similarly we may write $v(q(a,s(\gamma)))=v(n_{j_0}(s(\gamma)))$. As $\sigma_\Delta$ is an automorphism, it is easy to see that $v(p(f(a),f(s(\gamma))))=v(f(m_{i_0})(f(s(\gamma))))$, and similarly for $q$, hence $f$ preserves $\phi(a,\gamma)$; because $m_{i_0}$ is a monomial, we also have $v(f(m_{i_0})(f(s(\gamma))))\in \Delta$.

  If $s$ appears nested more than once in a term, then there must be an occurrence of $v$ in the middle, that is, the term contains a subterm of the form $s(t_2(a,\gamma,v(t_3(a,\gamma))))$, where $t_2,t_3$ are terms with fewer nested occurrences of $s$. As by inductive assumption $v(t_3(a,\gamma))\in \Delta$,  we fall back into the previous case.
\end{proof}
\begin{pr}\label{co:secacec}
  If $(K,v,\sigma,s)$ is existentially closed, and we set $\ac(x)\coloneqq\res(x/s(v(x)))$, then $(K,v,\sigma,\ac)$ is existentially closed. Therefore, so is $(K,v,\sigma)$.
\end{pr}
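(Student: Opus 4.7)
The plan is to translate a quantifier-free $\mathcal L_{\mathrm{ac},\sigma}$-formula into a quantifier-free $\mathcal L_{s,\sigma}$-formula using the identity $\ac(x) = \res(x/s(v(x)))$, and then to exploit existential closedness of $(K,v,\sigma,s)$. The assertion for $(K,v,\sigma)$ will then follow from \Cref{co:acec}.

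Concretely, I would fix a quantifier-free $\mathcal L_{\mathrm{ac},\sigma}$-formula $\phi(x,a)$ over $K$ satisfied in some $\mathrm{ac}$-valued difference field extension $(L, v, \sigma, \ac_L) \supseteq (K, v, \sigma, \ac)$. By \Cref{rem:ecsatexist}, I would replace $L$ by an existentially closed and existentially $\aleph_1$-saturated $\mathrm{ac}$-vdf extension $(L', v, \sigma, \ac_{L'})$, in which $\exists x\; \phi(x,a)$ still holds.

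The key technical step, which is dual to \Cref{claim:extendac}, is to extend $s$ to a $\sigma$-equivariant cross-section $s_{L'}\from \Gamma(L') \to (L')^\times$ such that $\ac_{L'}(s_{L'}(\gamma)) = 1$ for every $\gamma \in \Gamma(L')$; equivalently, $s_{L'}$ should induce $\ac_{L'}$. I would approach this by proving the analog of \Cref{thm_purity} for the inclusion of $\mathbb Z[\sigma]$-modules $1 + \mathfrak m(L') \hookrightarrow \ker(\ac_{L'})$, which fits into the short exact sequence
\[
1 \to 1 + \mathfrak m(L') \to \ker(\ac_{L'}) \xrightarrow{v} \Gamma(L') \to 0.
\]
The tropical machinery from \Cref{sec:prelim}---including the $\sigma$-equivariant initial forms of \Cref{trop:eqvini}---should adapt to this setting, the main new point being that the witnesses one produces must additionally lie in $\ker(\ac_{L'})$, i.e., have angular component equal to $1$. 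Given purity, the combination of \cite[Theorem~2.8]{prestModelTheoryModules1988}, the positive $\aleph_1$-saturation of $\ker(\ac_{L'})$ inherited from $L'$, and a Zorn-style argument preserving the partial splitting $s_K$ on $\Gamma(K)$ will yield the desired $s_{L'}$.

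Once $s_{L'}$ is in place, $(L', v, \sigma, s_{L'})$ is an $\mathcal L_{s,\sigma}$-extension of $(K, v, \sigma, s)$. Substituting $\res(t/s(v(t)))$ for each occurrence of $\ac(t)$ turns $\phi(x, a)$ into an equivalent quantifier-free $\mathcal L_{s,\sigma}$-formula $\phi'(x, a)$, which still has a witness in $L'$ because $\ac_{L'}$ is induced by $s_{L'}$. Existential closedness of $(K, v, \sigma, s)$ then produces a witness in $K$, establishing that $(K, v, \sigma, \ac)$ is existentially closed. The final assertion is immediate from \Cref{co:acec}. The principal obstacle is the purity step: even though the strategy parallels \Cref{thm_purity}, tracking the extra angular-component constraint requires a careful refinement of the initial-variety argument to ensure that the constructed witnesses never leave $\ker(\ac_{L'})$.
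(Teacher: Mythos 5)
Your high-level structure is right: you want to extend $s$ to a $\sigma$-equivariant cross-section $s_{L'}$ of a suitably saturated $\ac$-vdf extension $L'$ that induces $\ac_{L'}$, and then the translation argument at the end (replacing each $\ac(t)$ by $\res(t/s(v(t)))$ and invoking existential closedness of $(K,v,\sigma,s)$, followed by \Cref{co:acec}) is correct and matches the paper. Where you diverge, and where a genuine gap sits, is in the mechanism for producing $s_{L'}$.

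You propose to prove a new purity statement for the inclusion $1+\mathfrak m(L') \hookrightarrow \ker(\ac_{L'})$ by re-running the tropical argument of \Cref{thm_purity} while additionally tracking the constraint that witnesses lie in $\ker(\ac_{L'})$, and then patch the resulting splitting to agree with the given $s_K$ by a Zorn-style argument. This is unverified (you acknowledge it as the ``principal obstacle''), it is not what the proof of \Cref{thm_purity} readily delivers, and the Zorn step is not enough on its own: extending a given partial splitting past a subgroup requires precisely a pp-type inclusion criterion, not mere maximality. The paper avoids the tropical machinery entirely here. It observes that $\operatorname{im}(s) \subseteq N := \ker(\ac_L)$, so extending $s$ to $s_L\colon \Gamma(L)\to N$ via \cite[Theorem~2.8(v)]{prestModelTheoryModules1988} reduces to showing that every $\mathbb Z[\sigma]$-linear system $A\cdot z = a$ with $a$ from $\Gamma(K)$ that is solvable in $\Gamma(L)$ is already solvable in $\Gamma(K)$; then $s(\gamma)\in N$ is the needed solution. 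That solvability in $\Gamma(K)$ is exactly existential closedness of $(\Gamma(K),\sigma)$, which is \Cref{lemma:secbasics} (proved earlier via quantifier elimination in $\mathsf{ACVF}_s$). The missing ingredient in your plan is \Cref{lemma:secbasics}: invoking it collapses the whole ``purity plus Zorn'' step to a one-line reduction.
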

\begin{proof}
Given an extension $(L,v_L,\sigma_L,\ac_L)$ of $(K,v,\sigma,\ac)$, which we may assume existentially closed and existentially $\aleph_1$-saturated, we will show that there is a cross-section $s_L\from \Gamma(L) \rightarrow L$ which extends $s$ and such that $\ac_L(z)=\res(z/s_L(v_L(z)))$ for every $z \in L$. This will conclude the proof, as it shows that then $(K,v,\sigma,\ac)$ is existentially closed as an $\ac$-valued field, and then $(K,v,\sigma)$ is existentially closed by \Cref{co:acec}.

It is enough to show that there is a cross-section $s_L$ such that $\ac_L(s_L(\gamma))=1$ for every $\gamma \in \Gamma(L)$: if this is the case, then for every $z \in L$ we have $\res(z/s_L(v_L(z)))=\ac_L(z/s_L(v_L(z)))=\ac_L(z)/\ac_L(s_L(v_L(z)))=\ac_L(z)$ as we wanted. 
        
Consider the $\mathbb{Z}[\sigma]$-submodule $N$ of $L$ consisting of elements of $L$ with angular component 1; note that the image of $s$ is contained in $N$. Let $\bar{b}$ be a tuple enumerating $\Gamma(K)$. If we prove that the pp-type of $\bar{b}$ in $\Gamma(L)$ is contained in the pp-type of $s(\bar{b})$ in $N$, then we may apply \cite[Theorem 2.8(v)]{prestModelTheoryModules1988} and obtain the desired extension of $s$ to $\Gamma(L)$. In other words, we need to show that if $a$ is a finite subtuple of $\bar b$, and $A \in \operatorname{Mat}_{d \times n}(\mathbb{Z}[\sigma])$ is such that $A \cdot z=a$ has a solution in $\Gamma(L)$, then $A \cdot z=s(a)$ has a solution in $N$. In fact, it suffices to show that $A \cdot z=a$ has a solution in $\Gamma(K)$, as then if $\gamma$ is such a solution we get that $A \cdot s(\gamma)=s(a)$ and $s(\gamma)\in N$.

This follows from \Cref{lemma:secbasics}, concluding the proof. 
\end{proof}
\begin{co}\label{co:acec_basics}
	Let $K$ be an existentially closed $\ac$-valued (resp.\ $s$-valued) difference field. Then $\VF(K)$ is algebraically closed and $(\Gamma,\sigma)$ and $(k,\sigma)$ are existentially closed in their respective categories. In particular, $(K,v,\ac)$ (resp.\ $(K,v,s)$) and $(K,v)$ are existentially closed.
\end{co}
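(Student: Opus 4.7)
The plan is to consolidate the preceding results of this subsection. First, I would reduce the $s$-valued case to the $\ac$-valued one: if $(K,v,\sigma,s)$ is existentially closed, then by \Cref{co:secacec}, setting $\ac(x)\coloneqq \res(x/s(v(x)))$ turns $(K,v,\sigma,\ac)$ into an existentially closed $\ac$-valued difference field. So it suffices to treat the case where $(K,v,\sigma,\ac)$ is existentially closed.

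Under this assumption, \Cref{co:acec} tells us that $(K,v,\sigma)$ is existentially closed as a valued difference field. Then \Cref{pr:ecbasics} immediately supplies the three main claims: $\VF(K)$ is algebraically closed, $(\Gamma(K),\sigma)$ is existentially closed among difference oags, and $(k(K),\sigma)$ is existentially closed among difference fields.

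For the ``in particular'' part, I would appeal to model completeness of the appropriate completions of $\mathsf{ACVF}_{\ac}$, $\mathsf{ACVF}_{s}$, and $\mathsf{ACVF}$, all of which follows from quantifier elimination (noted in the proof of \Cref{co:acec} for the $\ac$ version, in \Cref{lemma:secbasics} for the $s$ version, and classical for the plain version). Since $\VF(K)$ is algebraically closed and the valuation on $K$ must be nontrivial ---otherwise embedding $K$ into, say, the Hahn-series field $K((\mathbb{Z}))$ (with $\sigma$ extended trivially on $\mathbb{Z}$ and the obvious $\ac$) would realize an existential formula of the form $\exists x\, (v(x)>0_\Gamma \land v(x)\neq \infty)$ not realized in $K$, contradicting existential closedness--- the reduct $(K,v,\ac)$ is a model of the relevant completion of $\mathsf{ACVF}_{\ac}$, hence existentially closed in the category of $\ac$-valued fields. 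The analogous arguments, with $\mathsf{ACVF}_{s}$ and $\mathsf{ACVF}$ respectively, handle the claims for $(K,v,s)$ and $(K,v)$.

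No major obstacle is expected here: the statement is essentially a packaging of \Cref{co:secacec}, \Cref{co:acec}, and \Cref{pr:ecbasics}, combined with standard quantifier elimination for algebraically closed valued fields in the various signatures under consideration; the only delicate point is ensuring nontriviality of the valuation on $K$, which is handled by the short extension argument above.
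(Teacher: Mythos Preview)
Your proposal is correct and follows essentially the same route as the paper: reduce to the $\ac$-valued case via \Cref{co:secacec}, apply \Cref{co:acec} and then \Cref{pr:ecbasics}, and deduce the ``in particular'' part from algebraic closedness together with model completeness of $\mathsf{ACVF}$, $\mathsf{ACVF}_\mathrm{ac}$, $\mathsf{ACVF}_s$. The only minor redundancy is your separate argument for nontriviality of the valuation: once you have shown that $(\Gamma,\sigma)$ is existentially closed, $\Gamma$ is automatically nontrivial, so no extra Hahn-field embedding is needed.
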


\begin{proof}
  By \Cref{co:secacec} it suffices to deal with the $\ac$-valued case.
  By \Cref{co:acec} $(K,v,\sigma)$ is existentially closed, so we may apply \Cref{pr:ecbasics}. Alternatively, we may adapt its proof by using quantifier elimination in $\mathsf{ACVF}_\mathrm{ac}$.   The ``in particular'' part follows from the above and  \Cref{pr:ecbasics}\ref{point:ecbasicsACF}.
\end{proof}

\subsection{Bad reducts}
Another natural reduct of $(K,v,\sigma)$ to consider is the difference field $(K,\sigma)$. In this subsection we will see that, in contrast to what we saw in \Cref{subsec:ec_and_reducts}, forgetting the valuation does not preserve existential closedness.

Recall that existentially closed difference fields are axiomatisable by a theory denoted by $\mathsf{ACFA}$, in the language $\set{+,0,-, \cdot, 1, (-)\inverse, \sigma, \sigma\inverse}$. The axioms of this theory say that $(K,\sigma)\models \mathsf{ACFA}$ iff $K$ is algebraically closed and, for all $n$, all  irreducible algebraic subvarieties $V,W$ of $K^n$ and $K^{2n}$ respectively such that $W \subseteq V \times \sigma(V)$ and the projections of $W$ to $V$ and to $\sigma(V)$ are dominant, there is $x\in V(K)$ such that $(x,\sigma(x))\in W$. See \cite[Theorem 7]{Ma}, \cite[Theorem on p.~3007]{CHr}.

We will see in \Cref{eg:ecHahn} that if $k\models \mathsf{ACFA}$ has characteristic zero and $\Gamma$ is an existentially closed difference oag, then the Hahn field $k((t^\Gamma))$, made into a valued difference field as in \Cref{eg:Hahnsigma}, is existentially closed. The following argument, due to Philip Dittmann, shows that it is not existentially closed as a difference field, that is, it is not a model of $\mathsf{ACFA}$.

Below, when we speak of fixed points, fixed fields, etc., we always refer to fixed points of $\sigma$, fixed fields of $\sigma$, etc.

\begin{pr}[P.~Dittmann]\label{pr:philip}
  Let $(K,v,\sigma)$ be a henselian valued difference field with a fixed point of nonzero valuation. Then $(K,\sigma)\centernot \models \mathsf{ACFA}$.
\end{pr}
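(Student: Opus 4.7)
The plan is to argue by contradiction: assume $(K,\sigma)\models \mathsf{ACFA}$ and analyse the fixed field $F\coloneqq \set{x\in K\mid \sigma(x)=x}$ equipped with the restricted valuation $v|_F$. Replacing the given fixed point by its inverse if necessary, we may assume $v(a)>0$, so that $v|_F$ is a nontrivial valuation on $F$. The strategy is to show that $F$ carries two incompatible properties: it is pseudofinite, but it is also henselian under its nontrivial valuation.

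The first property is a standard consequence of the axioms of $\mathsf{ACFA}$: the fixed field of any model of $\mathsf{ACFA}$ is pseudofinite (see~\cite{Ma,CHr}), hence pseudo-algebraically closed (PAC) and not separably closed. For the second, I claim $(F,v|_F)$ is henselian, and the cleanest way to see this is via the uniqueness part of Hensel's lemma applied inside $K$. Suppose $f(X)\in \mathcal O(F)[X]$ and $\bar a\in \mathcal O(F)$ satisfy $v(f(\bar a))>0$ and $v(f'(\bar a))=0$. Since $(K,v)$ is henselian, there is a unique $\alpha\in \mathcal O(K)$ with $f(\alpha)=0$ and $v(\alpha-\bar a)>0$. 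Applying $\sigma$, and using that the coefficients of $f$ lie in $F$, that $\bar a\in F$, and that $\sigma$ commutes with $v$, one sees that $\sigma(\alpha)$ satisfies the same defining conditions as $\alpha$. By uniqueness, $\sigma(\alpha)=\alpha$, whence $\alpha\in F$, proving $(F,v|_F)$ is henselian.

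To conclude, I would invoke the classical theorem that a PAC field admitting a nontrivial henselian valuation must be separably closed (a result of Prestel and others; see for instance Fried--Jarden's \emph{Field Arithmetic}). Combined with the pseudofiniteness of $F$ from the previous paragraph, this yields the desired contradiction. The proof is short; essentially the only step requiring genuine input is the transfer of henselianity from $K$ down to $F$, and the key observation there is precisely that Hensel's lemma provides a \emph{unique} lift, which must therefore be $\sigma$-invariant.
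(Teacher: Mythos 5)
Your proof is correct and follows the same route as the paper: pseudofiniteness of the fixed field of a model of $\mathsf{ACFA}$, henselianity of the fixed field transferred down from $K$, and the Fried--Jarden/Prestel theorem that a PAC field with a nontrivial henselian valuation is separably closed. The only cosmetic difference is that you invoke the uniqueness clause of Hensel's lemma to force $\sigma(\alpha)=\alpha$, whereas the paper re-derives this directly by noting that otherwise both $\alpha$ and $\sigma(\alpha)$ would reduce to $\res(\bar a)$, contradicting simplicity of the root.
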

\begin{proof}
It is well known (see e.g.~\cite[Proposition~1.2]{CHr}) that the fixed field of every model of $\mathsf{ACFA}$ is pseudofinite, and in particular not separably closed. By~\cite[Corollary~12.5.5]{friedFieldArithmetic2023}\footnote{In the third edition of the same book (2008), this is Corollary~11.5.5.} every pseudo-algebraically closed (in particular, every pseudofinite) nontrivially valued henselian field is separably closed. Since there is a fixed point of nonzero valuation, the induced valuation on the fixed field $K_0$ of $K$ is nontrivial. Hence, to obtain a contradiction,  
it suffices to show that  $K_0$ is henselian.

We check henselianity directly. Let $f\in \mathcal O(K_0)[x]$ and $a\in \mathcal O(K_0)$ be such that $\res(a)$ is a simple root of $\res(f)$. By henselianity of $K$ there is $b\in \mathcal O(K)$ with $f(b)=0$ and $v(b-a)>0$. If $\sigma(b)=b$ we are done, so assume this is not the case. As $f$ is over the fixed field, $\sigma(b)$ is a root of $f$. Therefore $(x-b)(x-\sigma(b))$ divides $f$, and since $\res(\sigma(b))=\sigma(\res (b))=\sigma(\res(a))=\res(a)$ this contradicts that $\res(a)$ is a simple root of $\res(f)$.
\end{proof}
\begin{co}[P.~Dittmann]\label{co:philip}
  If $k\models \mathsf{ACFA}$ and $\Gamma$ is a difference oag with nonzero fixed points, then the Hahn field $k((t^\Gamma))$ is not a model of $\mathsf{ACFA}$.
\end{co}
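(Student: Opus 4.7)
The plan is to apply Proposition \ref{pr:philip} directly to $(K, v, \sigma) = k((t^\Gamma))$, so the proof reduces to verifying the two hypotheses: henselianity and the existence of a fixed point of nonzero valuation.

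First, I would recall that Hahn fields are maximally complete, hence henselian as valued fields; this is classical and does not involve $\sigma$. So $k((t^\Gamma))$ satisfies the first hypothesis of \Cref{pr:philip} regardless of any difference structure.

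Next, I would use the natural cross-section $\gamma \mapsto t^{\gamma}$ described in \Cref{eg:Hahnsigma} to manufacture the required fixed point. If $\gamma_0 \in \Gamma$ is a nonzero fixed point of $\sigma_\Gamma$, then by the formula for $\sigma$ in \Cref{eg:Hahnsigma} we have
\[
\sigma(t^{\gamma_0}) = t^{\sigma_\Gamma(\gamma_0)} = t^{\gamma_0},
\]
while $v(t^{\gamma_0}) = \gamma_0 \neq 0$. So $t^{\gamma_0}$ is a fixed point of the difference field $k((t^\Gamma))$ of nonzero valuation.

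Both hypotheses of \Cref{pr:philip} being met, the conclusion $(k((t^\Gamma)), \sigma) \not\models \mathsf{ACFA}$ follows at once. The only subtlety worth flagging is that this argument does not use anything about $k$ being a model of $\mathsf{ACFA}$; the hypothesis on $k$ in the statement is there because this corollary is meant to be paired with the positive result in \Cref{eg:ecHahn} that under those assumptions $k((t^\Gamma))$ is an existentially closed valued difference field, so that the corollary expresses that the existentially closed valued difference field $k((t^\Gamma))$ fails to be existentially closed as a mere difference field. There is no real obstacle here; the work has been done in \Cref{pr:philip}.
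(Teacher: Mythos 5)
Your proof is correct and follows essentially the same route as the paper, which also concludes by applying \Cref{pr:philip}. The only cosmetic difference is that the paper identifies the full fixed field as $k_0((t^{\Gamma_0}))$ by inspection, whereas you exhibit the specific fixed point $t^{\gamma_0}$ of nonzero valuation; your closing observation that the hypothesis $k\models\mathsf{ACFA}$ is not actually used in the argument is also accurate.
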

\begin{proof}
 By inspection, if $k_0$ is the fixed field of $k$, and $\Gamma_0$ is the fixed subgroup of $\Gamma$, then the fixed field of $k((t^\Gamma))$  is $k_0((t^{\Gamma_0}))$, hence we are in the assumptions of \Cref{pr:philip}.
\end{proof}

By mining the proof of \Cref{pr:philip}, we can obtain an explicit example of failure of the $\mathsf{ACFA}$ axioms. 

\begin{lemma}\label{lemma:fixedfieldclosed}
The fixed field is closed in the valuation topology. Moreover,  if $v(a)=0$ and $\res(\sigma(a))\ne \res(a)$ then the ball $\set{x\mid v(x-a)>0}$  does not intersect the fixed field.
\end{lemma}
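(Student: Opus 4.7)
The plan is to deduce both statements from the fact that $\sigma$ is an order-preserving automorphism of $\Gamma$ (it is an automorphism in $\mathcal L_\mathrm{oag}$, hence preserves $<$), which makes $\sigma\from K\to K$ continuous in the valuation topology: we have $v(\sigma(x)-\sigma(y))=\sigma(v(x-y))$, and $\sigma$, being order-preserving, sends cofinal sets to cofinal sets.

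For the first assertion, the cleanest approach is to observe that the fixed field $K_0$ is the preimage of $\set0$ under the continuous map $x\mapsto \sigma(x)-x$. The valuation topology is Hausdorff (for $a\ne 0$ the ball $\set{x:v(x-a)>v(a)}$ avoids $0$), so $\set 0$ is closed and hence so is $K_0$. If one prefers a direct argument, suppose $a\notin K_0$ and put $\gamma\coloneqq v(\sigma(a)-a)\in \Gamma$. Then for every $x$ with $v(x-a)>\max(\gamma,\sigma\inverse(\gamma))$ one has $v(\sigma(x)-\sigma(a))=\sigma(v(x-a))>\gamma$; if such an $x$ were in $K_0$, then $\sigma(x)=x$ and the ultrametric inequality would give
\[
v(a-\sigma(a))\ge \min\bigl(v(a-x),v(x-\sigma(a))\bigr)>\gamma,
\]
contradicting the definition of $\gamma$. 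So the open ball $\set{x:v(x-a)>\max(\gamma,\sigma\inverse(\gamma))}$ is a neighbourhood of $a$ disjoint from $K_0$.

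For the second assertion, suppose towards a contradiction that some $x\in K_0$ lies in $\set{y:v(y-a)>0}$. Then $\res(x)=\res(a)$, and since $\sigma(x)=x$ and $\res$ commutes with $\sigma$ we get
\[
\res(a)=\res(x)=\res(\sigma(x))=\sigma(\res(x))=\sigma(\res(a))=\res(\sigma(a)),
\]
contradicting the hypothesis $\res(\sigma(a))\ne \res(a)$.

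No step presents a genuine obstacle here: both parts reduce to routine manipulations of the identities $v\circ\sigma=\sigma\circ v$ and $\res\circ\sigma=\sigma\circ\res$ together with order-preservation of $\sigma$ on $\Gamma$. The only thing to watch is that closedness really uses $\sigma$ being order-preserving (not just an abstract group automorphism of $\Gamma$), which is ensured by our convention that $\sigma_\Gamma\in\aut(\Gamma)$ as an $\mathcal L_\mathrm{oag}$-structure.
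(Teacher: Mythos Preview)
Your proof is correct and follows essentially the same approach as the paper's. The paper dismisses closedness with ``Closedness is clear'' and handles the second part with a two-line valuation computation (from $v(x-a)>0$ and $\sigma(x)=x$ deduce $v(a-\sigma(a))>0$); your residue-level argument for the second part is the same idea in slightly different clothing, and your continuity argument for the first part simply spells out what the paper leaves implicit.
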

\begin{proof}
  Closedness is  clear. For the moreover part, suppose  $v(\sigma(x)-\sigma(a))>0$. If $\sigma(x)=x$, then $v(a-\sigma(a))>0$, against $\res(\sigma(a))\ne \res(a)$.
\end{proof}

\begin{pr}\label{pr:noacfaaepsilon}
  Let $(K,v,\sigma)$ be a valued difference field containing a fixed point $\epsilon=\sigma(\epsilon)$ of positive valuation and an element $a\in \mathcal O^\times$ of valuation zero that, for some $n>1$, is a fixed point of $\sigma^n$ but such that, for every $m$ with $1\le m<n$, its residue $\res(a)$ is not a fixed point of $\sigma_k^m$. Then $(K,\sigma)\centernot\models \mathsf{ACFA}$.
\end{pr}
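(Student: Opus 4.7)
The plan is to assume for contradiction that $(K,\sigma)\models\mathsf{ACFA}$ and extract a violation by combining the standard consequences of $\mathsf{ACFA}$ for the fixed field with the specific valuation-theoretic data provided by $a$ and $\epsilon$. Under $\mathsf{ACFA}$, $K$ is algebraically closed and by \cite[Proposition~1.2]{CHr} the fixed field $K_0:=\mathrm{Fix}(\sigma)$ is pseudofinite; in particular, $K_0$ is PAC. Note also that $\res(K_0)\subseteq k_0:=\mathrm{Fix}(\sigma_k)$ and that the hypothesis on the orbit of $\res a$ ensures each $\sigma_k^i(\res a)$ lies in $k_0(\res a)\setminus k_0$.

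Set $P(X):=\prod_{i=0}^{n-1}(X-\sigma^i a)\in K_0[X]$, the minimal polynomial of $a$ over $K_0$; then $\bar P(X)=\prod_{i=0}^{n-1}(X-\sigma_k^i(\res a))\in k_0[X]$ is separable with no roots in $k_0$. A short case analysis on $v(x)$ for $x\in K_0$ shows that $v(P(x))\in\{0,n\cdot v(x)\}$, and equals $0$ unless $v(x)<0$: in particular, $\res x\in k_0$ cannot coincide with any $\sigma_k^i(\res a)$, so every factor $x-\sigma^i a$ has valuation $\min(v(x),0)$.

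The geometric input is the plane curve $C\subseteq\mathbb A^2$ over $K_0$ cut out by $P(X)Y^n-\epsilon=0$. I would verify its absolute irreducibility via Vahlen--Capelli applied to $Y^n-\epsilon/P(X)$ in $K_0^{\mathrm{alg}}(X)[Y]$: since $\epsilon/P(X)$ has only simple poles at the $\sigma^i a$, it is not a $p$-th power in $K_0^{\mathrm{alg}}(X)$ for any prime $p\mid n$, and the exceptional $4\mid n$ case is handled identically because $-4\epsilon/P(X)$ likewise has simple poles. Gauss's lemma then transfers irreducibility from $K_0^{\mathrm{alg}}(X)[Y]$ to $K_0^{\mathrm{alg}}[X,Y]$. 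Thus $C$ is absolutely irreducible over $K_0$ and, by the PAC property, $C(K_0)\neq\emptyset$: there are $x,y\in K_0$ with $P(x)y^n=\epsilon$. Combining $v(P(x))+n\,v(y)=v(\epsilon)>0$ with the case analysis above, one checks that $v(y)>0$ in every case, so $y\in\mathfrak m(K_0)\setminus\{0\}$ and $P(x)$ is a unit with $\res P(x)\in k_0$.

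The final step, which is the main obstacle, is to convert this $K_0$-rational relation into a contradiction with $\res(K_0)\subseteq k_0$ and \Cref{lemma:fixedfieldclosed}. The morally correct picture is that the identity $P(x)=\epsilon/y^n$ together with $v(\epsilon/y^n)=0$ and $\res(\epsilon/y^n)\in k_0$ should amount, via a Hensel-style lifting within $K_0$ enabled by the unit $y\in\mathfrak m(K_0)$, to the existence of a $K_0$-point in the ball around some $\sigma^j a$, contradicting \Cref{lemma:fixedfieldclosed}. Making this step rigorous is the technical heart of the argument: one must exploit that $K_0(a)/K_0$ is the unique unramified degree-$n$ extension of the pseudofinite field $K_0$ (with residue extension $k_0(\res a)/k_0$) and that every root of $P-c$ with $c\in K_0^\times$ of residue in $k_0$ must either generate this same extension or split off a $K_0$-rational factor---the latter alternative producing the forbidden lift. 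If that dichotomy fails to close the argument directly, an alternative route is to apply the $\mathsf{ACFA}$ axiom to the irreducible curve $W=V((Y-X)^n-\epsilon P(X))\subseteq\mathbb A^2$ (absolutely irreducible again by Vahlen--Capelli), derive that the resulting $x\in K$ must have $\res x\in k_0$, and then iterate to produce the required lift; this alternative presentation makes the henselian flavour of the obstruction fully explicit.
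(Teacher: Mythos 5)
Your plan is on the right track (build an absolutely irreducible curve over the fixed field $K_0$, use $\mathsf{ACFA}$---either via pseudofiniteness/PAC or directly via the axiom scheme---to produce a $K_0$-point, and contradict the valuation constraints coming from $\epsilon$ and $a$). But the curve you pick does not work, and this is the real source of the gap you flag at the end rather than a mere technical finishing issue.

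Your own case analysis gives the key inequality: for $x\in K_0$ one has $v(P(x))\le 0$, with equality iff $v(x)\ge 0$. This is the only lever available. On your curve $C=V(P(X)Y^n-\epsilon)$, a $K_0$-point $(x,y)$ only gives $v(P(x))+nv(y)=v(\epsilon)>0$. Since nothing constrains $v(y)$, the term $nv(y)$ can carry all of $v(\epsilon)$ (and more), leaving $v(P(x))\le 0$, which is perfectly compatible with $x\in K_0$. In particular, your intermediate assertion that ``$P(x)$ is a unit with $\res P(x)\in k_0$'' is false in general: Case~2 of your own case analysis ($v(x)<0$, so $v(P(x))=nv(x)<0$) is not excluded by the relation $P(x)y^n=\epsilon$, and even in Case~1 you only get $v(P(x))=0$, $\res P(x)=\prod_i(\res x-\sigma_k^i(\res a))\in k_0$, which contradicts nothing. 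So there is no Hensel-style lifting to perform, and no contradiction to extract; the ``technical heart'' you anticipate simply does not materialize. The alternative curve $(Y-X)^n-\epsilon P(X)$ has the same defect: a single unconstrained coordinate soaks up the valuation of $\epsilon$.

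The paper instead takes the curve $V=V(P(X)P(Y)-\epsilon)$. Now a $K_0$-point $(b_1,b_2)$ gives $v(P(b_1))+v(P(b_2))=v(\epsilon)>0$ with \emph{both} summands $\le 0$ by the key inequality, an immediate contradiction. (Concretely: some $v(P(b_i))>0$, hence $v(b_i)\ge 0$ and some factor $v(b_i-\sigma^j a)>0$, forcing $\res(b_i)=\sigma_k^j(\res a)\notin k_0$ by your orbit observation, contradicting $b_i\in K_0$ via \Cref{lemma:fixedfieldclosed}.) The paper then contradicts the $\mathsf{ACFA}$ axioms directly by taking $W$ to be the diagonal of $V\times V$; your route through PAC of the pseudofinite fixed field would also close once $V(K_0)=\emptyset$ is established, since that directly violates PAC for an absolutely irreducible curve. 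So the fix is simply to replace your $P(X)Y^n-\epsilon$ by $P(X)P(Y)-\epsilon$; the rest of your set-up (the orbit analysis of $\res a$, the valuation case analysis, the absolute irreducibility via an Eisenstein/Vahlen--Capelli-type criterion) is fine.
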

\begin{proof} For notational simplicity, we will only deal with the case $n=2$.
Define $P(x)\coloneqq(x-a)(x-\sigma(a))$, observe that it is over the fixed field $K_0$, and set $Q(x,y)\coloneqq P(x)P(y)-\epsilon$.
By the Eisenstein criterion (in the version of~\cite[Lemma~2.3.10(b)]{friedFieldArithmetic2023}) applied to $(K[x])[y]$, the vanishing variety $V$ of $Q$ is absolutely irreducible.

Let us show it does not have points in $K_0$.
Assume it does, say $(b_1, b_2)\in V(K_0)$. Then $P(b_1)P(b_2)=\epsilon$, hence $v(P(b_1))+v(P(b_2))>0$, so for some $i$ we have $v(b_i-a)>0$ or $v(b_i-\sigma(a))>0$. By assumption on $\res(a)$ and \Cref{lemma:fixedfieldclosed} we must have $\sigma(b_i)\ne b_i$.

  Let $W$ be the diagonal of $V\times V$. Then $W$, which is absolutely irreducible because it is isomorphic to $V$, projects surjectively on $V$ and on $\sigma(V)=V$, but a $\sigma$-point on $W$ would contradict that $V(K_0)=\emptyset$.
\end{proof}

\begin{co}\label{co:acvfanotacfa}
  Every existentially closed valued difference field is not existentially closed as a difference field.
\end{co}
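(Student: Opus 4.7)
The plan is to deduce the statement directly from \Cref{pr:philip}: it suffices to show that in any existentially closed valued difference field $(K,v,\sigma)$ the underlying valued field is henselian and there is a $\sigma$-fixed element of $K$ with nonzero valuation. Henselianity is immediate, since \Cref{pr:ecbasics}\ref{point:ecbasicsACF} gives that $\operatorname{VF}(K)$ is algebraically closed. Once a suitable fixed point is produced, \Cref{pr:philip} yields $(K,\sigma)\centernot\models \mathsf{ACFA}$, which is exactly what we want, as the models of $\mathsf{ACFA}$ are the existentially closed difference fields (recalled at the start of this subsection).

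To produce the fixed point, I would pass to an existentially $\aleph_1$-saturated existentially closed extension $(K^*,v,\sigma)\supseteq (K,v,\sigma)$, which exists by \Cref{rem:ecsatexist}. By \Cref{pr:ecbasics}\ref{point:ecbasicsOAGA}, the difference oag $(\Gamma(K^*),\sigma)$ is existentially closed; since every difference oag embeds into one containing a fixed positive element (e.g.\ by adjoining a convex $\mathbb Q$-summand on which $\sigma$ acts as the identity), existential closedness yields some $\gamma_0 \in \Gamma(K^*)$ with $\gamma_0>0$ and $\sigma(\gamma_0)=\gamma_0$.

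Next, \Cref{co:section} applied to $K^*$ furnishes a $\sigma$-equivariant cross-section $s^*\from \Gamma(K^*)\to (K^*)^\times$. Setting $\epsilon^*\coloneqq s^*(\gamma_0)$, we have $\sigma(\epsilon^*)=s^*(\sigma(\gamma_0))=\epsilon^*$ and $v(\epsilon^*)=\gamma_0>0$. Hence the existential $\mathcal L_\sigma$-sentence $\exists x\,(\sigma(x)=x\land v(x)>0_\Gamma)$ holds in $K^*$, and by existential closedness of $K$ it holds in $K$ as well, giving the required fixed point and concluding the proof via \Cref{pr:philip}.

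The only non-routine input is the passage from the value-group witness $\gamma_0$ to an actual field element, which is precisely what \Cref{co:section} is designed to enable; everything else is immediate from \Cref{pr:ecbasics} together with standard facts about extending difference oags. One could alternatively aim at the more explicit failure of the $\mathsf{ACFA}$ axioms given by \Cref{pr:noacfaaepsilon}, but this would require additionally lifting a residue-field element with a prescribed $\sigma^n$-orbit to a $\sigma^n$-fixed element of $K$, and the route via \Cref{pr:philip} neatly sidesteps that complication.
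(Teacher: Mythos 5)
Your proof is correct and takes a genuinely different (and somewhat cleaner) route from the paper's. The paper deduces the corollary from \Cref{pr:noacfaaepsilon}, which requires producing both a fixed point $\epsilon$ of positive valuation and an element $a$ of valuation $0$ with $\sigma^2(a)=a$ and $\res(\sigma(a))\ne\res(a)$; the latter witness is constructed via a pair of Gauss extensions. You instead go directly through \Cref{pr:philip}, whose hypotheses --- henselianity plus a fixed point of nonzero valuation --- are strictly weaker, so the Gauss-extension step disappears. For the fixed point, both arguments ultimately invoke existential closedness of $\Gamma$ to find a fixed positive value; the paper compresses the lift from the value group to the field into a one-clause remark, whereas you make it explicit by passing to an $\aleph_1$-saturated pec extension $K^*$, invoking \Cref{co:section} to get a $\sigma$-equivariant cross-section $s^*$, taking $\epsilon^*=s^*(\gamma_0)$, and pulling back to $K$ via the quantifier-free existential sentence $\exists x\,(\sigma(x)=x\land v(x)>0_\Gamma)$. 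This is a sound and self-contained way to carry out the lift. The only thing your shorter route does not buy is the explicit failing instance of the $\mathsf{ACFA}$ axioms (the variety cut out by $(x^2+1)(y^2+1)-t$ in the example following the corollary), which is the apparent reason the paper chose to pass through \Cref{pr:noacfaaepsilon} rather than stopping at \Cref{pr:philip}.
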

\begin{proof}
Let us show that every existentially closed valued difference field satisfies the assumptions of \Cref{pr:noacfaaepsilon}. The existence of $\epsilon$ follows from existential closedness of $\Gamma$ (\Cref{pr:ecbasics}\ref{point:ecbasicsOAGA}). To find some $a$ with $v(a)=0$,  $\sigma^2(a)=a$, and $\res(\sigma(a))\ne \res(a)$, it suffices to take two Gauss extensions (see~\cite[Corollary 2.2.2]{EP}), let $\sigma$ swap the added points, and apply existential closedness.
\end{proof}

\begin{rem}
The proof above may be adapted, for instance, to the existentially closed isometric case studied in \cite{AvD}, but not to the existentially closed strongly contractive case, where the valued field sort is indeed a model of $\mathsf{ACFA}$ \cite[Fact~2.13]{CH}.  
\end{rem}

\Cref{co:acvfanotacfa} implies that there must exist a valued difference field $(K,v,\sigma)$ and a difference field extension $(L,\sigma_L)\supseteq (K,\sigma)$ such that no valuation on $L$ extending $v$ is compatible with $\sigma_L$. It is indeed possible to extract a concrete example from the above proofs.

\begin{eg}
  Let $K=\mathbb C((t))$ with the usual valuation and
  \[
    \sigma\left(\sum_{m}a_mt^m\right)=\sum_{m}\overline{a_m}t^m
  \]
  where $\bar z$ denotes the complex conjugate of $z$.   Take, in the  notation of the proof of \Cref{pr:noacfaaepsilon}, $a=i$ and $\epsilon=t$, i.e.\ $P(x)=(x-i)(x+i)=x^2+1$ and $Q(x,y)=(x^2+1)(y^2+1)-t$. 

  By quantifier elimination in $\mathsf{ACF}$ and the fact that $V$ is defined over the fixed field, hence its generic type is invariant over the fixed field, we can find $L\coloneqq K(b_1,b_2)$ with $\sigma(b_j)=b_j$ and $(b_1,b_2)\in V(Q)$. Assume towards a contradiction that there is an extension of the valuation to $L$  preserved by $\sigma$. Because $Q(b_1,b_2)=0$, there is $j$ such that $v(b_j^2+1)>0$. So $\res(b_j)=\pm i$ is not a fixed point, hence neither is $b_j$, contradiction.

  In other words, what is happening is that the valuation forces points on the variety $(x^2+1)(y^2+1)-t$ to have a coordinate infinitesimally close to a square root of $-1$; continuity with respect to the valuation then implies that this coordinate cannot be fixed.

  In characteristic $(p,p)$, working with $\mathbb F_p^\mathrm{alg}((t))$ and replacing complex conjugation by the Frobenius automorphism allows a  similar argument to go through. 
\end{eg}

\section{Amalgamation}
\label{sec:ap}

An amalgamation problem of (expanded) valued fields induces amalgamation problems at the residue field and value group levels. Therefore, an obvious obstruction to the solution of an amalgamation problem is that one of these induced problems cannot be solved.

In the case of valued difference fields, value difference groups can always be amalgamated by \cite[Theorem~5.10]{DM}, but residual obstructions may still be present.
\begin{eg} 
Let $K$, $M$ and $L$ be the trivially valued difference fields with valued field sorts $\VF(K)=\mathbb Q$ and $\VF(M)=\VF(L)=\mathbb Q(i)$, and automorphisms $\sigma_M(i)=i$ and $\sigma_L(i)=-i$. Then $M$ and $L$ do not amalgamate over the common substructure $K$.
\end{eg}

 Even in the absence of obstacles coming from the residue field (e.g.\ algebraically closed difference fields are amalgamation bases, see \Cref{AP_VDF} and the following discussion), amalgamation problems may fail to have solutions.

\begin{eg}\label{eg:apfailure}
	Let $k$ be algebraically closed, let $\sigma\in \aut(k)$ be arbitrary, and consider the Hahn product $k((\mathbb R))$ of $(k,\sigma)$ and $(\mathbb{R},0, +, \leq,\id_{\mathbb{R}})$. Let $K_0\coloneqq k((\mathbb{Z})) \subseteq k((\mathbb{R}))$, and let $K\coloneqq K_0(t^{\pi})$. By the Zariski--Abhyankar inequality \cite[Corollary 3.25]{Dri}, the residue field of $K$ is an algebraic extension of $k$, so it is equal to $k$. Let $L$ and $M$ be valued fields isomorphic to $K(\sqrt{t^\pi})$, endowed with automorphisms $\sigma_L$ and $\sigma_M$ extending the automorphism on $K$ so that $\sigma_L$ fixes the square roots of $t^{\pi}$ while $\sigma_M$ swaps them. Then $(L,\sigma_L)$ and $(M,\sigma_M)$ cannot be amalgamated over $K$.
\end{eg}

This is not surprising, as similar issues are well known to exist when one tries to amalgamate e.g.\ henselian valued fields of residual characteristic zero (with no extra structure). Common solutions are to expand the language by an angular component or a cross-section, which in our case will be assumed to be equivariant. In \Cref{AP_VDF} we will show that, in the residual characteristic zero case, this allows to characterise which amalgamation problems have solutions. Our result also works when a lift of the residue field is added to the language.

\subsection{Set-up}

\begin{defin}
	A difference field $(k,\sigma)$ is \emph{linearly difference closed} iff for all $n \in \mathbb{N}$ and $\alpha_0,\ldots, \alpha_n\in k$ with $\alpha_n\neq 0$ the equation $1+\alpha_0 x+\ldots + \alpha_n \sigma^n(x)=0$ has a solution in $k$.
\end{defin} 

It is an easy and well-known fact that every model of $\mathsf{ACFA}$ is linearly difference closed.

The following definitions appear for example in \cite[Section 2]{AvD}.

\begin{defin}
	Let $(K,\sigma)$ be a difference field. A \emph{$\sigma$-polynomial over $K$} is an expression of the form $G(x)=g(x,\sigma(x),\ldots,\sigma^n(x))$, where $g \in K[T_0,\ldots,T_n]$.
	
	The \emph{order} of the $\sigma$-polynomial $G(x)$ is the smallest $n$ for which there exists $g \in K[T_0,\ldots,T_n]$ such that $G(x)=g(x,\ldots,\sigma^n(x))$. The \emph{complexity} of a $\sigma$-polynomial $G(x)= g(x,\sigma(x),\ldots, \sigma^n(x))$ of order $n$ is the ordered triple $(n, \deg_{x_n} g, \deg g)$, where $\deg g$  denotes the total degree of $g$.
\end{defin}

We regard complexities of $\sigma$-polynomials as being linearly ordered by the lexicographical ordering on $\mathbb{N}^3$.

\begin{defin}
	Let $(K,\sigma)$ be a difference field, $(K_0,\sigma_{|K_0})$ a difference subfield. An element $a \in K$ is \emph{$\sigma$-algebraic} over $K_0$ iff there is a nonzero $\sigma$-polynomial $G$ over $K_0$ such that $G(a)=0$, and \emph{$\sigma$-transcendental} over $K_0 $ otherwise. If $a$ is $\sigma$-algebraic over $K_0$, a \emph{minimal $\sigma$-polynomial} for $a$ over $K_0$ is a nonzero $\sigma$-polynomial $G$ of minimal complexity such that $G(a)=0$.
\end{defin}

For $G$ a $\sigma$-polynomial, we refer to~\cite[Section 2]{AvD} for the definition of the  $\sigma$-polynomials $G_{(\mathbf i)}$. Note that if $\mathbf{i}\ne 0$ then $G_{(\mathbf i)}$ has strictly lower complexity than $G$.

For $G$ a $\sigma$-polynomial and $a \in K$, we refer to~\cite[Definition~4.2]{Az} or~\cite[Definition~4.1]{DO} for the definitions of $(G,a)$ being in \emph{$\sigma$-Hensel configuration} and of the value $\gamma(G,a)$. Recall that a valued difference field is \emph{$\sigma$-henselian} iff, whenever $(G,a)$ is in $\sigma$-Hensel configuration, there is $b$ with $G(b)=0$ and $v(a-b)=\gamma(G,a)$.\footnote{There is another notion of $\sigma$-henselianity, designed to work for valued difference fields with analytic structure, see \cite[Definition~4.10]{Ri}. In the case where the analytic structure is trivial and $\operatorname{char}(k)=0$, the two notions are equivalent, as they both imply (and are implied by) elementary equivalence to a maximally complete valued difference field with linearly difference closed residue difference field. We thank Silvain Rideau-Kikuchi for pointing this out.} Our uses of $\sigma$-henselianity will be mediated by the following observation.

\begin{fact}[{\cite[Remark~4.2]{DO}}]\label{fact:fourtwo}
Let $G$ be nonconstant, $G(a)\ne 0$, $v(G(a))>0$ and $v(G_{(\mathbf{i})})=0$ for all $\mathbf{i}\ne 0$ with $G_{(\mathbf{i})}\ne 0$. Then $(G,a)$ is in $\sigma$-Hensel configuration with $\gamma(G,a)>0$.
\end{fact}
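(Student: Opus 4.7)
The plan is to treat this as a direct verification against the definition of $\sigma$-Hensel configuration in~\cite[Definition~4.1]{DO}, since the hypotheses appear to be engineered precisely to make each clause of that definition hold. Recall that $(G,a)$ in $\sigma$-Hensel configuration is a statement of the form: $G$ is nonconstant, $G(a)\ne 0$, and there exists $\gamma\in \Gamma$ controlling how the "higher-order" terms $G_{(\mathbf{i})}(a)$ compare to $G(a)$, with $\gamma(G,a)$ defined (up to the precise weighting used in the cited source) as the minimum over nonzero $\mathbf{i}$ with $G_{(\mathbf{i})}\ne 0$ of the quantity $(v(G(a)) - v(G_{(\mathbf{i})}(a)))/|\mathbf{i}|$.

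The first two conditions, $G$ nonconstant and $G(a)\ne 0$, are explicit in the hypotheses. For the quantitative part, I would first unpack the hypothesis $v(G_{(\mathbf{i})}) = 0$: this is the Gauss valuation of the $\sigma$-polynomial $G_{(\mathbf{i})}$, asserting that all coefficients lie in $\mathcal{O}$ with at least one in $\mathcal{O}^\times$. The next step is to deduce that $v(G_{(\mathbf{i})}(a)) \ge 0$ for all such $\mathbf{i}$. This uses that $v(a)\ge 0$, which in the intended applications (cf.\ the proof-sketch of $\sigma$-henselianity arguments) is implicit in the setup — in practice $a$ is a candidate approximate root.

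With these estimates in hand, for every $\mathbf{i} \ne 0$ with $G_{(\mathbf{i})}\ne 0$ we get
\[
v(G(a)) - v(G_{(\mathbf{i})}(a)) \;\ge\; v(G(a)) \;>\; 0.
\]
Dividing by $|\mathbf{i}|\ge 1$ and taking the minimum, we conclude that $\gamma(G,a) > 0$ is well-defined and strictly positive, and this is precisely the domination condition certifying that $(G,a)$ is in $\sigma$-Hensel configuration.

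The main obstacle I anticipate is bibliographic rather than mathematical: pinning down the exact form of the inequality used in~\cite[Definition~4.1]{DO} (there are several variants in the literature, with different weightings by $|\mathbf{i}|$, different conventions for when the minimum is strict, and in some sources a separate hypothesis ensuring that $\gamma$ is genuinely attained), and verifying that the translation of "$v(G_{(\mathbf{i})})=0$" through evaluation at $a$ yields the form actually needed. Once the definition is lined up correctly, the verification reduces to the one-line inequality above.
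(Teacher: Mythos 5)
There is a genuine gap in your argument, and it originates in a misreading of the hypothesis. The expression $v(G_{(\mathbf{i})})$ in the statement of the Fact (and in \cite[Remark~4.2]{DO}) is shorthand for $v(G_{(\mathbf{i})}(a))$, the valuation of $G_{(\mathbf{i})}$ \emph{evaluated at} $a$ --- not the Gauss valuation of the $\sigma$-polynomial $G_{(\mathbf{i})}$. You can see this from the place the Fact is invoked (the proof of \Cref{lem:k}), which checks precisely that $v(G_{(\mathbf{i})}(a_0))$ is not positive and, combined with $a_0 \in \mathcal{O}$ and $G_{(\mathbf{i})}$ having coefficients in $\mathcal{O}$, concludes $v(G_{(\mathbf{i})}(a_0))=0$. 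Your reading replaces the hypothesis ``$v(G_{(\mathbf{i})}(a))=0$'' with the strictly weaker ``$G_{(\mathbf{i})}$ has Gauss valuation $0$ and $v(a)\geq 0$,'' which yields only $v(G_{(\mathbf{i})}(a))\geq 0$ and does not suffice.

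This missing information is then papered over by a sign error. From $v(G_{(\mathbf{i})}(a))\geq 0$ one gets $v(G(a))-v(G_{(\mathbf{i})}(a))\;\leq\;v(G(a))$, not $\geq v(G(a))$ as you wrote; and if cancellations made $v(G_{(\mathbf{i})}(a))>v(G(a))$, the quantity would be negative and $(G,a)$ need not be in $\sigma$-Hensel configuration at all. With the correct reading of the hypothesis, $v(G_{(\mathbf{i})}(a))=0$ holds on the nose, and the verification is immediate: set $\gamma:=v(G(a))>0$; then for $|\mathbf{i}|=1$ one has $v(G_{(\mathbf{i})}(a))+\gamma=\gamma=v(G(a))$, and for $|\mathbf{j}|\geq 2$ one has $v(G_{(\mathbf{j})}(a))+|\mathbf{j}|\gamma=|\mathbf{j}|\gamma>\gamma=v(G(a))$, which are exactly the two clauses of \cite[Definition~4.1]{DO}, giving $\gamma(G,a)=\gamma>0$. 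Note also that the paper does not supply its own proof here --- the statement is cited verbatim from \cite[Remark~4.2]{DO} --- so the task is really to unpack that remark, and the single point of care is getting the meaning of $v(G_{(\mathbf{i})})$ right; you correctly flagged this as the potential obstacle but then resolved it the wrong way.
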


\begin{fact}\label{fact:sigmahenslindifcl}
  If $(K,v,\sigma)$ is $\sigma$-henselian, then $(k(K),\sigma)$ is linearly difference closed.
\end{fact}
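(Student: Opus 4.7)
Given $\alpha_0, \ldots, \alpha_n \in k$ with $\alpha_n \neq 0$, the plan is to solve the residue-field equation by lifting to a linear $\sigma$-polynomial over $K$ and applying $\sigma$-henselianity. Choose lifts $a_j \in \mathcal O$ of $\alpha_j$, taking $a_n \in \mathcal O^\times$ and $a_j = 0$ whenever $\alpha_j = 0$, and consider
\[ G(x) = 1 + \sum_{j=0}^{n} a_j \sigma^j(x). \]
Any root $b \in \mathcal O$ of $G$ yields the desired solution $\beta := \res(b)$. Since $G$ is linear, its $\sigma$-derivatives $G_{(\mathbf{e}_j)}$ equal $a_j$ (a constant) and all $G_{(\mathbf{i})}$ with $|\mathbf{i}| \geq 2$ vanish, so $v(G_{(\mathbf{i})}) = 0$ whenever $G_{(\mathbf{i})} \neq 0$. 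This verifies the derivative-valuation hypotheses of \Cref{fact:fourtwo} at every point of $\mathcal O$: if one can find $c \in \mathcal O$ with $G(c) \neq 0$ and $v(G(c)) > 0$, then $(G, c)$ is in $\sigma$-Hensel configuration with $\gamma(G, c) > 0$, and $\sigma$-henselianity produces $b$ with $G(b) = 0$ and $v(b - c) > 0$, so $b \in \mathcal O$.

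The main obstacle is precisely to produce such a $c$: the requirement $v(G(c)) > 0$ is equivalent to asking that $\res(c)$ already solve the residue equation we are trying to prove soluble. To break this circularity, I would exploit the $\sigma$-Newton polygon of $G$, whose extreme vertices $(0, v(1)) = (0, 0)$ and $(n, v(a_n)) = (n, 0)$ both lie at height $0$; horizontality of the polygon forces any root of $G$ to have valuation $0$, and, more importantly, allows $(G, 0)$ to be placed directly into a (degenerate) $\sigma$-Hensel configuration with slope $\gamma(G, 0) = 0$ in the sense of \cite{Az, DO}. Applying $\sigma$-henselianity to this configuration then yields $b \in \mathcal O^\times$ with $G(b) = 0$. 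A cleaner alternative is to embed $K$ into a maximally complete $\sigma$-henselian extension with the same residue field (as constructed in \cite{Az}), where invertibility of $a_n$ combined with spherical completeness solves $G(b) = 0$ by a Hahn-series recursion, and then to descend to $k$ via elementary equivalence of the two valued difference fields.
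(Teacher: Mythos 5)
The paper does not prove this statement; it is a Fact cited from \cite{Az,DO} (Lemma~4.6 in each), so there is no in-paper proof to compare against. Assessed on its own terms, your proposal correctly sets up the natural strategy (lift the coefficients, apply $\sigma$-henselianity) and, to your credit, you correctly identify the essential obstacle: to produce $c$ with $v(G(c))>0$ is literally to have already solved the residue equation. The difficulty is that neither of your two proposed escapes from this circularity actually works.

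For the Newton-polygon route, you want to place $(G,0)$ in a ``degenerate'' $\sigma$-Hensel configuration with $\gamma(G,0)=0$. But the paper's only tool for certifying a $\sigma$-Hensel configuration is \Cref{fact:fourtwo}, which explicitly requires $v(G(a))>0$; with $a=0$ and $G(0)=1$ this hypothesis fails. You do not verify directly against the definitions in \cite[Definition~4.2]{Az} or \cite[Definition~4.1]{DO} that the conditions for $\sigma$-Hensel configuration can be met with $\gamma=0$, and everything in the paper's own use of $\sigma$-henselianity (see the proof of \Cref{lem:k} and footnote~5) treats $\gamma(G,a)$ as strictly positive. Observing that the tropicalisation of $G$ has a tropical root at $0$ is not the same thing as establishing a $\sigma$-Hensel configuration there, so this step is an unproved (and in fact problematic) claim, not a proof.

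The maximally complete route is circular in a different way. In a Hahn field over $k$, solving $G(b)=0$ coefficient-by-coefficient requires, at the leading step, solving precisely the residue equation $1+\alpha_0 y+\ldots+\alpha_n\sigma^n(y)=0$ in $k$; invertibility of $a_n$ does not turn the rewriting $\sigma^n(x)=-a_n^{-1}(1+a_0x+\ldots+a_{n-1}\sigma^{n-1}(x))$ into a contraction unless $v(a_j)>0$ for all $j<n$, which is exactly the trivially solvable case. Moreover, if the maximal immediate extension has the same residue field, descending is vacuous; and if it has a larger residue field, ``descend via elementary equivalence'' is not available since a solution to the residue equation is not first-order expressible over $k$ alone. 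So the gap you flagged remains open, and the proof is incomplete; the cited sources close it with a more careful construction of a suitable $\sigma$-Hensel configuration (at a point of \emph{positive} valuation) rather than a degenerate one at $0$.
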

\begin{proof}
See~\cite[Lemma~4.6]{Az}  or~\cite[Lemma~4.6]{DO}.
\end{proof}

\begin{fact}\label{fact:liftavdd}
	Assume $K$ is $\sigma$-henselian and $\operatorname{char}(k)=0$. For every difference subfield $K_0\subseteq \mathcal O$ there is a difference subfield $K_1$ of $K$ with $K_0\subseteq K_1\subseteq \mathcal O$ and $\res(K_1)=k(K)$.
\end{fact}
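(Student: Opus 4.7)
\medskip

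\noindent\textbf{Proof proposal.} The plan is a straightforward Zornification combined with a case analysis on the $\sigma$-type over the residue of a maximal lift. First, I note that any subfield $K'\subseteq K$ contained in $\mathcal O$ is automatically \emph{trivially valued}: for $x\in (K')^\times$, both $x$ and $x\inverse$ lie in $\mathcal O$, so $v(x)=0$. Hence, any difference subfield $K'$ with $K_0\subseteq K'\subseteq \mathcal O$ is trivially valued, and $\res|_{K'}$ is a difference-field isomorphism onto $\res(K')\subseteq k(K)$. Applying Zorn's lemma to such $K'$ ordered by inclusion (unions of chains give upper bounds), I obtain a maximal difference subfield $K_1$ with $K_0\subseteq K_1\subseteq \mathcal O$. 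I claim $\res(K_1)=k(K)$; otherwise pick $c\in k(K)\setminus \res(K_1)$ and split cases.

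\emph{Transcendental case.} Suppose $c$ is $\sigma$-transcendental over $\res(K_1)$. Pick any $b\in\mathcal O$ with $\res(b)=c$; since $c\ne 0$, the elements $\sigma^n(b)$ all have valuation zero. For any $\sigma$-polynomial $f$ over $K_1$, reducing coefficients (all of which have valuation zero) yields a $\sigma$-polynomial $\bar f$ over $\res(K_1)$ with $\res(f(b))=\bar f(c)$; by $\sigma$-transcendence, $\bar f(c)=0$ iff $\bar f=0$ iff $f=0$. So every nonzero element $f(b)/g(b)$ of $K_1\langle b\rangle$ has valuation zero, whence $K_1\langle b\rangle\subseteq\mathcal O$ strictly contains $K_1$, contradicting maximality.

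\emph{Algebraic case.} Let $\bar G$ be a minimal $\sigma$-polynomial of $c$ over $\res(K_1)$, and lift its coefficients through the inverse of $\res|_{K_1}$ to obtain a $\sigma$-polynomial $G$ over $K_1$ of the same complexity. Pick any $b'\in\mathcal O$ with $\res(b')=c$. Then $\res(G(b'))=\bar G(c)=0$, so $v(G(b'))>0$. For each $\mathbf i\ne 0$, the $\sigma$-polynomial $G_{(\mathbf i)}$ has strictly lower complexity than $G$ and reduces to $\bar G_{(\mathbf i)}$; by minimality of $\bar G$, whenever $\bar G_{(\mathbf i)}\ne 0$ one has $\bar G_{(\mathbf i)}(c)\ne 0$, so $v(G_{(\mathbf i)}(b'))=0$. \Cref{fact:fourtwo} then places $(G,b')$ in $\sigma$-Hensel configuration with $\gamma(G,b')>0$; by $\sigma$-henselianity there is $b\in K$ with $G(b)=0$ and $v(b-b')>0$, so $\res(b)=c$. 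For any nonzero $\sigma$-polynomial $h$ over $K_1$ of complexity strictly less than $G$, the minimality of $\bar G$ again yields $\res(h(b))=\bar h(c)\ne 0$, hence $v(h(b))=0$. Since by minimality every element of $K_1\langle b\rangle$ can be written as a ratio of such $h(b)$'s, we get $K_1\langle b\rangle\subseteq\mathcal O$, again contradicting maximality.

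The main obstacle is the algebraic case: certifying the $\sigma$-Hensel configuration, and then showing that the resulting difference subfield $K_1\langle b\rangle$ lies in $\mathcal O$. Both steps hinge on the minimality of $\bar G$ and on the fact that coefficient-wise lifting preserves complexity of the $G_{(\mathbf i)}$. The hypothesis $\operatorname{char}(k)=0$ enters precisely to guarantee that some $\bar G_{(\mathbf i)}$ with $\mathbf i\ne 0$ is nonzero and nonvanishing at $c$ — a separability-type statement without which the $\sigma$-Hensel configuration from \Cref{fact:fourtwo} cannot be triggered.
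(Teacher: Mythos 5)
Your argument follows the same route as the one the paper appeals to: the paper's proof is a citation to \cite[Theorem~4.8]{AvD}, together with the observation that the argument there works with the present notion of $\sigma$-henselianity via \Cref{fact:fourtwo}. Your Zorn's-lemma setup, the observation that any intermediate difference subfield of $\mathcal O$ is trivially valued, the dichotomy on $\sigma$-transcendence of $c$ over $\res(K_1)$, and the use of \Cref{fact:fourtwo} to produce a root $b$ lifting $c$ are all correct and essentially reconstruct the [AvD] argument. The transcendental case goes through, with the minor caveat that, since the difference fields here are inversive, one should argue with $\sigma^{\pm}$-polynomials (equivalently: $\res$ is injective on $K_1[\sigma^i(b):i\in\mathbb Z]$ because the $\sigma^i(c)$ are algebraically independent over $\res(K_1)$, so this subring and its fraction field are trivially valued).

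The last sentence of the algebraic case, however, has a genuine gap. You assert that every element of $K_1\langle b\rangle$ is a ratio $h_1(b)/h_2(b)$ with $h_1,h_2$ of complexity strictly less than that of $G$, and in particular of order at most $n$. This is not true in general: $\sigma^{n+1}(b)$ lies in the difference field $K_1\langle b\rangle$, but the only relation you control is $\sigma(G)(b)=0$, which makes $\sigma^{n+1}(b)$ a root of a polynomial of degree $\deg_{x_n}g$ over $K_1(b,\ldots,\sigma^n(b))$; when that degree exceeds $1$ (the typical case), $\sigma^{n+1}(b)$ is algebraic but \emph{not} rational over $K_1(b,\ldots,\sigma^n(b))$, so it cannot be written as a ratio of $\sigma$-polynomials of order $\le n$ evaluated at $b$. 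The same problem occurs for $\sigma^{-1}(b)$. The fact you actually need --- that $K_1\langle b\rangle$ is trivially valued --- is precisely the content of \Cref{2.6}(1) (from \cite[Lemma~2.6]{AvD}): applied with $E=K_1$, $a=b$, $\alpha=\beta=c$, it gives $v(K_1\langle b\rangle^\times)=\Gamma(K_1)=\{0\}$, hence $K_1\langle b\rangle\subseteq\mathcal O$, and the maximality contradiction follows. So either invoke that fact directly, or replace the single reduction step by the inductive bookkeeping over the towers $K_1(\sigma^{-m}(b),\ldots,\sigma^{m+n}(b))$ that underlies its proof in [AvD].
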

\begin{proof}
	This is \cite[Theorem 4.8]{AvD}. Note that its proof does not assume that the automorphism $\sigma$ is isometric (i.e.\ $\sigma(\gamma)=\gamma$ for all $\gamma\in \Gamma$), even if the main results of the cited paper do. Moreover, even though the notion of $\sigma$-henselianity used there is different, it can be seen by using \Cref{fact:fourtwo} that the proof still works with the notion of $\sigma$-henselianity used here.
\end{proof}

\begin{lemma}\label{henselianpr}  Let $\mathcal L$ be either $\mathcal L_\mathrm{\sigma}$,   $\mathcal L_{\ac,\sigma}$, or the expansion of one of these by a lift. Every existentially closed $\mathcal L$-valued difference field of residual characteristic zero is $\sigma$-henselian.
\end{lemma}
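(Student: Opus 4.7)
The plan is to reduce $\sigma$-henselianity to the existence of appropriate extensions, produce such extensions, and then invoke existential closedness of $K$.

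Suppose $(G,a)$ is in $\sigma$-Hensel configuration over an existentially closed $\mathcal L$-valued difference field $(K,v,\sigma,\ldots)$. The value $\gamma\coloneqq\gamma(G,a)\in\Gamma(K)$ is an explicit $\Gamma(K)$-definable combination of the valuations $v(G_{(\mathbf i)}(a))$, so the statement ``there exists $b$ with $G(b)=0$ and $v(a-b)=\gamma$'' is existential with quantifier-free body and parameters in $K$, in every language under consideration. By existential closedness of $K$, it therefore suffices to produce such a $b$ in some $\mathcal L$-valued difference field extension of $K$.

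In the base case $\mathcal L=\mathcal L_\sigma$ I would invoke the standard fact, established in the Azgin--van den Dries / Durhan--Onay framework, that every valued difference field of residual characteristic zero embeds into a $\sigma$-henselian one: concretely, extend the residue difference field $(k(K),\sigma_k)$ to a linearly difference closed one, then take a maximally complete valued difference field extension $L_0\supseteq K$ with that residue field. Maximal completeness, linear difference closedness of the residue field and residue characteristic zero together force $\sigma$-henselianity, and $(G,a)$ retains its $\sigma$-Hensel configuration in $L_0$ with the same $\gamma$, so the required $b$ exists in $L_0$.

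For the remaining languages, I would embed $K$ into $L_0$ as above and then embed $L_0$ into an existentially closed, existentially $\aleph_1$-saturated valued difference field $(L,v_L,\sigma_L)$ (in which $b$ still lives). By \Cref{co:acec_basics}, $K$ is algebraically closed, hence closed under roots, so \Cref{claim:extendac} provides an equivariant angular component on $L$ extending $\ac_K$. For languages expanded by a lift $\iota$, \Cref{fact:liftavdd} applied to $L$ (which, by the base case, is $\sigma$-henselian) and to the difference subfield $\iota(k(K))\subseteq\mathcal O(L)$ produces a difference subfield of $\mathcal O(L)$ with residue equal to $k(L)$ containing $\iota(k(K))$, yielding the desired extension of $\iota$. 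Thus $L$ is an $\mathcal L$-valued difference field extension of $K$ in which the witness $b$ exists, and the conclusion follows by existential closedness of $K$.

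The main obstacle is the base case, namely constructing a $\sigma$-henselian extension of a given valued difference field in residue characteristic zero without circularly appealing to the statement being proved; once this is in hand, the angular component and lift cases are handled by the tools developed in \Cref{subsec:ec_and_reducts}.
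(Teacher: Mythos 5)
Your approach is essentially the one taken in the paper: note that the witness demanded by $\sigma$-henselianity is an existential condition with parameters in $K$, embed $K$ into a $\sigma$-henselian valued difference field built via a $\sigma$-algebraically maximal immediate extension over a linearly difference closed residue field (using the Durhan--Onay result), and conclude by existential closedness. The paper phrases the first step as ``$\sigma$-henselianity is expressible by $\forall\exists$-sentences'', which is the same observation packaged once and for all rather than per instance $(G,a)$.

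Two minor points. First, the hypothesis of the cited result [DO, Cor.~5.6(2)] is $\sigma$-algebraic maximality, which is what Zorn's lemma gives directly; ``maximally complete'' also suffices but is stronger than needed and slightly off the cited statement. Second, and more substantively: when $\mathcal L$ includes the lift $\iota$ together with $\ac$, your appeal to \Cref{co:acec_basics} to conclude that $K$ is closed under roots (so that \Cref{claim:extendac} applies) is not literally available, since that corollary is stated only for $\ac$- and $s$-valued difference fields without $\iota$, and the corresponding statement with $\iota$ (\Cref{co:reductswithlifts}) is proved \emph{after} the present lemma. This is patchable, e.g.\ by arguing algebraic closedness directly via quantifier elimination for $\mathsf{ACVF}_{\ac,\iota}$ [Kesting, Prop.~1.8]; the paper instead avoids the issue entirely by taking an \emph{immediate} $\sigma$-algebraically maximal extension of the $\mathcal L_0$-reduct of $K$ (over which $\ac$ extends uniquely since residue field and value group do not grow) and then applying \Cref{fact:liftavdd} to recover the lift, never needing \Cref{claim:extendac} or closure under roots.
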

\begin{proof}
  If $\mathcal L$ equals $\mathcal L_\mathrm{\sigma}$ (resp.\ $\mathcal L_{\ac,\sigma}$), this follows from  \Cref{pr:ecbasics}\ref{point:ecbasicsACFA} (resp.\ \Cref{co:acec_basics}), the fact that every $\sigma$-algebraically maximal (i.e.\ having no proper $\sigma$-algebraic immediate extension) valued difference field with linearly difference closed residue field and residual characteristic zero is $\sigma$-henselian \cite[Corollary 5.6(2)]{DO}, and the fact that $\sigma$-henselianity is expressible by a set of $\forall\exists$-sentences in $\mathcal L$. The last fact can be checked by inspection, by observing that, in \cite[Definition~4.1]{DO}, for $G$ of fixed complexity, it suffices to check finitely many of the $G_{(\mathbf{j})}$ because the other ones will be null regardless of the coefficients of $G$.

Assume now $\mathcal L_0$ is one of the languages above and $\mathcal L=\mathcal L_0\cup \set\iota$. Let $K$ be an existentially closed $\mathcal L$-valued difference field and let $K_0$ be its reduct to $\mathcal L_0$. Extend $K_0$ to some $\sigma$-algebraically maximal $M_0$, which is $\sigma$-henselian as recalled above. Now apply \Cref{fact:liftavdd} and the syntactical considerations above.
\end{proof}

\begin{co}\label{co:reductswithlifts}
	Assume $\operatorname{char}(k)=0$. 
        \begin{enumerate}
        \item If $(K,v,\sigma,\ac,\iota)$ is existentially closed, then so are all the reducts obtained by forgetting some subset of $\set{\sigma,\ac,\iota}$.
\item If $(K,v,\sigma,s,\iota)$ is existentially closed, and $\ac(x)=\res(x/s(v(x)))$, then so is $(K,v,\sigma,\ac,\iota)$.
        \end{enumerate}
\end{co}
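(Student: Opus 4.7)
The plan is to assemble the extension tools already developed in the paper---\Cref{fact:liftavdd}, \Cref{henselianpr}, \Cref{claim:extendac}, and the construction of \Cref{co:secacec}---together with quantifier elimination of the relevant $\mathsf{ACVF}$-theories. For part (1), I would handle each symbol in $\{\sigma, \ac, \iota\}$ individually and iterate: given an extension $L$ of the $\mathcal L$-reduct of $K$, pass to an existentially closed, existentially $\aleph_1$-saturated extension (still in $\mathcal L$), add the missing piece of structure to obtain an $\mathcal L_{\ac,\iota,\sigma}$-extension of $K$, and invoke existential closedness in the full language.

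To remove $\iota$, the saturated extension $L$ is $\sigma$-henselian by \Cref{henselianpr}; applying \Cref{fact:liftavdd} to $K_0 \coloneqq \iota_K(k(K)) \subseteq \mathcal O(L)$ yields a difference subfield $L_1 \subseteq \mathcal O(L)$ with $K_0 \subseteq L_1$ and $\res(L_1) = k(L)$, and since $L_1$ is a field, the kernel of $\res|_{L_1}$ is a proper ideal of $L_1$ and hence zero. Therefore $\iota_L \coloneqq (\res|_{L_1})\inverse$ is a lift; it commutes with $\sigma_L$ because $L_1$ is a difference subfield, and extends $\iota_K$ because $\iota_K(k(K)) \subseteq L_1$. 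To remove $\ac$, apply \Cref{claim:extendac} to obtain an equivariant extension of $\ac_K$ to $\ac_L$; the hypothesis that $K$ be closed under roots holds because $\VF(K)$ is algebraically closed, as shown by the argument of \Cref{pr:ecbasics}\ref{point:ecbasicsACF} using quantifier elimination of $\mathsf{ACVF}_{\ac,\iota}$. To remove $\sigma$, embed $L$ into an $\abs{L}^+$-strongly homogeneous $\mathfrak U \models \mathsf{ACVF}_{\ac,\iota}$; by quantifier elimination, $\sigma_K$ is $\mathcal L_{\ac,\iota}$-elementary as a partial map of $\mathfrak U$, so it extends to an $\mathcal L_{\ac,\iota}$-automorphism of $\mathfrak U$, which automatically commutes with $\ac$ and $\iota$, yielding the desired $\mathcal L_{\ac,\iota,\sigma}$-extension of $K$ containing $L$.

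For part (2), mirror the proof of \Cref{co:secacec}: given $(L, v_L, \sigma_L, \ac_L, \iota_L) \supseteq (K, v, \sigma, \ac, \iota)$, pass to an existentially closed, existentially $\aleph_1$-saturated extension in $\mathcal L_{\ac,\iota,\sigma}$ and build an equivariant cross-section $s_L\from \Gamma(L) \to L$ extending $s_K$ with $\ac_L(s_L(\gamma))=1$ for all $\gamma$, equivalently $\ac_L(x)=\res(x/s_L(v_L(x)))$ for all $x$. The construction of $s_L$ in \Cref{co:secacec} depends only on the $\mathbb Z[\sigma]$-module structure of $\Gamma(L)$ and $L^\times$, so the presence of $\iota_L$ is immaterial and the argument goes through unchanged; the resulting $\mathcal L_{s,\iota,\sigma}$-extension of $(K,v,\sigma,s,\iota)$ lets us transfer the original $\mathcal L_{\ac,\iota,\sigma}$-existential formula to $\mathcal L_{s,\iota,\sigma}$ (using that $\ac$ is definable from $s$, $v$, and $\res$), and then to $K$ by hypothesis. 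I do not expect any serious obstacle: the proof is a careful assembly of extension lemmas already available, and the only place where the residual characteristic zero assumption enters is through \Cref{fact:liftavdd}, whose applicability is guaranteed by the $\sigma$-henselianity of existentially closed models supplied by \Cref{henselianpr}.
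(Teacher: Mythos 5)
Your proposal is correct and follows essentially the same strategy as the paper's proof, relying on the same three extension tools (\Cref{fact:liftavdd} to recover $\iota$, \Cref{claim:extendac} to recover $\ac$, and strong homogeneity with Kesting's quantifier elimination to recover $\sigma$) and, for part (2), on the pp-type argument of \Cref{co:secacec}. The paper organises the seven reducts case-by-case, shortcutting the $\iota$-free reducts via the already-established \Cref{co:acec} and \Cref{co:acec_basics}, whereas you package the same content as a uniform remove-one-symbol-at-a-time recipe; the only small point worth spelling out on either route is that \Cref{claim:extendac} asks for an ec, saturated $\mathcal L_\sigma$-extension, so when your ambient extension is taken in a reduct language still containing $\iota$, one should first pass to an ec saturated $\mathcal L_\sigma$-extension and re-extend the lift by \Cref{fact:liftavdd} before applying it.
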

\begin{proof}
  This is proved similarly to the results in \Cref{subsec:ec_and_reducts}.

  For  $\mathcal L_{\ac,\sigma}$ we use that, by \Cref{fact:liftavdd}, partial lifts of the residue field may always be extended. This also implies the result for $\mathcal L_{\sigma}$ by \Cref{co:acec}, and for $\mathcal L_{\ac}$ and $\mathcal L$ by \Cref{co:acec_basics}. For $\mathcal L_{\ac, \iota}$ we apply \cite[Proposition 1.8]{Kesting}, which allows us to repeat the argument with quantifier elimination as in \Cref{pr:ecbasics}. 

        For $\mathcal L_{\iota,\sigma}$,  let $(L,v,\sigma,\iota)$ be an extension of $(K,v,\sigma,\iota)$. Then, by taking an existentially closed, existentially $\aleph_1$-saturated extension if necessary, we may assume that $L$ has a $\sigma$-equivariant angular component map extending the one on $K$ by \Cref{claim:extendac}. Then $(L,v,\sigma,\ac,\iota)$ is an extension of $(K,v,\sigma,\ac,\iota)$, as there are no extra compatibility conditions to be checked between $\ac$ and $\iota$. This proves that $(K,v,\sigma,\iota)$ is existentially closed. Finally, for $(K,v,\iota)$, we know by the above that $(K,v,\ac,\iota)$ is existentially closed so we can again extend the angular component map to any sufficiently saturated $\mathcal L_\iota$-valued field extension of $(K,v,\iota)$. This completes the proof of the first part.

The proof of the second part uses the same ideas as the proof of \Cref{henselianpr}, together with \Cref{co:secacec}, and is left to the reader.
\end{proof}

\begin{co}\label{henselian}
Let $\mathcal L$ be $\mathcal L_\sigma, \mathcal L_{\ac,\sigma}, \mathcal L_{s,\sigma}$, or an expansion of the previous by a lift. 
  If $K$ is an existentially closed  $\mathcal L$-valued difference field of residual characteristic zero, then it is $\sigma$-henselian.
\end{co}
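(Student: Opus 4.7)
The plan is to reduce the cross-section cases to the angular component cases already treated in \Cref{henselianpr}. Note that \Cref{henselianpr} already covers $\mathcal L_\sigma$, $\mathcal L_{\ac,\sigma}$, and their expansions by a lift $\iota$, so the only remaining cases are $\mathcal L_{s,\sigma}$ and $\mathcal L_{s,\iota,\sigma}$.

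For these, suppose $(K,v,\sigma,s)$ (resp.\ $(K,v,\sigma,s,\iota)$) is existentially closed. Define the induced angular component by $\ac(x)\coloneqq \res(x/s(v(x)))$; since $s$ commutes with $\sigma$ and so do $v$ and $\res$, the map $\ac$ is automatically $\sigma$-equivariant. Then, by \Cref{co:secacec} (in the case without a lift) or by the second part of \Cref{co:reductswithlifts} (in the case with a lift), the reduct $(K,v,\sigma,\ac)$ (resp.\ $(K,v,\sigma,\ac,\iota)$) is itself existentially closed in its own category.

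Since $\sigma$-henselianity is a property of $(K,v,\sigma)$ alone, and \Cref{henselianpr} applies to the reduct just obtained, we conclude that $K$ is $\sigma$-henselian. The only step requiring care is to make sure that the reduction from $s$ to $\ac$ preserves existential closedness in the presence of a lift $\iota$; this is exactly what the second part of \Cref{co:reductswithlifts} provides, so no genuine obstacle arises.
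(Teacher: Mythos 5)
Your proposal is correct and follows essentially the same route as the paper, which proves the corollary by combining \Cref{co:reductswithlifts} with \Cref{henselianpr}. You helpfully make explicit the appeal to \Cref{co:secacec} for the $\mathcal L_{s,\sigma}$ case (a step the paper's one-line proof leaves implicit), and the observation that $\ac(x)\coloneqq\res(x/s(v(x)))$ is $\sigma$-equivariant and that $\sigma$-henselianity depends only on $(K,v,\sigma)$ is exactly right.
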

\begin{proof}
  By \Cref{co:reductswithlifts} and \Cref{henselianpr}.
\end{proof}

\begin{notation}
  If $K$ is an $\mathrm{ac}$-valued difference field, $A\subseteq K$, and $b\in K$,  then by $\langle A \rangle$ and $A\langle b \rangle$ we mean the $\mathrm{ac}$-valued difference fields generated by $A$ and by $A\cup \{b\}$, respectively, unless otherwise specified. We use the same notation for structures in other languages, e.g.\ $k(E)\strgen\alpha$ is the difference field generated by $k(E)$ and $\alpha$.
\end{notation}

The proof of the following fact is straightforward.

\begin{fact}\label{fact:generating+nontorsion}
    Let $(K,v_K)\subseteq (L,v)$ be an extension of valued fields.
    \begin{enumerate}
\item \label{fact_generating} Let $a \in L$ be such that $0,v(a),\ldots, v(a^n)\in \Gamma(L)$ are in different cosets modulo $\Gamma(K)$. Then for all $c_0,\ldots,c_n\in K$ we have 
\[v\left(\sum_ic_i a^i\right) = \min_i\set{v_K(c_i)+iv(a)}.\]
In particular $v(K+Ka+\ldots+Ka^n)\subseteq \Gamma(K) + \mathbb Z v(a)$.
\item \label{fact_nontorsion} If there is $a \in L$ such that $\Gamma(L) = \Gamma(K) + \mathbb Z v(a)$, then every element of $L$ is of the form  $bca^n$, for suitable $b\in K$, $c\in \mathcal O(L)^{\times}$, and $n\in \mathbb Z$.
    \end{enumerate}
\end{fact}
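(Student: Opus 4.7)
The plan is to handle the two parts separately, using only the strict triangle inequality for valuations and the surjectivity of $v_K$; no real obstacle is expected, and the statement is genuinely routine.

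For \ref{fact_generating}, I would argue that the valuations $v_K(c_i)+iv(a)$, as $i$ ranges over those indices in $\{0,\ldots,n\}$ with $c_i\neq 0$, are pairwise distinct. Indeed, since each $v_K(c_i)$ lies in $\Gamma(K)$ while the elements $iv(a)$ represent distinct cosets of $\Gamma(K)$ in $\Gamma(L)$ by hypothesis, two such valuations can coincide only when the indices agree. The standard fact that a valuation satisfies $v(x+y)=\min\{v(x),v(y)\}$ whenever $v(x)\neq v(y)$, extended by induction to finite sums with pairwise distinct valuations, then yields
\[
v\Bigl(\sum_i c_i a^i\Bigr)=\min_i\{v_K(c_i)+iv(a)\},
\]
where by convention $v(0)=\infty$ so the terms with $c_i=0$ contribute nothing. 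The ``in particular'' part is then immediate, since the right-hand side lies in $\Gamma(K)+\mathbb Z v(a)$.

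For \ref{fact_nontorsion}, take any nonzero $x\in L$. By hypothesis $v(x)=\gamma+nv(a)$ for some $\gamma\in\Gamma(K)$ and $n\in\mathbb Z$. Using surjectivity of $v_K\from K\to\Gamma(K)$, pick $b\in K^\times$ with $v_K(b)=\gamma$; then $v(ba^n)=v(x)$, so $c\coloneqq x(ba^n)^{-1}$ satisfies $v(c)=0$, i.e.\ $c\in\mathcal O(L)^\times$, and $x=bca^n$ as required (the case $x=0$ being trivial by taking $b=0$).

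The only tiny point worth double-checking is that the ``strict triangle inequality for unequal valuations'' indeed extends cleanly to finite sums, but this is a standard induction: if $v(x_1),\ldots,v(x_m)$ are pairwise distinct and $j$ is the unique index minimising them, then $v(x_j+\sum_{i\neq j}x_i)=v(x_j)$ because $v(\sum_{i\neq j}x_i)\ge \min_{i\neq j}v(x_i)>v(x_j)$.
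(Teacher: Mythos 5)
Your proof is correct, and it is in the spirit the paper intends: the paper merely declares this ``Fact'' straightforward and supplies no proof, so there is no competing argument to compare against. Your verification that the $n+1$ shifted valuations $v_K(c_i)+iv(a)$ (for $c_i\neq 0$) are pairwise distinct --- precisely because $(i-j)v(a)\notin\Gamma(K)$ --- is the right reason, and combining it with the extended strict triangle inequality for sums with pairwise-distinct valuations gives \ref{fact_generating}. For \ref{fact_nontorsion}, writing $v(x)=\gamma+nv(a)$ and using surjectivity of $v_K$ onto $\Gamma(K)$ to choose $b$ is exactly the routine argument intended. Both parts check out.
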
 

The following facts about valued difference fields appear in Sections 2 and 6 of \cite{AvD}. As in \Cref{fact:liftavdd}, the isometricity assumption is not used in these three lemmas, hence they hold for arbitrary valued difference fields.

Suppose $(K,v,\sigma)$ and $(K',v,\sigma)$ are valued difference fields. Put $\mathcal O\coloneqq  \mathcal O(K)$ and $\mathcal O'\coloneqq  \mathcal O(K')$.  Let $(E,v,\sigma)$ be a valued difference subfield of both $K$ and $K'$.

\begin{fact}[{\cite[Lemma 2.5]{AvD}}]\label{2.5} Let $a \in \mathcal O$ and assume
$\alpha=\res({a})$ is $\sigma$-transcendental over $k(E)$. 
Then \begin{enumerate}
\item $v(P(a))= \min\limits_{i} \{v(b_{i})\}$  for each
$\sigma$-polynomial $P(x)=\sum b_{i} {\sigma}^{i}(x)$ over
$E$;
\item\label{point:astres}  $v(E \langle a \rangle^{\times})=v(E^{\times})=\Gamma(E)$, and
 $E \langle a \rangle$ has residue field $k(E) \langle
\alpha \rangle$;
\item\label{point:astvf} if $b\in \mathcal O'$ is such that $\beta=\res(b)$ is $\sigma$-transcendental over $k(E)$, then there is
a valued difference field isomorphism $E\langle a \rangle \to  E \langle b \rangle$ over $E$ sending $a$ to $b$. 
\end{enumerate} 
\end{fact}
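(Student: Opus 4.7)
The plan is to establish (1) as the heart of the argument, since (2) and (3) will follow from it by routine computations. Throughout I will write a general $\sigma$-polynomial as $P(x)=\sum_{\mathbf i} b_{\mathbf i}\,\sigma^{\mathbf i}(x)$ with $\sigma^{\mathbf i}(x)=\prod_j \sigma^j(x)^{i_j}$ for multi-indices $\mathbf i$.

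For (1), I would first normalise: dividing $P$ by some $b_{\mathbf i_0}$ of minimal valuation, we may assume $\min_{\mathbf i} v(b_{\mathbf i})=0$ and need to show $v(P(a))=0$. Under this normalisation every $b_{\mathbf i}$ lies in $\mathcal O(E)$ and at least one lies in $\mathcal O(E)^\times$. Since $\sigma^j(a)\in\mathcal O$ for all $j$ (as $\sigma$ fixes $\mathcal O$ setwise), we may take residues to obtain
\[
\res(P(a)) \;=\; \sum_{\mathbf i} \res(b_{\mathbf i})\,\sigma^{\mathbf i}(\alpha) \;=\; Q(\alpha),
\]
where $Q$ is the $\sigma$-polynomial over $k(E)$ obtained by residuating the coefficients of $P$. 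By construction $Q$ is nonzero (the $b_{\mathbf i_0}$ term survives), and by the $\sigma$-transcendence of $\alpha$ over $k(E)$ we get $Q(\alpha)\neq 0$, hence $v(P(a))=0$. Unwinding the normalisation yields $v(P(a))=\min_{\mathbf i} v(b_{\mathbf i})$.

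For (2), every nonzero element of the difference ring $E[\sigma^j(a):j\in\mathbb Z]$ is of the form $P(a)$, so by (1) its valuation lies in $\Gamma(E)$; passing to the fraction field $E\langle a\rangle$ we obtain $v(E\langle a\rangle^\times)\subseteq\Gamma(E)$, and the reverse inclusion is trivial. For the residue field, given $P(a)/Q(a)\in\mathcal O(E\langle a\rangle)^\times$, one can multiply numerator and denominator by an appropriate scalar in $E$ so that both $P(a)$ and $Q(a)$ have valuation $0$; then part (1) (or rather its proof) shows $\res(P(a)), \res(Q(a))\in k(E)\langle\alpha\rangle$, hence so is $\res(P(a)/Q(a))$. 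The reverse inclusion $k(E)\langle\alpha\rangle\subseteq\res(E\langle a\rangle)$ is immediate from $\res(\sigma^j(a))=\sigma^j(\alpha)$.

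For (3), the $\sigma$-transcendence of $a$ and $b$ over $E$ means each of $E[\sigma^j(a):j]$ and $E[\sigma^j(b):j]$ is isomorphic, as an abstract difference ring over $E$, to the difference polynomial ring $E\{X\}$ in one variable $X$. Composing these isomorphisms gives a difference field isomorphism $\phi\from E\langle a\rangle\to E\langle b\rangle$ over $E$ with $\phi(a)=b$. To see $\phi$ preserves valuation, apply (1) to both $a$ and $b$: for every $\sigma$-polynomial $P$ over $E$ one has $v(P(a))=\min_{\mathbf i}v(b_{\mathbf i})=v(P(b))=v(\phi(P(a)))$, and the same holds for quotients. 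The main (mild) obstacle is just bookkeeping the residue computation in (2), but the whole argument is driven by the observation, made in (1), that $\sigma$-transcendence of $\alpha$ blocks any cancellation in leading residues.
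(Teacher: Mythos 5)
Your argument is correct and follows the standard approach that Azgin--van den Dries use for their Lemma 2.5 (the paper cites this result from [AvD] without reproving it): normalise the $\sigma$-polynomial so that the minimal coefficient valuation is $0$, observe that after normalisation $P(a) \in \mathcal O$ with $\res(P(a)) = Q(\alpha)$ for a \emph{nonzero} $\sigma$-polynomial $Q$ over $k(E)$, and invoke $\sigma$-transcendence of $\alpha$ to conclude $Q(\alpha) \neq 0$; then (2) and (3) are bookkeeping. Two small points worth making explicit: in part (3) you silently use that $a$ and $b$ are $\sigma$-transcendental over $E$, which you should observe follows immediately from (1) (if $P(a)=0$ for $P\neq 0$ then $v(P(a))=\infty \neq \min_i v(b_i)$); and once you have a difference-field isomorphism of the $\VF$-sorts preserving $v$, you should note that it automatically carries $\mathcal O$ and its maximal ideal across, hence induces the required isomorphism on the residue sorts, sending $\alpha$ to $\beta$ and fixing $k(E)$ pointwise. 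Neither gap affects the substance.
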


\begin{fact}[{\cite[Lemma 2.6]{AvD}}]\label{2.6}
Assume $\operatorname{char}(k(E))=0$, and let $G(x)$ be a nonconstant
$\sigma$-polynomial over the valuation ring of $E$ such that 
$\res(G(x))$ has the same complexity as $G(x)$. Let $a\in \mathcal O$, 
$b\in \mathcal O'$, and assume that
$G(a)=0$, $G(b)=0$, and that $\res(G(x))$ is a minimal $\sigma$-polynomial of $\alpha\coloneqq \res{a}$ and of $\beta\coloneqq  \res{b}$ over $k(E)$. Then
\begin{enumerate}
\item $E \langle a \rangle$
has value group $v(E^{\times})=\Gamma(E)$ and residue field 
$k(E) \langle \alpha \rangle$;
\item if there is a difference field isomorphism $k(E)\langle \alpha \rangle \to k(E) \langle \beta \rangle$ over $k(E)$ sending $\alpha$ to $\beta$, then there is
a valued difference field isomorphism $E\langle a \rangle \to  E \langle b \rangle$ over $E$ sending $a$ to $b$. 
\end{enumerate} 
\end{fact}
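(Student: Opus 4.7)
The plan is to leverage the minimality of $\res(G)$ as the $\sigma$-polynomial of $\alpha$: since any $\sigma$-polynomial of strictly lower complexity cannot vanish at $\alpha$, low-complexity representatives evaluated at $a$ have predictable valuation and residue, and this pins down the valued difference field structure of $E\langle a\rangle$ in terms of $E$ and the residue datum.

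First I would prove a \emph{reduction lemma}: every element of $E\langle a\rangle$ can be written as $P(a)/Q(a)$ for nonzero $\sigma$-polynomials $P,Q$ over $E$ of complexity strictly less than that of $G$. The same-complexity hypothesis forces the coefficient of $x_n^d$ in $g$ to lie in $\mathcal O(E)^\times$, so $g$ is ``monic up to a unit'' in $x_n$, and Euclidean division in $x_n$ strictly reduces the $x_n$-degree of any $\sigma$-polynomial of order at most $n$. For higher orders, applying $\sigma^k$ to the identity $G(a)=0$ produces $\sigma$-polynomials $G^{[k]}$ of order $n+k$ vanishing at $a$, whose leading coefficients are again units of $\mathcal O(E)$; Euclidean division by these reduces the order. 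Once $P$ is in low complexity, pick a coefficient $c\in E^\times$ of $P$ of minimal valuation and set $\tilde P \coloneqq P/c \in \mathcal O(E)\{x\}$. Then $\res(\tilde P)$ is a nonzero $\sigma$-polynomial over $k(E)$ of complexity strictly less than that of $\res(G)$, so by minimality of $\res(G)$ we have $\res(\tilde P)(\alpha)\ne 0$, whence $v(\tilde P(a))=0$ and $\res(\tilde P(a))=\res(\tilde P)(\alpha)\in k(E)\langle\alpha\rangle$. Consequently $v(P(a))=v(c)\in\Gamma(E)$ and $\res(P(a)/c)\in k(E)\langle\alpha\rangle$. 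Applying this to numerator and denominator of $P(a)/Q(a)$ gives part~(1).

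For part~(2), the same analysis for $b$ shows that a nonzero $\sigma$-polynomial of complexity less than $G$ cannot vanish at $a$ or at $b$, since otherwise its normalised residue would vanish at $\alpha$ or $\beta$, contradicting minimality of $\res(G)$ (which is by assumption the minimal $\sigma$-polynomial of both $\alpha$ and $\beta$). Combined with the reduction lemma, this forces the $\sigma$-ideals of $a$ and of $b$ in $E\{x\}$ to coincide, so $a\mapsto b$ extends to a difference field isomorphism $\phi\colon E\langle a\rangle\to E\langle b\rangle$ over $E$. To check that $\phi$ preserves the valuation, represent $z\in E\langle a\rangle^\times$ as $P(a)/Q(a)$ in low complexity: the valuation $v(c_P)-v(c_Q)$ depends only on data in $E$ and hence equals $v(\phi(z))$, while the residue of $z$ is a rational expression in $\alpha$ over $k(E)$ whose image under the given residue field isomorphism is the residue of $\phi(z)$. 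The main obstacle is establishing the reduction lemma rigorously, tracking how Euclidean division by the $G^{[k]}$ interacts with the three-component lex complexity and verifying termination; characteristic zero enters to avoid separability pathologies when working modulo $\res(G)$.
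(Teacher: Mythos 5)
Your \emph{reduction lemma} --- that every element of $E\langle a\rangle$ can be written as $P(a)/Q(a)$ with $P,Q$ of complexity strictly less than that of $G$ --- is false whenever $d\coloneqq\deg_{x_n}g\ge 2$, and this breaks both halves of the argument. (Pseudo-)division of a $\sigma$-polynomial of order $m>n$ by $\sigma^{m-n}(G)$ brings the $x_m$-degree of the remainder below $d$ but not to zero, so the remainder still has order $m$ and hence complexity strictly \emph{greater} than that of $G$ in the lexicographic order; there is no further reduction available. Concretely, for $G(x)=\sigma(x)^2-cx$ with $c\in\mathcal O(E)^\times$ (complexity $(1,2,2)$), the $\sigma$-polynomials of strictly smaller complexity are exactly those of order at most $1$ and $x_1$-degree at most $1$, so the set of low-complexity quotients $P(a)/Q(a)$ is precisely $E(a,\sigma(a))$; but $\sigma^2(a)$ satisfies $\sigma^2(a)^2=\sigma(c)\,\sigma(a)$ and is generically a square root that does not lie in $E(a,\sigma(a))$. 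So $E\langle a\rangle$ strictly contains the low-complexity rational expressions, and your computation of the residue field in (1) and your transfer of vanishing $\sigma$-polynomials in (2) both collapse. A secondary slip: the coefficient of $x_n^d$ in $g$ is a polynomial in $x_0,\dots,x_{n-1}$, not an element of $\mathcal O(E)$; the same-complexity hypothesis does not make $g$ monic up to a unit in $x_n$. What it gives is that the residue of this leading polynomial is a nonzero $\sigma$-polynomial of lower complexity, hence by minimality of $\res(G)$ it does not vanish at $\alpha$, so its evaluation along $a$ is a unit of $\mathcal O(E\langle a\rangle)$; this is the statement you would actually need for any division to make sense.

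The paper cites this statement from \cite{AvD} without proof, and the argument there is not a one-shot rational normal form. Instead one realises $E\langle a\rangle$ as the increasing union of the finitely generated valued subfields $E\bigl(\sigma^{-k}(a),\dots,\sigma^{n+k}(a)\bigr)$, and shows step by step --- using that $\res(G)$ retains the full complexity of $G$ and is a minimal $\sigma$-polynomial of $\alpha$ --- that each finite algebraic extension in the tower is purely residual: the value group stays $\Gamma(E)$ and only the residues $\sigma^i(\alpha)$ are adjoined. That iterated, local control is exactly what a global reduction lemma cannot supply once $d\ge 2$.
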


\begin{fact}\label{fact:avd6364}
Let $E\subseteq F$ and $E'\subseteq F'$ be substructures of $\mathrm{ac}$-valued fields, with $v(F^\times)=v(E^\times)$. Let $f\from  E \to E'$ be an $\mathcal L_{\mathrm{ac}}$-isomorphism, and $g\from  F \to F'$ be an  $\mathcal L_\mathrm{vf}$-isomorphism extending $f$. Suppose that 
$\res(\mathcal O(F))\subseteq k({E})$ and that  $f(\res(u))=\res'(g(u))$
for all $u\in \mathcal O(F)$. 

Then $\ac(F)\subseteq  k({E})$
and $f(\ac(a))= \ac'(g(a))$ for all $a \in F$.  
\end{fact}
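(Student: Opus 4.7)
The plan is to use the hypothesis $v(F^\times)=v(E^\times)$ to reduce the problem to two cases already handled by the other assumptions: elements of $E$ (on which $f$ already preserves $\ac$ by definition) and elements of $\mathcal O(F)^\times$ (on which $\ac$ coincides with $\res$). The key observation is that every nonzero element of $F$ factors as $e \cdot u$ with $e \in E^\times$ and $u \in \mathcal O(F)^\times$, after which the multiplicativity of $\ac$ reduces both parts of the conclusion to these two cases.

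More precisely, given a nonzero $a \in F$, pick $e \in E^\times$ with $v(e)=v(a)$, which exists because $v(F^\times)=v(E^\times)$. Setting $u\coloneqq a/e \in \mathcal O(F)^\times$, we have $\ac(a)=\ac(e)\cdot\ac(u)=\ac(e)\cdot \res(u)$. The first factor lies in $k(E)$ because $E$ is an $\mathcal L_\mathrm{ac}$-substructure, and the second lies in $k(E)$ by the assumption $\res(\mathcal O(F))\subseteq k(E)$, proving $\ac(F)\subseteq k(E)$; the case $a=0$ is immediate from $\ac(0)=0$.

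For the commutation statement, apply $f$ to this decomposition to get
\[
f(\ac(a)) = f(\ac(e))\cdot f(\res(u)) = \ac'(g(e))\cdot \res'(g(u)),
\]
using that $f$ is an $\mathcal L_\mathrm{ac}$-isomorphism on $E$ and the hypothesis $f(\res(u))=\res'(g(u))$. Since $g$ preserves the valuation, $g(u)\in \mathcal O(F')^\times$, so $\res'(g(u))=\ac'(g(u))$; by multiplicativity of $\ac'$ the right-hand side equals $\ac'(g(e)g(u))=\ac'(g(a))$. There is no real obstacle here — the whole content is the observation that the product decomposition $a=e\cdot u$ separates the ``value group part'' (controlled by $E$) from the ``residue field part'' (controlled by $\res$), and on each part we already know $f$ matches $\ac'\circ g$.
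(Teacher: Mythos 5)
Your proof is correct. The paper does not actually give a direct argument for this fact: it simply cites the proof of Lemma 6.3 in Azgin--van den Dries and remarks that that proof never uses the elementarity hypotheses built into the definition of a good map, so it applies verbatim in the present generality. Your argument reconstructs from scratch the kind of reasoning that reference must contain (and almost certainly does): decompose $a = e\cdot u$ with $e \in E^\times$ chosen by $v(F^\times)=v(E^\times)$ and $u \in \mathcal O(F)^\times$, then use multiplicativity of $\ac$, the agreement $\ac = \res$ on units, and the compatibility of $f$ and $g$ with the given structure. All the small checks go through: $u = a/e$ does land in $\mathcal O(F)^\times$ since $F$ is closed under inverses and $v(u)=0$; $\ac(e)\in k(E)$ because $E$ is an $\mathcal L_{\ac}$-substructure; $\res(u)\in k(E)$ by hypothesis; and $g(u)\in \mathcal O(F')^\times$ because $g$ commutes with $v$. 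The main virtue of writing the proof out as you do, rather than citing, is that it makes the paper's meta-observation — that the argument uses no elementarity — completely transparent, since the proof is visibly algebraic and finite.
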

\begin{proof}
  The proof of~\cite[Lemma 6.3]{AvD} does not use the elementarity requirements in the definition of a good map in~\cite[Section 6]{AvD}, thus we can relax the assumption of being a good map to being an isomorphism of $\mathrm{ac}$-valued fields.
\end{proof}
Below, we extend the tropicalisation operator to $\sigma$-polynomials in the obvious way. For example, $\trop(\sigma^2(x^3)+\sigma(x^4))=\min \set{3\sigma^2(x), 4\sigma(x)}$.
\begin{defin}
  Let $B$ be a valued difference field. An element $a$ of a valued difference field extension of $B$ is \emph{regular over $B$} iff for every $\sigma$-polynomial $f$ over $B$ we have $v(f(a))=\trop(f)(v(a))$.
\end{defin}
\begin{rem}\label{rem:regacv}
If $a$ is regular over $B$, and $c$ is such that  $(\ac(a), v(a))=(\ac(c), v(c))$, then $c$ is also regular over $B$.
\end{rem}
\begin{proof}
  This is essentially \cite[Lemma~3.2]{DO}, together with the observation that $v(c-a)>v(a)$ if and only if $(\ac(a), v(a))=(\ac(c), v(c))$. This equality can be easily shown by using that the pair of maps $(\ac,v)$ induces an isomorphism of $k\times \Gamma$ with the \emph{$\mathrm{RV}$-structure} $\mathrm{VF}^\times/(1+\mathfrak m)$, where $\mathfrak m$ denotes the maximal ideal.
\end{proof}

\subsection{Amalgamation}

\begin{ass}
From now on, all fields will be assumed to have characteristic zero.
\end{ass}

\begin{lemma}\label{lemma:doreg}
Let $U,N$ be valued difference fields such that no $\sigma^\ell$ is the identity on $k(U)$ nor on $k(N)$. Let $E$ be a common valued difference subfield,  and let $\gamma\in \Gamma(N)\setminus\Gamma(E)$.   Assume $U$ is quantifier-free $\abs E^+$-saturated.
\begin{enumerate}
\item There is $a\in U$ that is regular over $E$ with $v(a)=\gamma$.
\item The value group of $E\strgen{a}$ is $\Gamma(E)\strgen{\gamma}$, and its residue field has size at most $\abs E$.
\item Suppose $b\in N$ is regular over $E$ and the identity on $\Gamma(E)$ extends to an isomorphism $\Gamma(E)\strgen{\gamma}\to \Gamma(E)\strgen{v(b)}$ sending $\gamma$ to $v(b)$. Then the identity on $E$ extends to a valued difference field isomorphism $E\strgen{a}\to E\strgen{b}$ sending $a$ to $b$.
\end{enumerate}
\end{lemma}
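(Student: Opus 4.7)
The plan is to prove (1) by a saturation argument, and to deduce (2) and (3) as consequences of regularity. For (1), using that quantifier-free $|E|^+$-saturation of $U$ induces the same on $\Gamma(U)$ and that difference oags amalgamate \cite[Theorem~5.10]{DM}, I first identify $\gamma$ with an element of $\Gamma(U)$ realizing the quantifier-free type of $\gamma$ over $\Gamma(E)$, and pick any $a_0 \in U$ with $v(a_0) = \gamma$. The idea is then to modify $a_0$ by multiplying by an appropriate unit. For $u \in \mathcal O(U)^\times$, writing $a = u a_0$ gives $v(a) = \gamma$, and for any $\sigma$-polynomial $f(x)=\sum_\mathbf{u} c_\mathbf{u}\prod_i \sigma^i(x)^{\mathbf{u}_i}$ over $E$ the condition $v(f(a)) = \trop(f)(\gamma)$ translates, after collecting the monomials whose valuation achieves the tropical minimum and dividing by a pivot, to the non-vanishing of a nonzero $\sigma$-polynomial $Q_{f,a_0}(\res(u))$ over a difference subfield of $k(U)$ of cardinality at most $|E|$. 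Invoking q.f.~$|E|^+$-saturation of $k(U)$ together with the hypothesis that no $\sigma^\ell$ is the identity on $k(U)$, I obtain $\beta \in k(U)$ that is $\sigma$-transcendental over this subfield; then $a := u a_0$ for any $u \in U$ with $\res(u)=\beta$ is regular over $E$.

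For (2), regularity gives that every element of $E\strgen{a}$ has the form $f(a)/g(a)$ for some $\sigma$-polynomials $f,g$ over $E$, so its valuation is $\trop(f)(\gamma)-\trop(g)(\gamma) \in \Gamma(E)\strgen{\gamma}$; together with $v(a)=\gamma$ this yields $v(E\strgen{a}^\times) = \Gamma(E)\strgen{\gamma}$. Residues of units in $E\strgen{a}$ are expressible over $k(E)$ in terms of the single angular component $\beta$ of $a$ and its $\sigma$-iterates, so the residue field has cardinality at most $|E|$.

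For (3), regularity of both $a$ and $b$ forces their $\sigma$-orbits to be algebraically independent over $E$: a nonzero polynomial relation among $(\sigma^i(a))_i$ would provide a nonzero $\sigma$-polynomial $f$ with $f(a)=0$, giving $v(f(a))=\infty$ against the finiteness of $\trop(f)(v(a))$. Consequently there is a unique field isomorphism $\phi\from E(\sigma^i(a) : i \in \mathbb{Z}) \to E(\sigma^i(b) : i \in \mathbb{Z})$ over $E$ sending $\sigma^i(a) \mapsto \sigma^i(b)$, which commutes with $\sigma$ by construction. Valuation preservation follows since, by regularity, $v(f(a)) = \trop(f)(v(a))$ and $v(\phi(f(a))) = v(f(b)) = \trop(f)(v(b))$ correspond under the assumed isomorphism $\Gamma(E)\strgen{v(a)} \to \Gamma(E)\strgen{v(b)}$ extending the identity on $\Gamma(E)$.

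The main technical obstacle will be justifying the existence of the $\sigma$-transcendental $\beta$ in $k(U)$ in the key step of part~(1): one must combine saturation of $k(U)$ with the infinite-order hypothesis on $\sigma$ to verify finite satisfiability of the $\sigma$-transcendence type over a subfield of size at most $|E|$, equivalently that no nonzero $\sigma$-polynomial over such a subfield can vanish identically on $k(U)$.
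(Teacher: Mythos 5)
Your overall structure is sound and your approach is genuinely different from the paper's: where you attempt a self-contained argument, the paper's proof is essentially a citation, replaying \cite[Lemma~8.8]{Pa} with the internal use of \cite[Lemma~8.5]{Pa} replaced by \cite[Lemma~3.6]{DO} (as packaged in \cite[Lemma~6.1]{DO}), noting that the saturation hypothesis there is only used to realise a quantifier-free type in $U$. Your reduction of regularity of $ua_0$ to the non-vanishing at $\res(u)$ of the Laurent $\sigma$-polynomials $Q_{f,a_0}$ over $k(E\strgen{a_0})$ is correct, as are parts (2) and (3).

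However, the ``main technical obstacle'' you flag at the end is a genuine gap, not a routine verification: you need that no nonzero $\sigma$-polynomial over the subfield $k'\subseteq k(U)$ of size at most $\abs E$ vanishes identically on $k(U)$, and you leave this unproved. Note the hypothesis ``no $\sigma^\ell$ is the identity'' is not enough on its own --- in positive residue characteristic, taking Frobenius as $\sigma$ gives $\sigma(x)-x^p$ vanishing identically although $\sigma$ has infinite order --- so the standing assumption $\operatorname{char}(k)=0$ (\Cref{sec:ap}) is essential and must be invoked. A correct argument in characteristic $0$ goes via the observation that $\{(\beta,\sigma\beta,\ldots,\sigma^n\beta)\mid\beta\in k(U)\}$ is a divisible subgroup of $\mathbb G_{\mathrm a}^{n+1}$, whose Zariski closure is therefore a connected algebraic subgroup, hence (in characteristic $0$) a linear subspace; if it were proper, it would be contained in a hyperplane, yielding a nontrivial $\overline{k(U)}$-linear dependence among $\id,\sigma,\ldots,\sigma^n$, contradicting Artin's lemma on independence of characters since no $\sigma^\ell=\id$. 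Without some such argument your part~(1) is incomplete. A secondary, more interpretive point: the statement places $\gamma$ in $\Gamma(N)$, and your step realising its quantifier-free type in $\Gamma(U)$ needs finite satisfiability of that type in $\Gamma(U)$, which amalgamation alone does not give; in the paper's actual application (\Cref{lem:Gamma}) one has $\gamma\in\Gamma(U)$ from the outset, so this is harmless, but as a self-contained proof of the literal statement it should be addressed.
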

\begin{proof}
This is a variant of \cite[Lemma~8.8]{Pa}. One modifies its proof by replacing the tacitly used \cite[Lemma~8.5]{Pa} by \cite[Lemma~3.6]{DO} (as in \cite[Lemma~6.1]{DO}\footnote{Point (ii) of the cited lemma incorrectly states that, in its notation, the residue field of $E\strgen a$ is the same as that of $E$. In fact, the residue field will in general grow. We thank Simone Ramello for bringing this fact to our attention.}) and observes that the saturation assumption in \cite[Lemma~8.8]{Pa} is only used to realise a quantifier-free type in $U$.
\end{proof}

\begin{lemma}\label{rem:acofreg}
  Let $M$ be an $\mathrm{ac}$-valued difference field, and let $a\in M$ be regular over an $\mathrm{ac}$-valued difference subfield $B$. Let $g_0(x),\ldots, g_n(x)$  be $\sigma$-monomials with coefficients from $B$ such that, for all $i\neq j$, the $\sigma$-monomial $g_i(x)$ is not of the form $bg_j(x)$ for any $b\in B$.
If all $g_i(a)$ have the same valuation, then $\ac(g_0(a)+\ldots+g_n(a))=\ac(g_0(a))+\ldots+\ac(g_n(a))$.
\end{lemma}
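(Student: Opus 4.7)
The plan is to leverage regularity to show that $v(g_0(a)+\cdots+g_n(a))$ equals the common value $\gamma\coloneqq v(g_i(a))$, and then invoke the standard behaviour of $\ac$ on sums of elements of common valuation. Write each $g_i(x)=c_i\prod_{j}\sigma^j(x)^{k_{i,j}}$ with $c_i\in B$. The hypothesis that no $g_i$ is a $B$-multiple of any $g_j$ (for $i\neq j$) is equivalent to saying that the exponent vectors $(k_{i,j})_j$ are pairwise distinct: any two $\sigma$-monomials with the same exponent vector differ by an element of $B$ (the ratio of their coefficients). Consequently $f(x)\coloneqq g_0(x)+\cdots+g_n(x)$, viewed as a $\sigma$-polynomial, is already in reduced form, having $g_0,\ldots,g_n$ as its distinct monomials.

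For any single $\sigma$-monomial, multiplicativity of $v$ gives $v(g_i(a))=v(c_i)+\sum_j k_{i,j}\sigma^j(v(a))=\trop(g_i)(v(a))$, with no regularity needed. Since the tropicalisation of a $\sigma$-polynomial is the pointwise minimum of the tropicalisations of its (reduced) monomials, $\trop(f)(v(a))=\min_i \trop(g_i)(v(a))=\min_i v(g_i(a))=\gamma$. Applying the regularity of $a$ over $B$ to the $\sigma$-polynomial $f$ yields $v(f(a))=\trop(f)(v(a))=\gamma$.

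If $\gamma=\infty$, then every $g_i(a)$ and $f(a)$ is zero, and both sides of the claimed identity vanish. Otherwise all $g_i(a)$ and $f(a)$ are nonzero and share the valuation $\gamma$, so the quotients $g_i(a)/g_0(a)$ and $f(a)/g_0(a)$ lie in $\mathcal O^\times$. Applying $\res$ to the identity $f(a)/g_0(a)=\sum_i g_i(a)/g_0(a)$, using that $\res=\ac$ on $\mathcal O^\times$, that $\res$ is additive on $\mathcal O$, and that $\ac$ is multiplicative, gives
\[
  \frac{\ac(f(a))}{\ac(g_0(a))} \;=\; \sum_i \frac{\ac(g_i(a))}{\ac(g_0(a))};
\]
clearing the common denominator yields the desired equality.

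The only delicate point is the reformulation of the hypothesis in the first paragraph: it is precisely the absence of $B$-proportional pairs that guarantees $f$ is presented in reduced form, which is needed so that $\trop(f)(v(a))$ can be read off as the minimum of the given $v(g_i(a))$. Everything else is a direct computation or a standard property of the angular component.
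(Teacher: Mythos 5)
Your proof is correct and follows essentially the same route as the paper: use regularity to show $v\big(g_0(a)+\cdots+g_n(a)\big)$ equals the common valuation $\gamma$, then normalize to valuation zero and conclude via additivity of $\res$ together with multiplicativity of $\ac$. The only differences are cosmetic: you make explicit why the ``no $B$-multiples'' hypothesis guarantees that $\sum_i g_i$ is in reduced form (so that $\trop(\sum_i g_i)(v(a))=\min_i v(g_i(a))$), a point the paper leaves implicit when invoking regularity, and you divide by $g_0(a)$ itself rather than by an auxiliary $c$ with $v(c)=\gamma$, $\ac(c)=1$ taken in an extension of $M$, which is a small simplification that avoids leaving $M$.
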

\begin{proof}
  By regularity, $g_0(a)+\ldots+g_n(a)$ has the same valuation as each  $g_i(a)$, say $\gamma$. In some extension of $M$, let $c$ be such that $v(c)=\gamma$ and $\ac(c)=1$. Then both sides of the equality equal $\res((g_0(a)+\ldots+g_n(a))/c)$.
\end{proof}

\begin{notation}\label{notation:mcL}
Until the end of the section, $\mathcal{L}$ will denote one of $\mathcal{L}_{\mathrm{ac},\sigma}$, $\mathcal{L}_{s,\sigma}$, $\mathcal{L}_{\mathrm{ac},\iota,\sigma}$, or $\mathcal{L}_{s,\iota,\sigma}$.
\end{notation}
		
\begin{defin}
	 Let $M$ and $N$ be $\mathcal{L}$-structures. A \emph{$k$-elementary map} is an isomorphism $f\from A \to B$ of $\mathcal{L}$-substructures of $M$ and $N$ respectively such that $f_{|k(A)}$ is an elementary map $k(M) \rightarrow k(N)$.
\end{defin} 

We now show how to extend $k$-elementary maps to make sure that the valuation and residue maps on the domain substructure are surjective (note that, in the absence of a cross-section, the valuation map in a substructure need not be surjective, and similarly for lift and residue). 

\begin{lemma}\label{lem:k}
	Let $E$ be a common $\mathcal{L}$-substructure of the $\mathcal{L}$-valued difference fields $U$ and $N$. Let $f\from A \rightarrow B$ be a $k$-elementary map extending $\id_{E}$. Assume that $U$, $N$ are $\sigma$-henselian and that $k(N)$ is $\abs{k(A)}^+$-saturated. 

Then, for all  $\alpha \in k(U)$, the map $f$ extends to a $k$-elementary map $f'\from A' \rightarrow B'$, with domain of cardinality $\abs{A'} = \abs A$, such that $\alpha \in \res(\mathcal{O}(A'))$ and $\Gamma(A')=\Gamma(A)$.
\end{lemma}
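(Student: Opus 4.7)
The plan is to extend $f$ by sending a carefully chosen lift $a\in \mathcal O(U)$ of $\alpha$ to a parallel lift $b\in \mathcal O(N)$ of a suitable $\beta\in k(N)$. I may assume $\alpha\notin \res(\mathcal O(A))$, else take $A'=A$. By $k$-elementarity of $f$ and $\abs{k(A)}^+$-saturation of $k(N)$, I realize in $k(N)$ an element $\beta$ whose difference-field type over $k(B)$ equals the $f$-image of the type of $\alpha$ over $k(A)$ in $k(U)$. Then $f$ extends on the residue field to an elementary partial map sending $\alpha\mapsto\beta$, which will supply $k$-elementarity of the final map.

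I split on whether $\alpha$ is $\sigma$-transcendental or $\sigma$-algebraic over $\tilde k\coloneqq \res(\mathcal O(A))$. In the $\sigma$-transcendental case, I pick any $a\in\mathcal O(U)$ with $\res(a)=\alpha$, taking $a=\iota(\alpha)$ whenever $\iota\in\mathcal L$; likewise I choose $b\in\mathcal O(N)$ with $\res(b)=\beta$. Then \Cref{2.5} gives $v(\mathrm{VF}(A)\langle a\rangle^\times)=v(\mathrm{VF}(A)^\times)\subseteq\Gamma(A)$, residue field $\tilde k\langle\alpha\rangle$, and a valued difference field isomorphism $\mathrm{VF}(A)\langle a\rangle\to\mathrm{VF}(B)\langle b\rangle$ extending $f$ via $a\mapsto b$.

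In the $\sigma$-algebraic case, I let $g$ be a minimal $\sigma$-polynomial of $\alpha$ over $\tilde k$, and lift its coefficients to $\mathcal O(A)$ to obtain a $\sigma$-polynomial $G$ with $\res(G)=g$ of the same complexity. For any $a_0\in\mathcal O(U)$ with $\res(a_0)=\alpha$, minimality of $g$ yields $v(G(a_0))>0$ while $v(G_{(\mathbf i)}(a_0))=0$ for a suitable $\mathbf i$, so \Cref{fact:fourtwo} places $(G,a_0)$ in $\sigma$-Hensel configuration. By $\sigma$-henselianity of $U$ there is $a\in\mathcal O(U)$ with $G(a)=0$ and $\res(a)=\alpha$. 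Mimicking the construction in $N$, using that elementarity of the residue map preserves the fact that $\beta$ is a simple root of $f(g)$, produces $b\in\mathcal O(N)$ with $f(G)(b)=0$ and $\res(b)=\beta$ (if $\iota\in \mathcal L$ one argues similarly after observing that $\iota(\alpha)$ itself is a suitable choice of $a$ up to a unit). \Cref{2.6} then furnishes a valued difference field isomorphism $\mathrm{VF}(A)\langle a\rangle\to\mathrm{VF}(B)\langle b\rangle$ extending $f$ via $a\mapsto b$, again with unchanged value group.

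Finally, I upgrade the valued difference field isomorphism to a $k$-elementary $\mathcal L$-substructure isomorphism $A\langle a\rangle\to B\langle b\rangle$. Since $\Gamma(A\langle a\rangle)=\Gamma(A)$, no new $s$-values appear; in the $\iota$-case, $a=\iota(\alpha)$ and $b=\iota(\beta)$ make $\iota$-compatibility automatic; and $k$-elementarity comes from the choice of $\beta$. The delicate point I foresee is $\ac$-compatibility when $\mathcal L$ contains $\ac$, as \Cref{fact:avd6364} does not apply verbatim because the residue field of $\mathrm{VF}(A)\langle a\rangle$ strictly extends $k(A)$. However, in both cases every nonzero element of $\mathrm{VF}(A)\langle a\rangle$ can be written, after dividing by a suitable scalar from $\mathrm{VF}(A)$, as a sum of $\sigma$-monomials in $a$ with coefficients in $\mathrm{VF}(A)$ of equal valuation, so \Cref{rem:acofreg} (applicable because $a$ is regular over $\mathrm{VF}(A)$ by \Cref{2.5} in the transcendental case, and by an analogous argument exploiting minimality of $g$ in the algebraic case) computes $\ac$ from data already compatible under $f$. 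Finally, $\abs{A'}=\abs{A}$ because we only adjoined finitely many new elements to each sort.
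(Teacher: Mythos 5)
Your overall architecture (lift $\alpha$ to $a$, realize $\beta$ by saturation, use \Cref{2.5}/\Cref{2.6} depending on whether $\alpha$ is $\sigma$-transcendental or $\sigma$-algebraic, then upgrade to an $\mathcal L$-isomorphism) is essentially the paper's, and the $\sigma$-transcendental branch and the $\iota$-compatibility observation are fine. However, there is a genuine gap in your treatment of $\ac$-compatibility in the $\sigma$-algebraic case. You invoke \Cref{rem:acofreg}, which requires $a$ to be \emph{regular} over $\mathrm{VF}(A)$. Regularity means $v(f(a))=\trop(f)(v(a))$ for \emph{every} $\sigma$-polynomial $f$ over $\mathrm{VF}(A)$. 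But in your construction $G(a)=0$ with $G$ over $\mathcal O(A)$ of the same complexity as the minimal $\sigma$-polynomial $g=\res(G)$, so $v(G(a))=\infty$ while $\trop(G)(0)=0$; thus $a$ is \emph{not} regular over $\mathrm{VF}(A)$, and there is no ``analogous argument exploiting minimality of $g$'' that repairs this. The claim that every nonzero element of $\mathrm{VF}(A)\langle a\rangle$ can be written as a sum of $\sigma$-monomials of equal valuation is also not justified once $a$ satisfies relations.

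Moreover, the reason you give for abandoning \Cref{fact:avd6364} is misplaced. That fact only needs $\res(\mathcal O(F))\subseteq k(E)$, an inclusion, not equality, and this can be arranged: since $f|_{k(A)}\cup\{(\alpha,\beta)\}$ is elementary, one may first enlarge the $k$-sorts to include $\alpha$ and $\beta$ (WLOG $\alpha\in k(A)$, $\beta\in k(B)$). After this, \Cref{2.5}\ref{point:astres} (resp.\ \Cref{2.6}) gives $\res(\mathcal O(A\langle a\rangle))=\res(\mathcal O(A))\langle\alpha\rangle\subseteq k(A)$ and $v(\mathrm{VF}(A)\langle a\rangle^\times)=v(\mathrm{VF}(A)^\times)$, so \Cref{fact:avd6364} applies verbatim with $E=A$ and $F=A\langle a\rangle$, in both the transcendental and algebraic cases. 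This is how the $\ac$-compatibility should be obtained; replace your appeal to \Cref{rem:acofreg} with this.
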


\begin{proof}
  Let $\alpha\in k(U)\setminus \res(\mathcal O(A))$. As $f|_{k(A)}\from k(U)\to k(N)$ is an elementary map, by saturation there is some $\beta\in k(N)$ such that the map $f|_{k(A)}\cup \{(\alpha,\beta)\}$ is elementary. We now give a separate argument for each language.

If $\mathcal L=\mathcal L_{\ac,\iota,\sigma}$, then we may take as $A'$ and $B'$ the structures generated by $A\alpha$ and $B\beta$ respectively. It is then clear that $f \cup \{(\iota (\alpha),\iota(\beta))\}$ extends to the desired $k$-elementary map.

Suppose now $\mathcal L=\mathcal L_{s,\iota,\sigma}$.
	For all $i \in \mathbb{Z}$, let $a_i\coloneqq \iota(\sigma^i(\alpha))$ and $b_i\coloneqq \iota(\sigma^i(\beta))$. We prove by induction on $i \in \mathbb{N}$ that the $k$ sort of the $\mathcal L_{s,\iota}$-structure generated by $k(A)\cup \operatorname{VF}(A)(a_{-i},\ldots, a_{i})$ equals $k(A)(\sigma^{-i}(\alpha),\ldots, \sigma^{i}(\alpha))$ and that there is an $\mathcal L_{s,\iota}$-isomorphism $f_i$ extending $f$ and sending the tuple $(a_{-i},\ldots ,a_i)$ to $(b_{-i},\ldots ,b_i)$. Suppose this holds for some $i \in \mathbb{N}$. Applying  \cite[Proposition~1.7]{Kesting} we obtain an $\mathcal L_{s,\iota}$-isomorphism extending $f_i$ and sending $a_{i+1}$ to $b_{i+1}$. Applying \cite[Proposition~1.7]{Kesting} again, this time to $a_{-i-1}$, we obtain the desired $k$-elementary map $f_{i+1}$.  We now conclude by taking $f'\coloneqq \bigcup_{i \in \mathbb{N}} f_i$.

Suppose now that $\mathcal L=\mathcal L_{\ac,\sigma}$. We may assume $\alpha\in k(A)$ and $\beta\in k(B)$. If $\alpha$ is $\sigma$-transcendental over $\res(\mathcal O(A))$, then $\beta$ is $\sigma$-transcendental over $\res(\mathcal O(B))$. Pick $a\in \mathcal O(U)$ and $b\in \mathcal O(N)$ with $\res(a)=\alpha$ and $\res(b)=\beta$. By \Cref{2.5}\ref{point:astvf} $f$ extends to an $\mathcal L_{\sigma}$-isomorphism $f'\from A \langle a\rangle\to B \langle b\rangle$ sending $a$ to $b$, and by \Cref{2.5}\ref{point:astres} $\Gamma(A)=\Gamma(A\strgen a)$. By the choice of $\beta$, $f'|_{k(A \langle a\rangle)}$ is an elementary map $k(U)\to k(N)$. We then use \Cref{fact:avd6364} to see that $f'$ commutes with $\ac$.  Thus, $f'$ is a $k$-elementary map.
	
	Now assume that $\alpha$ is $\sigma$-algebraic over $\res(\mathcal O(A))$, and fix a minimal $\sigma$-polynomial $g(x)$ witnessing this. Let $G(x)$ be a $\sigma$-polynomial over $\mathcal O(A)$ with residue $g(x)$ and of the same complexity as $g$. Let $a_0\in \mathcal O(U)$ be such that $\res(a_0)=\alpha$. We will find $a\in \mathcal O(U)$ with $G(a)=0$ and $\res(a)=\alpha$. If $G(a_0)=0$ we simply set $a\coloneqq a_0$. Otherwise, as $U$ is $\sigma$-henselian, it suffices to show that $(G,a_0)$ is in $\sigma$-Hensel configuration with $\gamma(G,a_0)>0$. To this end, we check that the assumptions of \Cref{fact:fourtwo} are satisfied.\footnote{While we will not needs this, let us remark that one can check the definition $\sigma$-henselianity directly, which yields that $\gamma(G,a_0)\in \set{v(G(a_0)), \sigma^n(v(G(a_0)))}$.} Clearly $G$ is nonconstant, and we have assumed $G(a_0)\ne 0$. Furthermore, by construction $v(G(a_0))>0$. If there was   $\mathbf{i}$  such that $G_{(\mathbf{i})}\ne 0$ but $v(G_{(\mathbf{i})}(a_0))>0$, then we would obtain a $\sigma$-polynomial of lower complexity over $\res(\mathcal O(A))$ vanishing on $\alpha$, contradicting minimality of $g=\res(G)$. Similarly, $\sigma$-henselianity of $N$ allows us to obtain $b\in \mathcal O(N)$ with $(f(G))(b)=0$ and $\res(b)=\beta$. By  \Cref{2.6} $f$ extends to an $\mathcal L_{\sigma}$-isomorphism $f'\from A\strgen{a}\to B\strgen{b}$ sending $a$ to $b$ and $\Gamma(A)=\Gamma(A\strgen a)$.
	By  \Cref{fact:avd6364}, $f'$ commutes with $\ac$, and by the choice of $\beta$, the map $f'|_{k(A\strgen a)}\from k(U)\to k(N)$ is partial elementary. Thus, $f'$ is a $k$-elementary map.

If $\mathcal L=\mathcal L_{s,\sigma}$, we use the same proof as in the previous case, except that we do not need to invoke \Cref{fact:avd6364}. Commutation with the cross-section is given by the fact that $\Gamma$ does not grow.
\end{proof}

\begin{lemma}\label{lem:Gamma}
	Let $E$ be a common substructure of the $\mathcal L$-valued difference fields $M$ and $N$ such that there is an embedding $e$ of $\Gamma(M)$ into $\Gamma(N)$ over $\Gamma(E)$. Let $f\from A \rightarrow B$ be a $k$-elementary map extending $\id_E$ with $f|_{\Gamma(A)}=e|_{\Gamma(A)}$.
Let $U$ be an extension of $M$, and assume that $U$, $N$ are $\sigma$-henselian, that $k(N)$ is $\abs{k(A)}^+$-saturated, and that $U$ is quantifier-free $\abs M^+$-saturated.

Then, for all $\gamma \in \Gamma(M)$, the map $f$ extends to a $k$-elementary map $f'\from A' \rightarrow B'$, with domain $A'\subseteq U$ of cardinality  $\abs{A'} =\abs{A}$, such that $\gamma \in v(A')$ and $\Gamma(A')\subseteq \Gamma(M)$.
\end{lemma}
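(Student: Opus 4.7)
The plan is to mirror \Cref{lem:k}, exchanging the roles of residue field and value group. If $\gamma\in\Gamma(A)$ there is nothing to do: in all our languages the only function symbol whose image lies in $\Gamma$ is $v$, so $\Gamma(A)=v(\operatorname{VF}(A))$ and hence $\gamma\in v(A)$. So assume $\gamma\in\Gamma(M)\setminus\Gamma(A)$. I distinguish between the $\ac$-based languages ($\mathcal L_{\ac,\sigma}$ and $\mathcal L_{\ac,\iota,\sigma}$), where \Cref{lemma:doreg} produces a suitable regular element, and the cross-section languages ($\mathcal L_{s,\sigma}$ and $\mathcal L_{s,\iota,\sigma}$), where the canonical choice $a\coloneqq s(\gamma)$ works directly.

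For the $\ac$-case, apply \Cref{lemma:doreg}(1) with $U$ playing both roles, $A$ as $E$ and $\gamma\in\Gamma(U)\setminus\Gamma(A)$: the hypothesis that no $\sigma^\ell$ is the identity on $k(U)$ follows from $\sigma$-henselianity of $U$ and characteristic zero via \Cref{fact:sigmahenslindifcl}, which gives linear difference closedness of $k(U)$; the saturation assumption holds since $|A|\le|M|$. We obtain $a\in U$ regular over $A$ with $v(a)=\gamma$, and by (2) we have $\Gamma(A\langle a\rangle)=\Gamma(A)\langle\gamma\rangle\subseteq\Gamma(M)$ and residue field of size at most $|A|$; in fact regularity forces $k(A\langle a\rangle)=k(A)\langle\ac(a)\rangle$, as $\ac$ of any $\sigma$-polynomial in $a$ is a top-tropical-coefficient formula in $\ac(a)$ and the $\ac$'s of the coefficients. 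Using $|k(A)|^+$-saturation of $k(N)$, extend $f|_{k(A)}$ to a partial elementary map sending $\ac(a)$ to some $\beta\in k(N)$, and produce $b\in N$ with $(v(b),\ac(b))=(e(\gamma),\beta)$ by taking any $b_0\in N$ of value $e(\gamma)$ and multiplying by a unit $u\in\mathcal O(N)^\times$ with $\res(u)=\beta/\ac(b_0)$. Regularity of $b$ over $B$ transfers from that of $a$ over $A$, since the characterisation of regularity as non-vanishing of a $\sigma$-polynomial expression in the $\ac$-values is preserved by the partial elementary correspondence $\ac(a)\leftrightarrow\beta$ together with $e$ on $\Gamma$. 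Now \Cref{lemma:doreg}(3) produces a valued difference field isomorphism $\phi\colon A\langle a\rangle\to B\langle b\rangle$ extending $f$ and sending $a\mapsto b$; it is $k$-elementary by our choice of $\beta$ and commutes with $\ac$ via the same top-tropical-coefficient formula.

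For the cross-section languages, set $a\coloneqq s(\gamma)\in\operatorname{VF}(M)\subseteq\operatorname{VF}(U)$ and $b\coloneqq s_N(e(\gamma))\in\operatorname{VF}(N)$. Since $\ac(a)=\ac(b)=1$, the residue field does not grow, and $\Gamma(A')=\Gamma(A)\langle\gamma\rangle\subseteq\Gamma(M)$ for $A'=A\langle a\rangle$. The extension $f'$ of $f$ sending $a\mapsto b$ automatically commutes with $s$, since $s$ is a group homomorphism into $\operatorname{VF}^\times$ commuting with $\sigma$, so any multiplicative relation $\prod_i\sigma^i(a)^{n_i}=s(\gamma_0)$ coming from a $\mathbb Z[\sigma]$-relation $\sum n_i\sigma^i(\gamma)=\gamma_0\in\Gamma(A)$ is mirrored on the $N$-side via $e$ and $s_N$; well-definedness of $f'$ as a field isomorphism follows because the elements $s(\gamma')$, as $\gamma'$ ranges over coset representatives of $\Gamma(A)$ in $\Gamma(A)\langle\gamma\rangle$, form a $\operatorname{VF}(A)$-basis of the module they generate in $\operatorname{VF}(A')$ (and analogously on the $N$-side). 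In the $\iota$ cases, matching the lift on the new residue elements is handled by $|k(A)|^+$-saturation of $k(N)$ exactly as in the proof of \Cref{lem:k}. The main obstacle is this cross-section case: the canonical $a=s(\gamma)$ need not be regular over $A$, so \Cref{lemma:doreg}(3) cannot be invoked verbatim, and the well-definedness of $\phi$ instead relies on the above direct argument exploiting the group-homomorphism property of $s$.
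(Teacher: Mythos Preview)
Your overall strategy matches the paper's: regular elements for the $\ac$-languages, the canonical $a=s(\gamma)$ for the cross-section languages. Two comments.

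In the $\ac$-case, the paper differs in that it first invokes \Cref{lem:k} to extend $f$ so that some element of $\mathcal O(U)$ with residue $\ac(a)$ lies in the domain; only then does it pick $b$ and cite \Cref{rem:regacv} and \Cref{rem:acofreg}. Your route---extending the partial elementary map on $k$ by $\ac(a)\mapsto\beta$ and transferring regularity via that map---works, and is arguably more direct. Do note, however, that \Cref{lemma:doreg}(3) only produces an $\mathcal L_\sigma$-isomorphism; the $k$-sort of the $\mathcal L_{\ac,\sigma}$-structure $A\langle a\rangle$ is strictly larger (it contains $\ac(a)$, which need not lie in $\res(\mathcal O(A\langle a\rangle_{\mathcal L_\sigma}))$), so defining $f'$ on this larger $k$-sort and checking commutation with $\ac$ is a separate step you should spell out.

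In the $s$-case your sketch has a gap. The elements $s(\gamma')$, for $\gamma'$ ranging over coset representatives of $\Gamma(A)$ in $\Gamma(A)\langle\gamma\rangle$, are indeed $\operatorname{VF}(A)$-linearly independent, but when $\Gamma(A)\langle\gamma\rangle/\Gamma(A)$ is infinite their span is only a proper \emph{subring} of $\operatorname{VF}(A')$ (a Laurent-polynomial-type ring), not all of $\operatorname{VF}(A')$. You need to add that this subring has fraction field $\operatorname{VF}(A')$, extend $f'$ to fractions, and then verify that the valuation is preserved throughout. The paper avoids this by carrying out a careful induction, adding one $s(\sigma^i(\gamma))$ at a time via \Cref{fact:generating+nontorsion}, distinguishing whether $\ell\sigma^{i+1}(\gamma)$ lies in the current value group for some $\ell>0$; this organisation also makes it transparent that the residue field does not grow, so in the $\mathcal L_{s,\iota,\sigma}$ case there are no ``new residue elements'' for $\iota$ to act on, and your appeal to saturation of $k(N)$ there is unnecessary.
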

\begin{proof}
Let $\gamma\in\Gamma(M)\setminus v(A)$ and $\delta\coloneqq e(\gamma)$.  We give a separate argument for each language.

Suppose $\mathcal L=\mathcal L_{\ac,\sigma}$. By \Cref{fact:sigmahenslindifcl} we are in the assumptions of \Cref{lemma:doreg}, hence there is $a\in U$ regular over $\operatorname{VF}(A)$ with $v(a)=\gamma$.  By \Cref{lem:k}, we may extend $f$ to some element of $\mathcal{O}(U)$ with residue $\ac(a)$. Pick $b\in N$  with $v(b)=\delta$ and $\ac(b)=f(\ac(a))$. By \Cref{rem:regacv}, $b$ is regular over $B$. By \Cref{lemma:doreg}, we may extend $f$ to a valued difference field isomorphism sending $a$ to $b$.  From the fact that $\ac(b)=f(\ac(a))$ and  \Cref{rem:acofreg}, we conclude that $f$ commutes with $\ac$.

If $\mathcal L=\mathcal L_{\ac,\iota,\sigma}$, argue as in the previous case and then, to check that $f$ also commutes with the lift, use the fact $\ac(a)$ is transcendental by regularity of $a$.

Suppose now that $\mathcal L=\mathcal L_{s,\sigma}$.
	For all $i \in \mathbb{Z}$, let $a_i\coloneqq s(\sigma^i(\gamma))$
	and $b_i\coloneqq s(\sigma^i(\delta))$. Denote by $\strgen X$ the structure generated by $X$ in the language $\mathcal L_{s}$.
We will prove by induction on $i \in \mathbb{N}$ that  $\Gamma(A\langle a_{-i},\ldots, a_{i} \rangle)=\Gamma(A)+\mathbb Z \sigma^{-i}(\gamma)+\ldots +\mathbb Z\sigma^{i}(\gamma) $ and that there is an $s$-valued field isomorphism $f_i\from A\langle  a_{-i},\ldots ,a_i \rangle \to B \langle b_{-i},\ldots ,b_i  \rangle$ extending $f$ and sending the tuple $(a_{-i},\ldots ,a_i)$ to $(b_{-i},\ldots ,b_i)$, which is enough to prove the lemma as then $f'\coloneqq \bigcup_{i \in \mathbb{N}} f_i$ is the desired isomorphism. Suppose this holds for some $i \in \mathbb{N}$. We will extend $f_i$ to an $\mathcal L_s$-isomorphism \[f_i^+\from A\langle  a_{-i},\ldots , a_i ,a_{i+1}\rangle \to B \langle b_{-i},\ldots ,b_i ,b_{i+1}  \rangle.\] We then repeat the argument to deal with $a_{-i-1}$ and $b_{-i-1}$. 
	
	Suppose first that for all $\ell \in \mathbb{N} \setminus \{0\}$ we have $\ell\sigma^{i+1}(\gamma) \notin \Gamma(A)+\mathbb Z \sigma^{-i}(\gamma)+\ldots +\mathbb Z\sigma^{i}(\gamma)=\Gamma(A\langle a_{-i},\ldots, a_{i} \rangle)$. Then by  \Cref{fact:generating+nontorsion}\ref{fact_generating} we have that $a_{i+1}$ and $b_{i+1}$ are transcendental over $\VF(A\langle  a_{-i},\ldots ,a_i\rangle)$ and $\VF(B\langle b_{-i},\ldots, b_i   \rangle)$ respectively, and that the field isomorphism $f_i^+$ extending $f_i$ and sending $a_{i+1}$ to $b_{i+1}$ commutes with the valuation, and by construction it commutes with the cross-section. \Cref{fact:generating+nontorsion}\ref{fact_generating} also gives us that \[v(A\langle  a_{-i},\ldots , a_{i+1}\rangle)=\Gamma(A)+\mathbb Z a_{-i}+\ldots + \mathbb Z a_{i+1}.\] 
	
	If instead there is some positive $\ell \in \mathbb{N}$ such that $\ell\sigma^{i+1}(\gamma) \in \Gamma(A)+\mathbb Z \sigma^{-i}(\gamma)+\ldots +\mathbb Z\sigma^{i}(\gamma)$, take the minimal such $\ell$, together with integer coefficients $h_i$, and $\gamma_0 \in \Gamma(A)$ such that $\ell\sigma^{i+1}(\gamma)=\gamma_0+\sum_{-i\leq j\leq i} h_i \sigma^{i}(\gamma)$. Applying the cross-section, we get that $a_{i+1}^{\ell}=s(\gamma_0)\prod_{-i\leq j\leq i}a_j^{h_j}$
	and $b_{i+1}^\ell =s(f(\gamma_0))\prod_{-i\leq j\leq i} b_j^{h_j}$. It follows then by  \Cref{fact:generating+nontorsion}\ref{fact_generating} that $x^\ell -s(\gamma_0)\prod_{-i\leq j\leq i} a_j^{h_j}$ is the minimal polynomial of $a_{i+1}$ over $\VF(
	A\langle  a_{-i},\ldots ,a_{i}\rangle)$, and $x^\ell -f(s(\gamma_0))\prod_{-i\leq j\leq i} b_j^{h_j}$ is the minimal polynomial of $b_{i+1}$ over $\VF(B\langle  b_{-i},\ldots ,b_{i}\rangle)$. Because we have $f_i(\prod_{-i\leq j\leq i} a_j^{h_j} )=\prod_{-i\leq j\leq i} b_j^{h_j}$, we get that $f_i$ extends to a map  \[f_i^+\from A\langle  a_{-i},\ldots, a_{i+1}\rangle \to B \langle  b_{-i},\ldots ,b_{i+1}\rangle\] sending $a_{i+1}$ to $b_{i+1}$ and whose $\mathrm{VF}$ part is a field isomorphism. It also follows from  \Cref{fact:generating+nontorsion}\ref{fact_generating} that $f_i^+$ commutes with the valuation map, and it commutes with the cross-section by construction.  \Cref{fact:generating+nontorsion}\ref{fact_generating}  also gives us that $v(A\langle  a_{-i},\ldots ,a_{i+1} \rangle)=\Gamma(A)+\mathbb Z a_{-i}+\ldots + \mathbb Z a_{i+1}$, as required.

If $\mathcal L=\mathcal L_{s,\iota,\sigma}$, the proof is analogous to the proof of the case $\mathcal L_{s,\iota,\sigma}$ in  \Cref{lem:k}
	with the roles of the sorts $\Gamma$ and $k$ interchanged, the role of $\iota$ now played by $s$, and the applications of  \cite[Proposition 1.7]{Kesting}  replaced by applications of \cite[Proposition 2.1]{Kesting}.	
\end{proof}

\begin{rem}
Above, if $\mathcal L$ contains a cross-section symbol $s$, we may furthermore ensure that $A'\subseteq M$ and that $\res(\mathcal O(A'))=\res(\mathcal O(A))$. As we will never use this, we leave it to the reader to check that this follows from the constructions given.
\end{rem}

We are now ready to prove an embedding result, using which we will be able to characterise the solvable amalgamation problems, as well as the existentially closed $\mathcal L$-valued difference fields.

\begin{pr}\label{pr:embeddingvdf}
  Let $E,M,N$ be substructures of $\mathcal{L}$-valued difference fields of residual characteristic zero, with $M$ and $N$ extending $E$ and such that:
  \begin{enumerate}
  \item\label{point:embeddingvdf1}  $N$ is quantifier-free $\abs{M}^+$-saturated, $\sigma$-henselian, with $k(N)$ an $\abs{M}^+$-saturated model of $\mathsf{ACFA}$,
  \item\label{point:embeddingvdf2} $\Gamma(M)$ embeds in $\Gamma(N)$ over $\Gamma(E)$, and
  \item\label{point:embeddingvdf3} $k(M)$ embeds in $k(N)$ over $k(E)$.
  \end{enumerate}
  Then there is an embedding of $M$ into $N$ over $E$.
\end{pr}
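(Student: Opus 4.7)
The plan is to prove the embedding result by a back-and-forth construction (essentially a one-directional ``forth'' step, since $N$ is sufficiently saturated), building an increasing chain of $k$-elementary maps $(f_\alpha\colon A_\alpha \to B_\alpha)_{\alpha \leq |M|}$ with $A_\alpha$ a substructure of $M$ and $B_\alpha$ a substructure of $N$, using Lemmas~\ref{lem:k} and~\ref{lem:Gamma} as the main inductive tools. Before starting, I would replace $M$ by its $\sigma$-henselisation if needed; since we are in residue characteristic $0$, this does not change $k(M)$ or $\Gamma(M)$, so the hypotheses are preserved. I also fix an auxiliary quantifier-free $|M|^+$-saturated $\sigma$-henselian extension $U \supseteq M$, which serves as the ambient space for the applications of Lemmas~\ref{lem:k} and~\ref{lem:Gamma}.

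For the base case, one constructs $f_0$ extending $\id_E$ that is $k$-elementary and agrees with the given embedding of $\Gamma(M)$ into $\Gamma(N)$ on $\Gamma(E)$. The key point is that, since $k(N)$ is an $|M|^+$-saturated model of $\mathsf{ACFA}$ and we are given an embedding $k(M) \hookrightarrow k(N)$ over $k(E)$, saturation of $k(N)$ lets us realise in $k(N)$ a complete $\mathsf{ACFA}$-type over $k(E)$ compatible with the given quantifier-free embedding, and this furnishes the required $k$-elementarity on $k(E)$.

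At each successor stage, enumerate $M$ and consider some $m \in M \setminus A_\alpha$. Using Lemma~\ref{lem:k} and Lemma~\ref{lem:Gamma} in turn, we extend $f_\alpha$ so that $\res(m)$ lies in $\res(\mathcal{O}(A_{\alpha+1}))$ and $v(m)$ lies in $v(A_{\alpha+1})$. Once these residue and valuation data are present in the domain, the quantifier-free type of $m$ over $A_{\alpha+1}$ is determined by them together with $\sigma$-henselianity; using $\sigma$-henselianity of $N$ (Corollary~\ref{henselian}) and the quantifier-free $|M|^+$-saturation of $N$, we find $b \in N$ realising the image type and set $f_{\alpha+1}(m) := b$. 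Taking unions at limit stages yields the desired embedding $M \to N$ over $E$.

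The main obstacle is ensuring that, when applying Lemmas~\ref{lem:k} and~\ref{lem:Gamma}, the witnesses they produce can be taken to lie in $M$ rather than just in $U$, so that $A_\alpha$ really is a substructure of $M$ throughout. In the $\sigma$-transcendental cases we can take the witness to be $m$ itself; in the $\sigma$-algebraic cases we appeal to $\sigma$-henselianity of $M$ (ensured by our preliminary reduction to a $\sigma$-henselisation) to produce a root of the relevant minimal $\sigma$-polynomial inside $M$, again placing the witness in $M$.
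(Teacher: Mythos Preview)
Your outline has the right overall shape, but there are two genuine gaps.

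\textbf{The base case for $k$-elementarity is not established.} You need $\id_{k(E)}$ to be an \emph{elementary} map between $k(U)$ and $k(N)$, not merely a quantifier-free embedding. Since $\mathsf{ACFA}$ is not complete, two models containing $k(E)$ need not agree on the full type of $k(E)$; saturation of $k(N)$ alone does not fix this. The paper handles this by \emph{choosing $U$ carefully}: using the hypothesis that $k(M)$ embeds in $k(N)$ over $k(E)$, one takes $U$ so that the automorphisms of $k(U)$ and $k(N)$ agree (up to isomorphism) on the algebraic closure of $k(E)$, and then invokes \cite[Theorem~1.3]{CHr}, which says completions of $\mathsf{ACFA}$ are determined by this data. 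Your $U$ is chosen arbitrarily, so this step fails.

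\textbf{The immediate-extension step is missing.} Your claim that ``the quantifier-free type of $m$ over $A_{\alpha+1}$ is determined by [residue and valuation data] together with $\sigma$-henselianity'' is false: already in pure valued fields, two elements with the same residue and valuation need not have the same type over a given base (think of two distinct elements of the same open ball). Once $\Gamma(A_{\alpha+1})=\Gamma(M)$ and $\res(\mathcal O(A_{\alpha+1}))\supseteq k(M)$, the element $m$ sits in an \emph{immediate} extension of $A_{\alpha+1}$, and the embedding problem for immediate extensions is the genuine technical heart of the argument. The paper treats this in a separate claim (\Cref{imm}), passing to the maximal immediate extension and using the pseudo-Cauchy machinery of \cite[Section~5]{DO} and \cite[Section~5]{AvD}; your sketch gives no mechanism here.

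A smaller structural point: your insistence that $A_\alpha\subseteq M$ throughout is both unnecessary and hard to maintain, since \Cref{lem:k} and \Cref{lem:Gamma} in general produce witnesses in $U$, not in $M$ (e.g.\ the regular element $a$ in the $\mathcal L_{\ac,\sigma}$ case of \Cref{lem:Gamma}). The paper explicitly allows $A'$ to contain points of $U\setminus M$, first arranges $v(A_0)=\Gamma(M)$ and $\res(\mathcal O(A_1))\supseteq k(M)$, then embeds the maximal immediate extension $A_2\supseteq M$, and only at the very end restricts the resulting map to $M$. This avoids the obstacle you single out rather than confronting it.
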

\begin{proof}
	By extending $M$ and $N$, we may assume that they are $\mathcal{L}$-valued difference fields.
	
	Since $k(M)$ embeds into $k(N)$ over $k(E)$, there is an existentially closed (in particular, $\sigma$-henselian by \Cref{henselian}) and quantifier-free $\abs{M}^+$-saturated extension $U$ of $M$ such that the restrictions to the algebraic closure of $k(E)$ of the automorphisms of $k(U)$ and $k(N)$ are isomorphic. By~\cite[Theorem~1.3]{CHr} $\id_{E}$ is then a $k$-elementary map between substructures of $U$ and $N$.
	
	By a standard inductive argument, in order to embed $M$ into $N$ over $E$, it is enough to show that if $f\from A\to B$ is a $k$-elementary map with $\abs{A}\leq \abs{M}$ and $a\in U$, then there is a $k$-elementary map $f'\from A'\to B'$ with $a\in A'$ and $\abs{A'}\leq \abs{M}$.\footnote{Note that $A'$ may contain points of $U\setminus M$.}  Additionally, we inductively maintain the condition that $\Gamma(A)\subseteq \Gamma(M)$.

By repeated applications of \Cref{lem:Gamma}, we extend $f$ to a $k$-elementary map $f_0\from A_0\to B_0$ with $v(A_0)=\Gamma(A_0)=\Gamma(M)$ and $\abs{A_0}\le \abs M$. By repeated applications of \Cref{lem:k}, we then find a $k$-elementary map $f_1\from A_1\to B_1$ extending $f_0$ with $\res(\mathcal O(A_1))=k(A_1)\supseteq k(M)$, $v(A_1)=\Gamma(A_1)=\Gamma(M)$ and $\abs{A_1}\le \abs M$. By further iterating \Cref{lem:k} if necessary, we may assume that $k(A_1)$ is linearly difference closed. Since $A_1$ has the same value group as $M$ and larger residue field, the maximal immediate extension $A_2$ of $A_1$ inside $U$ contains $M$; observe that $\abs{A_2}\le\abs{k(A_1)}^{\abs{\Gamma(M)}}\le 2^{\abs M}$. 
\begin{claim}\label{imm}
The $k$-elementary map $f_1\from A_1\to B_1$ extends to a $k$-elementary map $f_2\from A_2\to B_2$.
\end{claim}
\begin{claimproof}
Observe that the proof of \cite[Corollary 5.10]{DO}\footnote{For the reader's convenience, we point out that most of the proofs in \cite[Section 5]{DO}, which culminates in the corollary of interest, are omitted as they heavily rely on similar material presented in \cite[Section 5]{AvD}.} only uses saturation to realise sets of quantifier-free formulas. Namely, the only step where saturation is used is in the proof of \cite[Lemma~5.9]{AvD}, in saying that, in our notation, certain \emph{pseudo-Cauchy sequences} from $B_1$ have pseudolimits (see for example \cite[Section 2]{DO} for the definitions of pseudo-Cauchy sequence, or \emph{pc-sequence}, and of pseudolimit). To conclude, observe that if $\pi(x)$ is a set of quantifier-free formulas over $B_1$ stating that $x$ is a pseudolimit of a certain pc-sequence then, by definition of pc-sequence, $\pi(x)$ is finitely satisfiable in $B_1$, and a fortiori in $N$.
\end{claimproof}
As the extension $A_1\subseteq A_2$ is immediate, in order to conclude that $f_2$ is a $k$-elementary map it remains to be shown that, if $\mathcal L$ contains an angular component symbol, then $f_2$ commutes with $\ac$. This follows from the fact, because $a\in A_2$ is a pseudolimit of a pseudo-Cauchy sequence,  there is $b\in A_1$ such that $v(b-a)>v(a)$.

We conclude the proof by simply restricting $f_2$ to $M$.
\end{proof}
Recall that $K$ is an \emph{amalgamation base} iff every amalgamation problem $M\leftarrow K\to L$ has a solution.
\begin{thm}\label{AP_VDF}
  An amalgamation problem of valued difference fields of residual characteristic zero with either an angular component or a cross-section, and possibly with a lift, has a solution if and only if the induced problem on the residue difference fields has a solution.

  In particular, $K$ is an amalgamation base if and only if $k(K)$ is an amalgamation base, if and only if $\sigma$ has a unique extension to the algebraic closure of $k(K)$ up to isomorphism.
\end{thm}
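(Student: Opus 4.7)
My plan is to derive both parts of the theorem from the embedding result \Cref{pr:embeddingvdf}. One implication of the main statement is immediate: any amalgam $D$ of an $\mathcal L$-amalgamation problem $M\leftarrow K\to L$ restricts via $\res$ to an amalgam of the induced problem on residue difference fields, since $\mathcal L$-embeddings by definition commute with $\res$ (and with $\ac$ or $\iota$ when present).

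For the converse, suppose $M\leftarrow K\to L$ is an $\mathcal L$-amalgamation problem whose induced residue-field problem admits an amalgam $R$. The plan is to build a single large structure $N$ into which both $L$ and $M$ embed over $K$. I would start by choosing an existentially closed $\mathcal L$-valued difference field extension $N_0$ of $L$ (using \Cref{rem:ecsatexist}) and then passing to an $\abs M^+$-saturated elementary extension $N$ of $N_0$ in $\mathcal L$. The conditions of \Cref{pr:embeddingvdf} are then verified: $N$ extends $L$, is quantifier-free $\abs M^+$-saturated, is $\sigma$-henselian by \Cref{henselian} (a first-order property passing from $N_0$ to $N$), and has $k(N)\models \mathsf{ACFA}$ by \Cref{co:acec_basics} and elementarity; both $k(N)$ and $\Gamma(N)$ are $\abs M^+$-saturated as reducts.

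It remains to check the embeddings $\Gamma(M)\hookrightarrow \Gamma(N)$ over $\Gamma(K)$ and $k(M)\hookrightarrow k(N)$ over $k(K)$. For the residue fields, I would enumerate $R$ over $k(L)$ and inductively realise the quantifier-free type of each successive element in $k(N)$, using saturation of $k(N)$ as a model of $\mathsf{ACFA}$; this yields an embedding $R\hookrightarrow k(N)$ over $k(L)$, hence over $k(K)$, whose restriction provides the required embedding of $k(M)$. An analogous argument handles the value groups, starting from an amalgam of $\Gamma(M)$ and $\Gamma(L)$ over $\Gamma(K)$ guaranteed by \cite[Theorem~5.10]{DM}. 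Applying \Cref{pr:embeddingvdf} then gives an embedding $M\hookrightarrow N$ over $K$, and $N$ together with this embedding and the inclusion $L\subseteq N$ solves the original amalgamation problem. The main subtle point is ensuring that the back-and-forth for residue fields actually produces an embedding; this relies on consistent quantifier-free types over subsets of $k(N)$ being realised in $k(N)$, which follows from the combination of saturation and being a model of $\mathsf{ACFA}$.

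For the ``in particular'' clause, $K$ is an amalgamation base precisely when every amalgamation problem over $K$ is solvable, which by the main statement is equivalent to $k(K)$ being an amalgamation base in the category of difference fields. The last equivalence is standard: by the model theory of $\mathsf{ACFA}$ (in particular \cite[Theorem~1.3]{CHr}), $(k(K),\sigma)$ is an amalgamation base in difference fields iff $\sigma$ extends uniquely, up to isomorphism over $k(K)$, to the algebraic closure $k(K)^{\alg}$.
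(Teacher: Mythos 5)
Your argument has a genuine gap in the residue-field step. Having chosen $N_0$ as an \emph{arbitrary} existentially closed extension of $L$, you cannot in general embed the amalgam $R$ into $k(N)$ over $k(L)$: the category of difference fields does not have the amalgamation property, and the restriction of $\sigma_{k(N)}$ to $k(L)^{\alg}\cap k(N)$ may be incompatible with $\sigma_R$ on $k(L)^{\alg}\cap R$. For a concrete obstruction, take $k(L)=\mathbb{Q}$ with trivial automorphism and $R=\mathbb{Q}(i)$ with $\sigma_R(i)=i$; nothing forces the arbitrarily chosen $N_0$ to produce a $k(N)$ in which $\sigma(i)=i$ rather than $\sigma(i)=-i$, and in the latter case the quantifier-free difference-field type of $i_R$ over $k(L)$ is not even finitely satisfiable in $k(N)$. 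Your closing sentence, that consistency of such types follows from saturation and $k(N)\models\mathsf{ACFA}$, therefore does not hold: the obstruction is inconsistency, not a compactness/realisability issue. (The value-group side of your argument is genuinely different in kind and does go through, precisely because difference oags \emph{do} have the amalgamation property by \cite[Theorem~5.10]{DM}.)

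The paper sidesteps this by not starting the tower from $L$. It first fixes a valued difference field $N_0$ whose residue difference field \emph{is} the amalgam $R$ (such an $N_0$ can be manufactured directly, e.g.\ by a Hahn-field construction), and only then passes to a saturated existentially closed extension $N$. With that choice $k(M)$ and $k(L)$ both lie in $k(N_0)\subseteq k(N)$ compatibly over $k(K)$ by construction, so assumption~\ref{point:embeddingvdf3} of \Cref{pr:embeddingvdf} is free, and the proposition is applied to embed both $M$ and $L$ into $N$ over $K$. To salvage your approach you would need to constrain $N_0$, for instance by first extending $L$ to some $L'$ whose residue field contains $R$ over $k(L)$ before taking an existentially closed closure---but this amounts to reproducing the paper's choice. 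Your treatment of the ``in particular'' clause is fine.
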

\begin{proof}
  If the amalgamation problem $M \leftarrow K \to L$ has a solution, then this clearly descends to the residue difference field level, so we prove the converse. Let $N_0$ be a valued difference field with one of the additional structures mentioned above, whose residue difference field $k(N_0)$ solves the amalgamation problem $k(M) \leftarrow k(K) \to k(L)$. Let $N$ be an existentially closed,  existentially $\abs M^+$-saturated extension of $N_0$, so that $k(M)$ and $k(L)$ embed into $k(N)$ over $k(K)$. By \Cref{co:acec_basics} and \Cref{henselian}, assumption~\ref{point:embeddingvdf1} of \Cref{pr:embeddingvdf} is satisfied. Moreover, as $\Gamma(N)$ is existentially closed and existentially $\abs{\Gamma(M)}^+$-saturated, it follows from \cite[Theorem~5.10]{DM} that assumption~\ref{point:embeddingvdf2} is satisfied as well, while assumption~\ref{point:embeddingvdf3} is satisfied by choice of $N_0$. Hence $M$ embeds into $N$ over $K$, solving the amalgamation problem.
  
  For the ``in particular'' clause, the first part is clear. The fact that if the residue automorphism extends uniquely to the algebraic closure then $k(K)$ is an amalgamation base for difference fields follows from \cite[Lemma~8]{Ma}. Conversely, if $(k(K),\sigma)$ is an amalgamation base and $\sigma_1$, $\sigma_2$ are two extensions to the algebraic closure, then any solution of the amalgamation problem $(k(K)^{\mathrm{alg}},\sigma_1) \leftarrow (k(K), \sigma) \to (k(K)^{\mathrm{alg}},\sigma_2)$ induces an isomorphism between $(k(K)^{\mathrm{alg}},\sigma_1)$ and $(k(K)^{\mathrm{alg}},\sigma_2)$.
\end{proof}

  Solvable amalgamation problems appear in the difference algebra literature under the name of \emph{compatible extensions}. See e.g.~\cite[Chapter~7]{cohnDifferenceAlgebra1965}. See \cite[Lemmas~2.8 and 2.9(4)]{CHr}  for a condition ensuring that a field automorphism extends uniquely to the algebraic closure.

\begin{thm}\label{ec_characterisation} 
	A valued difference field $K$ of residual characteristic zero with either an angular component or a cross-section, and possibly with a lift, is existentially closed if and only if it satisfies the following conditions.
	\begin{enumerate}
		\item\label{cond:hens} $K$ is $\sigma$-henselian.
		\item\label{cond:generic} The automorphisms induced on $\Gamma(K)$ and $k(K)$ are generic.
	\end{enumerate}
      \end{thm}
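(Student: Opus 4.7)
The forward implication is immediate: if $K$ is existentially closed, Corollary \ref{henselian} gives $\sigma$-henselianity, while Corollary \ref{co:acec_basics} (after forgetting the lift via Corollary \ref{co:reductswithlifts}, if present) shows that $\Gamma(K)$ and $k(K)$ are existentially closed in their respective categories, i.e., the induced automorphisms are generic.

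For the converse, suppose $K$ satisfies \ref{cond:hens} and \ref{cond:generic}. To show existential closedness I would take an arbitrary $\mathcal{L}$-valued difference field extension $L\supseteq K$ and exhibit, over $K$, an embedding of $L$ into a sufficiently saturated elementary extension $K^*\succeq K$ of $K$ in $\mathcal{L}$. Once this is done, any existential $\mathcal{L}(K)$-formula satisfied in $L$ is preserved by the embedding, hence satisfied in $K^*$, and therefore satisfied in $K$ by elementarity, yielding existential closedness. The embedding will be produced by Proposition \ref{pr:embeddingvdf} applied with $E=K$, $M=L$, and target $K^*$.

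The hypotheses of Proposition \ref{pr:embeddingvdf} on $K^*$ are readily verified: $\sigma$-henselianity is $\forall\exists$-axiomatisable (as in the proof of Lemma \ref{henselianpr}) and hence passes from $K$ to $K^*$; $\mathsf{ACFA}$ is elementary, so $k(K^*)\succeq k(K)\models\mathsf{ACFA}$; and the saturation requirements are met by choosing $K^*$ sufficiently saturated. The residue field embedding $k(L)\hookrightarrow k(K^*)$ over $k(K)$ reduces to a standard fact about $\mathsf{ACFA}$: take an existentially closed difference field extension of $k(L)$, which is a model of the same completion of $\mathsf{ACFA}$ as $k(K^*)$ over the common submodel $k(K)$, and then use model completeness of $\mathsf{ACFA}$ together with saturation of $k(K^*)$ to embed it elementarily over $k(K)$.

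The main point is the value group embedding $\Gamma(L)\hookrightarrow \Gamma(K^*)$ over $\Gamma(K)$. Since $\Gamma(K)$ is existentially closed in the category of difference oags by hypothesis, its elementary extension $\Gamma(K^*)$ inherits its positive theory; combined with sufficient saturation, this ensures that $\Gamma(K^*)$ is itself existentially closed, because saturated enough models of the positive theory of an existentially closed structure are existentially closed. One then invokes \cite[Theorem~5.10]{DM} exactly as in the proof of Theorem \ref{AP_VDF} to produce the embedding. Proposition \ref{pr:embeddingvdf} then gives the desired embedding $L\hookrightarrow K^*$ over $K$, completing the argument.
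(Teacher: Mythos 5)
The forward direction is correct and matches the paper. For the converse, your overall strategy — pass to a saturated elementary extension and apply \Cref{pr:embeddingvdf} — is the one the paper uses, and your treatment of the residue field embedding via model completeness of $\mathsf{ACFA}$ is fine.

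The gap is in the value group step. You assert that $\Gamma(K^*)$ is existentially closed because ``saturated enough models of the positive theory of an existentially closed structure are existentially closed''. This is not a standard fact, and it is precisely the sort of claim one cannot take for granted here: existentially closed difference oags do not form an elementary class (as discussed around \cite{DM}), so there is no reason that a classical elementary extension $\Gamma(K^*)\succeq\Gamma(K)$ of an existentially closed difference oag is again existentially closed — the analogous statement fails for general inductive theories lacking a model companion, and no argument specific to difference oags is given. Consequently the appeal to \cite[Theorem~5.10]{DM}, which in the proof of \Cref{AP_VDF} is applied to a target value group that \emph{is} existentially closed, is not justified for $\Gamma(K^*)$. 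The paper sidesteps this entirely: it is enough to embed $K\langle b\rangle$ (not all of $L$), and one observes directly that $\qftp(\bar\gamma/\Gamma(K))$, for $\bar\gamma$ an enumeration of $\Gamma(K\langle b\rangle)$, is finitely satisfiable in $\Gamma(K)$ itself — here the hypothesis that $\Gamma(K)$ (not $\Gamma(K^*)$) is existentially closed is exactly what is used — hence finitely satisfiable in $\Gamma(N)\supseteq\Gamma(K)$, hence realised in $\Gamma(N)$ by $|K|^+$-saturation. Replacing your value-group paragraph with this argument (and restricting to $M=K\langle b\rangle$, which also keeps the saturation of $K^*$ independent of the ambient extension) repairs the proof.
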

\begin{proof}
  The conditions are necessary by \Cref{co:acec_basics,co:reductswithlifts,henselian}.

  For the converse, suppose $K$ satisfies the given conditions.
Let $N$ be a $\abs{K}^+$-saturated elementary extension of $K$.
 By elementarity,  $N$ is $\sigma$-henselian and $k(N) \models \mathsf{ACFA}$.
 Suppose that $\phi(x,a)$ is a quantifier-free formula with $a\in K$ which is satisfied by some $b$ in some extension of $K$. 

    Let $\bar \gamma$ be a tuple enumerating $\Gamma(K\strgen{b})$. Note that $\qftp(\bar \gamma/\Gamma(K))$ is finitely satisfiable in $\Gamma(K)$, by existential closedness of the latter. In particular, it is finitely satisfiable in $\Gamma(N)$, and so by saturation of $N$ it is realised in $N$ by some tuple $\bar \gamma '$.  Hence, there is an embedding $\Gamma(K\strgen{b})\to \Gamma(N)$  over $\Gamma(K)$.  As $\mathsf{ACFA}$ is model complete, the embedding of $k(K)$ in any model of $\mathsf{ACFA}$ containing $b$ is elementary,  hence there is an embedding $k(K\strgen{b})\to k(N)$ by saturation of $k(N)$. We may therefore apply \Cref{pr:embeddingvdf}, hence $N\models \exists x\;\phi(x,a)$, and we conclude by elementarity.
\end{proof}
Note that, in this characterisation of existentially closed structures,
the only non-first-order part is the assumption that $\sigma_\Gamma$ is generic.

\begin{eg}\label{eg:ecHahn}
If $(k,\sigma)$ and $(\Gamma,\sigma)$ are both existentially closed and $\operatorname{char}(k)=0$, then the Hahn field $k((\Gamma))$, with $\sigma$ and $\ac$ as in \Cref{eg:Hahnsigma}, is $\sigma$-henselian by  \cite[Corollary~5.6(2)]{DO}. By \Cref{ec_characterisation}, it is existentially closed.
\end{eg}

The results above are in languages with at least a definable angular component. Without auxiliary maps, the following problems remain open.
\begin{prob}
  In the language $\mathcal L_\sigma$, characterise
  \begin{enumerate}
  \item the solvable amalgamation problems, and
  \item the existentially closed models.
  \end{enumerate}
\end{prob}

\section{The tree property of the second kind}
\label{sec:ntp2}
In this section we prove that, for $\mathcal L$ any of the languages considered above (see \Cref{notation:mcL}), the positive theory of $\mathcal L$-valued difference fields of residual characteristic zero is $\mathsf{NTP}_2$. We do this by adapting the proof of~\cite[Theorem~4.1]{CH}. This will require generalising certain statements about indiscernible arrays to the positive setting, which we do in the first two subsections. Along the way, we prove the Array Modelling Theorem for arbitrary positive theories, removing the thickness assumption used in~\cite{Ka}, and deduce submultiplicativity of burden from it. We also provide an example showing that Lemma 2.3 in~\cite{Chernikov} is false (this, however, does not affect the correctness of any other results of~\cite{Chernikov}).

Henceforth we assume familiarity with positive logic. We refer the reader to~\cite{BYP} for the fundamentals; condensed accounts are available for instance in~\cite{DK,DGK,DM}. Terminology-wise, let us just mention that we say \emph{positive theory} for \emph{h-inductive theory}, and that we talk of \emph{positively existentially closed} (\emph{pec}) models, while ``existentially closed'' is reserved for the classical notion. We refer to \cite[Definitions~4.1 and~4.5]{DGK} for the definitions of  $\mathsf{IP}$  and $\mathsf{TP_2}$ in the positive setting. From now on, unless otherwise stated, we work in a monster model $\monster$ of a positive theory with the \emph{Joint Continuation Property} (\emph{JCP}). By a \emph{formula} we will mean a positive formula, and by a \emph{type} we will mean a (partial) positive type, we write $\equiv_A$ for having the same positive type over $A$, if we call a sequence \emph{indiscernible} we mean with respect to positive formulas, etc. Similarly, every mention of $\mathsf{NTP}_2$ and similar properties refers to their versions for positive logic.

We now recall some definitions and facts regarding generalised indiscernibles.

\begin{defin}
Let $\mathfrak L$ be a language and $I$, $J$ two $\mathfrak L$-structures indexing some tuples $(a_i)_{i\in I}$ and $(b_j)_{j\in J}$.
\begin{enumerate}
\item The \emph{$\mathrm{EM}_{\mathfrak L}$-type} $\operatorname{EM}_{\mathfrak L}((a_i)_{i\in I}/A)$ of $(a_i)_{i\in I}$ over $A$ is the set of formulas $\phi(x_{\bar \eta})$ over $A$, as $\bar \eta$ varies among finite tuples in $I$, such that, for every tuple $\bar \xi$ from $I$ with the same quantifier-free $\mathfrak L$-type as $\bar \eta$, we have $\models \phi(a_{\bar \xi})$.
\item We say that $(a_i)_{i\in I}$ is \emph{$\mathfrak L$-indiscernible over $A$} iff \[\operatorname{EM}_{\mathfrak L}((a_i)_{i\in I}/A)=\tp((a_i)_{i\in I}/A).\]
\item We say that $(b_j)_{j\in J}$ is \emph{$\mathrm{EM}_{\mathfrak L}$-based on $(a_i)_{i\in I}$ over $A$} iff, for all $\phi(x_{\bar\eta})\in \operatorname{EM}_{\mathfrak L}((a_i)_{i\in I}/A)$ and all tuples $\bar \xi$ from $J$ with the same quantifier-free $\mathfrak L$-type as $\bar \eta$, we have $\phi(b_{\bar\xi})\in \operatorname{EM}_{\mathfrak L}((b_i)_{i\in I}/A)$.
  \item We say that $(b_j)_{j\in J}$ is \emph{based on $(a_i)_{i\in I}$ over $A$} iff, for all tuples $\bar \xi\subseteq J$ there is $\bar \eta\subseteq I$,  with the same quantifier-free $\mathfrak L$-type as $\bar \xi$, such that $b_{\bar\xi}\equiv_A a_{\bar\eta}$.
\end{enumerate}
\end{defin}
As usual, when using the above definitions, if we omit the ``over $A$'' we mean ``over $\emptyset$''.

\begin{rem}\label{rem:based}\*
  \begin{enumerate}
  \item\label{point:embased}  $\mathrm{EM}_{\mathfrak L}$-basedness is most significant (and will only be used below) when $I$ and $J$ realise the same finitary quantifier-free $\mathfrak L$-types, in which case it is a transitive notion~\cite[Remark~3.8]{Ka}. Note that, if $I=J$, then being ``$\mathrm{EM}_{\mathfrak L}$-based on'' is simply inclusion of $\mathrm{EM}_{\mathfrak L}$-types.

  \item  If $(b_j)_{j\in J}$  is based on   $(a_i)_{i\in I}$ over $A$, then it is also $\mathrm{EM}_{\mathfrak L}$-based on it over $A$, but the converse is false in general~\cite[Proposition~3.10 and Example~3.11]{Ka}.
  \end{enumerate}
\end{rem}

Recall that tuples are called \emph{compatible} when they belong to the same cartesian product of sorts.

When index sets are linear orders, we will write $\mathrm{EM}_<$ instead of $\mathrm{EM}_{\set <}$. In the same setting, the following fact is standard, see e.g.~\cite[Lemma 1.2]{ben-yaacovSimplicityCompactAbstract2003}.
\begin{fact}\label{fact:extracting}
	Let $A$ be a parameter set, $\kappa$ a cardinal, and let\footnote{For the sake of readability, we will use the notation $\beth(\alpha)$ instead of the more commonly used $\beth_\alpha$.} $\lambda \coloneqq \beth((2^{\abs{T} + \abs{A} + \kappa})^+)$. Then for every sequence $(a_i)_{i < \lambda}$ of compatible $\kappa$-tuples there is a sequence $(b_i)_{i < \omega}$ that is $A$-indiscernible and based on $(a_i)_{i < \lambda}$ over $A$.
\end{fact}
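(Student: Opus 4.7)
The plan is to extract $(b_i)_{i<\omega}$ via iterated Erd\H{o}s--Rado, as is standard in classical model theory. The positive setting introduces no new difficulty, since the argument rests purely on counting positive types and on a combinatorial partition theorem; the only care required is to read every occurrence of ``type'' as ``positive type'' throughout.

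First, I would perform a cardinality count. Setting $\mu \coloneqq 2^{\abs{T} + \abs{A} + \kappa}$, for each $n \geq 1$ the number of positive $A$-types realised by $n$-tuples of compatible $\kappa$-tuples is bounded by the number of positive formulas over $A$ in $\kappa \cdot n$ free variables, hence by $2^{\abs T + \abs A + \kappa} = \mu$. For each $n$ this yields a colouring $c_n$ of $[\lambda]^n$ by at most $\mu$ colours, namely the map sending $\{i_1 < \ldots < i_n\}$ to $\tp(a_{i_1},\ldots,a_{i_n}/A)$.

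Next, I would apply Erd\H{o}s--Rado $\beth_n(\mu)^+ \to (\mu^+)^{n+1}_\mu$ iteratively: since $\lambda$ is taken to be a sufficiently large iterate of $\beth$ above $(2^{\abs T + \abs A + \kappa})^+$, the iteration survives for all $n < \omega$, producing a descending chain $\lambda = X_0 \supseteq X_1 \supseteq X_2 \supseteq \ldots$ with each $X_n$ still enormous and $c_n$-monochromatic on increasing $n$-tuples. Choosing $i_n \in X_n$ strictly increasing yields an $\omega$-sequence $b_n \coloneqq a_{i_n}$ whose positive types on increasing tuples of any fixed finite length are constant; this is exactly $A$-indiscernibility in the sense of the definition.

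Basedness is then automatic: every $b_{\bar\xi}$ equals $a_{\bar\eta}$ for $\bar\eta$ the corresponding subtuple of $(i_n)_{n<\omega}$, hence $b_{\bar\xi} \equiv_A a_{\bar\eta}$ trivially. I foresee no genuine obstacle, as the positive setting preserves both the bound on the number of positive types and the Erd\H{o}s--Rado combinatorics; the statement is essentially a restatement of the classical extraction lemma with complete types replaced by positive types.
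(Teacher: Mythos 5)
Your Erd\H{o}s--Rado step produces, for each $n<\omega$, a set $X_n$ that is $c_n$-monochromatic, with $X_0\supseteq X_1\supseteq\cdots$. The gap is in the sentence ``Choosing $i_n\in X_n$ strictly increasing yields an $\omega$-sequence $\ldots$ whose positive types on increasing tuples of any fixed finite length are constant.'' This is false: to guarantee that, say, $(a_{i_0},a_{i_1})$ and $(a_{i_2},a_{i_3})$ have the same positive type, you would need $i_0,i_1,i_2,i_3$ all to lie in $X_2$, but your construction only places $i_0\in X_0$ and $i_1\in X_1$. In general no choice of $i_n\in X_n$ along a strictly descending chain yields indiscernibility, and $\bigcap_n X_n$ may be empty. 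This is not a cosmetic issue: a partition relation of the form $\lambda\to(\omega)^{<\omega}_\mu$, which is what extracting an indiscernible \emph{subsequence} would require, is large-cardinal strength and fails for $\beth$-numbers.

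The correct route --- and the one in the cited reference --- is to stop after producing the nested monochromatic sets $X_n$, observe that the positive types $p_n\coloneqq c_n\!\restriction\! X_n$ cohere (each $p_{n+1}$ restricts to $p_n$ on every $n$-subtuple, because $X_{n+1}\subseteq X_n$), and then \emph{realise} the resulting consistent EM-type $\bigcup_n p_n$ in $\monster$ by compactness/saturation rather than by picking indices in $\lambda$. Basedness then holds because $b_{\bar\xi}\models p_{|\bar\xi|}=\tp(a_{\bar\eta}/A)$ for any increasing $\bar\eta$ from $X_{|\bar\xi|}$, not because $b_{\bar\xi}$ literally equals some $a_{\bar\eta}$. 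Your concluding remark that ``every $b_{\bar\xi}$ equals $a_{\bar\eta}$'' should be replaced by $b_{\bar\xi}\equiv_A a_{\bar\eta}$, which is what the definition of basedness actually demands. One point where positive logic does deserve a word, though it causes no trouble: each $p_n$ is the positive type of an actual tuple in $\monster$ and is therefore a \emph{maximal} positive type, so any realisation of $p_n$ in $\monster$ has $p_n$ as its full positive type; this is what makes the realised sequence genuinely $A$-indiscernible. Your cardinality count and the reduction to counting positive types are fine.
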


For some purposes, $\mathrm{EM}_<$-basing, as opposed to basing, suffices. In that case, one may start from shorter sequences.
\begin{fact}[{\cite[Corollary~5.4]{DK}}]\label{fact:DK54}
  Let $A$ be a parameter set. Then for every sequence $(a_i)_{i < \omega}$ of compatible tuples there is a sequence $(b_i)_{i < \omega}$ that is  $A$-indiscernible and $\mathrm{EM}_<$-based on $(a_i)_{i < \omega}$ over $A$.
\end{fact}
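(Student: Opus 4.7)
My plan is to combine iterated Ramsey's theorem with positive compactness. The crucial feature distinguishing this statement from \Cref{fact:extracting} is that we only require $\mathrm{EM}_<$-basing, which is captured by membership in a partial positive type---so, unlike full basing, it is amenable to a compactness argument over just countably many indices in $(a_i)$.

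First I would enumerate the positive formulas $\phi_n(\bar x_n)$ over $A$ (in tuples compatible with $(a_i)$) for $n<\omega$ and apply Ramsey's theorem iteratively: at stage $n$ I would two-colour $[\omega]^{|\bar x_n|}$ according to whether $\phi_n$ is satisfied by the corresponding subtuple of $(a_i)$. A diagonal extraction then produces a subsequence $(a'_k)_{k<\omega}$ and values $c_n \in \set{0,1}$ such that $\phi_n(a'_{k_1},\ldots,a'_{k_{|\bar x_n|}})$ holds if and only if $c_n = 1$, for every $n$ and every increasing tuple $n \leq k_1 < \cdots < k_{|\bar x_n|}$. Let $\Pi((y_i)_{i<\omega})$ denote the partial positive type over $A$ containing $\phi_n(y_{\bar j})$ for every $n$ with $c_n=1$ and every increasing tuple $\bar j$ of length $|\bar x_n|$. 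Finite satisfiability of $\Pi$ follows by shifting the indices involved into the Ramsey-homogeneous tail of $(a'_k)$, so by positive compactness $\Pi$ is realised by some $(b_i)_{i<\omega}$ in $\monster$. The $\mathrm{EM}_<$-basing condition is then immediate: if $\phi_n \in \mathrm{EM}_<((a_i)/A)$ then $c_n=1$, so $\phi_n \in \Pi \subseteq \mathrm{EM}_<((b_i)/A)$.

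The main obstacle is ensuring indiscernibility of $(b_i)$, since a realisation of $\Pi$ may satisfy additional positive formulas non-uniformly. To handle this I would exploit the natural action of the group of order-automorphisms of $\omega$ on partial positive types in the variables $(y_i)_{i<\omega}$: $\Pi$ is invariant under this action, and by Zorn's lemma admits a maximal consistent invariant positive extension $\Pi^\ast$, with chains closed under unions. A realisation of $\Pi^\ast$ is then automatically indiscernible, for otherwise a positive formula witnessing the failure of indiscernibility on some pair of increasing tuples could, after symmetrisation by an order-automorphism, be adjoined to $\Pi^\ast$ while preserving both invariance and consistency---contradicting maximality. Verifying consistency of this enlargement, namely that adjoining all order-translates of a witnessed positive formula preserves finite satisfiability, is the technical heart of the proof; this is where the positive framework is essential, as the absence of negation reduces the relevant finite satisfiability checks back to the Ramsey-homogeneity furnished in the first step.
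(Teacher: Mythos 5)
Your strategy hinges on a maximality argument: pass to a maximal consistent order-invariant positive type $\Pi^*$ extending $\mathrm{EM}_<((a_i)/A)$ and argue that a realisation of $\Pi^*$ is automatically indiscernible. The load-bearing step, which you flag as ``the technical heart'', is the consistency of $\Pi^*\cup\{\phi(y_{\bar\zeta}):\bar\zeta\text{ increasing}\}$ whenever $\models\phi(b_{\bar\xi})$ for some realisation $(b_i)\models\Pi^*$ and some increasing $\bar\xi$. I do not see how this follows from the Ramsey homogeneity of the first step, and I believe there is a real gap. That homogeneity was computed for the original sequence $(a'_k)$, whereas the witness $\models\phi(b_{\bar\xi})$ concerns a \emph{different} tuple $(b_i)$; applying Ramsey to $(b_i)$ and $\phi$ might well return a homogeneous subset on which $\phi$ \emph{fails}, in which case one merely produces another realisation of $\Pi^*$ on which $\phi$ holds nowhere, yielding no contradiction to maximality and saying nothing about whether the original $(b_i)$ was indiscernible. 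Two further, more superficial, issues: the enumeration $(\phi_n)_{n<\omega}$ of positive $A$-formulas presupposes a countable language and countable $A$, neither of which is given; and the order-automorphism group of $\omega$ is trivial, so ``symmetrisation by an order-automorphism'' has no literal meaning for $\omega$-indexed types.

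The paper cites the statement from \cite{DK} rather than proving it, so there is no in-paper proof to compare against. The standard route is to \emph{stretch rather than symmetrise}: let $\lambda=\beth\bigl((2^{\abs{T}+\abs{A}+\kappa})^+\bigr)$ with $\kappa=\abs{a_0}$, and view $\{\phi(y_{\bar\eta}):\phi\in\mathrm{EM}_<((a_i)/A),\ \bar\eta\subseteq\lambda\text{ increasing}\}$ as a positive type in $\lambda$ variables. By Ramsey its finite fragments are satisfied along increasing subsequences of $(a_i)_{i<\omega}$, so by positive compactness it is realised by some $(a'_\alpha)_{\alpha<\lambda}$ in $\monster$. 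Applying \Cref{fact:extracting} to $(a'_\alpha)_{\alpha<\lambda}$ produces an $A$-indiscernible $(b_i)_{i<\omega}$ based on it over $A$; the extraction gives, for each increasing $\bar\xi\subseteq\omega$, an increasing $\bar\eta\subseteq\lambda$ with $b_{\bar\xi}\equiv_A a'_{\bar\eta}$, and since every $\phi\in\mathrm{EM}_<((a_i)/A)$ holds at $a'_{\bar\eta}$, it holds at $b_{\bar\xi}$, giving $\mathrm{EM}_<((a_i)/A)\subseteq\mathrm{EM}_<((b_i)/A)$. This bypasses the maximal-invariant-type step entirely and requires no thickness, since indiscernibility is produced by the Erd\H{o}s--Rado extraction rather than forced by a positive type.
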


\subsection{Array Modelling and submultiplicativity of burden}
 The first objective of this subsection is to generalise \cite[Theorems~1.3 and 1.4]{Ka} to arbitrary positive theories, dropping the thickness assumption.
We follow notation from there.

Recall that $\mathfrak L_{\ar}$ is the language $\set{<_1,<_2}$, interpreted in a subset $Z$ of a product of linear orders $I\times J$ as $(i,j)<_1(i',j')$ iff $i<i'$ and $(i,j)<_2(i',j')$ iff $i=i'$ and $j<j'$. That is, $<_1$ is the order \emph{on} rows and $<_2$ is the order \emph{internal to} rows. For $\lambda$ an ordinal, $\mathfrak L_\lambda$ denotes the expansion of $\mathfrak L_{\ar}$ by predicates $P_\alpha$, for $\alpha<\lambda$, which we will interpret in a subset $Z$ of $\lambda\times \mu$ as $Z\cap(\{\alpha\}\times \mu)$.

If $Z$ is as above, we call $(a_{(ij)})_{(i,j)\in Z}$ an \emph{array}. In order to keep the notation concise, we will also simply write $a_Z$, and extend this usage to restrictions, e.g.\ if $Z'\subseteq Z$ we will write $a_{Z'}$ with the obvious meaning. Similarly for e.g.\ $a_{<\alpha, \ge \beta}$. We will use $\bar{}$  to indicate rows, e.g.\ $\bar a_\alpha$ typically will mean the $\alpha$-th row of $a_Z$, and we similarly write e.g.\ $\bar a_{<\alpha}$ for the sequence whose elements are the rows of $a_Z$ up to the $\alpha$-th, etc.

\begin{rem}\label{rem:lindischar}\*
  \begin{enumerate}
  \item An array is $\mathfrak L_\lambda$-indiscernible exactly when it is \emph{mutually indiscernible}, i.e.\ each of its rows is indiscernible over the union of all other rows.
  \item    An array indexed on $I\times J$ is $\mathfrak L_{\ar}$-indiscernible exactly when each of its rows is indiscernible over the union of all other rows and, furthermore, its rows form an indiscernible sequence.
  \end{enumerate}
\end{rem}
We use \emph{strongly indiscernible} as a synonym for ``$\mathfrak L_\mathrm{ar}$-indiscernible''; note that~\cite{Ka} uses the terminology \emph{array-indiscernible} instead.

\begin{lemma}\label{mod_inv}
  Let $I$ be an $\mathfrak L$-structure, $J,L$ be $\mathfrak L'$-structures, and $E$ a set of parameters.
If  $c_L$ is based on  $b_J$ over $a_IE$ and $a_I$ is $\mathfrak L$-indiscernible over $b_JE$, then $a_I$ is also $\mathfrak L$-indiscernible over $c_LE$.
\end{lemma}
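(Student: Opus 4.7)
The plan is to verify the conclusion directly from the definition of $\mathcal L$-indiscernibility, using a short three-step transfer between $c_L$ and $b_J$ mediated by the basedness hypothesis. Fix finite tuples $\bar\iota,\bar\iota'$ from $I$ with the same quantifier-free $\mathcal L$-type, a positive formula $\phi(x,y,z)$, a finite tuple $\bar\zeta$ from $L$, and a tuple $e$ from $E$. What I need to show is that whenever $\models \phi(a_{\bar\iota},c_{\bar\zeta},e)$ holds, then $\models\phi(a_{\bar\iota'},c_{\bar\zeta},e)$ holds as well; this is exactly the statement that the positive type of $a_{\bar\iota}$ over $c_LE$ depends only on the quantifier-free $\mathcal L$-type of $\bar\iota$.

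The argument would then sandwich $\phi$ across a common $b_{\bar\eta}$. First, by the basedness of $c_L$ on $b_J$ over $a_IE$, I would pick $\bar\eta\subseteq J$ with $c_{\bar\zeta}\equiv_{a_IE} b_{\bar\eta}$; regarding $\phi(a_{\bar\iota},-,e)$ as a positive formula with parameters in $a_IE$, the hypothesis $\models\phi(a_{\bar\iota},c_{\bar\zeta},e)$ transfers to $\models\phi(a_{\bar\iota},b_{\bar\eta},e)$. Second, I would apply the $\mathcal L$-indiscernibility of $a_I$ over $b_JE$: since $b_{\bar\eta}$ and $e$ both lie in $b_JE$ and $\bar\iota$, $\bar\iota'$ have the same quantifier-free $\mathcal L$-type, this yields $\models\phi(a_{\bar\iota'},b_{\bar\eta},e)$. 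Third, regarding $\phi(a_{\bar\iota'},-,e)$ as a positive formula with parameters now in $a_IE$ (crucially including $a_{\bar\iota'}$), I transfer back across the same equivalence $c_{\bar\zeta}\equiv_{a_IE}b_{\bar\eta}$ to obtain $\models\phi(a_{\bar\iota'},c_{\bar\zeta},e)$, as desired.

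The only thing to watch is the bookkeeping: at each transfer I must check that the moving side of the formula lies over the parameter set across which the two tuples are equivalent. In step 1 the moving parameter is $c_{\bar\zeta}$ versus $b_{\bar\eta}$ over $a_IE$, which is legitimate because $a_{\bar\iota},e\in a_IE$; in step 2 the moving side is $a_{\bar\iota}$ versus $a_{\bar\iota'}$ over $b_JE$, which works because $b_{\bar\eta},e\in b_JE$; in step 3 the parameters $a_{\bar\iota'},e$ again sit in $a_IE$. I do not anticipate any serious obstacle; the lemma is essentially the composition of the two hypotheses and uses nothing beyond the definitions, so the proof should be a few lines.
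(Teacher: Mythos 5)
Your proof is correct and follows essentially the same route as the paper's: pick, for each finite $\bar\zeta\subseteq L$, a tuple $\bar\eta\subseteq J$ with $c_{\bar\zeta}\equiv_{a_IE}b_{\bar\eta}$, transfer the satisfaction of $\phi$ across this equivalence (both parameters $a_{\bar\iota}$, $e$ lying in $a_IE$), apply indiscernibility of $a_I$ over $b_JE$, and transfer back. The paper reduces to $E=\emptyset$ first and phrases the sandwich as a chain of $\equiv$-statements rather than formula by formula, but these are cosmetic differences; your bookkeeping of which tuples must sit in which parameter set is exactly the correct check, and the concluding ``compactness'' step in the paper is nothing more than your observation that a positive formula over $c_LE$ only mentions a finite $\bar\zeta$.
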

\begin{proof}
By naming parameters, we may assume $E=\emptyset$.
Consider any finite tuples $\bar \eta, \bar \nu \subseteq I$ with $\qftp_{\mathfrak L}(\bar\eta)=\qftp_{\mathfrak L}(\bar\nu)$, as well as a finite tuple $\bar \ell\subseteq L$. As $c_L$ is based on $b_J$ over $a_I$,  there is a tuple $\bar \jmath\subseteq J$ with $b_{\bar \jmath}\equiv_{a_I} c_{\bar \ell}$.
As $a_I$ is $\mathfrak L$-indiscernible over $b_J$, we have that $a_{\bar \nu}\equiv_{b_{\bar \jmath}} a_{\bar \eta}$. It  follows that $a_{\bar \nu}\equiv_{c_{\bar \ell}} a_{\bar \eta}$, hence that $a_{\bar \nu}\equiv_{c_L} a_{\bar \eta}$.
\end{proof}

For an ordinal $\lambda$ and $I\subseteq \lambda$, by $\ot (I)$ we will mean the order type of $I$ with the order induced from $\lambda$. For $Z\subseteq \lambda\times \mu$ we write $Z_I\coloneqq Z\cap (I\times \mu)$ and consider it as an $\mathfrak L_{\ot(I)}$-structure in the obvious way.

By inspecting atomic $\mathfrak L_\lambda$-formulas, we easily see that if  $\lambda=I\dot{\cup}J$, $\bar\nu, \bar \nu' \subseteq Z_I$ and $\bar\xi,\bar\xi'\subseteq Z_J$, then $\qftp_{\mathfrak L_\lambda}(\nu,\xi)=\qftp_{\mathfrak L_\lambda}(\nu',\xi')$ if and only if  $\qftp_{\mathfrak L_{\ot(I)}}(\nu)=\qftp_{\mathfrak L_{\ot(I)}}(\nu')$ and $\qftp_{\mathfrak L_{\ot(J)}}(\xi)=\qftp_{\mathfrak L_{\ot(J)}}(\xi')$.

\begin{lemma}\label{I}
Let $\lambda=I\dot{\cup}J$ and $Z,Z'\subseteq \lambda\times \mu$ with $Z_J=Z'_J$.
Suppose $b_{Z'_I}$ is $\mathrm{EM}_{\mathfrak L_{\ot (I)}}$-based on $a_{Z_I}$ over $a_{Z_J}E$. Let

\[
  c_{ij}\coloneqq
  \begin{cases}
    a_{ij}&\text{ when }(i,j)\in Z'_J=Z_J\\
    b_{ij}&\text{ when }(i,j)\in Z'_I
  \end{cases}.
\]
Then $c_{Z'}$ is $\mathrm{EM}_{\mathfrak L_\lambda}$-based on $a_Z$ over $E$.
\end{lemma}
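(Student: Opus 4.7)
The plan is to unfold the definition of $\mathrm{EM}_{\mathcal L_\lambda}$-basedness and reduce, via the atomic-formula analysis stated immediately before the lemma, to the hypothesized $\mathrm{EM}_{\mathcal L_{\ot(I)}}$-basedness. First I would fix a formula $\phi(x_{\bar\eta}) \in \operatorname{EM}_{\mathcal L_\lambda}(a_Z/E)$ and a tuple $\bar\xi\subseteq Z'$ with $\qftp_{\mathcal L_\lambda}(\bar\xi)=\qftp_{\mathcal L_\lambda}(\bar\eta)$, split them as $\bar\eta=(\bar\eta_I,\bar\eta_J)$ and $\bar\xi=(\bar\xi_I,\bar\xi_J)$ along the partition $\lambda=I\dot\cup J$, and observe that since $Z'_J=Z_J$ we have $c_{\bar\xi_J}=a_{\bar\xi_J}$, while $c_{\bar\xi_I}=b_{\bar\xi_I}$. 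The goal thus becomes $\models \phi(b_{\bar\xi_I},a_{\bar\xi_J})$.

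Next, the atomic-formula characterisation in the paragraph preceding the lemma gives, from $\qftp_{\mathcal L_\lambda}(\bar\xi)=\qftp_{\mathcal L_\lambda}(\bar\eta)$, the two equalities $\qftp_{\mathcal L_{\ot(I)}}(\bar\xi_I)=\qftp_{\mathcal L_{\ot(I)}}(\bar\eta_I)$ and $\qftp_{\mathcal L_{\ot(J)}}(\bar\xi_J)=\qftp_{\mathcal L_{\ot(J)}}(\bar\eta_J)$. I would then form the auxiliary formula
\[
\psi(x_{\bar\eta_I})\coloneqq \phi(x_{\bar\eta_I},a_{\bar\xi_J}),
\]
with parameters in $a_{Z_J}E$, and check that $\psi\in \operatorname{EM}_{\mathcal L_{\ot(I)}}(a_{Z_I}/a_{Z_J}E)$. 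For this, given any $\bar\nu\subseteq Z_I$ with $\qftp_{\mathcal L_{\ot(I)}}(\bar\nu)=\qftp_{\mathcal L_{\ot(I)}}(\bar\eta_I)$, the same atomic-formula characterisation combined with the second equality above yields $\qftp_{\mathcal L_\lambda}(\bar\nu,\bar\xi_J)=\qftp_{\mathcal L_\lambda}(\bar\eta_I,\bar\eta_J)$, so that $\models\phi(a_{\bar\nu},a_{\bar\xi_J})$ by the assumption $\phi\in\operatorname{EM}_{\mathcal L_\lambda}(a_Z/E)$; this is precisely $\models\psi(a_{\bar\nu})$.

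Finally, the hypothesis that $b_{Z'_I}$ is $\mathrm{EM}_{\mathcal L_{\ot(I)}}$-based on $a_{Z_I}$ over $a_{Z_J}E$, applied to $\psi$ and the tuple $\bar\xi_I\subseteq Z'_I$, which has the required matching $\mathcal L_{\ot(I)}$-qftp, gives $\models\psi(b_{\bar\xi_I})$, i.e.\ $\models\phi(b_{\bar\xi_I},a_{\bar\xi_J})=\phi(c_{\bar\xi})$, closing the argument.

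The main obstacle—really the only subtle point—is the bookkeeping of indices when defining $\psi$: one must substitute the parameters at $\bar\xi_J$ (not $\bar\eta_J$), since it is $c_{\bar\xi_J}=a_{\bar\xi_J}$ that appears in the goal, and this forces the use of the second qftp equality to verify membership of $\psi$ in the appropriate $\mathrm{EM}$-type. Once this choice is made the atomic-formula analysis preceding the lemma does all the remaining work.
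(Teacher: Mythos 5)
Your proof is correct and follows essentially the same route as the paper's: reduce the $\mathcal L_\lambda$-qftp equality to the $\mathcal L_{\ot(I)}$- and $\mathcal L_{\ot(J)}$-parts via the atomic-formula characterisation stated before the lemma, absorb the $J$-part into the parameters, and apply the hypothesized $\mathrm{EM}_{\mathcal L_{\ot(I)}}$-basedness to the resulting formula $\psi(x_{\bar\eta_I})$. If anything, your version is slightly more explicit than the paper's in distinguishing the index tuple $\bar\eta\subseteq Z$ from $\bar\xi\subseteq Z'$, which is indeed the one bookkeeping point that deserves care.
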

\begin{proof}
  We may assume that $E=\emptyset$.
Let $\bar{\nu}\subseteq Z_I$, $\bar{\xi}\subseteq Z_J$, and $\phi$ be such that $\phi(x_{\bar \nu}, x_{\bar \xi})\in \EM_{\mathfrak L_\lambda}(a_Z)$. We have to prove that $\models \phi(c_{\bar\nu}, c_{\bar\xi})$.

For every  $\bar \nu'$ with $\qftp_{\mathfrak L_{\ot(I)}}(\bar\nu')=\qftp_{\mathfrak L_{\ot(I)}}(\bar\nu)$ we have that $\qftp_{\mathfrak L_\lambda}(\bar \nu',\bar\xi)=\qftp_{\mathfrak L_\lambda}(\bar\nu,\bar\xi)$, so $\models\phi(a_{\bar \nu'}, a_{\bar \xi})$. Hence $\phi(x_{\bar\nu'},a_{\bar\xi})\in \EM_{\mathfrak L_{\operatorname{ot}(I)}}(a_{Z_I}/a_{Z_J})$ and so $\models \phi(b_{\bar\nu},a_{\bar\xi})$ by the assumption on $b_{Z'_I}$. This means that $\models\phi(c_{\bar\nu},c_{\bar\xi})$, as required.
\end{proof}
\begin{lemma}\label{Iind}
Let $\lambda=I\dot{\cup}J$ and  $Z\subseteq \lambda\times \mu$. Suppose $a_{Z}$ is such that $a_{Z_J}$ is $\mathfrak L_{\ot(J)}$-indiscernible over $a_{Z_I}E$ and $a_{Z_I}$ is $\mathfrak L_{\ot (I)}$-indiscernible over $a_{Z_J}E$. 
Then $a_{Z}$ is $\mathfrak L_{\lambda}$-indiscernible over $E$.  
\end{lemma}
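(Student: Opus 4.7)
The plan is to reduce strong indiscernibility of the full array to a two-step substitution argument, handling the "between-rows" and "within-a-block" parts separately using the two hypothesised indiscernibilities.

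First I would take two tuples $(\bar\nu,\bar\xi),(\bar\nu',\bar\xi')\subseteq Z$ with $\bar\nu,\bar\nu'\subseteq Z_I$ and $\bar\xi,\bar\xi'\subseteq Z_J$, of the same quantifier-free $\mathcal L_\lambda$-type, and aim to show $a_{\bar\nu}a_{\bar\xi}\equiv_E a_{\bar\nu'}a_{\bar\xi'}$. Any finite tuple from $Z$ can be split in this way, so this suffices. Here I would invoke the characterisation recorded in the paragraph immediately preceding \Cref{I}: because $\lambda$ is the disjoint union of $I$ and $J$, an atomic $\mathcal L_\lambda$-formula on such split tuples factors, so the hypothesis becomes the conjunction of $\qftp_{\mathcal L_{\ot(I)}}(\bar\nu)=\qftp_{\mathcal L_{\ot(I)}}(\bar\nu')$ and $\qftp_{\mathcal L_{\ot(J)}}(\bar\xi)=\qftp_{\mathcal L_{\ot(J)}}(\bar\xi')$.

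Next I would perform the swap in two stages. By $\mathcal L_{\ot(J)}$-indiscernibility of $a_{Z_J}$ over $a_{Z_I}E$ (applied to the index tuples $\bar\xi,\bar\xi'\subseteq Z_J$) we obtain $a_{\bar\xi}\equiv_{a_{Z_I}E}a_{\bar\xi'}$; in particular, treating $a_{\bar\nu}$ and $E$ as parameters, $a_{\bar\nu}a_{\bar\xi}\equiv_E a_{\bar\nu}a_{\bar\xi'}$. Then, by $\mathcal L_{\ot(I)}$-indiscernibility of $a_{Z_I}$ over $a_{Z_J}E$ (applied to $\bar\nu,\bar\nu'\subseteq Z_I$), we get $a_{\bar\nu}\equiv_{a_{Z_J}E}a_{\bar\nu'}$, hence $a_{\bar\nu}a_{\bar\xi'}\equiv_E a_{\bar\nu'}a_{\bar\xi'}$. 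Concatenating, $a_{\bar\nu}a_{\bar\xi}\equiv_E a_{\bar\nu'}a_{\bar\xi'}$, as required. Since equality of positive types is symmetric and transitive (it is set equality), the composition step is legitimate in positive logic.

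The argument is essentially two applications of (one-sided) invariance glued together, so no real obstacle is expected; the only point worth mentioning is the factorisation of qf $\mathcal L_\lambda$-types, which is already recorded in the paper and whose verification is a straightforward inspection of the atomic formulas of $\mathcal L_\lambda$.
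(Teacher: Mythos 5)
Your proof is correct and is essentially the paper's argument (the paper simply states that the two-step swap $a_{\bar\nu,\bar\xi}\equiv a_{\bar\nu,\bar\xi'}\equiv a_{\bar\nu',\bar\xi'}$ "follows easily from the assumptions"); you have just spelled out the two substitutions and the appeal to the preceding observation about factorisation of quantifier-free $\mathcal L_\lambda$-types.
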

\begin{proof}
We may assume $E=\emptyset$.
Let $\bar\nu, \bar\nu'\subseteq Z_I$ and $\bar\xi, \bar\xi'\subseteq Z_J$ be such that $\qftp_{\mathfrak L_{\lambda}}(\bar\nu,\bar \xi)=
\qftp_{\mathfrak L_{\lambda}}(\bar\nu',\bar \xi')$. It follows easily from the assumptions that $a_{\bar\nu, \bar\xi}\equiv  a_{\bar\nu, \bar\xi'}\equiv a_{\bar\nu', \bar\xi'}$.
\end{proof}

\begin{pr}\label{lambda_mod}
Let $A=(a_{ij})_{i<\lambda, j<\omega}$ be an array with each row consisting of compatible tuples, and $E$ a set of parameters. Then there is an array $(b_{ij})_{i<\lambda, j<\omega}$  which is $\mathfrak L_\lambda$-indiscernible over $E$ and $\mathrm{EM}_{\mathfrak L_\lambda}$-based on $A$ over $E$.
\end{pr}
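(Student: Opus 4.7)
The plan is to proceed by transfinite induction on $\lambda$. The base case $\lambda=1$ follows immediately from \Cref{fact:DK54} applied to the single row $(a_{0j})_{j<\omega}$ over $E$, since $\mathcal L_1$-indiscernibility coincides with ordinary indiscernibility of a linear sequence, and $\mathrm{EM}_{\mathcal L_1}$-basing with $\mathrm{EM}_<$-basing.

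For the successor step $\lambda=\mu+1$, I would first apply the induction hypothesis to $(a_{ij})_{i<\mu,j<\omega}$ over the parameter set $E\cup\{a_{\mu j}\mid j<\omega\}$, producing an array $(c_{ij})_{i<\mu,j<\omega}$ that is $\mathcal L_\mu$-indiscernible over $E\bar a_\mu$ and $\mathrm{EM}_{\mathcal L_\mu}$-based on the corresponding subarray of $a$ over $E\bar a_\mu$. Next, I would apply \Cref{fact:DK54} to $(a_{\mu j})_{j<\omega}$ over $Ec_{<\mu,<\omega}$ to obtain a row $(b_{\mu j})_{j<\omega}$ which is $Ec_{<\mu}$-indiscernible and $\mathrm{EM}_<$-based on $(a_{\mu j})_{j<\omega}$ over $Ec_{<\mu}$. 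Two applications of \Cref{I}, combined with the transitivity of $\mathrm{EM}$-basing on sequences with matching quantifier-free type, would then yield $\mathrm{EM}_{\mathcal L_{\mu+1}}$-basing of the glued array $(c_{<\mu},b_\mu)$ on $a$ over $E$. For $\mathcal L_{\mu+1}$-indiscernibility I would appeal to \Cref{Iind}, which reduces the task to showing that $c_{<\mu}$ is still $\mathcal L_\mu$-indiscernible over $Eb_\mu$; this last step is exactly \Cref{mod_inv} applied with $a_I=c_{<\mu}$, $b_J=\bar a_\mu$, $c_L=b_\mu$.

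The limit case $\lambda$ will follow from a compactness argument. Any finite fragment of the partial positive type encoding $\mathrm{EM}_{\mathcal L_\lambda}$-basing on $a$ over $E$ mentions only coordinates from some subarray $\mu\times\omega$ with $\mu<\lambda$; observing that $\mathcal L_\mu$-quantifier-free type and $\mathcal L_\lambda$-quantifier-free type agree on tuples with row indices $<\mu$ (the predicates $P_\alpha$ for $\alpha\ge\mu$ being vacuously false), the induction hypothesis for $\mu$ provides an array realizing the fragment, and compactness delivers a global realization.

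The hard part will be the application of \Cref{mod_inv} in the successor step: its hypothesis requires $b_\mu$ to be \emph{based on} $\bar a_\mu$ over $Ec_{<\mu}$, whereas \Cref{fact:DK54} only supplies $\mathrm{EM}_<$-basing. I plan to bridge this gap by a preliminary stretching. Namely, by compactness I would first replace $(a_{\mu j})_{j<\omega}$ by a sequence of length $\beth((2^{\abs{T}+\abs{Ec_{<\mu}}+\aleph_0})^+)$ realizing its $\mathrm{EM}_<$-type over $Ec_{<\mu}$, and invoke \Cref{fact:extracting} in place of \Cref{fact:DK54} to secure genuine basing of $b_\mu$ on this stretched row; an analogous transfer of $\mathcal L_\mu$-indiscernibility of $c_{<\mu}$ from $E\bar a_\mu$ to the stretched row would then place us in the precise setting required by \Cref{mod_inv}, completing the successor step.
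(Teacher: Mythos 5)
Your limit step does not work, and the reason it fails is precisely the point of this proposition. You propose to realize ``$b_Z$ is $\mathcal L_\lambda$-indiscernible over $E$ and $\mathrm{EM}_{\mathcal L_\lambda}$-based on $A$ over $E$'' by compactness, using the induction hypothesis to realize finite fragments. The $\mathrm{EM}_{\mathcal L_\lambda}$-basing half is indeed a positive type; but $\mathcal L_\lambda$-indiscernibility is \emph{not} a positive type over $E$ in an arbitrary positive theory, since it asserts that increasing tuples of the same quantifier-free $\mathcal L_\lambda$-type satisfy the \emph{same} positive formulas, i.e.\ a conjunction of biconditionals $\phi(x_{\bar\iota})\leftrightarrow\phi(x_{\bar\iota'})$, and these involve negation. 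Expressibility of indiscernibility as a positive type is exactly the content of \emph{thickness}, and the stated goal of this subsection is to remove the thickness hypothesis from the Array Modelling Theorem of~\cite{Ka}; an argument that smuggles it back in via compactness proves nothing new. The paper replaces your compactness step by a transfinite recursion on $\alpha<\lambda$ constructing arrays $A^\alpha$ that agree with their predecessors below row $\alpha$, are $\mathrm{EM}_{\mathcal L_\lambda}$-based on $A$, and have $\bar a^\alpha_{<\alpha}$ mutually indiscernible over $\bar a^\alpha_{\ge\alpha}$; the final array is then the row-wise diagonal $b_{ij}=a^{i+1}_{ij}$, and $\mathcal L_\lambda$-indiscernibility follows because any finite set of indices is contained in some $\alpha\times\omega$.

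Your successor step is closer, but the proposed fix for the $\mathrm{EM}_<$-basing versus basing mismatch does not go through. You want to apply \Cref{mod_inv} with $a_I=c_{<\mu}$, $b_J$ the (stretched) last row, $c_L=b_\mu$. After your ``preliminary stretching'' you do obtain $b_\mu$ genuinely based on the long sequence $\bar c$ via \Cref{fact:extracting}, which is fine. What you still need is that $c_{<\mu}$ is $\mathcal L_\mu$-indiscernible over the \emph{stretched} $\bar c$; you know it is indiscernible over the original $\bar a_\mu$, and the transfer you appeal to is another application of \Cref{mod_inv}, which would need $\bar c$ to be \emph{based} on $\bar a_\mu$ over $Ec_{<\mu}$. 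But stretching by compactness only gives $\mathrm{EM}_<$-basing, so the gap recurs one level up. The clean repair, which is what the paper does, is to reverse the order of operations: \emph{first} stretch the last row $\bar a_\lambda$ to a long sequence $\bar c$ that is $\mathrm{EM}_<$-based on it over $\bar a_{<\lambda}E$, \emph{then} apply the induction hypothesis to $\bar a_{<\lambda}$ over $E\bar c$, so that the resulting $\bar b_{<\lambda}$ is mutually indiscernible over $\bar c$ by construction; finally extract $\bar b_\lambda$ from $\bar c$ by \Cref{fact:extracting}, getting genuine basing over $\bar b_{<\lambda}E$, and conclude with \Cref{mod_inv} and \Cref{Iind} exactly as you intended.
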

\begin{proof}
We proceed by induction on $\lambda$. For $\lambda=0$ the statement is trivial.

\paragraph{Limit step.}
Let $\lambda$ be a nonzero limit ordinal and let $A$ be as in the statement of the proposition. We may assume that $E=\emptyset$. We will inductively replace the rows of $A$, making the new rows mutually indiscernible over the remaining ones and keeping the array $\mathrm{EM}_{\mathfrak L_{\lambda}}$-based on $A$. More formally, we construct recursively on $\alpha<\lambda$ arrays $A^\alpha=(a^\alpha_{ij})_{i<\lambda,j<\omega}$ such that \begin{enumerate}
\item\label{point:limitstep1} $\bar a^\alpha_{i}=\bar a^{i+1}_{i}$ for every $i<\alpha$,
\item\label{point:limitstep2} $\bar a^\alpha_{i}=\bar a_{i}$ for every $i\geq\alpha$,
\item\label{point:limitstep3} $A^\alpha$ is $\mathrm{EM}_{\mathfrak L_\lambda}$-based on $A$, and
\item\label{point:limitstep4} $a^\alpha_{<\alpha,<\omega}$ is $\mathfrak L_\alpha$-indiscernible over $a^{\alpha}_{\geq\alpha,<\omega}$.
\end{enumerate}

First, let us argue that this is enough to complete the limit step of the induction. Suppose we have $A^\alpha$ as above, and put $b_{ij}\coloneqq a^{i+1}_{ij}$ for all $i<\lambda$ and $j<\omega$. To see that $b_{<\lambda,<\omega}$ is $\mathrm{EM}_{\mathfrak L_\lambda}$-based on $A$, consider any $\phi(x_{\bar\nu})$ in the $\mathrm{EM}_{\mathfrak L_\lambda}$-type of $A$. Then there is $\alpha<\lambda$ such that $\bar\nu\subseteq \alpha\times \omega$. As $A^\alpha$ is $\mathrm{EM}_{\mathfrak L_\lambda}$-based on $A$, we get that $\models \phi(a^\alpha_{\bar \nu})$, but $a^\alpha_{\bar \nu}=b_{\bar\nu}$, so $\models\phi(b_{\bar\nu})$. Similarly, to see that $(b_{ij})_{i<\lambda,j<\omega}$ is $\mathfrak L_\lambda$-indiscernible, let $\bar\nu, \bar\nu'\subseteq \lambda\times \omega$ be finite tuples with $\qftp_{\mathfrak L_{\lambda}}(\bar\nu)=
\qftp_{\mathfrak L_{\lambda}}(\bar\nu')$ and choose $\alpha<\lambda$ such that $\bar\nu, \bar\nu'\subseteq \alpha\times \omega$. As $\bar a^\alpha_{<\alpha}$ is 
in particular $\mathfrak L_\alpha$-indiscernible over $\emptyset$, we get that $a^\alpha_{\bar \nu}\equiv a^\alpha_{\bar \nu'}$, i.e.\ $b_{\bar \nu}\equiv b_{\bar \nu'}$.

We now carry out the construction of the arrays $A^\alpha$.
If $\alpha<\lambda$ is a limit ordinal, we put  $a^\alpha_{ij}\coloneqq a^{i+1}_{ij}$ for $i<\alpha$  and $a^\alpha_{ij}=a_{ij}$ for $i\geq \alpha$. Then~\ref{point:limitstep1} and~\ref{point:limitstep2} are clearly satisfied by this choice and the inductive assumption, and~\ref{point:limitstep3} and~\ref{point:limitstep4} are satisfied as well, due to the inductive assumption, the fact that  every finite tuple of elements of $\alpha\times\omega$ is contained in $\beta \times \omega$ for some $\beta<\alpha$, and arguments similar to those in the previous paragraph.

For the successor step of the construction, suppose we have constructed $A^\alpha$ satisfying conditions~\ref{point:limitstep1}--\ref{point:limitstep4}. Let $\kappa\coloneqq \beth((2^{\lambda+\abs A+\abs T})^+)$ and let $\bar c=(c_i)_{i<\kappa}$  be a 
sequence $\mathrm{EM}_{<}$-based on $\bar a_{\alpha}$ over $\bar a^\alpha_{\neq \alpha}$, which exists by compactness.\footnote{Note that we do not need to require this sequence to be indiscernible; this could nonetheless be ensured by invoking \Cref{fact:DK54}.} By \Cref{I}, the array $\bar a^\alpha_{<\alpha}\frown \bar c\frown \bar a^\alpha_{>\alpha}$ (obtained by replacing the $\alpha$-th row of $A^\alpha$ with $\bar c$) is $\mathrm{EM}_{\mathfrak L_{\lambda}}$-based on $A^\alpha$, and hence on $A$ by inductive assumption on $\alpha$.
By the main inductive assumption, there is an array $(\hat a^\alpha_{ij})_{i<\alpha}$ which is $\mathrm{EM}_{\mathfrak L_\alpha}$-based on $\bar a^\alpha_{<\alpha}$ over $\bar a^\alpha_{>\alpha},\bar c$ and  ${\mathfrak L_\alpha}$-indiscernible over $\bar a^\alpha_{>\alpha},\bar c$.
By \Cref{I} again, $\hat a^\alpha_{<\alpha} \frown \bar c\frown \bar a^\alpha_{>\alpha}$ is $\mathrm{EM}_{\mathfrak L_\lambda}$-based on $\bar a^\alpha_{<\alpha}\frown \bar c\frown \bar a^\alpha_{>\alpha}$, and hence on $A$. As $\bar a^\alpha_{<\alpha}$ is ${\mathfrak L_{\alpha}}$-indiscernible over $\bar a^\alpha_{>\alpha}$, we have  $\EM_{\mathfrak L_{\lambda}}(\bar a^\alpha_{<\alpha}/ \bar a^\alpha_{>\alpha}, \bar c)\supseteq  \tp(\bar a^\alpha_{<\alpha}/ \bar a^\alpha_{>\alpha})$, so $\hat a^\alpha_{<\alpha} \bar a^\alpha_{>\alpha}\equiv \bar a^\alpha_{<\alpha} \bar a^\alpha_{>\alpha}$. Let $f$ be an automorphism over $\bar a^\alpha_{>\alpha}$ sending $\hat a^\alpha_{<\alpha}$ to $\bar a^\alpha_{<\alpha}$. Then $\bar a^\alpha_{<\alpha}$ is ${\mathfrak L_\alpha}$-indiscernible over $f(\bar c), \bar a^\alpha_{>\alpha}$, and $\bar a^\alpha_{<\alpha}\frown f(\bar c) \frown \bar a^{\alpha}_{>\alpha}$ is $\mathrm{EM}_{\mathfrak L_\lambda}$-based on $A$.

Now by \Cref{fact:extracting} we can choose $\bar d=(d_j)_{j<\omega}$ to be $\{<\}$-based on $f(\bar c)$ over $\bar a^\alpha_{<\alpha}, \bar a^\alpha_{>\alpha}$ and indiscernible over  $\bar a^\alpha_{<\alpha}, \bar a^\alpha_{>\alpha}$. By \Cref{I}, $\bar a^\alpha_{<\alpha}\frown \bar d \frown\bar a^\alpha_{>\alpha}$ is $\mathrm{EM}_{\mathfrak L_\lambda}$-based on $\bar a^\alpha_{<\alpha}\frown f(\bar c) \frown \bar a^\alpha_{>\alpha}$, and hence on $A$. Note that, since $\bar a^\alpha_{<\alpha}$ is $\mathfrak L_\alpha$-indiscernible over $f(\bar c), \bar a^\alpha_{>\alpha}$,  by \Cref{mod_inv} it is also $\mathfrak L_\alpha$-indiscernible over $\bar d,\bar a^\alpha_{>\alpha}$. 
As $\bar d$ is  indiscernible over $\bar a^\alpha_{<\alpha}, \bar a^\alpha_{>\alpha}$, it follows from \Cref{Iind} that $\bar a^\alpha_{<\alpha}\frown \bar d$ is $\mathfrak L_{\alpha+1}$-indiscernible over $\bar a^\alpha_{>\alpha}$. Thus putting $a^{\alpha+1}_{ij}\coloneqq a^{\alpha}_{ij}$ for $i\neq \alpha$, and $a^{\alpha+1}_{\alpha,j}\coloneqq d_j$ for all $j<\omega$, we get that $A^{\alpha+1}$ satisfies conditions~\ref{point:limitstep3} and~\ref{point:limitstep4}. It also clearly satisfies conditions~\ref{point:limitstep1} and~\ref{point:limitstep2} by the inductive assumption about $A^\alpha$ and the choice of $a^{\alpha+1}_{ij}$.
This finishes the recursive construction of $(A^\alpha)_{\alpha<\lambda}$, and hence the limit step of the inductive proof of the proposition.

\paragraph{Successor step.} We proceed similarly as in the successor step of the above construction. 
Namely, suppose the statement holds for some $\lambda$ and consider an array $(a_{ij})_{i<\lambda+1, j<\omega}$. We may assume that $E=\emptyset$. By compactness there is a sufficiently long sequence $\bar c$ that is  $\mathrm{EM}_<$-based on $\bar a_{\lambda}$ over $\bar a_{<\lambda}$.
By the inductive hypothesis, there is an $\mathfrak L_\lambda$-indiscernible over $\bar c $ array $(b_{ij})_{i<\lambda,j<\omega}$ that is  $\mathrm{EM}_{\mathfrak L_\lambda}$-based on $\bar a_{<\lambda}$ over $\bar c$.

Use \Cref{fact:extracting} to find a $\bar b_{<\lambda}$-indiscernible sequence $(b_{\lambda,i})_{i<\omega}$ that is based on $\bar c$ over $\bar b_{<\lambda}$.  By \Cref{mod_inv}, $\bar b_{<\lambda}$ is $\mathfrak L_{\lambda}$-indiscernible over $\bar b_\lambda$, hence we may apply \Cref{Iind} and conclude that $\bar b_{<\lambda+1}$  is $\mathrm{EM}_{\mathfrak L_{\lambda+1}}$-indiscernible.

We conclude by observing that, by repeated applications of \Cref{I},  $\bar b_{<\lambda+1}$ is $\mathrm{EM}_{\mathfrak L_{\lambda+1}}$-based on $\bar b_{<\lambda}\frown\bar c$, in turn $\mathrm{EM}_{\mathfrak L_{\lambda+1}}$-based on $\bar a_{<\lambda}\frown\bar c$, itself $\mathrm{EM}_{\mathfrak L_{\lambda+1}}$-based on $\bar a_{<\lambda+1}$. 
\end{proof}

\begin{thm}[Array Modelling]\label{ar_mod}
Let $(a_{ij})_{i,j<\omega}$ be an array of compatible tuples and $E$ a set of parameters. Then there is an array $(b_{ij})_{i,j<\omega}$  which is strongly indiscernible over $E$ and $\mathrm{EM}_{\ar}$-based on $(a_{ij})_{i,j<\omega}$ over $E$.
\end{thm}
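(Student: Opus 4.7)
My plan is to combine Proposition~\ref{lambda_mod}, which produces mutually indiscernible rows, with Fact~\ref{fact:extracting}, which produces an indiscernible sequence of rows, following the broad outline of the thick case in~\cite{Ka}. Fix $\lambda \coloneqq \beth((2^{\abs T + \abs E + \aleph_0})^+)$.

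First, I would produce by compactness an array $(a'_{ij})_{i<\lambda,\, j<\omega}$ that is $\mathrm{EM}_{\ar}$-based on $(a_{ij})_{i,j<\omega}$ over $E$, by realising the positive type over $E$ in variables $(z_{ij})_{i<\lambda,\, j<\omega}$ that contains, for each finite $\bar\xi \subseteq \lambda\times\omega$ and each positive formula $\phi$ satisfied by $a_{\bar\eta}$ for every $\bar\eta \subseteq \omega\times\omega$ of the same $\mathcal L_{\ar}$-quantifier-free type as $\bar\xi$, the formula $\phi(z_{\bar\xi})$; consistency is clear since every finite $\mathcal L_{\ar}$-substructure of $\lambda\times\omega$ embeds into $\omega\times\omega$. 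Next, I would apply Proposition~\ref{lambda_mod} to $(a'_{ij})_{i<\lambda,\, j<\omega}$ to obtain $(b_{ij})_{i<\lambda,\, j<\omega}$ that is $\mathcal L_\lambda$-indiscernible over $E$ (equivalently, with mutually indiscernible rows, by Remark~\ref{rem:lindischar}) and $\mathrm{EM}_{\mathcal L_\lambda}$-based on $(a'_{ij})$ over $E$. Finally, viewing each row $\bar b_i \coloneqq (b_{ij})_{j<\omega}$ as an $\omega$-tuple, I would apply Fact~\ref{fact:extracting} to $(\bar b_i)_{i<\lambda}$ to extract an $E$-indiscernible sequence $(\bar c_i)_{i<\omega}$ based on $(\bar b_i)_{i<\lambda}$ over $E$ with the monotone correspondence inherent to the underlying Ramsey-style construction, and then set $c_{ij} \coloneqq (\bar c_i)_j$.

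For the verification that $(c_{ij})_{i,j<\omega}$ is strongly indiscernible over $E$, by Remark~\ref{rem:lindischar} it suffices to check $E$-indiscernibility of the row sequence, which is immediate, together with mutual indiscernibility of the rows. For the latter, any finite tuple of rows $\bar c_{i_1},\ldots,\bar c_{i_n}$ has by basedness the same positive type over $E$ as some $\bar b_{j_1},\ldots,\bar b_{j_n}$, and mutual indiscernibility of the joint block transfers by a standard one-direction-plus-symmetry argument. For the $\mathrm{EM}_{\ar}$-basedness of $(c_{ij})$ on $(a_{ij})$ over $E$, given $\phi(x_{\bar\eta}) \in \mathrm{EM}_{\ar}((a_{ij})/E)$ and $\bar\xi' \subseteq \omega\times\omega$ of the same $\mathcal L_{\ar}$-qftp as $\bar\eta$, the monotone basing translates $\bar\xi'$ to $\bar\xi'' \subseteq \lambda\times\omega$ with the same $\mathcal L_{\ar}$-qftp, giving $c_{\bar\xi'} \equiv_E b_{\bar\xi''}$; the first step yields $\models\phi(a'_{\bar\xi''})$, and since $\mathcal L_\lambda$-qftps refine $\mathcal L_{\ar}$-qftps one obtains $\phi(x_{\bar\xi''}) \in \mathrm{EM}_{\mathcal L_\lambda}((a'_{ij})/E)$; the $\mathrm{EM}_{\mathcal L_\lambda}$-basing then transfers this to $\models \phi(b_{\bar\xi''})$, and hence $\models \phi(c_{\bar\xi'})$.

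The main obstacle will be the careful bookkeeping of basedness across the three different notions ($\mathrm{EM}_{\ar}$, $\mathrm{EM}_{\mathcal L_\lambda}$, and plain ``based on''), and securing the monotone correspondence in the last extraction; the positive-logic subtleties---in particular, that Fact~\ref{fact:extracting} delivers only the one-sided ``based'' notion rather than a two-sided elementary extraction---are handled by the symmetric-one-direction manoeuvre when transferring indiscernibility.
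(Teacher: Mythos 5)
Your proposal is correct and follows essentially the same route as the paper's proof: stretch the array to $\lambda$ rows via compactness while keeping $\mathrm{EM}_{\ar}$-basedness, apply Proposition~\ref{lambda_mod} to obtain mutual indiscernibility of rows, then apply Fact~\ref{fact:extracting} to the sequence of rows to make the row sequence indiscernible, and finish via Remark~\ref{rem:lindischar}. The only small slip is the choice of $\lambda$: you use $\aleph_0$ where the paper uses $\abs{a_{0,0}}$, and since in this section tuples may be infinite, the correct bound is $\lambda=\beth\bigl((2^{\abs T + \abs E + \abs{a_{0,0}}})^+\bigr)$ so that Fact~\ref{fact:extracting} applies to the (possibly uncountable) rows $\bar b_i$; otherwise the verification steps you spell out match what the paper leaves implicit.
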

\begin{proof}
  Without loss of generality $E=\emptyset$. 
Let $\lambda\coloneqq \beth((2^{\abs T+\abs{a_{0,0}}})^+)$. 
By compactness we can extend $(a_{ij})_{i,j<\omega}$ to an array $(a_{ij})_{i<\lambda, j<\omega}$ which is $\mathrm{EM}_{\mathfrak L_{\ar}}$-based on 
$(a_{ij})_{i,j<\omega}$. By \Cref{lambda_mod}, there is an $\mathfrak L_{\lambda}$-indiscernible array $(c_{ij})_{i<\lambda, j<\omega}$ which is $\mathrm{EM}_{\mathfrak L_\lambda}$-based on $(a_{ij})_{i<\lambda, j<\omega}$. In particular, by \Cref{rem:based}\ref{point:embased}, $(c_{ij})_{i<\lambda, j<\omega}$ is $\mathrm{EM}_{\mathfrak L_{\ar}}$-based on
$(a_{ij})_{i<\lambda, j<\omega}$. By \Cref{fact:extracting} there is an indiscernible sequence $(\bar b_{i})_{i<\omega}$ based on the sequence $\bar c_{<\lambda}$. Then, by \Cref{rem:lindischar},  $(b_{ij})_{i,j<\omega}$ is an $\mathfrak L_{\ar}$-indiscernible array $\mathrm{EM}_{\mathfrak L_{\ar}}$-based on $(a_{ij})_{i,j<\omega}$.
\end{proof}

In~\cite{Ka}, the thickness assumption was used in proving Array Modelling, but not in deriving its consequences. In other words, having proved Array Modelling without assuming thickness, we may derive, by the same proofs as in~\cite[Lemma 6.2 and Theorem 1.4]{Ka}, the following corollaries.

\begin{co}\label{tp2_indis}
A formula $\phi(x, y)$ has $k\mathsf{-TP}_2$ for $k \geq 2$ if and only if there is a strongly indiscernible array $(a_{ij})_{i,j < \omega}$ such that
\begin{enumerate}
\item $\{ \phi(x, a_{i, 0}) \mid  i < \omega\}$ is consistent, and
\item $\{ \phi(x, a_{0, i}) \mid i < \omega\}$ is $k$-inconsistent.
\end{enumerate}
\end{co}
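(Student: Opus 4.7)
The plan is to deduce both directions from \Cref{ar_mod} together with the explicit characterisation of strong indiscernibility in \Cref{rem:lindischar}(2), following the classical template.

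For $(\Leftarrow)$, suppose $(a_{ij})_{i,j<\omega}$ is a strongly indiscernible array satisfying (1) and (2). By \Cref{rem:lindischar}(2), each row is indiscernible over the union of the other rows, and the sequence of rows is itself indiscernible; combining the latter with condition (2) yields $k$-inconsistency of every row $\{\phi(x, a_{i, j}) : j < \omega\}$. For the ``paths'' condition, I would show by a standard one-coordinate-at-a-time swap that for every $\eta \colon \omega \to \omega$ and every $n < \omega$,
\[ (a_{0, \eta(0)}, a_{1, \eta(1)}, \ldots, a_{n-1, \eta(n-1)}) \equiv (a_{0, 0}, a_{1, 0}, \ldots, a_{n-1, 0}), \]
by replacing $a_{0, 0}$ with $a_{0, \eta(0)}$ using indiscernibility of row $0$ over the others, then $a_{1, 0}$ with $a_{1, \eta(1)}$, and so on. Combined with (1), this yields consistency of $\{\phi(x, a_{i, \eta(i)}) : i < \omega\}$ for every $\eta$, which together with the row-wise $k$-inconsistency is precisely $k\mathsf{-TP}_2$.

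For $(\Rightarrow)$, start with an array $(c_{ij})_{i,j<\omega}$ witnessing $k\mathsf{-TP}_2$ for $\phi(x,y)$, and apply \Cref{ar_mod} to obtain a strongly indiscernible array $(a_{ij})_{i,j<\omega}$ that is $\mathrm{EM}_{\ar}$-based on $(c_{ij})$. It remains to check that (1) and (2) transfer. The row-wise $k$-inconsistency, in the positive setting (cf.\ \cite[Definition~4.5]{DGK}), is witnessed by a positive formula $\psi(y_0, \ldots, y_{k-1})$ such that $\models \psi(c_{i, j_0}, \ldots, c_{i, j_{k-1}})$ for every $i$ and $j_0 < \ldots < j_{k-1}$; since all such index tuples share a single quantifier-free $\mathcal L_{\ar}$-type (same row, $<_2$-increasing), this formula lies in the $\mathrm{EM}_{\ar}$-type of $(c_{ij})$ and hence holds on the corresponding tuples of $(a_{ij})$, yielding (2). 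For (1), for each $n$ consider the positive formula $\theta_n(y_0, \ldots, y_{n-1}) \coloneqq \exists x \bigwedge_{\ell < n} \phi(x, y_\ell)$. By the paths condition, $\theta_n(c_{i_0, j_0}, \ldots, c_{i_{n-1}, j_{n-1}})$ holds for every $i_0 < \ldots < i_{n-1}$ and every choice of $j_0, \ldots, j_{n-1}$; these tuples all share a single quantifier-free $\mathcal L_{\ar}$-type (distinct rows, unconstrained columns), so $\theta_n$ is in the $\mathrm{EM}_{\ar}$-type and transfers to give $\models \theta_n(a_{0, 0}, \ldots, a_{n-1, 0})$, establishing (1).

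The main obstacle is the non-positive character of ``$k$-inconsistency'' in positive logic; this is resolved by using the positive witnessing formula $\psi$ built into the definition of $\mathsf{TP}_2$ in \cite{DGK}, after which the $\mathrm{EM}_{\ar}$-basing provided by Array Modelling applies uniformly. Beyond this, the argument is a straightforward application of \Cref{ar_mod}.
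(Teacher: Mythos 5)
Your proof is correct and reproduces the argument of~\cite[Lemma~6.2]{Ka}, which the paper defers to without reproducing in full; the decomposition into a coordinate-swap argument for $(\Leftarrow)$ and an $\mathrm{EM}_{\ar}$-transfer via \Cref{ar_mod} for $(\Rightarrow)$ is exactly the intended route. The one point worth tightening is the final sentence of your $(\Leftarrow)$ direction: ``path-consistency together with row-wise $k$-inconsistency is precisely $k\mathsf{-TP}_2$'' elides the step of producing the positive witness $\psi$ that the positive-logic definition of $\mathsf{TP}_2$ requires. This is routine but should be said: by strong indiscernibility, all $<_2$-increasing $k$-tuples from a single row of $(a_{ij})$ realise the same type over $\emptyset$, so a single positive formula $\psi(y_0,\ldots,y_{k-1})$ holding on, say, $(a_{00},\ldots,a_{0,k-1})$ and inconsistent with $\exists x \bigwedge_{\ell<k}\phi(x,y_\ell)$ (such a $\psi$ exists in the monster since the conjunction is not realised) then works uniformly across all rows and all increasing $k$-tuples. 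With that said, the argument is complete.
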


\begin{co}\label{co:tp2iff2tp2} If $\phi(x,y)$ has $k\mathsf{-TP}_2$ for some $k\geq 2$, then $\bigwedge_{i=1}^{n} \phi(x,y_i)$ has $2\mathsf{-TP}_2$ for some $n$.
\end{co}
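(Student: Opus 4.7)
The plan is to adapt the classical Chernikov-style argument (cf.\ \cite[Proposition~2.3]{Chernikov}) to the positive setting. We may assume $k \geq 3$ is minimal, since for $k = 2$ one takes $n = 1$ and $\bigwedge^n \phi = \phi$ trivially has $2$-$\mathsf{TP}_2$.

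By \Cref{tp2_indis}, start from a strongly indiscernible array $(a_{i,j})_{i,j<\omega}$ witnessing $k$-$\mathsf{TP}_2$ of $\phi$: the set $\{\phi(x, a_{i, 0}) : i < \omega\}$ is consistent and $\{\phi(x, a_{0, j}) : j < \omega\}$ is $k$-inconsistent. Minimality of $k$ together with strong indiscernibility implies that every $(k-1)$-element subset of every row is consistent. Set $\psi(x, y_0, \ldots, y_{k-2}) \coloneqq \bigwedge_{\ell < k-1}\phi(x, y_\ell)$ and $B_{i,j} \coloneqq (a_{i, (k-1)j + \ell})_{\ell < k-1}$. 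By inspection, the reindexing preserves the relevant $\mathcal{L}_{\mathrm{ar}}$-structure, so $(B_{i,j})$ is strongly indiscernible, and we aim to verify that it witnesses $2$-$\mathsf{TP}_2$ of $\psi$ via \Cref{tp2_indis}.

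The $2$-inconsistency of $\{\psi(x, B_{0, j}) : j < \omega\}$ is immediate by counting: for $j_1 \neq j_2$, the formula $\psi(x, B_{0, j_1}) \wedge \psi(x, B_{0, j_2})$ is a conjunction of $2(k-1) \geq k$ distinct instances of $\phi$ over elements of row $0$, inconsistent by the $k$-inconsistency of that row.

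The main obstacle is the consistency of $\{\psi(x, B_{i, 0}) : i < \omega\}$, which unpacks to $\{\phi(x, a_{i, \ell}) : i < \omega, \ell < k-1\}$: this requires upgrading the ``one-column'' consistency guaranteed by \Cref{tp2_indis} to a $(k-1)$-column consistency across all rows. The plan is a Ramsey-style argument. Picking $c$ realising $\{\phi(x, a_{0, \ell}) : \ell < k-1\}$ (available by $(k-1)$-consistency of row $0$), colour each row $i$ by $T_i \coloneqq \{j : \monster \models \phi(c, a_{i, j})\}$; by $k$-inconsistency of rows $|T_i| \leq k-1$, so Ramsey yields an infinite monochromatic $I$ with common colour $T$. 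Strong indiscernibility, combined with a careful refinement via extraction of indiscernibles over $c$ as provided by \Cref{fact:extracting}, should let us arrange $|T| = k-1$, so that $c$ realises $\{\phi(x, a_{i,j}) : i \in I, j \in T\}$; since by strong indiscernibility this set has the same positive type as $\{\phi(x, a_{i, \ell}) : i < \omega, \ell < k - 1\}$, consistency of the latter follows. Invoking the converse direction of \Cref{tp2_indis} then gives $2$-$\mathsf{TP}_2$ for $\psi$, yielding the corollary with $n = k - 1$.
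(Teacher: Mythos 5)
Your high-level plan matches the intended route: the paper derives this corollary by citing the proofs of [Ka, Lemma~6.2 and Theorem~1.4], which also proceed via \Cref{tp2_indis}, minimality of $k$, and blocks of size $k-1$. Your reindexing argument (that $(B_{i,j})$ inherits $\mathcal{L}_{\mathrm{ar}}$-indiscernibility from $(a_{i,j})$) and the $2$-inconsistency of $\{\psi(x,B_{0,j})\}_j$ are both essentially correct, though in the positive setting you should exhibit the witnessing formula for $2$-inconsistency explicitly --- e.g.\ $\Psi(y_0,y_1)\coloneqq\psi_0(y_{0,0},\ldots,y_{0,k-2},y_{1,0})$, where $\psi_0$ witnesses the $k$-inconsistency of the original row. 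Likewise, the minimality argument showing $(k-1)$-subsets of a row are consistent needs the standard pec-model observation that an unrealised positive formula is positively denied.

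However, your argument for the consistency of $\{\psi(x,B_{i,0})\}_{i<\omega}$ --- which you correctly flag as the main obstacle --- does not work as written. First, you choose $c$ realising only $\{\phi(x,a_{0,\ell}):\ell<k-1\}$, which says nothing about any row $i>0$: $T_i$ can be empty for every $i\ge 1$, and the one hypothesis you actually have (consistency of the first column $\{\phi(x,a_{i,0}):i<\omega\}$) is never used. If instead $c$ realises the column, you at least get $0\in T_i$ for all $i$, but then there is no reason $c$ should satisfy the first $(k-1)$-block of even a single row. Second, the colouring $i\mapsto T_i$ takes values in $[\omega]^{\le k-1}$, an infinite set, so Ramsey gives nothing; one can pigeonhole on $|T_i|$, but that only produces some $m'\le k-1$ shared by infinitely many rows, with no control forcing $m'=k-1$. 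Third, extracting an indiscernible sequence over $c$ via \Cref{fact:extracting} produces a sequence of rows that is $c$-indiscernible but not mutually indiscernible, so the resulting object is no longer a strongly indiscernible array and \Cref{tp2_indis} cannot be applied to it; using \Cref{ar_mod} over $c$ instead does not help, since $\phi(c,a_{i,j})$ fails for $j\neq 0$ and hence is not part of the $\mathrm{EM}_{\mathcal{L}_{\mathrm{ar}}}$-type over $c$, so the modelled array need not satisfy any $\phi(c,\cdot)$ at all. The actual proof needs a genuinely different mechanism to propagate consistency across all rows simultaneously; the phrase ``should let us arrange $|T|=k-1$'' is where the missing idea lives, and no argument is offered for it.
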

\begin{rem}\label{infinitary_tp2}
Suppose $\phi(x,y)$, $k$, $\psi(x,y_1,\ldots,y_k)$ and $(a_{ij})_{i,j<\omega}$ witness $\mathsf{TP}_2$. Assume that in the monster model $\phi\equiv \bigwedge_{i\in I} \phi_i$ 
and $\psi\equiv \bigwedge_{j\in J} \psi_j$. Then by compactness there are finite $I_0\subseteq I$ and $J_0\subseteq J$ such that, putting $\phi'\coloneqq \bigwedge_{i\in I_0}\phi_i$ and $\psi'\coloneqq \bigwedge_{j\in J_0}\psi_j$ the formula $\psi'(y_1,\ldots,y_k)\wedge \phi'(x,y_1)\wedge\ldots\wedge\phi'(x, y_n)$ is inconsistent. Then $\phi'(x,y)$, $k$, $\psi'(x,y_1,\ldots,y_k)$ and $(a_{ij})_{i,j<\omega}$ also witness $\mathsf{TP}_2$.
\end{rem}

We now move towards submultiplicativity of burden.

\begin{defin}\label{inp}
Let $\pi(x)$ be a partial type and $\kappa$ be a (possibly finite) cardinal. An \emph{$\inp$-pattern} in $\pi(x)$ of depth $\kappa$ consists of
$(a_{\alpha,i})_{\alpha<\kappa,i<\omega}$ and, for each $\alpha<\kappa$, a formula $\phi_{\alpha}(x,y_{\alpha})$, 
 such that
\begin{enumerate}
\item\label{point:inp1} for every $\alpha<\kappa$ there are  $k_{\alpha}<\omega$ and  $\psi_\alpha(y_{\alpha,0},\ldots,y_{\alpha,{k_\alpha-1}})$ such that \[\models \neg \left(\exists x,y_{\alpha,0},\ldots, y_{\alpha,{k_\alpha -1}} \;\left(\psi_\alpha(y_{\alpha,0},\ldots,y_{\alpha,{k_\alpha-1}})\wedge \bigwedge_{i<k_\alpha}\phi_\alpha(x,y_{\alpha,i})\right)\right)\] and $ \models \psi_{\alpha}(a_{\alpha,i_0}, \ldots,a_{\alpha,i_{k_\alpha-1}})$
for all $i_0<\ldots<i_{k_\alpha-1}<\omega$; and
\item for every $f\from \,\kappa\to\omega$, the set $\pi(x)\cup\left\{ \phi_{\alpha}(x,a_{\alpha,f(\alpha)})\mid \alpha<\kappa\right\}$
is consistent.
\end{enumerate}
The \emph{burden} of $\pi(x)$, denoted by $\bdn(\pi)$, is the supremum
of the depths of all $\inp$-patterns in $\pi(x)$, if one exists, and $\infty$ otherwise. By $\bdn(a/E)$
we mean $\bdn(\tp(a/E))$.
\end{defin}
Clearly, $\pi(x)\subseteq \rho(x)$ implies $\bdn(\pi)\geq\bdn(\rho)$.
Note that, by pigeonhole and compactness, we have $\bdn(\pi)<\infty\iff\bdn(\pi)<\abs T^{+}$. In particular, $T$ has $\mathsf{TP}_2$ if and only if there is partial type of infinite burden.

Following \cite{Chernikov}, we will say that an array $(a_{ij})_{i<\alpha, j<\beta}$ is \emph{almost mutually indiscernible} over $E$ iff for every $i_0<\alpha$ we have that 
$(a_{i_0,j})_{j<\beta}$ is an indiscernible sequence over $a_{<i_0,<\beta} a_{>i_0, 0}E$.

In the next lemma we correct \cite[Lemma 2.3]{Chernikov}, which we will see in \Cref{eg:23cntrex} not to hold as stated in \cite{Chernikov}.

\begin{lemma}
\label{lem: RotateIfConsistent} Let $\bar a=(a_i)_{i<\omega}$ be an $E$-indiscernible sequence and let $b$ be arbitrary. Let $p(x,a_{0})=\tp(b/a_{0}E)$,
and assume that $p_{\infty}(x)=\bigcup_{i<\omega}p(x,a_{i})$
is consistent. Then there is $\bar a'$
such that
\begin{enumerate}
\item $\bar{a}'\equiv_{a_{0}E}\bar{a}$ and
\item $\bar{a}'$ is indiscernible over $Eb$.
\end{enumerate}
\end{lemma}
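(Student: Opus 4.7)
The strategy is to produce the desired $\bar a'$ as the image of an auxiliary indiscernible sequence under a suitable $E$-automorphism of $\monster$. First I would extend $\bar a$ to an $E$-indiscernible sequence $(a_i)_{i<\lambda}$ with $\lambda$ sufficiently large to apply \Cref{fact:extracting} with parameter set of size $|E|+|b|+|a_0|+\aleph_0$. The type $q(x) \coloneqq \bigcup_{i<\lambda} p(x, a_i)$ is then consistent by compactness: each of its finite subsets $\{p(x, a_{i_1}), \ldots, p(x, a_{i_n})\}$ is conjugate, via an $E$-automorphism of $\monster$ witnessing $(a_{i_1}, \ldots, a_{i_n}) \equiv_E (a_0, \ldots, a_{n-1})$, to a subset of the assumed-consistent $p_\infty$. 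Let $b^\dagger \in \monster$ realise $q$. Since positive types in the monster are maximal, $\tp(b^\dagger / a_i E) = p(x, a_i)$, and composing with the $E$-automorphism sending $a_0$ to $a_i$ yields $(b^\dagger, a_i) \equiv_E (b, a_0)$ for every $i < \lambda$.

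Next I would apply \Cref{fact:extracting} with parameter set $E b^\dagger$ to extract an $E b^\dagger$-indiscernible sequence $\bar c = (c_i)_{i<\omega}$ based on $(a_i)_{i<\lambda}$ over $E b^\dagger$. By basedness, $c_0 \equiv_{E b^\dagger} a_j$ for some $j<\lambda$, whence $(b^\dagger, c_0) \equiv_E (b^\dagger, a_j) \equiv_E (b, a_0)$; by homogeneity of $\monster$, there is $\rho \in \aut(\monster/E)$ with $\rho(b^\dagger) = b$ and $\rho(c_0) = a_0$. Set $\bar a' \coloneqq \rho(\bar c)$: then $a'_0 = a_0$ and $\bar a'$ is $Eb$-indiscernible, as $\bar c$ is $Eb^\dagger$-indiscernible and $\rho$ maps $Eb^\dagger$ onto $Eb$.

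To verify $\bar a' \equiv_{a_0 E} \bar a$, for any $0 < i_1 < \ldots < i_m$ basedness of $\bar c$ on $(a_i)_{i<\lambda}$ (weakened to $E$) together with $E$-indiscernibility of $(a_i)_{i<\lambda}$ gives $(c_0, c_{i_1}, \ldots, c_{i_m}) \equiv_E (a_0, a_{i_1}, \ldots, a_{i_m})$. Since $\rho$ is an $E$-automorphism, applying it to a tuple preserves its positive type over $E$, so $(a_0, a'_{i_1}, \ldots, a'_{i_m}) = \rho(c_0, c_{i_1}, \ldots, c_{i_m}) \equiv_E (a_0, a_{i_1}, \ldots, a_{i_m})$, which is equivalent to $(a'_{i_1}, \ldots, a'_{i_m}) \equiv_{a_0 E} (a_{i_1}, \ldots, a_{i_m})$, hence $\bar a' \equiv_{a_0 E} \bar a$. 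The main subtlety I anticipate is promoting the one-sided positive-type containment $\tp(b, a_0 / E) \subseteq \tp(b^\dagger, a_j / E)$ afforded by the choice of $b^\dagger$ to the two-sided equivalence $(b^\dagger, c_0) \equiv_E (b, a_0)$ needed to extract $\rho$; this step uses maximality of positive types in the monster, which is a standard but non-automatic feature of the positive-logic setting that must be invoked explicitly.
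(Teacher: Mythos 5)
Your proof is correct and follows essentially the same strategy as the paper's: realise a ``universal'' point satisfying all the $p(x,a_i)$ simultaneously, extract a sequence indiscernible over that point and compatible with $\bar a$, then transport everything back to $b$ and $a_0$ via a single automorphism. The only technical divergence is that you first stretch $\bar a$ to length $\lambda=\beth\big((2^{|T|+|E|+|a_0|+|b|})^+\big)$ so as to use \Cref{fact:extracting} and genuine basedness, whereas the paper stays at length $\omega$ and uses \Cref{fact:DK54}, which only delivers $\mathrm{EM}_<$-basing; the latter route then needs a slightly more delicate argument that $b''a''_0\equiv ba_0$ (pushing a single formula at a time through the $\mathrm{EM}_<$-type), but avoids the cardinal bookkeeping. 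Your explicit remark about promoting one-sided positive-type containment to equality via maximality of types in a pec monster is exactly the point the paper's proof uses implicitly in the chain $b'a_0\equiv b''a''_0\equiv ba_0$.
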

\begin{proof}
We may assume $E=\emptyset$.
Let $b'\models p_{\infty}(x)$ and use \Cref{fact:DK54} to obtain some $\bar{a}''$ indiscernible over $b'$ and $\mathrm{EM}_<$-based
on $\bar{a}$ over $b'$. 

Note that $b'a''_0\equiv ba_0$, because for every $i$ we have  $b'a_i\equiv ba_0$ by choice of $b'$. Therefore, 
there is an automorphism $f$ sending $b'a_0''$ to $ba_0$. Put $\bar a'\coloneqq f(\bar a'')$.
 Clearly $\bar{a}'\equiv_{a_{0}}\bar{a}$ and, as $\bar a''$ is indiscernible over $b'$, we get that  $\bar a'=f(\bar a'')$ is indiscernible over $b=f(b')$.
\end{proof}

\begin{lemma}
\label{lem: burden by rotation} The following are equivalent for
a partial type $\pi(x)$ over $E$.
\begin{enumerate}
\item \label{point:noinppat}There is no $\inp$-pattern of depth $\kappa$ in $\pi$.
\item \label{point:mutind}For every $b\models \pi(x)$ and every almost mutually indiscernible over $E$ array $\left(\bar{a}_{\alpha}\right)_{\alpha<\kappa}$, there are $\beta<\kappa$
and $\bar{a}'$ indiscernible over $bE$ and such that $\bar{a}'\equiv_{a_{\beta,0}E}\bar{a}_{\beta}$.
\item\label{point:almmutind} For every $b\models \pi(x)$ and every mutually indiscernible over $E$ array $\left(\bar{a}_{\alpha}\right)_{\alpha<\kappa}$, there are $\beta<\kappa$
and $\bar{a}'$ indiscernible over $bE$ and such that $\bar{a}'\equiv_{a_{\beta,0}E}\bar{a}_{\beta}$.
\end{enumerate}
\end{lemma}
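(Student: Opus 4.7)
The plan is to prove the cyclic chain $(1)\Rightarrow(2)\Rightarrow(3)\Rightarrow(1)$. The implication $(2)\Rightarrow(3)$ is immediate because any mutually indiscernible array is almost mutually indiscernible: indiscernibility of each row over the union of all other rows and $E$ implies, a fortiori, indiscernibility over the smaller set $a_{<i_0,<\omega}\,a_{>i_0,0}\,E$.

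For $(3)\Rightarrow(1)$ I argue by contraposition. Given an $\inp$-pattern $(a_{\alpha,i})_{\alpha<\kappa,\,i<\omega}$ in $\pi$ with witnesses $\phi_\alpha,k_\alpha,\psi_\alpha$, I first replace it by a mutually indiscernible one with the same witnesses by invoking \Cref{lambda_mod} with $\lambda=\kappa$; the resulting array is $\mathrm{EM}_{\mathcal L_\kappa}$-based on the original over $E$, so both the row inconsistency expressed by $\psi_\alpha$ and finite path consistency transfer, and compactness against $\pi$ delivers full path consistency. I then fix $b\models \pi\cup\{\phi_\alpha(x,a_{\alpha,0})\mid \alpha<\kappa\}$, consistent via the $0$-path. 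If $(3)$ held at this $b$, some $\beta$ and $\bar a'$ indiscernible over $bE$ with $\bar a'\equiv_{a_{\beta,0}E}\bar a_\beta$ would exist; then $a'_0=a_{\beta,0}$ is forced by the equality atom in the type, so $\models\phi_\beta(b,a'_i)$ for all $i$ by indiscernibility over $bE$, while $\psi_\beta(a'_0,\ldots,a'_{k_\beta-1})$ transfers from $\bar a_\beta$, contradicting inconsistency of $\psi_\beta(\bar y)\wedge\bigwedge_i\phi_\beta(x,y_i)$.

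For $(1)\Rightarrow(2)$, by contraposition again, I suppose $(2)$ fails at $b\models\pi$ and almost mutually indiscernible $(\bar a_\alpha)_{\alpha<\kappa}$. Each row $\bar a_\beta$ is $E$-indiscernible, so the contrapositive of \Cref{lem: RotateIfConsistent} gives that $\bigcup_i\tp(b/a_{\beta,i}E)$ is inconsistent for every $\beta$. Compactness (after naming $E$) yields $\phi_\beta(x,y)\in\tp(b/a_{\beta,0}E)$ and $k_\beta<\omega$ with $\{\phi_\beta(x,a_{\beta,i})\mid i<k_\beta\}$ inconsistent, and a further compactness step against the indiscernible type of ordered $k_\beta$-tuples of $\bar a_\beta$ produces $\psi_\beta(\bar y)$ holding on all such tuples, with $\psi_\beta(\bar y)\wedge\bigwedge_i\phi_\beta(x,y_i)$ inconsistent. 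For path consistency, given $f\from\kappa\to\omega$ and finitely many $\alpha_1<\ldots<\alpha_n$, almost mutual indiscernibility provides, for each $j$, an automorphism $\sigma_j$ fixing $a_{<\alpha_j,<\omega}\,a_{>\alpha_j,0}\,E$ pointwise and sending $a_{\alpha_j,0}$ to $a_{\alpha_j,f(\alpha_j)}$. Applying $\sigma_n\circ\cdots\circ\sigma_1$ to $b$ and checking inductively that each $\sigma_j$ fixes both the already-rotated witnesses of earlier rows (which lie inside $a_{<\alpha_j,<\omega}$) and the still-unrotated $0$-th elements of later rows, I obtain a realization of $\pi(x)\wedge\bigwedge_j\phi_{\alpha_j}(x,a_{\alpha_j,f(\alpha_j)})$. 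Compactness gives the full path set, and the data $(\phi_\alpha,k_\alpha,\psi_\alpha,\bar a_\alpha)$ is then the required $\inp$-pattern.

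I expect the main technical point to be the automorphism chain in $(1)\Rightarrow(2)$: the specific shape of the base set $a_{<i_0,<\omega}\,a_{>i_0,0}\,E$ in the definition of almost mutual indiscernibility is precisely what keeps the compositions $\sigma_n\circ\cdots\circ\sigma_1$ from undoing earlier stages, and is the reason this weaker notion suffices in place of full mutual indiscernibility.
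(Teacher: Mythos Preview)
Your proof is correct and follows the same route as the paper's: the cycle $(1)\Rightarrow(2)\Rightarrow(3)\Rightarrow(1)$, with \Cref{lem: RotateIfConsistent} doing the work in $(1)\Rightarrow(2)$ and \Cref{lambda_mod} upgrading an arbitrary $\inp$-pattern to a mutually indiscernible one in $(3)\Rightarrow(1)$. Your automorphism-chain argument for path consistency in $(1)\Rightarrow(2)$ spells out exactly what the paper's one-line appeal to ``almost indiscernibility'' means, and your observation that the asymmetric base set $a_{<i_0,<\omega}\,a_{>i_0,0}\,E$ is what makes the composition $\sigma_n\circ\cdots\circ\sigma_1$ coherent is the right diagnosis. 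One small notational slip: in $(1)\Rightarrow(2)$ you write $\bigcup_i\tp(b/a_{\beta,i}E)$, but what \Cref{lem: RotateIfConsistent} actually controls is $\bigcup_i p_\beta(x,a_{\beta,i})$ where $p_\beta(x,a_{\beta,0})=\tp(b/a_{\beta,0}E)$ and the other instances are obtained by parameter substitution along the indiscernible row; these are not the same set of formulas, though your subsequent extraction of $\phi_\beta$ is unaffected.
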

\begin{proof}
We may assume $E=\emptyset$.
\begin{description}[leftmargin=*]
\item[$\ref{point:noinppat}\Rightarrow\ref{point:mutind}$]  Let $b\models \pi(x)$ and an almost mutually indiscernible $\left(\bar{a}_{\alpha}\right)_{\alpha<\kappa}$ be given. To find  $\bar{a}'$ as in the conclusion it suffices to check that we are in the assumptions of \Cref{lem: RotateIfConsistent}. Let $p_{\alpha}(x,a_{\alpha,0})\coloneqq\tp(b/a_{\alpha,0})$ and $p_{\alpha,\infty}(x)\coloneqq\bigcup_{i<\omega}p_{\alpha}(x,a_{\alpha,i})$.  If $p_{\alpha,\infty}$ is inconsistent for every $\alpha$ then by compactness
  and indiscernibility of $\bar{a}_{\alpha}$  we can find for every $\alpha$ integers $k_\alpha$ and formulas $\phi_\alpha(x,y)\in \tp(b, a_{\alpha,0})$ and $\psi_\alpha$ satisfying condition~\ref{point:inp1} in \Cref{inp}. As we have that  $b\models\left\{ \phi_{\alpha}(x,a_{\alpha,0})\right\} _{\alpha<\kappa}$,
  by almost indiscernibility of $\left(\bar{a}_{\alpha}\right)_{\alpha<\kappa}$  we get that $\left(\bar{a}_{\alpha}\right)_{\alpha<\kappa}$ is an $\inp$-pattern
  of depth $\kappa$ in $\pi$, a contradiction. Thus $p_{\beta,\infty}(x)$ is consistent for some $\beta<\kappa$. 

  \item[$\ref{point:mutind}\Rightarrow\ref{point:almmutind}$] Obvious.

  \item[$\ref{point:almmutind}\Rightarrow\ref
    {point:noinppat}$] Assume that there is an $\inp$-pattern $\left(\bar{a}_{\alpha},\phi_{\alpha}\right)_{\alpha<\kappa}$ of depth
  $\kappa$ in $\pi(x)$. We may assume $\left(\bar{a}_{\alpha}\right)_{\alpha<\kappa}$ is mutually indiscernible by \Cref{lambda_mod}. 
  Let $b\models \pi(x)\cup \{\phi_\alpha(x,a_{\alpha,0})\mid \alpha<\kappa\}$.
  Then for every $\beta<\kappa$ we have that  $\models\phi_{\beta}(b,a_{\beta,0})$ and
  $\left\{ \phi_{\beta}(x,a_{\beta,i})\right\} _{i<\omega}$
  is inconsistent, so it is impossible to find an $\bar{a}'$
  as required.\qedhere
\end{description}
\end{proof}

\begin{thm}
\label{thm: product array} If there is an $\inp$-pattern of depth
$\kappa_{1}\times\kappa_{2}$ in $\tp(b_{1}b_{2}/E)$, then either
there is an $\inp$-pattern of depth $\kappa_{1}$ in $\tp(b_{1}/b_2E)$
or there is an $\inp$-pattern of depth $\kappa_{2}$ in $\tp(b_{2}/E)$.\end{thm}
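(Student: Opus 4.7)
The plan is to prove the contrapositive: assume $\bdn(b_1/b_2E)<\kappa_1$ and $\bdn(b_2/E)<\kappa_2$, and derive a contradiction from an $\inp$-pattern $(\bar a_\alpha,\phi_\alpha)_{\alpha<\kappa_1\times\kappa_2}$ in $\tp(b_1 b_2/E)$. Reindex the rows as $(\bar a_{\alpha_1,\alpha_2},\phi_{\alpha_1,\alpha_2})_{\alpha_1<\kappa_1,\,\alpha_2<\kappa_2}$ and apply \Cref{lambda_mod} with base $E$ to obtain a mutually indiscernible over $E$ array $(\bar a'_{\alpha_1,\alpha_2})$ that is $\EM$-based on the original over $E$. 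Row $k$-inconsistency transfers automatically, as a global non-realisation statement; path consistency transfers too, since for each finite sub-path and each finite $\theta\in\tp^+(b_1 b_2/E)$ the positive formula $\exists x\,\theta(x)\wedge\bigwedge_i\phi_{\alpha_i}(x,z_i)$ lies in the $\EM^+$-type of the original array over $E$ (by applying path consistency to every path), hence is inherited by the new array, and a compactness argument produces a realisation of any path. So we may assume the array is mutually indiscernible over $E$ and fix a realisation $b_1 b_2$ of the zero path.

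For each $\alpha_1<\kappa_1$, the sub-array $D_{\alpha_1}\coloneqq(\bar a_{\alpha_1,\alpha_2})_{\alpha_2<\kappa_2}$ is mutually indiscernible over $E$. Applying \Cref{lem: burden by rotation} with $\pi(x_2)=\tp(b_2/E)$, which admits no $\inp$-pattern of depth $\kappa_2$ by hypothesis, we obtain $\alpha_2^*(\alpha_1)<\kappa_2$ and a sequence $\bar d_{\alpha_1}$ indiscernible over $b_2 E$ with $\bar d_{\alpha_1}\equiv_{a_{\alpha_1,\alpha_2^*(\alpha_1),0}E}\bar a_{\alpha_1,\alpha_2^*(\alpha_1)}$; because the equality formula is positive, this forces $d_{\alpha_1,0}=a_{\alpha_1,\alpha_2^*(\alpha_1),0}$. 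Collecting these rotated rows into $(\bar d_{\alpha_1})_{\alpha_1<\kappa_1}$ and applying \Cref{lambda_mod} again, this time with base $b_2 E$, we obtain a mutually indiscernible over $b_2 E$ array $(\bar e_{\alpha_1})_{\alpha_1<\kappa_1}$ that is $\EM$-based on $(\bar d_{\alpha_1})$ over $b_2 E$.

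Setting $\phi^*_{\alpha_1}(x_1,y)\coloneqq\phi_{\alpha_1,\alpha_2^*(\alpha_1)}(x_1,b_2,y)$, we claim $(\bar e_{\alpha_1},\phi^*_{\alpha_1})_{\alpha_1<\kappa_1}$ is an $\inp$-pattern of depth $\kappa_1$ in $\tp(b_1/b_2 E)$, contradicting $\bdn(b_1/b_2E)<\kappa_1$. Row $k$-inconsistency transfers, since substituting $x_2=b_2$ can only restrict the set of potential solutions to an already inconsistent system; the zero path is realised by $b_1$ in $(\bar d_{\alpha_1})$ because $d_{\alpha_1,0}=a_{\alpha_1,\alpha_2^*(\alpha_1),0}$, and a second compactness/$\EM$-basing argument over $b_2 E$, exactly as in the first paragraph, carries this over to $(\bar e_{\alpha_1})$; consistency of any other path then follows from mutual indiscernibility of $(\bar e_{\alpha_1})$ over $b_2 E$, which yields $(e_{\alpha_1,f(\alpha_1)})_{\alpha_1}\equiv_{b_2 E}(e_{\alpha_1,0})_{\alpha_1}$ by iterated one-row swaps. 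The main obstacle is orchestrating the two applications of \Cref{lambda_mod} around the rotation so that at each stage both halves of the $\inp$-pattern survive; the second $\lambda$-modelling is what restores mutual indiscernibility over $b_2 E$ after the row-by-row rotation has broken it at the level of the full $\kappa_1$-block.
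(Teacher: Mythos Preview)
Your argument has a genuine gap at the step where you apply \Cref{lambda_mod} a second time, over $b_2E$, and claim that path consistency carries over ``exactly as in the first paragraph''. In the first paragraph the argument works because \emph{every} path of the original $\inp$-pattern is consistent with $\tp(b_1b_2/E)$; this is precisely why the formula $\exists x\,\theta(x)\wedge\bigwedge_i\phi_{\alpha_i}(x,z_i)$ lies in the $\mathrm{EM}_{\mathcal L_\lambda}$-type (it must hold for \emph{all} choices of column indices with the prescribed row assignment). But for $(\bar d_{\alpha_1})_{\alpha_1<\kappa_1}$ you only know that the \emph{zero} path is consistent with $\tp(b_1/b_2E)$: each rotation was performed independently, so $\bar d_{\alpha_1}$ is indiscernible over $b_2E$ but carries no indiscernibility over the other rotated rows $\bar d_{\alpha_1'}$. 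Hence for arbitrary column indices $j_1,\ldots,j_n$ there is no reason why $\exists x_1\,\theta(x_1)\wedge\bigwedge_i\phi^*_{\alpha_1^{(i)}}(x_1,d_{\alpha_1^{(i)},j_i})$ should hold, so this formula need not lie in the $\mathrm{EM}_{\mathcal L_\lambda}$-type of $(\bar d_{\alpha_1})$ over $b_2E$, and you cannot conclude it for $(\bar e_{\alpha_1})$.

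The paper avoids this by making the rotations dependent: instead of rotating each $\kappa_2$-block separately over $b_2E$, one proceeds by induction on $\alpha<\kappa_1$, at each stage applying \Cref{lem: burden by rotation} to the $\kappa_2$-block at level $\alpha$ over a parameter set that includes $b_2$, the previously rotated rows $\bar a'_{<\alpha}$, and the remaining original rows $\bar a_{\ge(\alpha+1,0)}$. This yields an \emph{almost} mutually indiscernible array over $b_2$ whose zero column coincides with points on the original zero path. One then applies \Cref{lem: burden by rotation} once more (the implication from no $\inp$-pattern to the rotation condition on almost mutually indiscernible arrays) in $\tp(b_1/b_2)$ to obtain the required single rotated row, verifying condition~\ref{point:almmutind} for the original $\kappa_1\times\kappa_2$ array. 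No second $\lambda$-modelling is needed, and the issue of transferring path consistency through it does not arise.
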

\begin{proof}
Assume not. We may assume $E=\emptyset$. We show that condition~\ref{point:almmutind} in \Cref{lem: burden by rotation} is satisfied.

Let $\left(\bar{a}_{\gamma}\right)_{\gamma\in\kappa_{1}\times\kappa_{2}}$
be a mutually indiscernible array, where we consider the product $\kappa_{1}\times\kappa_{2}$
lexicographically ordered.

By induction on $\alpha<\kappa_{1}$ we
choose $\bar{a}_{\alpha}'$ and $\beta_{\alpha}<\kappa_{2}$ such
that:
\begin{enumerate}
\item\label{point:prodar1} $\bar{a}_{\alpha}'$ is indiscernible over $b_{2}\bar{a}_{<\alpha}'\bar{a}_{\geq(\alpha+1,0)}$,
\item\label{point:prodar2} $\bar{a}_{\alpha}'$ has the same type as $\bar{a}_{(\alpha,\beta_{\alpha})}$ over $a_{(\alpha,\beta_{\alpha}),0}\bar{a}_{<\alpha}'\bar{a}_{\geq(\alpha+1,0)}$, and
\item\label{point:prodar3} $\bar{a}_{\leq\alpha}'\cup\bar{a}_{\geq(\alpha+1,0)}$ is a mutually
indiscernible array.
\end{enumerate}
Assume we have managed up to $\alpha$, and we need to choose $\bar{a}_{\alpha}'$
and $\beta_{\alpha}$. Let $D=\bar{a}_{<\alpha}'\bar{a}_{\geq(\alpha+1,0)}$.
As $\left(\bar{a}_{(\alpha,\delta)}\right)_{\delta\in\kappa_{2}}$
is a mutually indiscernible array over $D$ (by assumption in the
case $\alpha=0$ and by~\ref{point:prodar3} of the inductive hypothesis in the other
cases) and there is no $\inp$-pattern of depth $\kappa_{2}$ in $\tp(b_{2}/D)$,
by \Cref{lem: burden by rotation} there are
 $\bar{a}_{\alpha}'$ indiscernible over $b_{2}D$ (which gives
us~\ref{point:prodar1}) and $\beta_{\alpha}<\kappa_{2}$ such that
$\bar{a}_{\alpha}'\equiv_{a_{(\alpha,\beta_{\alpha}),0}D}\bar{a}_{(\alpha,\beta_{\alpha})}$ (which gives us~\ref{point:prodar2}),  and as by the inductive assumption~\ref{point:prodar3} we have in particular that $\bar{a}_{<\alpha}'\frown \bar a_{(\alpha,\beta_\alpha)}\frown \bar{a}_{\geq(\alpha+1,0)}$ is mutually indiscernible, the condition $\bar{a}_{\alpha}'\equiv_D\bar{a}_{(\alpha,\beta_{\alpha})}$ gives us that~\ref{point:prodar3} is preserved as well.
This completes the induction.

As, by~\ref{point:prodar2}, we have ${a}_{\alpha,0}'={a}_{(\alpha,\beta_{\alpha}),0}$
for every $\alpha<\kappa_1$, we get by~\ref{point:prodar1} that $\left(\bar{a}_{\alpha}'\right)_{\alpha<\kappa_{1}}$
is an almost mutually indiscernible array over $b_{2}$.

As there is no $\inp$-pattern of depth $\kappa_{1}$ in $\tp(b_{1}/b_{2})$,
by \Cref{lem: burden by rotation} there are some $\delta<\kappa_{1}$
and $\bar{a}''$ indiscernible over $b_{1}b_{2}$ and such that $\bar{a}''\equiv_{a_{(\delta,\beta_{\delta}),0}}\bar{a}_{\delta}'\equiv_{a_{(\delta,\beta_{\delta}),0}}\bar{a}_{(\delta,\beta_{\delta})}$ (for the second equivalence, note that $a_{\delta,0}'=a_{(\delta,\beta_{\delta}),0}$). Therefore, $\bar a''$ witnesses that condition~\ref{point:almmutind} in \Cref{lem: burden by rotation} is satisfied.
\end{proof}

\begin{co}[Submultiplicativity of burden]\label{co:submulbur}
If $\bdn(a_{i}/E)<k_{i}$ for
$i<n$ with $k_{i}<\omega$, then $\bdn(a_{0},\ldots, a_{n-1}/E)<\prod_{i<n}k_{i}$.
\end{co}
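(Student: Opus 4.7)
The plan is a clean induction on $n$, with \Cref{thm: product array} doing all of the heavy lifting at the inductive step. The base case $n=1$ is immediate from the hypothesis. For the inductive step, I would proceed by contradiction: assuming $\bdn(a_0,\ldots, a_{n-1}/E) \ge \prod_{i<n} k_i$, one obtains an $\inp$-pattern of depth $\kappa_1 \cdot \kappa_2$ in $\tp(a_0,\ldots, a_{n-1}/E)$, where $\kappa_1 \coloneqq \prod_{i<n-1} k_i$ and $\kappa_2 \coloneqq k_{n-1}$. Applying \Cref{thm: product array} with $b_1 \coloneqq (a_0,\ldots, a_{n-2})$ and $b_2 \coloneqq a_{n-1}$ yields two cases.

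In the first case, there is an $\inp$-pattern of depth $\kappa_2 = k_{n-1}$ in $\tp(a_{n-1}/E)$, which directly contradicts the assumption $\bdn(a_{n-1}/E) < k_{n-1}$. In the second case, there is an $\inp$-pattern of depth $\kappa_1 = \prod_{i<n-1} k_i$ in $\tp(a_0,\ldots, a_{n-2}/a_{n-1}E)$. Here one uses the observation, recorded right after \Cref{inp}, that enlarging the partial type can only decrease burden: since $\tp(a_i/a_{n-1}E) \supseteq \tp(a_i/E)$, we have $\bdn(a_i/a_{n-1}E) \le \bdn(a_i/E) < k_i$ for each $i<n-1$. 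Applying the inductive hypothesis over the enlarged base $a_{n-1}E$ then gives $\bdn(a_0,\ldots,a_{n-2}/a_{n-1}E) < \prod_{i<n-1} k_i = \kappa_1$, again a contradiction.

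There is no genuine obstacle: the only point requiring a moment of care is the monotonicity of burden under enlarging the base, which ensures that the inductive hypothesis remains applicable after absorbing $a_{n-1}$ into the parameter set. The result extends verbatim to an analogous statement for infinite cardinals $\kappa_i$ by the same argument, but the finite formulation in the statement is what is needed in \Cref{sec:ntp2}.
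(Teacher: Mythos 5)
Your proof is correct and is precisely the standard inductive argument that the paper leaves implicit when it labels this statement a corollary of \Cref{thm: product array}: decompose $\prod_{i<n}k_i$ as $\kappa_1\cdot\kappa_2$ with $\kappa_1=\prod_{i<n-1}k_i$ and $\kappa_2=k_{n-1}$, apply the theorem, and handle the first alternative by monotonicity of burden under enlarging the base together with the inductive hypothesis. The one point I would soften is the closing remark that the statement ``extends verbatim'' to infinite $\kappa_i$: for infinite cardinals the depth of the lexicographic product $\kappa_1\times\kappa_2$ in \Cref{thm: product array} is not the cardinal product $\prod_i\kappa_i$, and passing from $\bdn\ge\kappa$ to the existence of an $\inp$-pattern of depth exactly $\kappa$ requires an extra compactness step, so that extension is not quite immediate --- but this is a side remark and does not affect the finite case proved here.
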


\begin{co}\label{singlev} If $T$ has $\mathsf{TP}_2$, then some formula $\phi(x,y)$ with $\abs{x}=1$ has $\mathsf{TP}_2$. 
\end{co}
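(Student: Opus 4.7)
My plan is to use submultiplicativity of burden to push infinite burden from a realization of a $\mathsf{TP}_2$ pattern down to a single coordinate, and then extract a $\mathsf{TP}_2$ formula in one variable by pigeonholing the parameters of the witnessing inp-pattern. Concretely, starting from a positive formula $\phi(x,y)$ with $|x|=n<\omega$ witnessing $\mathsf{TP}_2$ (with inconsistency witness $\psi$ and bound $k$), I would first extend the witnessing array by compactness: the statement that there exists an inp-pattern of depth $\abs{T}^+$ with uniform data $(\phi,\psi,k)$ is type-definable and finitely realised inside the given array, hence consistent. Picking a realization $b$ of the constant $0$-path through such an extended pattern, we obtain $|b|=n$ and $\bdn(\tp(b/\emptyset)) \geq \abs{T}^+$, so $\bdn(\tp(b/\emptyset)) = \infty$ by the note after \Cref{inp}.

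Next, I would iterate \Cref{thm: product array} to obtain the infinite-cardinal version of \Cref{co:submulbur}: if each $\bdn(\tp(b_i/\emptyset))<\kappa_i$ for $i<n$, then $\bdn(\tp(b/\emptyset))<\kappa_0 \cdots \kappa_{n-1}$, using the monotonicity $\bdn(\tp(b_i/b_{<i}))\leq\bdn(\tp(b_i/\emptyset))$ recorded in the paragraph after \Cref{inp}. Writing $b=(b_0,\ldots,b_{n-1})$ with each $|b_i|=1$, if every $1$-type over $\emptyset$ had burden $<\abs{T}^+$, taking $\kappa_i=\abs{T}^+$ would yield $\bdn(\tp(b/\emptyset))<(\abs{T}^+)^n=\abs{T}^+$ for $n$ finite, contradicting $\bdn(\tp(b/\emptyset))=\infty$. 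Hence $\bdn(\tp(b_i/\emptyset))=\infty$ for some $i$.

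Finally, to pass from infinite burden of the $1$-type $\tp(b_i/\emptyset)$ to a single $\mathsf{TP}_2$ formula in one variable, I would reuse the pigeonhole argument underpinning the note after \Cref{inp}. An inp-pattern of depth $\abs{T}^+$ in that type is labelled at each position by a triple $(\phi_\alpha,\psi_\alpha,k_\alpha)$ ranging over at most $\abs{T}$ values, so by pigeonhole some fixed triple $(\varphi,\chi,k)$ labels an $\abs{T}^+$-sized subset of positions; restricting the pattern to those positions gives a uniform inp-pattern in the variable $x'$ with $|x'|=1$, which is exactly a $\mathsf{TP}_2$ witness for $\varphi(x',y')$. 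The main subtlety is the uniformity of the inconsistency witnesses $(\psi_\alpha,k_\alpha)$ across the first and last steps --- automatic at the first (inherited from the original $\mathsf{TP}_2$ array for $\phi$) and enforced by pigeonhole at the last --- everything else being routine iteration of results already at hand.
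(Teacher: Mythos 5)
Your proof is correct and it is the argument the paper intends; \Cref{singlev} is stated without proof as an immediate consequence of \Cref{thm: product array}, and your route --- lift to a tuple $b$ of burden $\geq \abs{T}^+$, apply \Cref{thm: product array} iterated with all $\kappa_i = \abs{T}^+$ (noting $(\abs{T}^+)^n=\abs{T}^+$ for $n$ finite), invoke monotonicity of burden along $\tp(b_i/\emptyset)\subseteq \tp(b_i/b_{<i})$, and pigeonhole on the $\abs T$-many possible triples $(\varphi,\chi,k)$ --- is the standard way to read it off. You are also right that the literal statement of \Cref{co:submulbur} (with finite $k_i$) does not suffice; one really needs the infinite-cardinal content of \Cref{thm: product array}.

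One phrase in the first step is imprecise, though not a gap: ``there exists an inp-pattern of depth $\abs{T}^+$ with uniform data $(\phi,\psi,k)$'' is not literally a type, because the consistency-of-all-paths clause quantifies over $\omega^{\abs{T}^+}$-many paths. The clean version in this framework is to first pass to a strongly indiscernible array via \Cref{tp2_indis}, stretch the row sequence to length $\abs{T}^+$ by compactness on its $\mathrm{EM}_{\mathcal L_\mathrm{ar}}$-type, and then use mutual indiscernibility to reduce consistency of every path (together with $\tp(b)$ for $b$ a realisation of the $0$-path) to consistency of the $0$-path, which is a genuine type condition that survives the stretch. With that replacement, everything you wrote goes through.
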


 We now provide a counterexample to \cite[Lemma 2.3]{Chernikov}, and also to its relaxation to ``almost mutually indiscernible''. The lemma is used in that paper only with $\kappa=1$ in which case it is true (it is \Cref{lem: RotateIfConsistent} above, specialised to first-order logic), hence this does not affect correctness of any other results in there.

 \begin{eg}\label{eg:23cntrex}
  Let $\mathcal{L}$ be the language with three sorts
  $\mathrm{A},\mathrm{B},\mathrm{C}$ and a single function symbol
  $f \from \mathrm{A} \times \mathrm{B} \to \mathrm{C}$. One checks
  amalgamation of finite $\mathcal L$-structures easily, obtaining a Fra\"iss\'e class. Let $T$ be the theory of its Fra\"iss\'e limit (see~\cite[Definition~1.2(viii)]{HM} for an axiomatisation).

Let $\bar c=(c_i)_{i<\omega}$ be a sequence of pairwise distinct
  elements of sort $\mathrm{C}$, and let $a_0$ be an arbitrary element
  of sort $\mathrm{A}$. Consider the $2$-row array with first row
  $\bar c$ and second row $\bar a$ constantly equal to $a_0$. Let $C=\emptyset$. This is a counterexample to \cite[Lemma 2.3]{Chernikov}.
\end{eg}
\begin{proof}
  By quantifier elimination, this array is mutually indiscernible over
  $\emptyset$.

  Choose any $b \in \mathrm{B}$ such that $f(a_0, b)=c_0$. By quantifier
  elimination and genericity of $f$, the partial type $p^\infty$ in \cite[Lemma~2.3]{Chernikov} is consistent (in fact, it defines the whole sort $\mathrm{B}$).

  Now take the promised $b$-mutually indiscernible array, say
  $\bar c'\frown\bar a'$. By clause (1) of the lemma, the first column is
  the same as that of $\bar c\frown \bar a$, from which it follows that
  $\bar a'=\bar a$ and that $f(a_0',b)=c_0'$. But the $c_i$ are
  pairwise distinct, hence by clause (1) so are the $c_i'$, hence we
  cannot also have $f(a_0',b)=c_1'$. This contradicts mutual
  indiscernibility over $b$, i.e.\ clause (2).
\end{proof}

\subsection{Weak quantifier elimination and weak stable embeddedness}
In this subsection we introduce the notions of weak quantifier elimination and (strict) weak stable embeddedness, and generalise to positive logic certain lemmas from~\cite{CH} concerning array extension.

Recall that usual inductive theories may be identified with \emph{Pillay} positive theories; see~\cite[Subsection 6.2]{DM} for details.

\begin{lemma}\label{lemma:qf}
Let $T$ be a Pillay theory, $E\subseteq \monster$ be small, $x$ a variable of sort\footnote{Here we regard the family of sorts as being closed under cartesian products.} $X$, and let $\mathcal F$ be a family of quantifier-free formulas $\phi(x)$ over $E$,  closed under Boolean combinations. Assume that, for all $a,b\in\monster$ of sort $X$, if $a$ and $b$ satisfy the same formulas in $\mathcal F$ then $\tp(a/E)=\tp(b/E)$. Then, in $\monster$, every quantifier-free formula over $E$ of sort $X$ is equivalent to a formula in $\mathcal F$.
\end{lemma}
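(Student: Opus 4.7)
The plan is to apply the standard double-compactness argument familiar from classical proofs of quantifier elimination or reduct theorems. Fix a quantifier-free $\mathcal L(E)$-formula $\psi(x)$ of sort $X$. For each $a \in \monster$ with $a \models \psi$, let $q_a(x) \coloneqq \{\phi(x) \in \mathcal F \mid a \models \phi\}$; since $\mathcal F$ is closed under Boolean combinations (and the Pillay hypothesis makes sense of the negation of a positive formula), the set $q_a$ records the full $\mathcal F$-type of $a$, in the sense that any $b \in \monster$ satisfying $q_a$ must agree with $a$ on every $\phi \in \mathcal F$: if $a \not\models \phi$ then $\neg\phi \in \mathcal F \cap q_a$ and so $b \models \neg\phi$.

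By the hypothesis on $\mathcal F$, any such $b$ then satisfies $\tp(b/E) = \tp(a/E)$, and in particular $b \models \psi$. Hence the partial type $q_a(x) \cup \{\neg\psi(x)\}$ is not realised in $\monster$; by saturation of $\monster$, some finite conjunction $\phi_a(x)$ of elements of $q_a$ satisfies $\phi_a \vdash \psi$ in $\monster$. Closure of $\mathcal F$ under finite conjunctions gives $\phi_a \in \mathcal F$, and by construction $a \models \phi_a$.

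Consider now the set $\Phi \coloneqq \{\phi_a \mid a \in \monster,\ a \models \psi\} \subseteq \mathcal F$. By construction each member of $\Phi$ implies $\psi$, and every realisation of $\psi$ satisfies at least one member of $\Phi$, whence $\psi(x) \equiv \bigvee_{\phi \in \Phi} \phi(x)$ in $\monster$. A second application of saturation reduces this to a finite disjunction $\phi_{a_1} \vee \cdots \vee \phi_{a_n}$, which lies in $\mathcal F$ by closure under Boolean combinations. I do not anticipate any real obstacle; the only delicate point is ensuring that the Pillay hypothesis legitimises the classical notions of negation and inconsistency used above, after which the argument is entirely routine.
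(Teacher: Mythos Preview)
Your proof is correct and essentially the same as the paper's: the paper invokes a general topological fact about clopen subsets of the compact type space (citing an exercise in Tent--Ziegler), and your double-compactness argument via saturation of $\monster$ is precisely the standard proof of that fact. The Pillay hypothesis is used in both approaches exactly where you flag it, namely to make $\neg\psi$ a legitimate formula (equivalently, to make $[\psi]$ clopen in $S_x(E)$).
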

\begin{proof}
   This is a standard application of a general topological fact, see e.g.~\cite[Exercise~3.1.1]{tentCourseModelTheory2012}. In the notation of the cited source, given $\phi(x)$ a quantifier-free formula, take as $X$ the space of maximal types in variables $x$, as $Y_1$ the set given by $\phi(x)$, which is clopen because $T$ is Pillay, and as $Y_2$ its complement.
\end{proof}
\begin{defin} The theory $T$ admits \emph{weak quantifier elimination} iff, whenever $a,b$ are compatible tuples from $\monster$, if $\qftp(a)=\qftp(b)$ then $\tp(a)=\tp(b)$.
\end{defin}

\begin{defin} We say that a $\emptyset$-definable set $D$ is
  \begin{enumerate}
  \item \emph{weakly stably embedded} iff every formula with parameters $\phi(x)$  implying $x\in D$ is equivalent to a partial type $\pi(x)$ with parameters in $D$, in the sense that they have the same set of solutions in every $\abs{T}^+$-saturated pec model; and
  \item  \emph{strictly weakly stably embedded} iff  furthermore, for every $c$ over which $\phi(x)$ is definable, $\pi(x)$ can be chosen over $D\cap \dcl(c)$.
  \end{enumerate}
\end{defin}
\begin{rem}
If $T$ is Boolean (that is, it is the translation in positive logic of a full first-order theory), then weak quantifier elimination is the same as quantifier elimination and weakly stably embedded is the same as stably embedded.
\end{rem}

\begin{defin} Let $D$ be a $\emptyset$-definable set. By $D_{\indu}(\monster)$ we denote the structure with universe $D(\monster)$ and, for all $n<\omega$ and every formula $\phi(\bla x1,n)$ over $\emptyset$,  a predicate for $\phi(\monster)\cap D(\monster)^n$.
\end{defin}

We now generalise~\cite[Lemmas~3.6, 3.7 and 3.8]{CH}. 

\begin{lemma}\label{trivial}
Let $(c_{ij})_{i,j<\omega}$ be a strongly indiscernible array such that $(\bar c_i)_{i<\omega}$ is indiscernible over $a$, and let $b$ be arbitrary. Then there are $(b_{ij})_{i,j<\omega}$ such that $(b_{ij}c_{ij})_{i,j<\omega}$ is a strongly indiscernible array, $b_{ij}c_{ij}\equiv bc_{00}$ for all $i,j$, and $(\bar b_i\bar c_i)_{i<\omega}$ is indiscernible over $a$.
\end{lemma}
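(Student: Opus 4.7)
The plan is to combine the strong indiscernibility of $(c_{ij})$ with the Array Modelling Theorem (\Cref{ar_mod}). First I would use that, by strong indiscernibility, $c_{ij}\equiv c_{00}$ for every $i,j$; fixing for each $(i,j)$ an automorphism $\sigma_{ij}\in\aut(\monster)$ with $\sigma_{ij}(c_{00})=c_{ij}$ and setting $b^0_{ij}\coloneqq\sigma_{ij}(b)$, I obtain a preliminary array $(b^0_{ij})$ satisfying $b^0_{ij}c_{ij}\equiv bc_{00}$ for all $i,j$, with no further joint indiscernibility properties.

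Next I would apply \Cref{ar_mod} to the array $(b^0_{ij}c_{ij})_{i,j}$ over the parameter $a$, obtaining an array $(d_{ij}e_{ij})_{i,j}$ that is strongly indiscernible over $a$ and $\mathrm{EM}_{\mathcal L_{\ar}}$-based on $(b^0_{ij}c_{ij})$ over $a$. Because every cell of the input array has the same positive type over $\emptyset$ as $bc_{00}$, the EM-basedness together with the maximality of positive types in the pec monster gives $d_{ij}e_{ij}\equiv bc_{00}$ for every $i,j$.

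The final step is to transport $(d_{ij}e_{ij})$ back to an array whose $c$-coordinate is the original $(c_{ij})$. Concretely, I would show that $(e_{ij})\equiv_a(c_{ij})$ in $\monster$, so that by homogeneity there is $\tau\in\aut(\monster/a)$ with $\tau(e_{ij})=c_{ij}$ for all $i,j$. Setting $b_{ij}\coloneqq\tau(d_{ij})$, the resulting array $(b_{ij}c_{ij})=\tau((d_{ij}e_{ij}))$ inherits strong indiscernibility over $a$, hence over $\emptyset$, its rows form an $a$-indiscernible sequence, and each cell satisfies $b_{ij}c_{ij}\equiv bc_{00}$ because $\tau$ preserves positive types.

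The main obstacle is the verification of $(e_{ij})\equiv_a(c_{ij})$. Using the strong indiscernibility of $(e_{ij})$ over $a$ and the fact that it is $\mathrm{EM}_{\mathcal L_{\ar}}$-based over $a$ on $(c_{ij})$, this reduces to the inclusion $\tp((c_{ij})/a)\subseteq \mathrm{EM}_{\mathcal L_{\ar}}((c_{ij})/a)$. I expect to establish this by decomposing any $\mathcal L_{\ar}$-q.f.-type-preserving rearrangement of indices as a row permutation followed by within-row permutations: the former preserves $a$-type by the $a$-indiscernibility of $(\bar c_i)_i$, while the latter preserves positive type over $\{c_{kl}:k\neq i\}\cup\{a\}$ by combining the $\emptyset$-mutual indiscernibility of $(c_{ij})$ with the uniform behaviour of $a$ with respect to the rows imposed by the $a$-indiscernibility of the row sequence.
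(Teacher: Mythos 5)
There is a genuine gap in the final step. Your plan applies the Array Modelling Theorem \emph{over the parameter $a$}, which produces an array $(d_{ij}e_{ij})$ that is strongly indiscernible over $a$, and you then try to match $(e_{ij})$ with $(c_{ij})$ over $a$. As you yourself note, this requires $\tp((c_{ij})/a)\subseteq \mathrm{EM}_{\mathcal L_{\ar}}((c_{ij})/a)$, i.e.\ that $(c_{ij})$ is \emph{strongly indiscernible over $a$}. But the hypotheses only give strong indiscernibility of $(c_{ij})$ over $\emptyset$, together with $a$-indiscernibility of the row sequence $(\bar c_i)_i$. These do not imply strong indiscernibility over $a$: the row $a$-indiscernibility constrains the type over $a$ of tuples sampled with at most one cell per row, while $\emptyset$-mutual indiscernibility says nothing over $a$. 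Concretely, in (the positive rendering of) the random graph, take $(c_{ij})$ pairwise non-adjacent and pick $a$ with $a E c_{i0}$ for every $i$ but $a\mathbin{\centernot E} c_{ij}$ for $j>0$. Then $(c_{ij})$ is strongly indiscernible, $(\bar c_i)_i$ is $a$-indiscernible, yet $c_{00}\not\equiv_a c_{01}$, so $(c_{ij})$ is not mutually indiscernible over $a$. In that situation the array $(e_{ij})$ you produce is strongly indiscernible over $a$, hence $e_{00}\equiv_a e_{01}$, and no automorphism over $a$ can carry $(e_{ij})$ onto $(c_{ij})$. The sketch you give for the inclusion (decomposing index rearrangements into row permutations and within-row permutations) fails at the within-row step: neither $\emptyset$-mutual indiscernibility nor the $a$-indiscernibility of the row sequence gives invariance of the positive type over $a$ under within-row shifts.

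The paper avoids this by splitting the construction into two stages. First it stretches $(c_{ij})$ to $\lambda$ rows, pads cells with $b'_{ij}$ so that $b'_{ij}c_{ij}\equiv bc_{00}$, and applies array modelling \emph{over $\emptyset$}; since $(c_{ij})$ is strongly indiscernible over $\emptyset$, one has $\tp(\bar c_{<\lambda})\subseteq\mathrm{EM}_{\ar}((b'c))$, so the $c$-coordinates of the modelled array may be taken to be the original ones. Only \emph{then} does it invoke \Cref{fact:extracting} over $a$ to extract an $a$-indiscernible sequence of rows; because $(\bar c_i)_{i<\lambda}$ was already $a$-indiscernible, the extracted $c$-part agrees with the original over $a$ and an automorphism over $a$ finishes the argument. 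The point is that the $\emptyset$-level strong indiscernibility and the $a$-level row indiscernibility are matched separately, exactly because only the weaker $\emptyset$-level strong indiscernibility of $(c_{ij})$ is available. Your steps (1)--(3) are fine, but step (4) would need the input array to be strongly indiscernible over $a$, which is not a hypothesis of the lemma and is false in general.
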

\begin{proof}
Let $\lambda = \beth((2^{\abs T+\abs{abc_{00}}})^+)$.
By compactness, we can extend $(c_{ij})_{i,j<\omega}$ to a strongly indiscernible array $(c_{ij})_{i<\lambda, j<\omega}$ such that $\bar c_{<\lambda}$ is $a$-indiscernible.
For all $i<\lambda$ and $j<\omega$, let $b'_{ij}$ be such that $b'_{ij}c_{ij}\equiv b_{00}c_{00}$. By \Cref{ar_mod} we can find a strongly indiscernible array $(b''_{ij}c''_{ij})_{i<\lambda,j<\omega}$ which is $\mathrm{EM}_{\ar}$-based on $(\bar b'_{i}\bar c_{i})_{i<\lambda}$. As  $\bar c_{<\lambda}$ was strongly indiscernible, $\mathrm{EM}_{\ar}((\bar b'_{i}\bar c_{i})_{i<\lambda})\supseteq \tp(\bar c_{<\lambda})$ so
$\bar c''_{<\lambda}\equiv \bar c_{<\lambda}$. Hence, by applying an automorphism, we may assume $\bar c''_{<\lambda}= \bar c_{<\lambda}$. By \Cref{fact:extracting}, we can find an $a$-indiscernible sequence $(\bar b_i\bar c'''_i)_{i<\omega}$  based on $(\bar b''_i\bar c_i)_{i<\lambda}$ over $a$. As $\bar c_{<\lambda}$ was already indiscernible over $a$, we get that $\bar c'''_{<\omega}\equiv_a \bar c_{<\omega}$, so, by applying an automorphism over $a$, we may assume $\bar c'''_{<\omega}= \bar c_{<\omega}$. Then 
$(b_{ij})_{i,j<\omega}$ satisfies the requirements.
\end{proof}
\begin{defin}
  We say that a structure $M$ (as opposed to a positive theory) is (positively) \emph{$\mathsf{NTP}_2$} iff there is no infinite $\inp$-pattern with constant $\phi_\alpha$, $k_\alpha$, $\psi_\alpha$ and parameters in $M$.
\end{defin}
We will exclusively use the notion just introduced with $M=D_{\indu}(\monster)$ for $D$ some $\emptyset$-definable strictly weakly stably embedded set.
\begin{lemma}\label{seD} Let $D$ be a $\emptyset$-definable strictly weakly stably embedded set such that $D_{\indu}(\monster)$ is  $\mathsf{NTP}_2$. Let $b\subseteq D(\monster)$ and assume  $(\bar c_i)_{i<\kappa}$ is a mutually indiscernible array over $C$. If $\kappa\geq (\abs T+\abs b)^+$, then there are $i<\kappa$ and $\bar c'\equiv _{Cc_{i0}} \bar c_i$ which is indiscernible over $Cb$.
\end{lemma}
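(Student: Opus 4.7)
The conclusion is precisely condition (3) of \Cref{lem: burden by rotation} applied to $\pi(x)\coloneqq\tp(b/C)$, base set $E\coloneqq C$, and the given array $(\bar c_\alpha)_{\alpha<\kappa}$, so by the equivalence therein it suffices to show $\bdn(b/C)<\kappa$. Suppose for contradiction that an $\inp$-pattern of depth $\kappa$ in $\tp(b/C)$ exists. By \Cref{lambda_mod} we may take it to be witnessed by a mutually indiscernible over $C$ array $(a_{\alpha,i})_{\alpha<\kappa,i<\omega}$, together with positive formulas $\phi_\alpha(x,y_\alpha)$ and inconsistency data $k_\alpha,\psi_\alpha$ as in \Cref{inp}.

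The plan is to transfer this pattern into $D_{\indu}(\monster)$ and contradict $\mathsf{NTP}_2$ there. Since $b\in D^{\abs b}$, only the sets $\phi_\alpha(\monster,a_{\alpha,i})\cap D^{\abs b}$ are relevant, and each is defined by a formula implying $x\in D$; hence strict weak stable embeddedness supplies a partial type $\pi_\alpha(x,a'_{\alpha,i})$ in the induced language on $D$ defining it, with parameters $a'_{\alpha,i}\in D\cap\dcl(Ca_{\alpha,i})$. A key point is that these $a'_{\alpha,i}$ can be chosen \emph{uniformly} in $i$: fix for each $\alpha$ an enumeration of $D\cap\dcl(Ca_{\alpha,0})$ by a family of $\emptyset$-definable partial functions of $a_{\alpha,0}$ over parameters from $C$, and apply the same enumeration to every $a_{\alpha,i}$. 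Uniformity implies that mutual indiscernibility of $(\bar a_\alpha)_{\alpha<\kappa}$ over $C$ in $\monster$ transfers to that of $(\bar a'_\alpha)_{\alpha<\kappa}$ over $C$ in $\monster$, and a fortiori in $D_{\indu}(\monster)$.

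For every $f\from \kappa\to\omega$ the tuple $b$ realises $\pi_\alpha(x,a'_{\alpha,f(\alpha)})$ for all $\alpha$, so every path is consistent; and $k_\alpha$-inconsistency of $\{\pi_\alpha(x,a'_{\alpha,i})\}_{i<k_\alpha}$ is inherited from that of the $\phi_\alpha$'s restricted to $D^{\abs b}$. By compactness there is a finite conjunction $\rho_\alpha(x,y_\alpha)$ of formulas from $\pi_\alpha(x,y_\alpha)$ already witnessing this $k_\alpha$-inconsistency, and mutual indiscernibility yields a positive formula $\psi'_\alpha$ in the induced language fulfilling the remaining requirements of \Cref{inp}. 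Hence $(a'_{\alpha,i},\rho_\alpha)_{\alpha<\kappa,i<\omega}$ is an $\inp$-pattern of depth $\kappa$ in $\tp_{D_{\indu}}(b/C\cap D)$. The induced language on $D$ has cardinality at most $\abs T$, so the pigeonhole-and-compactness observation following \Cref{inp}, applied inside $D_{\indu}(\monster)$ and combined with its $\mathsf{NTP}_2$, bounds all burdens therein by $\abs T$. But $\kappa\geq(\abs T+\abs b)^+>\abs T$, contradiction. The principal difficulty is the \emph{uniform} choice of $D$-parameters that preserves mutual indiscernibility; strict weak stable embeddedness is precisely what enables this, after which compactness extracts the inconsistency witnesses in $D_{\indu}$.
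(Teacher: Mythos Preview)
Your argument is essentially correct but takes a more roundabout route than the paper, and contains one small slip that is easily repaired.

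The paper works \emph{directly} with the given array $(\bar c_i)_{i<\kappa}$. After absorbing $C$ into the language and enlarging each $c_{ij}$ to $\dcl(c_{ij})$, it invokes \Cref{lem: RotateIfConsistent}: it suffices that $\bigcup_j p_i(x,c_{ij})$ be consistent for some $i$, where $p_i(x,c_{i0})=\tp(b/c_{i0})$. If no such $i$ exists, strict weak stable embeddedness reduces each $p_i$ to its restriction over $D\cap c_{ij}$, compactness extracts a single formula $\psi_i$ and an integer $k_i$, pigeonhole on $\kappa>\abs T$ makes these constant, and the array $(D\cap c_{ij})_{i,j}$ then witnesses $\mathsf{TP}_2$ in $D_{\indu}(\monster)$ directly. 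You instead pass through \Cref{lem: burden by rotation} to reduce to ruling out a depth-$\kappa$ $\inp$-pattern in $\tp(b/C)$, and then transfer a \emph{hypothetical} such pattern (on a new array $(a_{\alpha,i})$) into $D_{\indu}$. This is valid, but the implication $(1)\Rightarrow(3)$ you are using is itself proved via \Cref{lem: RotateIfConsistent}, so the detour buys nothing; and the paper's $\dcl$-enlargement makes the $D$-parameters literal subtuples of the $c_{ij}$, obviating your uniformity paragraph.

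The slip: after applying \Cref{lambda_mod} you no longer know that $b$ itself realises $\{\phi_\alpha(x,a_{\alpha,f(\alpha)})\mid\alpha<\kappa\}$ for arbitrary $f$; the $\inp$-pattern only guarantees consistency of this set with $\tp(b/C)$. Replace ``the tuple $b$ realises'' by ``some $b'\equiv_C b$ realises''; since $x\in D^{\abs b}$ lies in $\tp(b/C)$, any such $b'$ is in $D^{\abs b}$, so path-consistency in $D_{\indu}(\monster)$ follows and the remainder of your argument (compactness to extract $\rho_\alpha$ and $\psi'_\alpha$, then pigeonhole to constant data to contradict structure-level $\mathsf{NTP}_2$) goes through unchanged.
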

\begin{proof}
We may assume $C=\emptyset$. By making each $c_{ij}$ longer, we may also assume $\dcl(c_{ij})=c_{ij}$ for all $i,j$.
Let $p_i(x,c_{i0})=\tp(b/c_{i0})$ for every $i<\kappa$.
By \Cref{lem: RotateIfConsistent}, it is enough to show that $\bigcup_{j\in \omega}p_i(x,c_{ij})$ is consistent for some $i$. Suppose not. By strict weak stable embeddedness, for every $i<\kappa$ we have $p_i(x,c_{ij})\equiv p_i(x,c_{ij})|_{D(\monster)\cap c_{ij}}$ and hence there is a formula $\psi_i(x,y')\in p_i(x,y')$ such that $\{\psi_i(x,D(\monster)\cap c_{ij})\mid j<\omega\}$ is $k_i$-inconsistent for some $k_i<\omega$.
By pigeonhole we can assume $\psi_i$ and $k_i$ are constantly equal to some fixed $\psi$ and $k$. Now $(D(\monster)\cap c_{ij})_{i,j<\omega}$,  $(x\in D)\wedge \psi(x,y)$, and $k$ witness that $D_{\indu}(\monster)$ has $\mathsf{TP}_2$, because the consistency condition with the $0^\kappa$-path is witnessed by $b$, hence holds for all paths by mutual indiscernibility.
\end{proof}

\begin{lemma}\label{row_replacement}
	Suppose $C$ is a set of parameters and $(a_{i})_{i<\lambda,j<\omega}$ is almost mutually indiscernible over $C$. Then there is a mutually indiscernible over $C$ array $(a'_{ij})_{i<\lambda,j<\omega}$ such that $\bar a'_i\equiv_{a_{i0}C} \bar a_i$ for every $i<\lambda$.
\end{lemma}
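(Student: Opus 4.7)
The plan is to prove this by transfinite induction on $\alpha \le \lambda$, constructing rows $(\bar a'_i)_{i<\alpha}$ of length $\omega$ while maintaining three inductive invariants on the ``mixed'' array consisting of the new rows $(\bar a'_i)_{i<\alpha}$ followed by the unmodified old rows $(\bar a_i)_{\alpha\le i<\lambda}$: first, $\bar a'_i \equiv_{a_{i0}C} \bar a_i$ for each $i<\alpha$; second, the mixed array is almost mutually indiscernible over $C$; third, each $\bar a'_i$ for $i<\alpha$ is $C$-indiscernible over all other rows of the mixed array (not merely over the weaker first-column parameter set prescribed by almost mutual indiscernibility). Note that the third invariant, once established for $i<\alpha$, must be shown to persist through all later stages of the induction.

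For the successor step at $\alpha$, the main tool is \Cref{fact:extracting}. I would first extend the old row $\bar a_\alpha$ to a sequence $(r_j)_{j<\kappa}$ for a sufficiently large $\kappa$, doing so by compactness while preserving the inductive invariants for the extended mixed array. Then I would apply \Cref{fact:extracting} to the tail $(r_j)_{1\le j<\kappa}$ with parameter set $A \coloneqq (\bar a'_i)_{i<\alpha} \cup (\bar a_i)_{\alpha<i<\lambda} \cup \{a_{\alpha,0}\} \cup C$, obtaining an $A$-indiscernible sequence $(c_j)_{1\le j<\omega}$ based on the tail over $A$. Setting $\bar a'_\alpha \coloneqq (a_{\alpha,0}, c_1, c_2, \ldots)$, the type equivalence $\bar a'_\alpha \equiv_{a_{\alpha,0}C} \bar a_\alpha$ would follow by chaining the based-on relation with the standard fact that the tail of an indiscernible sequence is indiscernible over the base parameters together with the initial entry of the sequence: each finite tuple from $(c_1, c_2, \ldots)$ has the same type over $a_{\alpha,0}C$ as a finite tuple from the tail of the extended row, which in turn has the same type as the corresponding initial segment of $\bar a_\alpha$. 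The preservation of the second and third invariants under the replacement hinges on the same based-on relation: any finite tuple from $\bar a'_\alpha$ has the same positive type over the remaining rows and $C$ as a finite tuple from the extended old row $\alpha$, so all indiscernibility properties of the other rows in the mixed array transfer from the ``old row $\alpha$'' version to the ``new row $\alpha$'' version. Limit stages are handled by taking unions and verifying the invariants via compactness; at $\alpha=\lambda$ one obtains the desired mutually indiscernible array.

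The main obstacle is the subtle asymmetry in the definition of almost mutual indiscernibility: the old row $\alpha$ is indiscernible only over the initial columns of later rows rather than over the full later rows, so establishing that the new $\bar a'_\alpha$ is indiscernible over the full later rows of the mixed array amounts to using \Cref{fact:extracting} to ``upgrade'' this indiscernibility. Ensuring that the upgrade is compatible with the fixed anchoring $a'_{\alpha,0}=a_{\alpha,0}$, required by the type equivalence condition, is where the argument is most delicate and requires the careful choice of the parameter set $A$ (including $a_{\alpha,0}$ but separating it from the tail $(r_j)_{j\ge 1}$) together with the shift trick for indiscernible sequences.
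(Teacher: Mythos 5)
There is a genuine gap at the point where you must \emph{establish} (not merely preserve) the third invariant for the newly built row $\bar a'_\alpha$. You set $\bar a'_\alpha \coloneqq (a_{\alpha,0}, c_1, c_2, \ldots)$ where $(c_j)_{j\ge 1}$ is extracted to be $A$-indiscernible with $A$ containing the \emph{full} later rows $\bar a_\beta$, $\beta>\alpha$. The tail $(c_j)_{j\ge 1}$ is then indeed indiscernible over all other rows, but the prepended element $a_{\alpha,0}$ was not part of the extraction and has a fixed type over the later rows that has no reason to agree with the type of a $c_j$. Precisely: by the basing one has $\tp(c_j/\bar a_{>\alpha}\bar a'_{<\alpha}C)=\tp(r_m/\bar a_{>\alpha}\bar a'_{<\alpha}C)$ for some $m\ge 1$, while $a_{\alpha,0}=r_0$; but the sequence $(r_j)_{j<\kappa}$, being a stretching of a row of an \emph{almost} mutually indiscernible array, is only indiscernible over the \emph{first columns} of the later rows, not over their full extent. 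So $\tp(r_0/\bar a_{>\alpha}\bar a'_{<\alpha}C)$ may differ from $\tp(r_m/\bar a_{>\alpha}\bar a'_{<\alpha}C)$, and the resulting sequence $(a_{\alpha,0},c_1,c_2,\ldots)$ need not be indiscernible over the later rows. The shift trick gives you the $\equiv_{a_{\alpha,0}C}$-equivalence, and the second invariant does go through (for almost mutual indiscernibility the relevant parameter set for row $\alpha$ consists only of the first columns of later rows, over which $(r_j)$ genuinely is indiscernible), but the third invariant for $i=\alpha$ --- which is what ultimately yields full mutual indiscernibility --- is where the argument breaks. The ``careful choice of $A$'' and the shift trick do not repair this, because they cannot change the fixed type of $a_{\alpha,0}$ over the full later rows.

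The paper avoids the obstruction by doing a single \emph{global} replacement rather than a row-by-row one: it applies \Cref{lambda_mod} to produce a mutually indiscernible array $\mathrm{EM}_{\mathcal L_\lambda}$-based on $(\bar a_i)_{i<\lambda}$, then observes that almost mutual indiscernibility forces every transversal $a_{i_0 j_0}\ldots a_{i_{k-1}j_{k-1}}$ with distinct rows to have the same type over $C$ as $a_{i_0 0}\ldots a_{i_{k-1}0}$, so the $\mathrm{EM}_{\mathcal L_\lambda}$-basing already ensures that the new and old first columns are $C$-conjugate. A single automorphism sending one first column to the other then finishes. The crucial difference from your approach is that the anchoring to $(a_{i0})_{i<\lambda}$ happens only once, for the whole column simultaneously and \emph{after} mutual indiscernibility has already been secured, so it never has to be made compatible with a running extraction.
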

\begin{proof}
We may assume that $C=\emptyset$.
	By \Cref{lambda_mod} there is a mutually indiscernible array $(\bar a''_i)_{i<\lambda}$ which is  $\mathrm{EM}_{\mathfrak L_\lambda}$--based on $\bar a_{<\lambda}$.
		 By the assumption on $\bar a_{<\lambda}$ we have that $\bar a_i$ is indiscernible for every $i<\lambda$, hence $\bar a''_i\equiv \bar a_i$, and that for all $k<\omega$,  $i_0<\ldots<i_{k-1}<\lambda$ and $j_0,\ldots,j_{k-1}<\omega$ 
		we have $a_{i_0j_0}\ldots a_{i_{k-1}j_{k-1}}\equiv_C a_{i_00}\ldots a_{i_{k-1}0}$, hence $(a''_{i0})_{i<\lambda}\equiv (a_{i0})_{i<\lambda}$. 
		Thus, we can find an automorphism $f$ sending $(a''_{i0})_{i<\lambda}$ to $(a_{i0})_{i<\lambda}$ and define $\bar a'_i=f(\bar a''_i)$ for each $i<\lambda$.
\end{proof}

In \Cref{careful_ext}, we assume that $T$ is thick; recall that this means that ``$(x_0,x_1,\ldots)$ is an indiscernible sequence'' is equivalent, in sufficiently saturated pec models, to a partial type.  Direct usage of thickness will be confined to the proof of \Cref{careful_ext}, and mediated by its following consequence.

\begin{rem}\label{rem:thickind}
  Assume $T$ is thick.
  \begin{enumerate}
  \item\label{point:tind1} If $\bar c$ is a sequence indiscernible over $\bar d E$ and $\bar d'$ is $\mathrm{EM}_<$-based on $\bar d$ over $\bar cE$ then $\bar c$ is indiscernible over $\bar d'E$.
  \item\label{point:tind2}   Let $(\bar a_i)_{i<\lambda}$ be $\mathrm{EM}_\lambda$-based on $(\bar b_j)_{j<\lambda}$ over $E$. If $\bar b_{<\lambda}$ is  mutually indiscernible over $E$, then so is $\bar a_{<\lambda}$.
  \end{enumerate}
\end{rem}

\begin{lemma}\label{careful_ext} Assume $T$ is thick.
Let $D$ be a $\emptyset$-definable strictly weakly stably embedded set and assume that $D_{\indu}(\monster)$ is $\mathsf{NTP}_2$. Let $b\subseteq D(\monster)$ be arbitrary and $(c_{ij})_{i,j<\omega}$ be a strongly indiscernible array such that $(\bar c_i)_{i<\omega}$ is indiscernible over $a$.
Then there are $(c^*_{ij})_{i,j<\omega}$ and $(b^*_{ij})_{i,j<\omega}$ such that
\begin{enumerate}
\item\label{careful_ext1} $(\bar b^*_i\bar c^*_i)_{i<\omega}$ is indiscernible over $a$
\item\label{careful_ext2} $(\bar b^*_i\bar c^*_i)_{i<\omega}$ is mutually indiscernible
\item\label{careful_ext3} $\bar c^*_i\equiv _{c_{i0}} \bar c_i $ for all $i<\omega$ (so in particular $c^*_{i0}=c_{i0}$)
\item\label{careful_ext4} $b^*_{00} c^*_{00}\equiv_a bc_{00}$
\end{enumerate}
\end{lemma}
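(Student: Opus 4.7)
The plan is to adapt the proof of \cite[Lemma~3.8]{CH} to the positive logic setting. We first apply \Cref{trivial} to the given data, obtaining an array $(\bar b_i \bar c_i)_{i<\omega}$ that is strongly indiscernible, has all cells satisfying $b_{ij}c_{ij}\equiv bc_{00}$, and whose row sequence is $a$-indiscernible. Taking $\bar c^*_i=\bar c_i$ immediately gives conditions (\ref{careful_ext1}), (\ref{careful_ext2}), and (\ref{careful_ext3}) of the conclusion; however, we only have $b_{00}c_{00}\equiv bc_{00}$ over $\emptyset$, so condition (\ref{careful_ext4}), which requires the agreement to hold over $a$, is not yet ensured.

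To upgrade to condition (\ref{careful_ext4}), first extend the array by compactness (using thickness) to $(b_{ij}c_{ij})_{i<\lambda, j<\omega}$ for $\lambda=(\abs{T}+\abs{abc_{00}})^+$, preserving the properties above. Then use \Cref{lambda_mod} (or an analogue of \Cref{ar_mod} for arrays of size $\lambda\times\omega$) over $a$ to extract a mutually indiscernible over $a$ array $\mathrm{EM}_{\mathcal L_\lambda}$-based over $a$ on the previous one; this preserves the cell-wise $\emptyset$-type. Now crucially apply \Cref{seD} with $C=a$ and the tuple $b\subseteq D$: via strict weak stable embeddedness of $D$ and $\mathsf{NTP}_2$ of $D_{\indu}(\monster)$, we obtain an index $i^*<\lambda$ and a replacement row $\bar d'$ indiscernible over $ab$ that agrees with the $i^*$-th row over $a$ together with its first coordinate. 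The indiscernibility of $\bar d'$ over $ab$ furnishes the alignment between $b$ and the array over $a$ required by condition (\ref{careful_ext4}).

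To finish, use \Cref{fact:extracting} to extract a length-$\omega$ $a$-indiscernible subsequence of rows based on the modified structure, and then apply an automorphism fixing $a$ to recover $c^*_{i0}=c_{i0}$ and $\bar c^*_i\equiv_{c_{i0}}\bar c_i$ for each $i$, re-establishing condition (\ref{careful_ext3}) while not disturbing (\ref{careful_ext4}). The main obstacle is the bookkeeping: through the successive extraction and automorphism steps we must simultaneously control the $\emptyset$-type of each cell (ensuring $b^*_{ij}c^*_{ij}\equiv bc_{00}$), the $a$-type at the $(0,0)$-cell (for condition (\ref{careful_ext4})), the mutual and row-sequence indiscernibility (for conditions (\ref{careful_ext1})--(\ref{careful_ext2})), and the row-wise agreement modulo $c_{i0}$ (condition (\ref{careful_ext3})). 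The $\mathsf{NTP}_2$ assumption on $D_{\indu}(\monster)$, funneled through \Cref{seD}, is precisely what allows these constraints to be satisfied simultaneously; this is where the hypothesis on $D$ enters essentially.
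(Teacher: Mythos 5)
Your proposal identifies the right ingredients (thickness, \Cref{seD}, \Cref{fact:extracting}, array modelling) but the architecture of the argument is wrong, and the key step — upgrading condition~(\ref{careful_ext4}) from agreement over $\emptyset$ (which is what \Cref{trivial} gives) to agreement over $a$ — does not go through as described. You apply \Cref{seD} once with $C=a$ and obtain a single row $\bar d'$ indiscernible over $ab$ with $\bar d'\equiv_{ad_{i^*,0}}\bar d_{i^*}$. This says nothing about the $a$-type of the first cell $d'_0=d_{i^*,0}$ relative to $bc_{00}$: you know $d_{i^*,0}\equiv bc_{00}$ over $\emptyset$, but \Cref{seD} cannot manufacture $d_{i^*,0}\equiv_a bc_{00}$. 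Indiscernibility of $\bar d'$ over $ab$ is a tool for \emph{rotating} (as in \Cref{lem: RotateIfConsistent}), but rotation preserves the type over the first cell, not the type of the first cell over $a$ — and it is the latter that condition~(\ref{careful_ext4}) asks for.

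The paper's proof of \Cref{careful_ext} does not pass through \Cref{trivial} at all (\Cref{trivial} is a separate, simpler lemma used independently in the proof of \Cref{ntp2}). Instead, it \emph{starts} by choosing, for each prospective initial cell $c_{(i,\ell)0}$, a tuple $b_{(i,\ell)0}$ with $b_{(i,\ell)0}c_{(i,\ell)0}\equiv_a bc_{00}$ — this is where the $a$-type requirement of (\ref{careful_ext4}) is actually baked in — and then constructs the rows one at a time by induction on the row index. At each stage \Cref{seD} is applied over the accumulating parameter set (which includes the $b_{(i,\ell)0}$'s as well as the previously constructed rows and the untouched future segments) to produce a row indiscernible over all of them, and thickness (\Cref{rem:thickind}) is used to maintain almost mutual indiscernibility; only at the very end does \Cref{row_replacement} convert almost mutual into mutual indiscernibility, and then compactness plus \Cref{fact:extracting} plus an automorphism over $a$ finishes. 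Your proposal collapses this entire inductive scaffolding into a single application of \Cref{seD}, which leaves the $a$-alignment of the $(0,0)$-cell unaccounted for; the ``bookkeeping'' you defer is not incidental but is precisely the content of the lemma.
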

\begin{proof}

\begin{claim} It is enough to find, for every $n<\omega$, sequences $\bar c^{**}_{<n}$ and $\bar b^{**}_{<n}$ satisfying~\ref{careful_ext2},~\ref{careful_ext3} and such that:
  \begin{enumerate}[label=(\arabic*$'$)]
    \setcounter{enumi}{3}
    \item\label{careful_ext4p} $b^{**}_{k0}c^{**}_{k0}\equiv_a bc_{00}$ for all $k<n$. 
  \end{enumerate}
\end{claim} 
\begin{claimproof}
Indeed, assume we can find such $\bar c^{**}_{<n}$ and $\bar b^{**}_{<n}$. Then, for $\kappa\coloneqq \beth((2^{\abs{T} + \abs{abc_{00}}})^+)$,  by compactness and \Cref{rem:thickind}\ref{point:tind2} we may find  $\bar c^{**}_{<\kappa}$ and $\bar b^{**}_{<\kappa}$ satisfying~\ref{careful_ext2}, together with~\ref{careful_ext3} (for all $i<\omega$), and with the analogue of~\ref{careful_ext4p} but for $i<\kappa$ instead of $k<n$, with $(c^{**}_{i0})_{i<\kappa}$ indiscernible over $a$. Then we can apply  \Cref{fact:extracting} to $(\bar b_i^{**}\bar c^{**}_i)_{i<\kappa}$ over $a$, obtaining an array $(\bar b'_i\bar c'_i)_{i<\omega}$ satisfying~\ref{careful_ext1},~\ref{careful_ext2}, and~\ref{careful_ext4}.
As $(c^{**}_{i0})_{i<\kappa}$ was indiscernible over $a$, we get that $(c'_{i0})_{i<\omega}\equiv_a (c^{**}_{i0})_{i<\omega}=(c_{i0})_{i<\omega}$. Let $f$ be an automorphism over $a$ sending $(c'_{i0})_{i<\omega}$ to $(c_{i0})_{i<\omega}$. Let $\bar b_i^{*}=f(\bar b'_i)$ and $\bar c_i^{*}=f(\bar c'_i)$. Then for every $i$ we have that $\bar c^*_i\equiv \bar c_i$ and $c^*_{i0}=c_{i0}$, yielding~\ref{careful_ext3}, while~\ref{careful_ext1},~\ref{careful_ext2} and~\ref{careful_ext4} are clearly preserved by $f$, hence they are satisfied by $(\bar b^*_i\bar c^*_i)_{i<\omega}$ as well.
\end{claimproof}

Fix any $n<\omega$. By compactness, stretch $(\bar c_i)_{i<\omega}$ to a strongly indiscernible over $a$ array 
$(\bar c_{(i,\ell)})_{(i,\ell)\in (n\times \omega,{<_{\lex} })}$ with $\bar c_{(0,i)}=\bar c_i$ for all $i<\omega$. For each $(i,\ell)\in n\times \omega$ find $b_{(i,\ell)0}$ with $b_{(i,\ell)0} c_{(i,\ell)0}\equiv_ab c_{00}$.
 By induction on $k<n$ we will find $(b_{k,j}^*c_{k,j}^*)_{j\in \omega}$ such that the following hold.

\begin{enumerate}[label=(\roman*)]
\item\label{point:careful_ext_r1} $(\bar b^*_i\bar c^*_i)_{i\le k}$  is almost mutually indiscernible over $(b_{(i,\ell)0}c_{(i,\ell)0})_{i\in [k+1, n-1],\ell<\omega}$.

\item\label{point:newr2}   $\bar c^*_{\leq k}\frown  (\bar c_{(i,\ell)})_{(i,\ell)\in ([k+1, n-1]\times \omega,<_{\lex})}$  is based on $(\bar c_i)_{i<\omega}$.

\item\label{point:newr3}  $(c^*_{i0})_{i\leq k}\frown ( c_{(i,\ell)0})_{(i,\ell)\in ([k+1, n-1]\times \omega,<_{\lex})}$  is based on $(c_{i0})_{i<\omega}$ over $a$.

\item\label{point:newr4}   $b^*_{k0}c^*_{k0} \equiv_abc_{00}$
\end{enumerate}
 Suppose we have constructed $(\bar b^*_i\bar c^*_i)_{i<k}$. Put $\lambda\coloneqq (\abs{T}+\abs{bc_{00}})^+$ and find by compactness  a sequence $(b_i^+\bar c_i^+)_{i<\lambda}$ that is $\mathrm{EM}_{<}$-based on $(b_{(k,\ell)0}\bar c_{(k,\ell)})_{\ell< \omega}$ over $aBC$, where 
\begin{gather*}
  C\coloneqq \bar c^*_{<k}(\bar c_{(i,\ell)})_{i\in [k+1, n-1],\ell< \omega },\\ B\coloneqq \bar b^*_{<k}(b_{(i,\ell)0})_{i\in [k+1, n-1],\ell<\omega}. 
\end{gather*}

Then $b^+_ic^+_{i0}\equiv_a bc_{00}$ for every $i<\lambda$, and, by the inductive assumption and \Cref{rem:thickind}\ref{point:tind1} applied  $k$ times, one for each row of $(\bar b^*_i \bar c^*_i)_{i<k}$,
\begin{equation}
  \label{eq:1}\tag{$\dagger$}
  \begin{split}
  (\bar b_{i}^*\bar c_{i}^*)_{i<k} \text { is almost mutually indiscernible}\\ \text{over } (b_i^+\bar c_i^+)_{i<\lambda}(b_{(i,\ell)0}\bar c_{(i,\ell)})_{i\in [k+1, n-1], \ell< \omega}.
  \end{split}
\end{equation}
Observe that, since~\ref{point:newr2} inductively holds, by choice of $b^+_i\bar c^+_{i}$  the sequence \[\bar c^*_{<k}\frown \bar c^+_{<\lambda}\frown (\bar c_{(i,\ell)})_{(i,\ell)\in ([k+1, n-1]\times \omega,<_{\lex})}\] is based on $\bar c_{<\omega}$.

It also follows by \Cref{rem:thickind}\ref{point:tind2} that $\bar c^+_{<\lambda}$ is a mutually indiscernible over $C$ array, hence, by 
 \Cref{seD}, there are $m<\lambda$ and $\bar d=(d_{i})_{i<\omega}\equiv_{ Cc^+_{m0}} \bar c^+_{m}$  with $(d_{i})_{i<\omega}$ indiscernible over $BC$. For each $j<\omega$, let $e_j$ be such that $e_jd_j\equiv_{BC} b^+_{m}c^+_{m0}$. By \Cref{fact:DK54} there is an indiscernible over $BC$ sequence 
 $(s_{j}t_{j})_{j<\omega}$ which is $\mathrm{EM}_{<}$-based on  $(e_jd_{j})_{j<\omega}$  over $BC$. Let $f$ be an automorphism over $BC$ sending $s_0t_0$ to $b^+_{m}c^+_{m0}$. 
 Then $\bar b^*_k \coloneqq f(\bar s)$ and $\bar c^*_k\coloneqq f(\bar t)$ (together with $\bar b^*_{<k}\bar c^*_{<k})$ satisfy~\ref{point:careful_ext_r1} because $(s_jt_j)_{j<\omega}$ is indiscernible over $BC$, hence so is $(b^*_{kj} c^*_{kj})_{j<\omega}$, and because by~\eqref{eq:1} the array $(b^*_i\bar c^*_i)_{i<k}$ is almost mutually indiscernible over $b^*_{k0}c^*_{k0}(b_{(i,\ell)0}c_{(i,\ell)0})_{i\in [k+1, n-1],\ell<\omega}$.

 By the same choice,  because $f$ fixes $C$ pointwise, and because $(\bar c_{(k,\ell)})_{\ell<\omega}$ is indiscernible over $C$, the sequence $\bar c_{\le k}$ also satisfies~\ref{point:newr2}. Also, \ref{point:newr3} follows by the fact that $c^*_{m0}=c^+_{m0}$ and the choice of $(b_i^+\bar c_i^+)_{i<\lambda}$, which implies, as $(c_{(k,\ell)0})_{\ell<\omega}$ is indiscernible over $aC$, that $(c_{i0}^+)_{i<\lambda}$ is based on $(c_{(k,\ell)0)})_{\ell<\omega}$ over $aC$. Point~\ref{point:newr4} holds because $b^*_{k0}c^*_{k0}=b^+_m c^+_{m0}$.  This completes the construction of $(\bar b^*_i\bar c^*_i)_{i<n}$.
 
 Applying \Cref{row_replacement} to $(\bar b^*_i \bar c^*_i)_{i<n}$ we obtain an array $(\bar b^{***}_i \bar c^{***}_i)_{i<n}$  satisfying~\ref{careful_ext2},~\ref{careful_ext4p}, and  $\bar c^{***}_i\equiv \bar c_{i}$ with $c^{***}_{i0}= c^{*}_{i0}$ for every $i<n$. By~\ref{point:newr3} we have in particular that $(c^{*}_{i0})_{i<n}\equiv_a (c_{i0})_{i<n}$, hence we can find an automorphism $g$ over $a$ with $g((c^{*}_{i0})_{i<n})= (c_{i0})_{i<n}$. Then $\bar b^{**}_i\coloneqq g(\bar b^{***}_i)$ and $\bar c^{**}_i\coloneqq g(\bar c^{***}_i)$ satisfy~\ref{careful_ext2},~\ref{careful_ext3} and~\ref{careful_ext4p}, as required.
\end{proof}

\subsection{$\mathsf{NTP}_2$ for existentially closed valued difference fields of residual characteristic zero}\label{subsec:ntp2foracvfa00}

In our proof of $\mathsf{NTP}_2$ for valued difference fields of residual characteristic zero we will need to Morleyise the residue field sort. For details on (partial) Morleyisation in positive logic see~\cite[Remark 2.2]{DK} and~\cite[proof of Remark~6.14]{DM}.

\begin{notation}\label{notation:languagessec43}
In this subsection we denote by $\mathcal L$  one of the four languages in \Cref{notation:mcL}. We let $\mor_k$ be the language of the Morleyisation of $\mathsf{ACFA}$. We write $\widehat{\mathcal L}$ for the expansion of $\mathcal L$ by the symbols in $\mor_k$, added on the $k$ sort. We write $\mathsf{ACVFA}_{0,0,\widehat{\mathcal L}}$ for the h-inductive theory of $\widehat{\mathcal L}$-valued difference fields of residual characteristic zero. By  $\mathsf{ACVFA}_{0,0}$ we mean the h-inductive theory of valued difference fields of residual characteristic zero in the language $\mathcal L_\sigma$ of \Cref{defin:languages}.
\end{notation}

Note that, contrary to $\mathsf{ACFA}$, the axioms of $\mathsf{ACVFA}_{0,0,\widehat{\mathcal L}}$ and of $\mathsf{ACVFA}_{0,0}$ do not require any form of genericity of $\sigma$. This is common in the literature (e.g.~\cite{HK,DM}), and due to the fact that, at any rate, we focus on the existentially closed models.
 \begin{rem}\label{rem_Pil}
$\mathsf{ACVFA}_{0,0,\widehat{\mathcal L}}$  is a Pillay theory, as the negations of the formulas $x<_\Gamma y$, $x=_{\Gamma}y$, 
and $x=_{\mathrm{VF}}y$ are equivalent to the positive  formulas $x>_\Gamma y\vee x=_\Gamma y$, $x<_\Gamma y \vee x>_\Gamma y$, 
and $(x-y)^{-1}(x-y)=_{\mathrm{VF}}1$, respectively, and the negation of every predicate coming from Morleyising the sort $k$ is also added to the language in the Morleyisation process.
\end{rem} 
\begin{pr}
The JCP-refinements of $\mathsf{ACVFA}_{0,0,\widehat{\mathcal L}}$ and of $\mathsf{ACVFA}_{0,0}$ are determined by fixing a characteristic zero completion of $\mathsf{ACFA}$.
\end{pr}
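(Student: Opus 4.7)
The plan is to establish, for each of the two theories, a bijection between JCP-refinements and characteristic $0$ completions of $\mathsf{ACFA}$. Recall that a JCP-refinement of a positive theory corresponds to an equivalence class of pec models under joint continuation ($M \sim N$ iff there is a pec model containing both), so the task splits into two parts: verifying that the residue difference field sits well, and inverting this correspondence via amalgamation.

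For the forward direction, I would observe that in both theories every pec model has residue difference field a model of $\mathsf{ACFA}$: this is  \Cref{pr:ecbasics}\ref{point:ecbasicsACFA} for $\mathsf{ACVFA}_{0,0}$, and \Cref{co:acec_basics} combined with \Cref{co:reductswithlifts} for $\mathsf{ACVFA}_{0,0,\mathcal L}$ (covering the $\ac$, $s$ and lift variants). If two pec models $M, N$ jointly continue through a pec $L$, then $k(M), k(N)$ both embed into $k(L)\models \mathsf{ACFA}$, and by model-completeness of $\mathsf{ACFA}$ all three lie in the same completion. So the assignment sending a pec model to the completion of $\mathsf{ACFA}$ realised by its residue field descends to JCP-refinements.

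For the converse in $\mathsf{ACVFA}_{0,0,\mathcal L}$, suppose $M, N$ are pec with $k(M), k(N)$ in the same completion of $\mathsf{ACFA}$. By model-completeness of $\mathsf{ACFA}$, the residue fields admit a common elementary extension, which in the Morleyised $\mor_k$ is a common positive extension; hence the induced residue amalgamation problem is solvable, and \Cref{AP_VDF} produces a common $\mathcal L$-valued difference field extension of $M$ and $N$, which then embeds in a pec such extension. For $\mathsf{ACVFA}_{0,0}$, I would reduce to the previous case: pass to existentially $\aleph_1$-saturated pec extensions (\Cref{rem:ecsatexist})---which preserve the $\mathsf{ACFA}$-completion of the residue by model-completeness---endow them with equivariant cross-sections via \Cref{co:section}, and then extend to pec $\mathcal L_{s,\sigma}$-models (the existence of these extensions is standard, and by \Cref{co:secacec} their $\mathcal L_\sigma$-reducts remain pec). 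Apply the enriched case to amalgamate, and take the $\mathcal L_\sigma$-reduct of the resulting pec extension; this reduct contains $M$ and $N$, witnessing joint continuation.

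The main obstacle is not mathematical but organisational: all the substantive content is concentrated in \Cref{AP_VDF} (Theorem B) and the purity result \Cref{co:section}. The care required is in bookkeeping across languages, in particular ensuring that the various saturation-and-expansion steps used for the $\mathcal L_\sigma$-case preserve the completion of $\mathsf{ACFA}$ realised by the residue field; this is automatic from the model-completeness of $\mathsf{ACFA}$, which ensures all extensions of residue fields within a fixed completion are elementary.
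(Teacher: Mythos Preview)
Your proposal is correct and follows essentially the same approach as the paper: both directions hinge on \Cref{AP_VDF} applied over the common prime substructure $\mathbb Q$, with \Cref{co:section} used to endow models of $\mathsf{ACVFA}_{0,0}$ with equivariant angular components before amalgamating. The paper's treatment of the latter case is slightly more direct---it simply notes that on $\mathbb Q$ the angular component coincides with the residue map, so $\mathbb Q$ remains a common $\mathcal L_{\ac,\sigma}$-substructure after expansion---avoiding your detour through pec $\mathcal L_{s,\sigma}$-extensions and the (unnecessary, since the common extension need not itself be pec) invocation of \Cref{co:secacec}.
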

\begin{proof}Indeed, suppose $T$ is such a completion and  $A,B\models \mathsf{ACVFA}_{0,0,\widehat{\mathcal L}}\cup T$. Then the prime subfields of $k(A)$ and of $k(B)$ are $\mor_k$-isomorphic. By identifying both of them with the same $\mathrm{Mor}_k$-expansion of $\mathbb Q$, we get that $k(A)$ and $k(B)$ amalgamate over $\mathbb Q$, so, by \Cref{AP_VDF}, $A$ and $B$ amalgamate over $\mathbb Q$. This proves JCP.

Conversely, if $T\supseteq \mathsf{ACVFA}_{0,0,\widehat{\mathcal L}}$ is a JCP-refinement, then for every $A,B\models T$ we can embed $A$ and $B$ in a model $C \models T$ and so the copies of $\mathbb Q^{\alg}$ in $k(A)$ and $k(B)$ are isomorphic. By \cite[Proposition 1.4]{CHr}, this means that $T$ implies a completion of $\mathsf{ACFA}$ on $k$.

For $\mathsf{ACVFA}_{0,0}$, the only difference is that in the proof of the first implication we pass from $A$ and $B$ to some positively $\aleph_1$-saturated extensions, and equip them with angular components by using \Cref{co:section}. On $\mathbb Q$ the angular component coincides with the residue map, hence $\mathbb Q$ is still a common substructure of $A$ and $B$ in the expanded language, hence we can apply \Cref{AP_VDF}.
\end{proof}

\begin{pr}\label{qe}  $\mathsf{ACVFA}_{0,0,\widehat{\mathcal L}}$ admits weak quantifier elimination in $\widehat{\mathcal{L}}$. 
\end{pr}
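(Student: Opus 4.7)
The plan is to deduce weak quantifier elimination directly from the embedding result \Cref{pr:embeddingvdf}. Let $a,b$ be compatible tuples in a sufficiently saturated positively existentially closed $\mathcal L$-valued difference field $\monster \models \mathsf{ACVFA}_{0,0,\mathcal L}$ with $\qftp^{\mathcal L}(a)=\qftp^{\mathcal L}(b)$, and write $f\from \strgen a\to \strgen b$ for the induced isomorphism of $\mathcal L$-substructures. By symmetry, it is enough to show that every positive formula satisfied by $a$ is satisfied by $b$, which after reducing to the atomic case amounts to the following: if $c\in \monster$ realises some quantifier-free formula $\psi(a,y)$, then there is $c'\in \monster$ realising $\psi(b,y)$.

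The key step is to extend $f$ to an embedding of $\strgen{a,c}$ into $\monster$ over $\strgen b$, by applying \Cref{pr:embeddingvdf} with $E=\strgen b$, with $M$ obtained by transporting $\strgen{a,c}$ along $f$ so as to make it an extension of $\strgen b$, and with $N=\monster$. The hypothesis on $N$ is immediate from \Cref{henselian}, \Cref{co:acec_basics}, and the assumed saturation of $\monster$. For the embedding $\Gamma(M)\hookrightarrow \Gamma(\monster)$ over $\Gamma(\strgen b)$, we invoke amalgamation of difference oags from \cite[Theorem~5.10]{DM} together with quantifier-free saturation of $\Gamma(\monster)$, exactly as in the proofs of \Cref{AP_VDF} and \Cref{ec_characterisation}.

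For the residue field embedding $k(M)\hookrightarrow k(\monster)$ over $k(\strgen b)$, the Morleyisation plays its essential role: since $\mathcal L$ contains predicates for all $\mathsf{ACFA}$-formulas on $k$, the equality $\qftp^{\mathcal L}(a)=\qftp^{\mathcal L}(b)$ forces $f|_{k(\strgen a)}$ to be an $\mathsf{ACFA}$-partial elementary map between subsets of $k(\monster)\models \mathsf{ACFA}$. Using model completeness of $\mathsf{ACFA}$ together with saturation of $k(\monster)$, this map then extends to an embedding $k(\strgen{a,c})\hookrightarrow k(\monster)$ over $k(\strgen b)$. Once \Cref{pr:embeddingvdf} is applied, the embedding it produces is $k$-elementary and thus preserves all $\mathcal L$-atomic formulas, so the image $c'$ of $c$ satisfies $\psi(b,c')$ as required.

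The main conceptual point is that the Morleyisation on $k$ is exactly the feature that upgrades quantifier-free agreement in $\mathcal L$ to $\mathsf{ACFA}$-partial elementarity on residue fields, which is precisely what \Cref{pr:embeddingvdf} needs; the rest is standard transport through the embedding theorem. No new geometric or arithmetic input beyond what already went into \Cref{pr:embeddingvdf} is required.
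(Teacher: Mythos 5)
Your argument is correct and takes a somewhat different route from the paper's. The paper cites the Hodges theorems to reduce weak quantifier elimination to the Amalgamation Property for substructures, and then establishes AP via Theorem~\ref{AP_VDF} together with an explicit $\mathrm{Mor}_k$-amalgamation of residue fields; you instead verify directly that equality of quantifier-free types implies equality of types, by unwinding the back-and-forth content of Proposition~\ref{pr:embeddingvdf}, which is in any case the engine powering Theorem~\ref{AP_VDF}. The residue-field side is handled identically in both proofs: Morleyisation forces the induced map on $k$ to be $\mathsf{ACFA}$-partial elementary, and saturation of $k(\monster)$ together with model completeness of $\mathsf{ACFA}$ supplies the required embedding. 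What your route avoids is the appeal to Hodges; what it costs is having to handle explicitly the reduction to positive primitive formulas and the transport of $\strgen{a,c}$ along $f$, and two small facts are left implicit. First, for Corollaries~\ref{co:acec_basics} and~\ref{henselian} to apply to $\monster$ you need its $\mathcal L_0$-reduct to be existentially closed, which holds because model completeness of $\mathsf{ACFA}$ lets any $\mathcal L_0$-extension of $\monster$ be expanded to an $\mathcal L$-extension. Second, the \emph{statement} of Proposition~\ref{pr:embeddingvdf} only yields an $\mathcal L_0$-embedding; the preservation of $\mathrm{Mor}_k$-predicates relies on the embedding being $k$-elementary (a feature of its \emph{proof}, where the auxiliary extension $U$ is chosen compatibly), together with the observation that model completeness of $\mathsf{ACFA}$ makes the $\mathrm{Mor}_k$-structure on $k(M)$ unambiguous. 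The paper's own proof leans on exactly this implicit $k$-elementarity when it reduces the $\mathcal L$-amalgamation problem to the $\mathrm{Mor}_k$-one via Theorem~\ref{AP_VDF}, so the two write-ups are at comparable levels of rigor.
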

\begin{proof}
 By \cite[Theorems 8.5.2 and 8.5.4]{Ho}, it suffices to show that the class of substructures of models of $\mathsf{ACVFA}_{0,0,\widehat{\mathcal L}}$ has the Amalgamation Property. 

 Let $L\leftarrow M \rightarrow N$ be substructures of $\widehat{\mathcal L}$-valued difference fields.  By \Cref{AP_VDF} it suffices to show that the amalgamation problem $k(L) \leftarrow k(M) \rightarrow k(N)$ has a solution. If $k_1$ and $k_2$ are characteristic zero models of $\mathsf{ACFA}$ in the language $\mor_k$ extending $k(L)$ and $k(N)$, respectively, such that $k_2$ is $\abs{k_1}^+$-saturated, then $k(M)$ is a $\mor_k$-substructure of $k_1$ and of $k_2$, hence $\id_{k(M)}$ is a partial $\mor_k$-elementary map between $k_1$ and $k_2$. Hence $\id_{k(M)}$ extends to an embedding of $k_1$ into $k_2$. 
\end{proof}

\begin{co}
	In every existentially closed model of	$\mathsf{ACVFA}_{0,0,\widehat{\mathcal L}}$ the sorts $\Gamma$ and $k$ are \emph{orthogonal}, i.e.\ for every pair of maximal types $p(x)$ and $q(y)$ with $x$ a tuple of variables of the sort $\Gamma$ and $y$ a tuple of variables of the sort $k$, the partial type $p(x)
	\cup q(y)$ implies a maximal type in variables $xy$.
\end{co}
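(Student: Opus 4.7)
The plan is to deduce orthogonality directly from the weak quantifier elimination of \Cref{qe}. By weak QE the positive type of a tuple is determined by its quantifier-free type, so it is enough to show that, for compatible tuples $(a_1,b_1),(a_2,b_2)$ in $\monster$ with the $a_i$'s from $\Gamma$ and the $b_i$'s from $k$, the assumptions $\qftp(a_1)=\qftp(a_2)$ and $\qftp(b_1)=\qftp(b_2)$ together imply $\qftp(a_1,b_1)=\qftp(a_2,b_2)$. This is equivalent to producing an $\mathcal L$-isomorphism $\langle a_1,b_1\rangle \to \langle a_2,b_2\rangle$ sending $(a_1,b_1)\mapsto(a_2,b_2)$.

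Set $A_i\coloneqq\langle a_i\rangle_\Gamma$ and $B_i\coloneqq\langle b_i\rangle_k$. The hypothesis yields an isomorphism $\phi_\Gamma\from A_1\to A_2$ of difference oags with $\phi_\Gamma(a_1)=a_2$ and an $\mor_k$-isomorphism $\phi_k\from B_1\to B_2$ with $\phi_k(b_1)=b_2$. The $\mathcal L$-substructure $\langle a_i,b_i\rangle$ has $\Gamma$-part $A_i$, $k$-part $B_i$, and a $\mathrm{VF}$-part $F_i$ generated from the prime field $\mathbb Q$ by $\iota(B_i)$ (when $\iota\in\mathcal L$) and by $s(A_i)$ (when $s\in\mathcal L$), closed under $\sigma$. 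When $\mathcal L=\mathcal L_{\ac,\sigma}$, the $\mathrm{VF}$-part is just $\mathbb Q$ and the gluing of $\phi_\Gamma$ and $\phi_k$ is trivial; in the other three cases I would describe $F_i$ as (the fraction field of) the twisted group algebra $\iota(B_i)[s(A_i)]$ with its natural Gauss valuation, its residue map onto $B_i$, and, when $\ac$ or $s$ is in the language, the induced angular component $\ac(\iota(c)\cdot s(\alpha))=c$, all determined functorially by the pair $(A_i,B_i)$.

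The main obstacle is verifying this canonical description, i.e.\ excluding unexpected algebraic identities between elements of $s(A_i)$ and $\iota(B_i)$ inside $\mathrm{VF}(\monster)$. The relevant tool is \Cref{fact:generating+nontorsion}\ref{fact_generating}: torsion-freeness of $A_i$ guarantees that, for $\mathbb Z$-linearly independent $\alpha$'s, the monomials $s(\alpha)$ carry pairwise distinct valuations $\alpha\in A_i$ over $\iota(B_i)\subseteq\mathcal O^\times$, so any polynomial relation $\sum_\alpha \iota(c_\alpha)s(\alpha)=0$ collapses, at each valuation level, to a nontrivial identity among the $c_\alpha\in B_i$, which is then preserved by $\phi_k$. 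The same fact shows that $v(F_i)\subseteq A_i$, $\res(F_i\cap\mathcal O)\subseteq B_i$, and (where defined) $\ac(F_i)\subseteq B_i$, all computed from the group-algebra data. Compatibility with $\sigma$ is automatic, since $\sigma$ commutes with $s$ and $\iota$ and since $A_i, B_i$ are difference-closed. Hence the prescription $s(a)\mapsto s(\phi_\Gamma(a))$ and $\iota(b)\mapsto \iota(\phi_k(b))$ extends to the desired $\mathcal L$-isomorphism of substructures, and a final appeal to \Cref{qe} upgrades this to $\tp(a_1,b_1)=\tp(a_2,b_2)$, concluding the proof.
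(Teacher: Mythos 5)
Your proof is correct and follows the same strategy as the paper: by weak quantifier elimination (\Cref{qe}) reduce to showing that the $\mathcal L$-substructure generated by a $\Gamma$-tuple and a $k$-tuple is determined, up to isomorphism over those generators, by their separate quantifier-free types, which you verify directly. The paper's proof of this corollary is a one-liner that leaves this algebraic verification to the reader; you have supplied it (including the torsion-freeness/linear-independence argument for the group-ring description of the $\mathrm{VF}$-part and the case distinctions over the various languages $\mathcal L$), and the details check out.
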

\begin{proof}
By \Cref{qe} it suffices to check that if $\alpha\models p$ and $\gamma,\gamma'\models q$ then the structures generated by $\alpha\gamma$ and $\alpha\gamma'$ are isomorphic. This is easy and left to the reader.
\end{proof}

\begin{pr}\label{se}
In every existentially closed $\monster\models\mathsf{ACVFA}_{0,0,\widehat{\mathcal L}}$ the following hold.
\begin{enumerate}
\item\label{point:swse} The sorts $k$ and $\Gamma$ are strictly weakly stably embedded. 
\item\label{point:eqinfconj} The formulas in $k_{\indu}(\monster)$ are equivalent in $\monster$ to infinitary conjunctions of formulas in  $\mathrm{Mor}_k(k(\monster))$, and the formulas in $\Gamma_{\indu}(\monster)$ are equivalent to infinitary conjunctions of quantifier-free formulas over $\Gamma(\monster)$ in the language $\mathcal{L}_{\mathrm{oag}}\cup \set{\sigma,\sigma\inverse}$.
\end{enumerate}
\end{pr}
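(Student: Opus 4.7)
The plan is to deduce both parts from the following \textbf{key claim}: for every $c\in\monster$, if $a,a'\in k(\monster)$ (resp.\ $\gamma,\gamma'\in\Gamma(\monster)$) have the same $\mor_k$-type over $E_k:=k\cap\dcl(c)$ (resp.\ the same quantifier-free $\mathcal L_\mathrm{oag}\cup\set{\sigma,\sigma^{-1}}$-type over $E_\Gamma:=\Gamma\cap\dcl(c)$), then $\tp(a/c)=\tp(a'/c)$ (resp.\ $\tp(\gamma/c)=\tp(\gamma'/c)$) in $\monster$. By weak quantifier elimination (\Cref{qe}), establishing the claim reduces to constructing an $\mathcal L$-isomorphism $\langle c,a\rangle\to\langle c,a'\rangle$ over $\langle c\rangle$ sending $a\mapsto a'$, and similarly for $\gamma,\gamma'$, since quantifier-free types are determined by the isomorphism types of the generated substructures.

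For the $k$-case, the $\mathcal L$-substructures $\langle c,a\rangle$ and $\langle c,a'\rangle$ share the $\Gamma$-sort of $\langle c\rangle$, while their $k$-sorts are $k(\langle c\rangle)\langle a\rangle$ and $k(\langle c\rangle)\langle a'\rangle$; since $a\equiv^{\mor_k}_{E_k}a'$ and $k(\langle c\rangle)\subseteq E_k$, the assignment $a\mapsto a'$ extends to a difference field isomorphism on the $k$-sorts. In languages with a lift $\iota$, the VF-sorts are additionally augmented by the $\sigma$-orbits of $\iota(a)$ and $\iota(a')$, and the $k$-isomorphism extends to a valued difference field isomorphism of the VF-parts via \Cref{2.5} (if $a$ is $\sigma$-transcendental over $k(\langle c\rangle)$) or \Cref{2.6} (if $\sigma$-algebraic); compatibility with $\ac$ is then given by \Cref{fact:avd6364}, and with $s$ is immediate since the $\Gamma$-sort is unchanged. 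For the $\Gamma$-case, $\langle c,\gamma\rangle$ and $\langle c,\gamma'\rangle$ share the $k$-sort of $\langle c\rangle$, and in languages without cross-section also the VF-sort, so the quantifier-free oag$\cup\set{\sigma,\sigma^{-1}}$-isomorphism on $\Gamma$ suffices; in languages with cross-section one proceeds inductively as in \Cref{lem:Gamma}, using \Cref{fact:generating+nontorsion}\ref{fact_generating} to establish that the minimal polynomial of $s(\sigma^i(\gamma))$ over the field generated by $\mathrm{VF}(\langle c\rangle)$ and $\set{s(\sigma^j(\gamma)):j<i}$ is determined by whether $\ell\sigma^i(\gamma)$ lies in $\Gamma(\langle c\rangle)+\sum_{j<i}\mathbb Z\sigma^j(\gamma)$ for some positive $\ell$, an information visible in the quantifier-free type of $\gamma$ over $E_\Gamma$.

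Granted the key claim, both parts follow by standard saturation arguments. For part~\ref{point:swse} in the $k$-sort case, given a formula $\phi(x,c)$ with $x$ of sort $k$ and letting $D\coloneqq\phi(\monster,c)$, define $\pi(x)$ to be the set of $\mor_k$-formulas $\psi(x)$ over $E_k$ with $D\subseteq\psi(\monster)$; then $D\subseteq\pi(\monster)$ is trivial, and the converse holds by compactness: for any $a'\in\pi(\monster)$, a conjunction $\chi$ of finitely many formulas in $\tp^{\mor_k}(a'/E_k)$ must be satisfied by some $a\in D$, for otherwise $\neg\chi$ would belong to $\pi$ by the closure of $\mor_k$ under negation (being a Morleyisation), contradicting $a'\models\chi$; the key claim then yields $\tp(a/c)=\tp(a'/c)$, so $a'\in D$. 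The $\Gamma$-case is analogous, using the closure of positive quantifier-free oag$\cup\set{\sigma,\sigma^{-1}}$-formulas under negation via the usual trichotomy. Part~\ref{point:eqinfconj} is then immediate: the $\pi(x)$ constructed above is by definition an infinitary conjunction of $\mor_k$-formulas with parameters in $k(\monster)$ (respectively of quantifier-free oag$\cup\set{\sigma,\sigma^{-1}}$-formulas over $\Gamma(\monster)$). The main technical step is the lift/cross-section case of the $\mathcal L$-isomorphism construction, but this essentially reprises \Cref{lem:k,lem:Gamma} in the simpler setting where $a,a',\gamma,\gamma'$ already lie in $\monster$, so no saturation is required.
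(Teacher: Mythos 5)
Your argument is correct and follows the paper's strategy: both reduce to showing that the $\mor_k$-type (resp.\ quantifier-free oag-type) of a $k$- (resp.\ $\Gamma$-) tuple over the definable closure of the parameters determines the full type, prove this by extending the identity on $\langle c\rangle$ to an $\mathcal L$-isomorphism of generated substructures and invoking weak quantifier elimination (\Cref{qe}), and conclude by a compactness argument. The paper imports the isomorphism-extension step from \cite[Lemmas~4.9--4.10]{Kesting} and the compactness step from \Cref{lemma:qf}, whereas you reassemble both from material internal to the paper (\Cref{2.5}, \Cref{2.6}, \Cref{fact:avd6364}, and the method of \Cref{lem:Gamma}); this is a legitimate, more self-contained route, the one detail worth making explicit in the lift case being that the VF-isomorphism obtained from \Cref{2.5}/\Cref{2.6} also commutes with $\iota$, which holds since both it and $\iota\circ g\circ\iota^{-1}$ are $\sigma$-ring embeddings of $\iota(k(\langle c\rangle)\langle a\rangle)$ agreeing on $\iota(k(\langle c\rangle))$ and on $\iota(a)$.
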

\begin{proof}We prove both statements in the case of $\Gamma$; the case of $k$ is completely analogous. In both cases, we apply \Cref{lemma:qf}. We first deal with the language with lift and section.

For~\ref{point:swse}, it is enough to show that if $M$ is a substructure of $\monster$ and $\gamma, \delta\in \Gamma(\monster)^n$ are such that $\qftp(\gamma/\Gamma(M))=\qftp(\delta/\Gamma(M))$, then $\tp(\gamma/M)=\tp(\delta/M)$. Indeed, if we have this, let  $\phi(x,c)$ be a quantifier-free formula,  with $x$ a tuple of variables of sort $\Gamma$, and set $M\coloneqq\dcl(c)$. Apply  \Cref{lemma:qf} with $\mathcal F$ the family of quantifier-free formulas over $\Gamma(M)$, obtaining that $\phi(x,c)$ is equivalent to such a formula. By \Cref{qe}, it suffices to check the definition of strict weak stable embeddedness on quantifier-free formulas, hence this suffices.

Observe that, in the proofs of \cite[Lemmas~ 4.9 and~4.10]{Kesting}, the assumption that $\sigma_\Gamma$ is multiplicative is never used. We apply~\cite[Lemma~4.10]{Kesting} for the $\Gamma$ case, the other lemma being used in the $k$ case. By the cited result, we have an isomorphism $f\from \langle M, \gamma \rangle\to \langle M, \delta \rangle$ extending $\id_{M}$ and sending $\gamma$ to $\delta$. As $k(\langle M, \gamma \rangle)=k(M)=k( \langle M, \delta \rangle)$, in particular, $f$ restricted to  $k(\langle M, \gamma \rangle)$ is $\mathrm{Mor}_k$-elementary.
Hence, by weak quantifier elimination we get that $f$ is a partial immersion, so in particular $\tp(\gamma/M)=\tp(\delta/M)$.

For~\ref{point:eqinfconj} we show that, if $\gamma, \delta\in \Gamma(\monster)^n$ and $\qftp_{\Gamma}(\gamma)=\qftp_{\Gamma}(\delta)$, then $\tp(\gamma)=\tp(\delta)$, and then apply \Cref{lemma:qf} and \Cref{qe}. Again by \cite[Lemma~4.10]{Kesting}, we have an isomorphism $\langle \mathbb Q, \gamma\rangle\to \langle \mathbb Q, \delta\rangle$ sending $\gamma$ to $\delta$. As $k(\langle \mathbb Q, \gamma\rangle)=\mathbb Q =k(\langle \mathbb Q, \delta\rangle)$, we have in particular that $f|_{k(\langle \mathbb Q, \gamma\rangle)}=\id_{\mathbb Q}$ is elementary. Hence, by weak quantifier elimination we get that $f$ is a partial immersion.

For the other three languages, the proof is analogous, and uses analogues of \cite[Lemmas~ 4.9 and~4.10]{Kesting} that are easier to prove and left to the reader.
\end{proof} 

\begin{lemma}\label{immediate}
If $K\subseteq L$  is an immediate extension of $\widehat {\mathcal{L}}$-valued difference subfields of $\monster$, and $a\in L$, then every realisation in $\monster$ of the quantifier-free type of $a$ over $K$ in the language $\mathcal L$ (see \Cref{notation:languagessec43}) realises $\tp(a/K)$.
\end{lemma}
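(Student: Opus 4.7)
My plan is to fix an arbitrary $a' \in \monster$ realising $\qftp_{\mathcal L_0}(a/K)$ and show $a \equiv_K a'$ in $\mathcal L$. First I would apply weak quantifier elimination (\Cref{qe}) to the compatible tuples $(a,\bar c)$ and $(a',\bar c)$, where $\bar c$ enumerates $K$, thereby reducing the task to showing that $a$ and $a'$ satisfy the same quantifier-free $\mathcal L$-formulas over $K$. In other words, the problem becomes that of upgrading sameness of $\mathcal L_0$-qftp to sameness of $\mathcal L$-qftp.

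Next I would perform a syntactic decomposition. Since $\mor_k$ expands $\mathcal L_0$ only by relation symbols on the $k$ sort, the two languages share the same terms, and every atomic $\mathcal L$-formula over $K$ is either an atomic $\mathcal L_0$-formula (handled directly by hypothesis) or of the form $P(t_1(x),\ldots,t_n(x))$ with $P \in \mor_k$ and each $t_i$ a $k$-sort term over $K$. So only the second kind requires work.

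For these residual predicates I would exploit the immediacy hypothesis to pin down the values. Concretely, because $K \subseteq L$ is immediate and $a \in L$, each $t_i(a)$ lies in $k(L) = k(K)$, so there exists $d_i \in k(K)$ with $t_i(a) = d_i$. The atomic $\mathcal L_0$-formula $t_i(x) = d_i$ belongs to $\qftp_{\mathcal L_0}(a/K) = \qftp_{\mathcal L_0}(a'/K)$, giving $t_i(a') = d_i$ as well. Hence both $P(t_1(a),\ldots,t_n(a))$ and $P(t_1(a'),\ldots,t_n(a'))$ reduce to $P(d_1,\ldots,d_n)$, whose truth in $k(\monster)$ depends only on the parameters $d_i \in k(K)$; the same reasoning applies to negated atomic formulas, so the $\mathcal L$-qftps of $a$ and $a'$ over $K$ coincide.

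I do not anticipate serious obstacles: the crux is observing that Morleyising $k$ adds only relations, so all residue-level terms applied to $a$ remain determined by the $\mathcal L_0$-qftp thanks to immediacy. The only minor subtlety is transferring weak QE from $\emptyset$ to parameters, which is handled by appending an enumeration of $K$ to each tuple.
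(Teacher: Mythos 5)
Your proof is correct and takes essentially the same approach as the paper's one-line proof, which just cites weak quantifier elimination (\Cref{qe}) and ``inspection of $\mathcal L$''; you have simply spelled out what the inspection amounts to. The key observation is exactly the one you make: $\mor_k$ adds only relation symbols, so $\mathcal L$- and $\mathcal L_0$-terms agree, and the new atomic formulas are of the form $P(\bar t(x))$ with $\bar t$ $k$-sort terms; immediacy forces $\bar t(a)\in k(L)=k(K)$, so the equalities $t_i(x)=d_i$ with $d_i\in k(K)$ lie in $\qftp_{\mathcal L_0}(a/K)$ and pin down the truth value of $P(\bar t(x))$ independently of $a$.

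One small point of precision: rather than treating ``negated atomic formulas'' as a separate case, note that since $\mor_k$ comes from Morleyising, it already contains a relation symbol for the negation of every $\mathsf{ACFA}$-formula (cf.\ \Cref{rem_Pil}); these are handled by the exact same argument as any other $P\in\mor_k$. Also, to avoid appealing to weak QE for an infinite tuple $\bar c$ enumerating $K$, you may prefer to fix an arbitrary quantifier-free $\phi(x,\bar y)$ and finite $\bar c\in K$ and apply \Cref{qe} to the finite tuples $(a,\bar c)$ and $(a',\bar c)$; but given the paper's convention that tuples may be infinite in \Cref{sec:ntp2} and a routine compactness argument, your formulation is also fine.
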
 
\begin{proof}
This follows by \Cref{qe} and inspection of $\widehat{\mathcal L}$.
\end{proof}

\begin{rem}
By \Cref{AP_VDF}, as observed in~\cite[Remark~6.55]{DM}, the positive theory $\mathsf{ACVFA}_{0,0,\widehat{\mathcal L}}$ is \emph{Hausdorff}, see~\cite[Definition~2.13]{DK}. In particular, it is thick.
\end{rem}
\begin{thm}\label{ntp2} The positive theories $\mathsf{ACVFA}_{0,0,\widehat{\mathcal L}}$ and $\mathsf{ACVFA}_{0,0}$ are $\mathsf{NTP}_2$.
\end{thm}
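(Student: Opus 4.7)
The strategy is to adapt the proof of \cite[Theorem~4.1]{CH} to the positive setting, combining the tools developed throughout this section. It suffices to handle $\mathsf{ACVFA}_{0,0,\mathcal L}$: were $\mathsf{ACVFA}_{0,0}$ to have $\mathsf{TP}_2$, one could pass to an existentially $\aleph_1$-saturated existentially closed extension and, by \Cref{co:section}, expand by an equivariant angular component, producing a $\mathsf{TP}_2$ pattern in $\mathsf{ACVFA}_{0,0,\mathcal L_{\ac,\sigma}}$, contradicting the first case.

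Arguing by contradiction, suppose $\mathsf{ACVFA}_{0,0,\mathcal L}$ has $\mathsf{TP}_2$. By \Cref{co:submulbur} (via \Cref{singlev}) combined with \Cref{co:tp2iff2tp2}, we fix a formula $\phi(x,y)$ with $|x|=1$ and $2\mathsf{-TP}_2$; by \Cref{tp2_indis} we fix a strongly indiscernible array $(a_{ij})_{i,j<\omega}$ witnessing it, and let $b$ realise the consistent column $\{\phi(x, a_{i,0}) : i < \omega\}$. Split according to the sort of $x$. If $x$ is of sort $k$ or $\Gamma$, then by \Cref{se}\ref{point:swse}-\ref{point:eqinfconj} the pattern descends to $k_{\indu}(\monster)$ or $\Gamma_{\indu}(\monster)$, which are (reducts of) a Morleyised model of $\mathsf{ACFA}$ and of a generic difference divisible ordered abelian group, respectively; the former is supersimple, hence $\mathsf{NTP}_2$, and the latter is $\mathsf{NTP}_2$ by~\cite{DM}, yielding a contradiction.

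The remaining case is $x$ of sort $\mathrm{VF}$. Here the plan is to absorb the $k$- and $\Gamma$-traces of the obstruction by rotating them indiscernibly over the trace of $b$. Concretely, let $d_{ij}$ be a sufficiently rich tuple in $D\coloneqq k\cup \Gamma$ encoding the residues, valuations, and all applicable $\ac,s,\iota$-values of the coordinates of $a_{ij}$ and of their arithmetic interactions with $b$ (such as $\res(b-a_{ij})$, $v(b-a_{ij})$, and their iterates under $\sigma$). By \Cref{se}, $D$ is strictly weakly stably embedded and $D_{\indu}(\monster)$ is $\mathsf{NTP}_2$ by the residue-field and value-group cases above. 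Apply \Cref{careful_ext} to $(d_{ij})$ with the $D$-parameter extracted from $b$: we obtain a mutually indiscernible extension $(b^*_{ij}d^*_{ij})_{i,j<\omega}$ indiscernible over the trace of $b$, with $b^*_{00}d^*_{00} \equiv b\,d_{00}$ and each row having the same type as the corresponding row of $(d_{ij})$. Using weak quantifier elimination (\Cref{qe}) and the transfer along immediate extensions (\Cref{immediate}), this descent can be lifted to a strongly indiscernible $\mathrm{VF}$-array $(a^*_{ij})$ with the property that the second column's $k$- and $\Gamma$-traces over $b$'s trace coincide with those of the first column. The embedding result \Cref{pr:embeddingvdf}, which rests on \Cref{AP_VDF}, then supplies a realisation of $\{\phi(x, a^*_{0,j}) : j < \omega\}$ in a sufficiently saturated existentially closed extension, contradicting the $2$-inconsistency of this set (preserved by $\mathrm{EM}_\ar$-basing).

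The main obstacle is the bookkeeping in the reduction to $D$: one must extract a trace $d_{ij}$ rich enough that agreement in $D$-type over $b$'s trace triggers the hypothesis of \Cref{pr:embeddingvdf} at the residue field and value group level, yet structured so that \Cref{careful_ext} applies cleanly (iterated once on the $k$-component and once on the $\Gamma$-component, using their orthogonality). This is the analogue, for our setting, of the careful trace analysis performed in \cite[Section~4]{CH}; the positive-logic apparatus assembled in the previous two subsections---in particular Array Modelling without thickness and the positive version of \Cref{careful_ext}---is precisely what enables this adaptation.
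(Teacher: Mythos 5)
You correctly identify the overall architecture and most of the tools: reduce to a single expressive language, then to a single variable, use strict weak stable embeddedness (\Cref{se}) to dispose of the $k$ and $\Gamma$ cases, and run the Chernikov--Hils array machinery for the $\mathrm{VF}$ sort using the positive-logic versions of the array lemmas (\Cref{trivial}, \Cref{careful_ext}). The language-reduction and sort-splitting are essentially as in the paper (the paper moves to the most expressive language $\mathcal L_{s,\iota,\sigma}$ from the start; you go the other way, but the transfer argument is sound), and your use of $\mathsf{NIP}$ of $\Gamma$ and simplicity of $k(\monster)\models\mathsf{ACFA}$ to certify that the induced structures are $\mathsf{NTP}_2$ matches the paper.

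However, your endgame for the $\mathrm{VF}$ case has a genuine gap, and it is not the one you flag as ``bookkeeping''. After running steps (I)--(V) of the CH argument (with \Cref{careful_ext} and \Cref{trivial} in place of their first-order counterparts), one has a chain of substructures $K=\bigcup_\beta a^\beta_{00}$ with $K\langle a\rangle$ an immediate extension of $K$. You then propose that \Cref{pr:embeddingvdf} would ``supply a realisation'' of $\{\phi(x,a^*_{0,j})\mid j<\omega\}$, but you do not say why its hypotheses are met, nor how producing a realisation in an extension could make a $2$-inconsistent set of formulas consistent in the monster -- $2$-inconsistency is witnessed by a single positive sentence and therefore persists in every extension. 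The paper deliberately \emph{deviates} from CH's step (VI) at exactly this point: by \Cref{immediate} and compactness it extracts a quantifier-free $\mathcal L_0$-formula $\psi(x,c)\in\qftp(a/K)$ \emph{without} $\mathrm{Mor}_k$ symbols, with $c=a^\beta_{00}$ for some $\beta$, such that $\psi(\monster,c)\subseteq\phi(\monster,a_{00})$; it then shows $(a^\beta_{ij})$ witnesses that $\psi$ has $\mathsf{TP}_2$, hence $\mathsf{IP}$, and finally -- unfolding $\psi$ as $\rho(x,\sigma(x),\dots,\sigma^n(x),y,\dots,\sigma^n(y))$ for an $\mathcal L_{s,\iota}$-formula $\rho$ -- transfers the $\mathsf{IP}$-pattern to the $\sigma$-less reduct, which is $\mathsf{NIP}$, a contradiction. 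This passage from a $\mathrm{Mor}_k$-free formula to $\mathsf{IP}$ of the $\mathcal L_{s,\iota}$-valued field is the missing key idea; without it, the argument does not close.
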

\begin{proof}
  If $M$ is an $\widehat{\mathcal L}$-valued difference field, or a valued difference field in the language $\mathcal L_\sigma$, as in the proof of \Cref{co:reductswithlifts} we may expand a suitable pec extension of it to an $\widehat{\mathcal L}_{s,\iota,\sigma}$-valued difference field.   It follows that it suffices to show that $\mathsf{ACVFA}_{0,0,s,\iota}$ is $\mathsf{NTP}_2$.
  
Suppose for a contradiction that a formula $\phi(x,y)$ has $\mathsf{TP}_2$. We may assume that $x$ is a single variable by \Cref{singlev}.
As the theory of $\Gamma$ is $\mathsf{NIP}$ (hence $\mathsf{NTP}_2$, see~\cite[Theorem~1.1]{DGK}) by \cite[Corollary~6.39]{DM}, by \Cref{se}\ref{point:eqinfconj} and \Cref{infinitary_tp2} the structure $\Gamma_{\indu}(\monster)$ is $\mathsf{NTP}_2$. Similarly for $k_{\indu}(\monster)$, using the fact that $k(\monster)\models \mathsf{ACFA}$, hence is simple~\cite{CHr}. Therefore, by \Cref{seD,lem: burden by rotation} we may further assume that $x$ is of sort $\mathrm{VF}$. By \Cref{qe} and \Cref{infinitary_tp2} we may furthermore assume that $\phi(x,y)$ is quantifier-free.

Let $(a_{ij})_{i,j<\omega}$ be a strongly indiscernible array witnessing that $\phi(x,y)$ has $\mathsf{TP}_2$. Let $a\models \{ \phi(x,a_{i0})\mid i<\omega\}$. By \Cref{fact:DK54} we may assume that the sequence $(\bar a_i)_{i<\omega}$ is $a$-indiscernible.  We run the analogues for the language $\widehat{\mathcal L}_{s,\iota,\sigma}$ of steps (I) to (V) in the proof of \cite[Theorem~4.1]{CH}, but we replace \cite[Lemmas~3.6 and 3.8]{CH} by \Cref{careful_ext,trivial}. We obtain, for $\beta<\omega$, strongly indiscernible arrays $(a_{ij}^\beta)_{i,j<\omega}$ such that $K\coloneqq \bigcup_{\beta<\omega} a^\beta_{00}$ is a substructure of $\monster$ (in fact, a valued difference subfield because, since  $\iota$ and $s$ are in the language, the maps $\res$ and $v$ are automatically surjective), the union is increasing, $L\coloneqq K\langle a\rangle$ is an immediate extension of $K$, 
\begin{enumerate}
\item for every $\beta$, the sequence $(\bar a_{i}^\beta)_{i<\omega}$ is indiscernible over $a$, and
\item\label{point:acvfantp22} for every $\alpha<\beta$ and $i,j<\omega$ we have a decomposition $a_{ij}^\beta=(a^*_{ij}, b_{ij}^*)$ such that $\bar a_i^*\equiv_{a_{i0}^{\alpha}} \bar a_i^\alpha$.
\end{enumerate}
Note that it follows from~\ref{point:acvfantp22} that $a^\beta_{i,0}\supseteq a_{i,0}$ for all $i,\beta<\omega$, so 
$a\models \{ \phi(x,a^\beta_{i0})\mid i<\omega\}$ and the rows $\{ \phi(x,a^\beta_{ij})\mid j<\omega\}$ are inconsistent for all $i,\beta<\omega$.

We now deviate from the proof of \cite[Theorem~4.1]{CH}: instead of constructing $(a^\omega_{ij})_{i,j<\omega}$ and running their step (VI), we argue as follows. By \Cref{immediate} and compactness there is a quantifier-free $\mathcal L$-formula $\psi(x,c)\in \qftp(a/K)$ such that $\psi(\monster,c)\subseteq \phi(\monster,a_{00})$. As $K=\bigcup_{\beta<\omega} a^\beta_{00}$ is an increasing union, we may assume $c=a_{00}^\beta$ for some $\beta<\omega$. Then $\psi(x,a^\beta_{00})$ is a quantifier-free formula, and by indiscernibility of the rows and strong homogeneity of $\monster$ we have  $\psi(\monster,a^\beta_{ij})\subseteq \phi(\monster,a_{ij}^\beta)$ for all $i,j<\omega$, preserving inconsistency, while $a\models \psi(x,a_{i0}^\beta)_{i<\omega}$ by indiscernibility of $(a^\beta_{i0})_{i<\omega}$ over $a$, so $(a^\beta_{ij})_{i,j<\omega}$ witnesses that $\psi$ has $\mathsf{TP}_2$.  In particular, $\psi$ has $\mathsf{IP}$ witnessed by some $(a_i)_{i<\omega}$ and $(b_{w})_{w\subseteq \omega}$. But then, writing $\psi(x,y)=\rho(x,\sigma(x),\ldots, \sigma^n(x),y,\sigma(y),\ldots,\sigma^n(y))$ for some $n$ and a formula $\rho$ in the language of $\mathcal L_{s,\iota}$-valued fields, we get that $\rho$, $(a^\beta_{i},\sigma(a^\beta_{i}),\ldots,\sigma^n(a^\beta_{i}))_{i<\omega}$ and $(b_{w},\sigma(b_w),\ldots, \sigma^n(b_w))_{w\subseteq \omega}$ witness that the $\mathcal L_{s,\iota}$--valued algebraically closed field $\monster$ has $\mathsf{IP}$. But algebraically closed $\mathcal L_{s,\iota}$-valued fields of residual characteristic zero are $\mathsf{NIP}$, as can be shown, e.g.\ by using~\cite[Theorem~2.2]{Kesting}. This is a contradiction.
\end{proof}

\addcontentsline{toc}{section}{\bibname}

\emergencystretch=1em

\renewcommand*{\bibfont}{\normalfont\footnotesize}
\printbibliography

\end{document}